\documentclass[11pt,reqno]{amsart}

\usepackage{amsrefs}
\usepackage{amsmath}
\usepackage{amsthm}
\usepackage{graphicx}
\usepackage{subcaption}
\usepackage{tikz}
\usetikzlibrary{trees,snakes,external,backgrounds,patterns}
\usepackage{amsfonts}
\usepackage{amssymb}
\usepackage{color}
\usepackage{bm}
\usepackage{bbm}
\usepackage{hyperref}
\usepackage{enumitem}
\usepackage[margin=1.1in]{geometry}
\usepackage{relsize}
\parindent=.25in
\def\root{\oSlash}

\numberwithin{equation}{section}

\newtheorem{theorem}{Theorem}[section]
\newtheorem{lemma}[theorem]{Lemma}
\newtheorem{proposition}[theorem]{Proposition}

\newtheorem{corollary}[theorem]{Corollary}

\theoremstyle{definition}
\newtheorem{example}[theorem]{Example}
\newtheorem{definition}[theorem]{Definition}
\newtheorem{remark}[theorem]{Remark}
\newtheorem{Condition}[theorem]{Condition}

\newenvironment{assumption}[1]
  {\innercustomthm}
  {\endinnercustomthm}

\def\ind{{\mathbbm 1}}
\def\E{{\mathbb E}}

\def\R{{\mathbb R}}

\def\N{{\mathbb N}}
\def\PP{{\mathbb P}}

\def\FF{{\mathbb F}}

\def\P{{\mathcal P}}

\def\X{{\mathcal X}}

\def\Y{{\mathcal Y}}

\def\L{{\mathcal L}}

\def\G{{\mathcal G}}

\def\Z{{\mathbb Z}}

\def\F{{\mathcal F}}
\def\C{{\mathcal C}}

\def\Cov{{\mathrm{Cov}}}

\def\SQ{S^{\sqcup}}

\newcommand{\Erdos}{Erd\H{o}s-R\'enyi}
\newcommand{\CM}{\textnormal{CM}}

\newcommand{\lan}{\langle}
\newcommand{\ran}{\rangle}
\newcommand{\lfl}{\lfloor}
\newcommand{\rfl}{\rfloor}
\newcommand{\ti}{\tilde}

\newcommand{\Emb}{{\mathbb{E}}}

\newcommand{\Nmb}{{\mathbb{N}}}

\newcommand{\Pmb}{{\mathbb{P}}}

\newcommand{\Rmb}{{\mathbb{R}}}

\newcommand{\Tmb}{{\mathbb{T}}}

\newcommand{\Zmb}{{\mathbb{Z}}}

\newcommand{\Gmc}{{\mathcal{G}}}

\newcommand{\Lmc}{{\mathcal{L}}}

\newcommand{\Pmc}{{\mathcal{P}}}

\newcommand{\Tmc}{{\mathcal{T}}}

\newcommand{\rhohat}{{\hat{\rho}}}

\newcommand{\tily}{\tilde{y}}
\newcommand{\tilz}{\tilde{z}}
\newcommand{\tile}{\tilde{\eta}}
\newcommand{\z}{z}

\newcommand{\Etil}{{\tilde{E}}}

\newcommand{\Gtil}{{\tilde{G}}}

\newcommand{\Htil}{{\tilde{H}}}

\newcommand{\rhotil}{{\tilde{\rho}}}

\newcommand{\Vtil}{{\tilde{V}}}




\newcommand{\tree}{{\mathcal T}}

\newcommand{\parm}{\theta}
\newcommand{\cc}{c}
\newcommand{\bK}{\bar{K}}

\def\decayfn{c}
\def\newdecayfn{\bar{c}}

\def\nspace{\Xi}

\def\newspace{\base}
\def\newpathspace{\pathspace}
\def\newpathspacek{\pathspacek}
\def\base{\Polish}
\def\pathspace{\base_\infty}
\def\pathspacek{\base_k}

\def\path{E_\infty}

\def\comp{\mathsf{C}}
\def\compu{\mathsf{C}_{\mathrm{Unif}}}

\def\Polish{\mathcal{Y}}	
\def\polish{y}				
\def\RV{Y} 					
\def\newPolish{\mathcal{Z}}	
\def\newpolish{z}				
\def\newRV{Z}					

\def\compmax{{\mathsf{C}}_{\mathrm{max}}}

\def\indexset{\mathbb{I}}

\newcommand{\oSlash}{{\mbox{\o}}}

\title[Local weak convergence for networks of interacting processes]{Local weak convergence for sparse networks of interacting processes}

	\date{\today}
	 	\subjclass[2000]{Primary: 60K35, 60J05, 60J60, 60J80; Secondary: 60F17, 60B10, 82C22}
	 	\keywords{interacting diffusions, probabilistic cellular automata, discrete-time Markov chains, sparse graphs, random graphs, local weak convergence, mean-field limits, nonlinear Markov processes, \Erdos \ graphs, configuration model, unimodularity, Gibbs measures, Markov random fields} 
	 	\author[Lacker]{Daniel Lacker}
                \address{Columbia University, New York, New York} 
	 	\author[Ramanan]{Kavita Ramanan}
                \thanks{D. Lacker was partially supported by the Air Force Office of Scientific Research (AFOSR) Grant FA9550-19-1-0291. K. Ramanan was partially supported by the Army Research Office (ARO) Grant W911NF2010133 and a Simons Fellowship.} 
	 	\address{Division of Applied Mathematics, Brown University, 182 George Street, Providence, RI 02912} 
	 	\author[Wu]{Ruoyu Wu}
                 \address{Department of Mathematics, Iowa State University, 411 Morrill Road, Ames, IA 50011} 
	 	 \email{daniel.lacker@columbia.edu, kavita\_ramanan@brown.edu, ruoyu@iastate.edu}

\date{\today}
\begin{document}
\begin{abstract}
We study the limiting behavior of interacting particle systems indexed by large sparse graphs, which evolve either according to a discrete time Markov chain or a diffusion, in which particles interact directly only with their nearest neighbors in the graph. To encode sparsity we work in the framework of local weak convergence of marked (random) graphs. We show that the joint law of the particle system varies continuously with respect to local weak convergence of the underlying graph marked with the initial conditions. In addition, we show that the global empirical measure converges to a non-random limit for a large class of graph sequences including sparse \Erdos \ graphs and configuration models, whereas the empirical measure of the connected component of a uniformly random vertex converges to a random limit. Along the way, we develop  some related results on the time-propagation of ergodicity and empirical field convergence, as well as some general results on local weak convergence of Gibbs measures in the uniqueness regime which appear to be new. The results obtained here are also useful for obtaining autonomous descriptions of marginal dynamics of interacting diffusions and Markov chains on sparse graphs. While limits of interacting particle systems on dense graphs have been extensively studied, there are relatively few works that have studied the sparse regime in generality.    
\end{abstract}

\maketitle

\tableofcontents

\section{Introduction}

\subsection{Problem Description} 
We study  limits of large  systems of interacting particles 
whose dynamics are governed by a (possibly random) underlying 
 interaction graph, in the limit as the number of particles goes to infinity, while the
(expected) degree of the interaction graph remains finite. 
 We focus in parallel on the case of discrete-time processes and continuous-time
 diffusive processes. 

In discrete time, given a finite simple (possibly random) graph $G=(V,E)$, a Polish space $\X$,
  an initial configuration $x=(x_v)_{v \in V} \in \X^V$, and
  i.i.d.\ noises $\{\xi_v(k) : v \in V, \ k \in \N\}$, taking values in some other Polish space $\Xi$,
    we consider processes evolving according to 
\begin{align} \label{intro:discmodel}
X_v^{G,x}(k+1) = F\left(X_v^{G,x}(k),\mu_v^{G,x}(k),\xi_v(k+1)\right), \quad X^{G,x}_v(0) = x_v, \ \ v \in V.
\end{align}
Here $F$ is a given (suitably regular) function, and for any vertex $v$ that is not isolated, 
$\mu^{G,x}_v(k)$ is the local (random) empirical measure of the states of the neighbors of $v$ at time $k$, defined by
\[
\mu_v^{G,x}(k) = \frac{1}{|N_v(G)|} \sum_{u \in N_v(G)} \delta_{X^{G,x}_u(k)},
\] 
where $N_v(G) = \{u \in V: (u,v) \in E\}$  denotes the neighborhood of the vertex $v$. 
For diffusive dynamics, we replace the noises with  
independent $d$-dimensional Brownian motions $(W_v)_{v \in V}$. Given initial
conditions $x = (x_v)_{v \in V} \in (\R^d)^V$, the dynamics are characterized by the equation  
\begin{align}  \label{intro:contmodel}
dX^{G,x}_v (t) =  b(X^{G,x}_v (t),  \mu^{G,x}_v (t)) dt  +  \sigma (X^{G,x}_v(t),  \mu^{G,x}_v (t)) dW_v (t), \quad X^{G,x}_v(0) = x_v, \ \ v \in V,
\end{align}
where $b$ and $\sigma$ are suitably regular drift and diffusion coefficients.
We in fact study  more general classes of (possibly non-Markovian) particle systems,
which are fully specified  in Sections \ref{se:disc-setup} and \ref{se:cont-setup}.

Discrete-time particle systems of the form \eqref{intro:discmodel} fit into the class of \emph{probabilistic or stochastic cellular automata}, used in a variety of fields such as statistical physics  \cite{lebowitz1990statistical}, ecology \cite{DurrettLevin}, epidemiology \cite{grassberger1983critical}, and economics \cite{follmer1994stock} to name but a few;  see also the recent text \cite{LouisNardi} for a more comprehensive account.
Large systems of interacting diffusions of the form \eqref{intro:contmodel} arise as models in a range of applications including statistical 
physics \cite{Der03,RedRoeRus10}, neuroscience
  \cite{BalFasFagTou12,LucStan14,Med18}, 
and systemic risk  \cite{nadtochiy2018mean,spiliopoulos2018network}. 
    These systems are often too complex to be tractable, either analytically or numerically, 
   and it is natural to try to understand the behavior of the particle system in an asymptotic regime,  for suitable sequences of graphs $\{G_n\}_{n \in \N}$ with growing vertex set.
   We will be chiefly interested in the behavior of a ``typical" particle (represented by the root vertex, which we do not label explicitly in this introduction) and the (global) empirical measure process, 
\begin{align}	\label{intro:empiricalmeasure}
\mu^{G,x} (t) = \frac{1}{|V|} \sum_{v \in V} \delta_{X^{G,x}_v(t)}.
\end{align}

Suppose first that $G_n$ is the complete graph on $n$ vertices, and $x^n=(x^n_v)_{v \in G_n}$ are \emph{chaotic} in the sense that $\frac{1}{n}\sum_{v \in G_n} \delta_{x^n_v}$ converges weakly to a probability measure $\mu(0)$.
Then, under suitable assumptions on the coefficients, the limiting behavior of \eqref{intro:discmodel} and \eqref{intro:contmodel} as $n \rightarrow \infty$, known as a \emph{mean field limit},  
has been well studied. 
Specifically, the limiting dynamics of a representative randomly chosen vertex  in $G_n$ is described by a \emph{nonlinear Markov process}, governed in discrete time by the dynamics
\begin{align}
X(k+1) = F(X(k),\mu(k),\xi(k+1)), \quad \mu(k)={\rm Law}(X(k)), \label{intro:nonlinearMC}
\end{align}
or for diffusions by the dynamics
\begin{align} \label{intro:McKeanVlasov}
d X(t)  =  b( X(t), \mu(t)) dt + \sigma (X(t),\mu(t)) dW(t), \quad \mu(t) = {\rm Law} (X(t)), 
\end{align}
often referred to as the \emph{McKean-Vlasov equation}.
In fact, it is known that, under suitable assumptions on the initial conditions,  $\mu(\cdot)$ is also the (deterministic) limit, as $n \rightarrow \infty$, of the empirical measure process $\mu^{G_n,x^n}(\cdot)$.
For a derivation of these limits,  see \cite{del2004feynman} for discrete time models  and \cite{Mck67,sznitman1991topics,Kol10} and references therein for diffusive dynamics. 
The measure-valued function  $\mu(\cdot)$ can also be characterized as the unique solution to a nonlinear Kolmogorov equation (a recursive difference equation in discrete time or a partial differential equation in the diffusive setting), whence the name {\em nonlinear} Markov process.    
Such a  characterization is possible because the particles interact only \emph{weakly},
with the influence of any single 
particle on any other particle being of order $1/n$. 
This leads to asymptotic independence of any finite collection of particles 
and the convergence of the random (global)  empirical measure $\mu^{G_n,x^n}$ of the particle systems
to a deterministic limit (see \cite{sznitman1991topics,Mck67} for further discussion
of this phenomenon, known as propagation of chaos).

With the above intuition in mind, 
it is natural to expect that the
limiting dynamics of a typical particle 
could be described by exactly the same nonlinear Markov process
even for graph sequences $\{G_n\}_{n \in \N}$ in which each graph is not necessarily complete,  as long as they are sufficiently dense. 
Indeed, the interactions remain \emph{weak} 
in this setting, and thus one would expect the asymptotic independence property to persist. 
Recent works by several authors have rigorously
established this in various settings \cite{DelGiaLuc16,BhaBudWu18,CopDieGia18,OliRei18,Luc18quenched,bayraktar2019mean}, 
although the arguments are more
involved than in the complete graph case.

In this article, we   complement the above body of work by studying the convergence of $X^{G_n,x^n}$ for  a large class  of (possibly random) sparse 
graph sequences $\{G_n\}_{n \in \N}$ and initial configurations $x^n=(x^n_v)_{v \in G_n}$. 
In contrast to the setting of dense graph sequences, there are relatively few works that have studied convergence results in the sparse regime.
In the sparse setting, neighboring particles interact \emph{strongly} and do not become asymptotically independent, and the limiting dynamics of any finite set of 
particles is no longer described by the mean-field limit.
In particular, the graph structure plays an important role, and        
a completely different approach is required.

This paper, along with \cite{LacRamWu19b,LacRamWu-MRF}, supersedes an earlier arXiv preprint \cite{LacRamWu-original}, reorganizing and expanding upon several aspects of the material. Notably, the present paper significantly extends the results on local convergence from \cite{LacRamWu-original}.
The present paper and \cite{LacRamWu19b,LacRamWu19a,LacRamWu-MRF} treat complementary aspects of the same class of particle systems but may be read independently.

\subsection{Discussion of Results} \label{se:discussion}
The framework we adopt for the convergence analysis is that of \emph{local weak convergence}, a natural
mode of convergence for sparse graphs, which is reviewed in detail in Section \ref{se:background-localmarkedgraphs}.
Essentially, a sequence of (rooted, locally finite) graphs $\{G_n\}_{n \in \N}$ converges locally to a limiting graph $G$ if for each $r > 0$ the neighborhood of radius $r$ around the root $G_n$ is isomorphic to that of $G$ for  large enough $n$; see Section \ref{subsub-unmarked} for a precise definition.
A simple example that demonstrates the \emph{local} nature of this topology is the n-vertex cycle, which converges locally to the infinite line graph $\Z$ (i.e., the 2-regular tree).
Notably, as summarized in Section \ref{se:example graphs}, local limits are well known for
many common sparse random graph models: \Erdos \ graphs converge to Galton-Watson trees with Poisson offspring distribution, whereas random regular graphs converge to (non-random) infinite regular trees, and configuration models converge to so-called unimodular Galton-Watson trees. 
The remarkable feature of these random graph models is that they are \emph{locally tree-like}, which means the local limit is a tree; although these random graphs have  many cycles,  short cycles are very unlikely, and long cycles are irrelevant to local convergence.

The key notion that will be used in the dynamical setting considered here is   
  a similar local convergence notion that can be defined for \emph{marked graphs} $(G,\polish)$, in which elements
$\polish=(\polish_v)_{v \in G}$ of some fixed metric space $\Polish$ are attached to the vertices of the graph.
In our setting, the marks will represent either
  initial conditions $x = (x_v)_{v \in V}$ or the (random) trajectories $(X^{G,x}_v)_{v \in V}$ of the interacting process.
We now briefly summarize our main results.

 \subsubsection{Local convergence in law} 
 \label{subsub-inlaw} 
In both the discrete-time and diffusive settings of \eqref{intro:discmodel} and \eqref{intro:contmodel}, under suitable assumptions on the coefficients, we show in Theorems \ref{thm:local-law-disc} and \ref{thm:local-law-cont}, respectively, that if the sequence of (random) marked graphs $\{(G_n,x^n)\}$ converges to $(G,x)$ in law, with respect to  the topology of local convergence, then $\{(G_n,X^{G_n,x^n})\}$ converges in distribution to $(G,X^{G,x})$. Here $(G,X^{G,x})$ is the (random) marked graph formed by marking each vertex with the (random) trajectory of the particle indexed by vertex $v$.

Our results cover a wide class of initial conditions, including independent and identically distributed (i.i.d.)  marks on any locally convergent graph sequence, as well as a class of Gibbs measures (see  Section \ref{subsub-icconv}). 
In the latter case, Section \ref{subsub-icconv} and 
Appendix \ref{ap:Gibbsmeasures} develop some natural but apparently new results on local weak convergence of Gibbs measures, which may well be folklore. Essentially, we show that the Gibbs or Markov-random field property
of the marked graph is preserved  under  
local convergence of the underlying graph, as long as the (infinite-volume) Gibbs measure on the limiting graph is unique. We do not address the intriguing case where uniqueness fails for the Gibbs measure, which of course requires a finer analysis more tailored to specific models (see, e.g., \cite{montanari2012weak}).  
This complements the substantial recent literature studying Gibbs measures on sparse graphs, surveyed in \cite{dembo-montanari}, which has successfully analyzed the limiting behavior of various other quantities and processes derived from these Gibbs measures, such as the free energy/entropy density and belief propagation algorithms.

\subsubsection{Global Empirical Measure Convergence}
\label{subsub-inprob}

Our second set of results focuses on convergence of the empirical measures  $\{\mu^{G_n,x^n}\}_{n \in \N}$. 
Suppose $(G_n,x^n)$ converges locally in law to a limit marked graph $(G,x)$.   When the limit graph $G$  is infinite with positive probability,  
empirical measure convergence does not follow immediately from the resulting \emph{local} convergence results of $(G_n, X^{G_n,x^n})$ described 
in Section \ref{subsub-inlaw} because  the empirical measure is a \emph{global} quantity. 
In general, the asymptotic behavior of the  empirical measure sequence in this sparse graph setting is  more subtle than in the mean-field 
 or dense graph cases because pairs of particles (for example, neighboring particles) need no longer be asymptotically 
   independent, and so the limiting empirical measure can be random. 
   Indeed,  if the event that the limit graph $G$ is finite has positive probability,  it is straightforward to argue
   that on that event $\mu^{G_n,x^n}$ converges in law to the empirical measure $\mu^{G,x}$;
 see Proposition \ref{pr:localconvergence-empiricalmeasure}. 
 More interesting subtleties arise when dealing with potentially disconnected graphs, such as the sparse Erd\"{o}s-Renyi graph.
The topology of local convergence is defined only on the space of \emph{connected} and rooted graphs, so one must choose a
root and a connected component in order to claim local convergence of the Erd\"{o}s-Renyi graph $G_n = {\mathcal G}(n, p_n)$ with  
  $np_n \rightarrow \theta \in (0,\infty)$.   The root vertex $\root_n$  
 is normally taken to be uniformly distributed with corresponding connected component $\comp_{\root_n}(G_n)$, in which case the associated limit graph is well known to be a 
 Galton-Watson tree with a Poisson offspring distribution.
 This tree is finite with positive probability, and so the corresponding empirical measure sequence 
 converges to a random limit, which is explicitly described in  Theorem \ref{thm:CompEmpMeas}.
 Analogs of this result are shown also for a broad class of configuration models, and in fact 
 for a larger class of graph sequences (see Theorem \ref{thm:GenCompEmpMeas}) that satisfy certain properties
 pertaining to the behavior of the largest connected component (see Condition \ref{cond-graphs}).

But it is arguably more natural, when considering the empirical measure, to include  all vertices in $G_n$, rather than just those in a single component.  In this case, we show (see Theorem \ref{thm:localprob-disc}) that, as in the mean-field setting,
  the empirical measure $\mu^{G_n,x^n}$ converges to a \emph{non-random} limit that, additionally, coincides with the law of the root
  particle in the limit graph, ${\rm Law}(X^{G,x}_{\root})$. 
  However, since as mentioned above,  neighboring vertices in the graphs are not asymptotically independent and remain correlated in the limit,
the  reason for a {\em deterministic} limit is  different from that  in the mean-field setting.  
In the sparse regime this phenomenon can be attributed to a more global averaging effect that is a consequence of the asymptotic independence
of finite neighborhoods of any two independent uniformly chosen roots in the graph.
 Theorems \ref{thm:localprob-disc} and \ref{thm:localprob-cont} (for discrete time and diffusive processes, respectively), 
 elucidate the more general principle that the global averaging effect and a deterministic
limit coinciding with the law of the root particle hold more broadly 
whenever the sequence $(G_n,x^n)$ converges to some limiting rooted marked graph $(G,x)$ in 
the sense of \emph{convergence in probability in the local weak sense}.
This latter notion is stronger than convergence in law in the local weak sense and arguably describes 
 a more global notion of convergence that is defined for sequences of graphs that are not necessarily  connected.
We refer to  Section \ref{se:local weak in prob} for precise definitions of 
these modes of convergence, as well as a justification of the terminology, which is taken from   \cite{van2020randomII}.
 The proofs of Theorems \ref{thm:localprob-disc} and \ref{thm:localprob-cont} entail correlation decay estimates and 
(quenched) concentration estimates for the empirical measure on (random) graphs, which
exploit duality properties of random graphs. 
A final subtlety regarding these different forms of convergence is 
 illustrated by examples considered in Section \ref{se:lattices + trees}, where it is shown that 
under the weaker assumption of local convergence in law (but not in probability), the limiting empirical measure,
even if deterministic, may fail to coincide with the limiting law of a randomly chosen (or ``typical") particle,
thus highlighting yet another  departure from the   mean-field setting.


\subsubsection{Empirical field convergence and propagation of ergodicity}
\label{subs-properg}

The main results discussed thus far may be summarized as follows: If the initial data sequence
$\{(G_n,x^n)\}$ converges in a certain sense, then so does the particle system sequence
$\{(G_n,X^{G_n,x^n})\}$. This is true when the ``certain sense" is either local convergence in law or in probability. In other words, both modes of convergence \emph{propagate} under dynamics of the form \eqref{intro:discmodel} or \eqref{intro:contmodel}.
Our final results pertain to propagation of two other related properties, ergodicity and the convergence of empirical fields, when the underlying graph is fixed.

\subsection{Outlook} \label{se:outlook}

The results here are crucially used in a  related paper \cite{LacRamWu19b}, and the forthcoming
\cite{LacRamWu19a}, where 
we show that if the underlying graph is a regular tree, or more generally a (unimodular) 
Galton-Watson tree, then the dynamics of a single particle and its neighborhood can be autonomously characterized
as solutions to certain
  ``local equations."  This represents a dimension-reduction similar in spirit to the self-contained equations 
\eqref{intro:nonlinearMC} or \eqref{intro:McKeanVlasov} characterizing the dynamics of
a single typical particle at the mean field limit.
Thus, when combined with the results in the present paper, the local equations of \cite{LacRamWu19a} and \cite{LacRamWu19b} yield a concise, finite-dimensional description of the limiting behavior of a typical particle in \eqref{intro:discmodel} or \eqref{intro:contmodel} or of the corresponding empirical distributions
on locally tree-like graphs.
Achieving this limiting characterization, described in detail in \cite[Section 3.7]{LacRamWu19b}, was our original motivation for the present work,
  but we carry out the local convergence analysis here at a more natural level of generality, beyond what is immediately relevant for \cite{LacRamWu19b,LacRamWu19a}. 
  The reader well-versed in local weak convergence may observe the analogy between our results and those obtained in static settings such as \cite{dembo-montanari}, and \cite{aldous-steele}, the latter under the rubric of the objective method. But the convergence analysis in  our dynamic setting is very different and involves novel coupling arguments.   In addition, subtleties 								arise from the different notions of local convergence, and the analysis must be conducted with care. It is especially important for the applications in \cite{LacRamWu19b,LacRamWu19a} to codify precisely
  which forms of graph convergence lead to precisely which forms of convergence of the particle systems, as we will discuss in the following sections.

As we were finalizing the earlier version of this manuscript \cite{LacRamWu-original}, we learned of the independent work of Oliveira, Reis, and Stolerman \cite{OliReiSto19}, which proves  some results on local convergence and convergence of empirical measures of interacting diffusions that have a form  somewhat similar  to \eqref{intro:contmodel}. 
While these are similar in spirit to the results described in Sections \ref{subsub-inlaw} and \ref{subsub-inprob},  our results do not subsume theirs, nor vice versa. For instance, they allow random environments, whereas we allow more general forms of interaction in the coefficients. Methodologically our approaches are quite different, with theirs being more quantitative and thus more restrictive in the nature of permissible graph sequences.
We  compare these results with our own in more detail in Remark \ref{re:OliReiSto-comparison}.
We are unaware of any  results prior to our work on 
 discrete time models, or on the convergence of connected component empirical measures.

 The rest of the paper is organized as follows. In Section \ref{sec-notat} we fix notation and precisely describe the notion of local convergence for marked and unmarked graphs. Section \ref{se:mainresults} gives precise statements of the main results, whose proofs are provided in the  remaining Sections \ref{se:locallimits}--\ref{se:nonrandom}.   Appendix \ref{ap:localconvergence}   summarizes important properties of local weak convergence of
   marked graphs, and
   the remaining Appendices \ref{ap:Gibbsmeasures}--\ref{ap:condgraphs}  contain proofs of various technical results.

\section{Preliminaries and Notation} 
\label{sec-notat}

In this section, we introduce common notation and definitions used throughout the paper, and which are required to state the main results.
We let $\N$ denote the set of natural numbers and $\N_0 := \N \cup \{0\}$.  
For a Polish space $\Polish$, we write $\P(\Polish)$ for the set of Borel probability measures on $\Polish$, endowed always with the topology of weak convergence. Note that $\P(\Polish)$ itself becomes a Polish space with this topology, and we equip it with the corresponding Borel $\sigma$-field. We write $\delta_\polish$ for the Dirac delta measure at a point $\polish \in \Polish$. For a $\Polish$-valued random variable $\RV$, we write $\L(\RV)$ to denote its law, which is an element of $\P(\Polish)$.
Given two $\Polish$-valued random elements $\RV$ and $\RV'$, we write $\RV \stackrel{d}{=} \RV'$ to mean $\L(\RV)= \L(\RV')$.
Write also $C_b(\Polish)$ for the set of bounded continuous real-valued functions on $\Polish$.
Also, given any measure $\nu$ on a measurable space and any  $\nu$-integrable 
function $f$, we use the usual shorthand notation
$\langle \nu, f\rangle := \int f\,d\nu$.  
Denote by $A \Delta B$ the symmetric difference between two sets $A$ and $B$.

\subsection{Graphs}
\label{subs-graphs}

In this paper, unless explicitly stated otherwise, a \emph{graph} $G=(V,E)$ always has a finite or countably infinite vertex set, is simple (no self-edges or multi-edges), and is locally finite (i.e., the degree of each vertex, is finite).
We abuse notation by writing $v \in G$ to mean $v \in V$,  and similarly $|G|=|V|$ denotes the cardinality of the vertex set.  For any graph $G=(V,E)$ and any vertex $v \in V$, we write $N_v(G) := \{u \in V : (u,v) \in E\}$ for the set of neighbors of $v$ in $G$, noting that this set is empty if $v$ is an isolated vertex. As usual, $|A|$ denotes the cardinality of a set $A$.
Let $\mathrm{diam}(A)$ denote the diameter of a set $A \subset V$; precisely,
  for two vertices $u,v \in V$, the distance between $u$ and $v$ is the length of the shortest path from $u$ to $v$,
  and the diameter of $A$ is the maximal distance between any two of its vertices.
For a set $\Polish$ and a graph $G= (V,E)$, we write either $\Polish^V$ or $\Polish^G$
for the configuration space
$\{(\polish_v)_{v \in V}: \polish_v \in \Polish, \ v \in V\}$. We make use of a standard notation for configurations on subsets of vertices: For $\polish=(\polish_v)_{v \in V} \in \Polish^V$ and $A \subset V$, we write $\polish_A$ for the element
$\polish_A=(\polish_v)_{v \in A}$ of $\Polish^A$.

\subsection{Local convergence of marked graphs} \label{se:background-localmarkedgraphs}

This section describes the basic concepts of local convergence for marked and unmarked graphs.
For full details and proofs, see \cite[Section 3.2]{Bordenave2016} and Appendix \ref{ap:localconvergence}. 
The notion of local weak convergence was introduced by Benjamini and Schramm in \cite{benjamini2001recurrence}; other useful references on this topic include \cite{van2020randomII,Bordenave2016,aldous-steele}.

\subsubsection{Unmarked graphs and the space $\G_*$} \label{subsub-unmarked}
A \emph{rooted graph} $G=(V,E,\oSlash)$ is a graph $(V,E)$ (assumed as usual to be locally finite with either finite or countable vertex set) with a distinguished vertex $\oSlash \in V$.
We say two rooted graphs $G_i=(V_i,E_i,\oSlash_i)$  are \emph{isomorphic} if there exists a bijection $\varphi : V_1 \mapsto V_2$ such that $\varphi(\oSlash_1)=\oSlash_2$ and $(\varphi(u),\varphi(v)) \in E_2$ if and only if $(u,v) \in E_1$, for each $u,v \in V_1$.  We 
denote this by $G_1 \cong G_2$. We refer to the map $\varphi$ as an \emph{isomorphism} from $G_1$ to $G_2$, and denote by $I(G_1,G_2)$ the collection of all such isomorphisms from $G_1$ to $G_2$.

Let $\G_*$ denote the set of isomorphism classes of connected rooted graphs.
Given $k \in \N$ and $G=(V,E,\oSlash) \in \G_*$, let $B_k(G)$ denote the induced subgraph (rooted at $\oSlash$) consisting of those vertices whose graph distance from $\oSlash$ is no more than $k$.
We say that a sequence $\{G_n\} \subset \G_*$ \emph{converges locally} to $G \in \G_*$ if, for every $k \in \N$, there exists $n_k \in \N$ such that $B_k(G_n) \cong B_k(G)$ for every $n \ge n_k$.
There is a metric compatible with this notion of convergence that renders $\G_*$ a complete and separable space, such as
\begin{align}
d_*(G,G') &= \sum_{k=1}^\infty 2^{-k} \, \mathbbm{1}_{\{ I(B_k(G),B_k(G')) = \emptyset\}} \label{def:G* metric}
\end{align}
where as usual $\mathbbm{1}_{\{A\}}=1$ if $A$ holds and $\mathbbm{1}_{\{A\}}=0$ otherwise.

\begin{remark}
We will often omit the root from the notation, writing $G \in \G_*$ instead of $(G,\oSlash) \in \G_*$, when there is no need to make explicit reference to the root. But we understand that a graph $G \in \G_*$ always carries with it a root, which by default will be denoted $\oSlash$.
\end{remark}

\subsubsection{Marked graphs and the space $\G_*[\Polish]$}
We also need a notion of local convergence for \emph{marked} graphs, where each vertex of the graph has a mark (or label) associated to it; as mentioned,
these marks will later encode initial conditions or trajectories of  particles.
For a metric space $(\Polish,d)$, a \emph{$\Polish$-marked rooted graph} is a pair $(G,{\polish})$, where $G=(V,E,\oSlash) \in \G_*$, and ${\polish} = (\polish_v)_{v \in V} \in \Polish^V$ is a vector of marks.
For a $\Polish$-marked rooted graph $(G,\polish)$ and $k \in \N$, let $B_k(G,\polish)$ denote the induced $\Polish$-marked rooted subgraph consisting of vertices within the ball of radius $k$ centered at the root.
We say that two $\Polish$-marked rooted graphs $(G,{\polish})$ and $(G',{\polish}')$ are \emph{isomorphic} if there exists an
isomorphism $\varphi$ from $G$ to $G'$ such that $(\polish_v)_{v \in V} = (\polish'_{\varphi(v)})_{v \in V}$. We write $(G,{\polish}) \cong (G',{\polish}')$  to indicate isomorphism.

Let $\G_*[\Polish]$ denote the set of isomorphism classes of $\Polish$-marked rooted graphs.
We say that a sequence $\{(G_n,{\polish}^n)\} \subset \G_*[\Polish]$ \emph{converges locally} to $(G,{\polish}) \in \G_*[\Polish]$ if, for every $k \in \N$ and $\epsilon > 0$, there exists $n_k \in \N$ such that for all $n \ge n_k$ there exists an isomorphism $\varphi : B_k(G_n) \mapsto B_k(G)$ with $\max_{v \in B_k(G_n)}d({\polish}^n_v,{\polish}_{\varphi(v)}) < \epsilon$.
The space $\G_*[\Polish]$ can be equipped with a metric compatible with this notion of convergence, and if $(\Polish,d)$ is complete and separable then so is $\G_*[\Polish]$ (cf.\ \cite[Lemma 3.4]{Bordenave2016}). An equivalent metric which we will use on occasion is
\begin{align}
d_*((G,{\polish}),(G',{\polish}')) &= \sum_{k=1}^\infty 2^{-k}\left(1 \wedge \inf_{\varphi \in I(B_k(G),B_k(G'))}\max_{v \in B_k(G)}d(\polish_v,\polish'_{\varphi(v)})\right). \label{def:G* marked metric}
\end{align}

\subsubsection{Convergence notions in the local weak sense}
\label{se:local weak in prob}

For a (finite or countable, locally finite, possibly disconnected) graph $G=(V,E)$ and a vertex $v \in V$, we write $\comp_v(G)$ for (the isomorphism class of) the connected component of $v$, that is, the set of $u \in V$ for which there exists a path from $v$ to $u$. By viewing $v$ as the root, $\comp_v(G)$ is then an element of $\G_*$.
Note that even if two distinct vertices $u$ and $v$ belong to the same connected component of $G$, the \emph{rooted} graphs $\comp_u(G)$ and $\comp_v(G)$ can be non-isomorphic and thus induce distinct elements of $\G_*$.
When the graph is finite, we may choose a uniformly random vertex $U$ of $G$, and we write $\compu(G) := \comp_U(G)$ for the resulting $\G_*$-valued random variable.
That is, we write $\compu(G)$ for the random connected rooted graph obtained by assigning a root uniformly at random and then isolating the connected component containing this root. 
We define $\comp_v(G,\polish) := (\comp_v(G),\polish_{\comp_v(G)})$ and $\compu(G,\polish)$ similarly for marked graphs.

Fix throughout this section a sequence of finite (possibly disconnected) random graphs $\{G_n\}$.
Let $G$ be a random element of $\G_*$.

\begin{definition}
  \label{def-cip}
We say that $\{G_n\}$ \emph{converges in probability in the local weak sense} to $G$ if
\begin{align}
\lim_{n\to\infty}\frac{1}{|G_n|}\sum_{v \in G_n} f(\comp_v(G_n)) = \E[f(G)], \qquad \text{in probability}, \ \ \forall f \in C_b(\G_*), \label{def:localweakprob}
\end{align}
where we recall that $\comp_v(G_n)$ denotes the connected component of vertex $v$ of $G_n$, rooted at $v$. 
\end{definition}

Note that this definition is meaningful even if the sequence of graphs is non-random, in which case of course the phrase ``in probability" in \eqref{def:localweakprob} is redundant.

\begin{remark} \label{re:lwp-P(G)form}
Because $\G_*$ is a Polish space, a standard argument using a countable convergence-determining set in $C_b(\G_*)$ yields the following equivalent definition:
$\{G_n\}$ converges in probability in the local weak sense to $G$ if and only if
\begin{align*}
\lim_{n\to\infty}\frac{1}{|G_n|}\sum_{v \in G_n} \delta_{\comp_v(G_n)} = \L(G), \qquad \text{in probability in } \P(\G_*).
\end{align*}
\end{remark}

\begin{remark} \label{re:nonrandvertexset}
  Throughout the paper, if we say that a sequence of random graphs $\{G_n\}$ converges
  in probability in the local weak sense, it should be understood that we implicitly
  require that the vertex set of each graph $G_n$ is finite.
\end{remark}

The definition of convergence in probability 
is borrowed from \cite[Definition 2.7]{van2020randomII}, where one also defines \emph{converges in distribution} or \emph{in law} in the local weak sense as  follows:

\begin{definition}
  \label{def-cil}
  We say that $\{G_n\}$ \emph{converges in distribution or law in the local weak sense} to $G$ if 
\begin{align}
\lim_{n\to\infty}\E\left[\frac{1}{|G_n|}\sum_{v \in G_n} f(\comp_v(G_n))\right] = \E[f(G)], \qquad \forall f \in C_b(\G_*), \label{def:localweakdist}
\end{align}
where, recalling that $\compu(G_n)$ denotes the connected component of a uniformly randomly chosen root in $G_n$, we may write the expectation on the left-hand side of \eqref{def:localweakdist} as $\E[f(\compu(G_n))]$.
\end{definition}

Hence, convergence of $\{G_n\}$ to $G$ in distribution in the local weak sense is equivalent to convergence in law of $\{\compu(G_n)\}$ to $G$ in $\G_*$, and of course convergence in probability in the local weak sense is a stronger property.
The averaging over the entire vertex set in Definitions \ref{def-cil} and \ref{def-cip} may lead one to question whether these are really \emph{local} rather than \emph{global} notions of convergence; the key point is that the test functions $f$ must be continuous in the (local) topology on $\G_*$, which essentially deals with bounded (local) neighborhoods around the root vertex. This is why, for instance, the n-cycle converges in the local weak sense (both in law and in probability) to the 2-regular tree $\Z$. 
That said, Lemma \ref{le:localweakprob characterization} below 
(stated more generally for marked graphs) specifies the extent to which convergence in probability is a somewhat more \emph{global} notion: if a graph sequence converges in law in the local weak sense, then it also converges in probability if and only if two uniformly randomly chosen (root) vertices have asymptotically independent neighborhoods.

The above discussion is equally valid for marked graphs. Let $\Polish$ be a Polish space. Let $\polish^n=(\polish^n_v)_{v \in G_n}$ be random $\Polish$-valued marks on the vertices of $G_n$, and let $\polish=(\polish_v)_{v \in G}$ be random $\Polish$-valued marks on $G$.

\begin{definition}
  \label{conv-marked}
  We say that the sequence $\{(G_n,\polish^n)\}$ 
\emph{converges in probability in the local weak sense} to $(G,\polish)$ if
\begin{align}
\lim_{n\to\infty}\frac{1}{|G_n|}\sum_{v \in G_n} f(\comp_v(G_n,\polish^n)) = \E[f(G,\polish)], \qquad \text{in probability}, \ \ \forall f \in C_b(\G_*[\Polish]), \label{def:localweakprob-marked}
\end{align}
\end{definition}

Once again, convergence of $\{(G_n,\polish^n)\}$ to $(G,\polish)$ in probability in the local weak sense implies $\{\compu(G_n,\polish^n)\}$ converges in law to $(G,\polish)$ in $\G_*[\Polish]$. Remark \ref{re:nonrandvertexset} applies also for marked graphs.

Note that the ``root mark map" $\G_*[\Polish] \ni (G,\oSlash,\polish) \mapsto \polish_{\oSlash} \in \Polish$ is continuous. Thus, applying \eqref{def:localweakprob-marked} with $f$ of the
form $f(G,\oSlash,\polish) =  g(\polish_{\oSlash})$ for $g \in C_b(\Polish)$, we deduce that convergence in probability in the local weak sense implies convergence in probability of the empirical mark distributions:

\begin{lemma} \label{le:empmeas general}
  If $\{(G_n,\polish^n)\}$ converges in probability in the local weak sense to $(G,\oSlash,\polish)$, then  
  the empirical measure sequence $\{\frac{1}{|G_n|}\sum_{v \in G_n}\delta_{\polish^n_v}\}$ converges in probability to $\Lmc(\polish_{\oSlash})$ in $\P(\Polish)$.
\end{lemma}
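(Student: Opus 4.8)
The plan is to reduce the statement to testing the empirical measures against a fixed countable family of bounded continuous functions, in the spirit of the argument sketched in Remark~\ref{re:lwp-P(G)form}. Since $\Polish$ is Polish, fix a countable convergence-determining family $\{g_m\}_{m\in\N}\subset C_b(\Polish)$, so that for $\mu_j,\mu\in\P(\Polish)$ one has $\mu_j\to\mu$ weakly if and only if $\langle\mu_j,g_m\rangle\to\langle\mu,g_m\rangle$ for every $m$. Writing $\nu_n:=\frac{1}{|G_n|}\sum_{v\in G_n}\delta_{\polish^n_v}$ and $\nu_\infty:=\Lmc(\polish_\o)$ (a \emph{deterministic} element of $\P(\Polish)$, being the law of the root mark of the random marked graph $(G,\polish)$), it then suffices to prove: (i) $\langle\nu_n,g_m\rangle\to\langle\nu_\infty,g_m\rangle$ in probability for each fixed $m$; and (ii) that this coordinatewise convergence in probability upgrades to $\nu_n\to\nu_\infty$ in probability in $\P(\Polish)$.

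For step (i), I would apply the hypothesis \eqref{def:localweakprob-marked} with the test function $f=f_m$ on $\G_*[\Polish]$ given by the root-mark composition $f_m(G,\o,\polish):=g_m(\polish_\o)$. One first checks $f_m\in C_b(\G_*[\Polish])$: boundedness is clear, and continuity follows from the metric \eqref{def:G* marked metric}, whose $k=1$ summand dominates $\tfrac12\bigl(1\wedge d(\polish_\o,\polish'_{\o'})\bigr)$ because every rooted-graph isomorphism of the radius-$1$ balls sends root to root. Then, since the root mark of $\comp_v(G_n,\polish^n)$ is precisely $\polish^n_v$, we have $f_m(\comp_v(G_n,\polish^n))=g_m(\polish^n_v)$, so the left-hand side of \eqref{def:localweakprob-marked} equals $\langle\nu_n,g_m\rangle$, while the right-hand side is $\E[g_m(\polish_\o)]=\langle\nu_\infty,g_m\rangle$; this is exactly (i).

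For step (ii), I would use the standard subsequence characterization of convergence in probability. Given an arbitrary subsequence, a diagonal extraction produces a further subsequence along which $\langle\nu_n,g_m\rangle\to\langle\nu_\infty,g_m\rangle$ almost surely \emph{simultaneously} over all $m\in\N$ (a countable family of convergence-in-probability statements); by the convergence-determining property of $\{g_m\}$, this forces $\nu_n\to\nu_\infty$ weakly almost surely along that further subsequence. Since every subsequence admits such a further subsequence and the limit $\nu_\infty$ is non-random, it follows that $\nu_n\to\nu_\infty$ in probability in $\P(\Polish)$.

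I do not anticipate a genuine obstacle: the proof is essentially a direct application of the hypothesis to one well-chosen continuous functional, followed by a routine measure-theoretic upgrade. The only points that warrant a line of care are verifying continuity of the root-mark map from the explicit metric on $\G_*[\Polish]$ and organizing the diagonal/subsequence argument in step (ii); if anything is the ``hard part,'' it is merely this bookkeeping, and both pieces are standard.
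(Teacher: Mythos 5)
Your proof is correct and follows essentially the same route as the paper: the paper's argument is precisely to observe that the root-mark map $(G,\o,\polish)\mapsto\polish_{\o}$ is continuous and to apply \eqref{def:localweakprob-marked} with test functions of the form $f(G,\o,\polish)=g(\polish_{\o})$, $g\in C_b(\Polish)$, treating the upgrade to convergence in probability in $\P(\Polish)$ as the standard countable convergence-determining-class argument already invoked in Remark \ref{re:lwp-P(G)form}. Your additional details (the $k=1$ term of the metric dominating $\tfrac12(1\wedge d(\polish_\o,\polish'_{\o'}))$, and the diagonal subsequence extraction) are exactly the bookkeeping the paper leaves implicit.
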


Lastly, we state a useful equivalent characterization of convergence in probability in the local weak sense, valid for marked or unmarked graphs.
The proof is given in  Appendix \ref{apsub:auxiliary}.

\begin{lemma} \label{le:localweakprob characterization}
Suppose $\{G_n\}$ is a sequence of  finite (possibly disconnected) random graphs. Suppose  $\polish^n=(\polish^n_v)_{v \in G_n}$ are (random) marks  with values in a Polish space $\Polish$, for each $n \in \N$. Let $(G,\polish)$ be a random element of $\G_*[\Polish]$.
 Assume $|G_n| \to \infty$ in probability. Let $U^n_1$ and $U^n_2$ denote independent  vertices that are uniformly distributed  on $G_n$,  given $G_n$.  
Then $\{(G_n,\polish^n)\}$ converges in probability in the local weak sense to $(G,x)$ if and only if
\begin{align}
\E[g_1(\comp_{U^n_1}(G_n,\polish^n))g_2(\comp_{U^n_2}(G_n,\polish^n))] \to \E[g_1(G,\polish)]\E[g_2(G,\polish)], \quad \forall  g_1, g_2 \in C_b(\G_*[\Polish]). \label{def:asympindep}
\end{align}
\end{lemma}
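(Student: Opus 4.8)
The plan is to prove the two implications separately, treating the ``only if'' direction first since it is the easier one. For the forward direction, assume $\{(G_n,\polish^n)\}$ converges in probability in the local weak sense to $(G,\polish)$. By Remark \ref{re:lwp-P(G)form} (adapted to marked graphs), this means $\frac{1}{|G_n|}\sum_{v\in G_n}\delta_{\comp_v(G_n,\polish^n)} \to \Lmc(G,\polish)$ in probability in $\Pmc(\G_*[\Polish])$. Now observe that, conditionally on $G_n$, the quantity $\E[g_1(\comp_{U^n_1}(G_n,\polish^n))g_2(\comp_{U^n_2}(G_n,\polish^n)) \mid G_n]$ factorizes by independence of $U^n_1,U^n_2$ into the product $\big(\frac{1}{|G_n|}\sum_v g_1(\comp_v(G_n,\polish^n))\big)\big(\frac{1}{|G_n|}\sum_v g_2(\comp_v(G_n,\polish^n))\big)$, i.e.\ the product of the integrals of $g_1$ and $g_2$ against the empirical measure $\mu_n := \frac{1}{|G_n|}\sum_v\delta_{\comp_v(G_n,\polish^n)}$. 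Since $\mu_n \to \Lmc(G,\polish)$ in probability and $g_1,g_2$ are bounded continuous, $\langle\mu_n,g_i\rangle \to \E[g_i(G,\polish)]$ in probability; their product converges in probability to $\E[g_1(G,\polish)]\E[g_2(G,\polish)]$, and boundedness gives convergence of expectations, which is exactly \eqref{def:asympindep}.

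For the converse, assume \eqref{def:asympindep} holds. I would like to conclude that $\langle\mu_n,g\rangle \to \E[g(G,\polish)]$ in probability for each $g\in C_b(\G_*[\Polish])$, which by Remark \ref{re:lwp-P(G)form} is the claim. The strategy is the standard ``second moment'' argument: taking $g_1=g_2=g$ in \eqref{def:asympindep} gives $\E[\langle\mu_n,g\rangle^2] \to \E[g(G,\polish)]^2$ (using the conditional factorization identity above). It remains to control the first moment $\E[\langle\mu_n,g\rangle]$ and show it also converges to $\E[g(G,\polish)]$; then $\Var(\langle\mu_n,g\rangle) = \E[\langle\mu_n,g\rangle^2] - \E[\langle\mu_n,g\rangle]^2 \to 0$, and convergence in $L^2$ (hence in probability) follows. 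To get the first-moment convergence, take $g_1=g$ and $g_2\equiv 1$ in \eqref{def:asympindep}: this yields $\E[\langle\mu_n,g\rangle] \to \E[g(G,\polish)]\cdot 1 = \E[g(G,\polish)]$ directly. So the variance goes to zero and we are done.

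The one genuine subtlety I need to be careful about is the role of the hypothesis $|G_n|\to\infty$ in probability, together with the fact that the $G_n$ are only required to be \emph{finite} (and possibly disconnected and random). The conditional factorization of $\E[g_1(\comp_{U^n_1}(\cdot))g_2(\comp_{U^n_2}(\cdot))\mid G_n]$ into a product of empirical averages is valid on the event $\{|G_n|\ge 1\}$, which has probability one by local finiteness and the convention that vertex sets are nonempty (or tending to infinity); and the identity $\E[g(\comp_{U^n_1}(G_n,\polish^n))\mid G_n] = \langle\mu_n,g\rangle$ is likewise an exact identity, not an asymptotic one. The hypothesis $|G_n|\to\infty$ is not actually needed for the equivalence of \eqref{def:asympindep} with convergence in probability in the local weak sense — it is presumably included because it is the relevant regime for all applications in the paper and makes the statement cleaner — so I expect the main ``obstacle'' to be merely bookkeeping: making sure the conditional-expectation manipulations and the passage from $L^2$-convergence of $\langle\mu_n,g\rangle$ for each fixed $g$ to convergence of the empirical measures as $\Pmc(\G_*[\Polish])$-valued random variables (Remark \ref{re:lwp-P(G)form}, via a countable convergence-determining family) are stated with the right measurability caveats. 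No deep estimate is involved; the lemma is essentially a soft consequence of the $U$-statistic structure.
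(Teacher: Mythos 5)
Your proof is correct, but it takes a genuinely different and in fact more economical route than the paper's. The paper follows the Sznitman--Tanaka pattern: it first replaces the with-replacement pair $(U^n_1,U^n_2)$ by a without-replacement pair $(\pi_n(1),\pi_n(2))$ for a uniform random permutation $\pi_n$, at a total-variation cost of $2/|G_n|$, and then uses the exchangeability identity
\begin{align*}
\E\left[ \lan \mu_n,g_1\ran \lan\mu_n, g_2\ran \right] = \E\left[ \tfrac{|G_n|-1}{|G_n|}\, g_1(\comp_{\pi_n(1)}(G_n,\polish^n))g_2(\comp_{\pi_n(2)}(G_n,\polish^n)) + \tfrac{1}{|G_n|}\, g_1 g_2(\comp_{\pi_n(1)}(G_n,\polish^n))\right],
\end{align*}
whose diagonal correction term and TV error both require $|G_n|\to\infty$ to vanish. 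You instead exploit that $U^n_1,U^n_2$ are sampled \emph{with} replacement, so that $\E[g_1(\comp_{U^n_1}(G_n,\polish^n))g_2(\comp_{U^n_2}(G_n,\polish^n))\,|\,(G_n,\polish^n)] = \lan\mu_n,g_1\ran\lan\mu_n,g_2\ran$ is an exact identity with no error term; combined with the choice $g_2\equiv 1$ to pin down the first moment and $g_1=g_2=g$ for the second moment, this gives $L^2$ convergence of $\lan\mu_n,g\ran$ directly, and your observation that the hypothesis $|G_n|\to\infty$ is then superfluous is correct --- it is an artifact of the paper's with/without-replacement comparison. Two small points of bookkeeping: the factorization is an identity conditionally on $(G_n,\polish^n)$, not on $G_n$ alone (as written, $\lan\mu_n,g_i\ran$ is not $G_n$-measurable since $\mu_n$ depends on the marks), and it relies on $(U^n_1,U^n_2)$ being conditionally independent of $\polish^n$ given $G_n$, which is the intended reading of the setup; neither affects the validity of the argument. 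Also note that Definition \ref{conv-marked} is already stated functionwise, so your final passage through a countable convergence-determining family is not even needed.
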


\subsubsection{Examples of locally convergent graph sequences}
\label{se:example graphs}

Here we catalog some of the most well known examples of locally converging graphs.

\begin{example} \label{ex:GW}
  Consider the \Erdos \ graph $G_n \sim \G(n,p_n)$, with
  $\lim_{n\rightarrow\infty}np_n= \parm \in (0,\infty)$.
  Then $G_n$ converges in probability in the local weak sense to 
   the Galton-Watson tree with offspring distribution Poisson($\parm$), denoted $\mathrm{GW}(\mathrm{Poisson}(\parm))$.
  Similarly, suppose $G_n \sim \G_{n,m_n}$, which means $G_n$ is selected uniformly at random
  from all (labeled) graphs on $n$ vertices with $m_n$ edges.
  If $\lim_{n\rightarrow\infty}2m_n/n = \parm \in (0,\infty)$, then again 
$G_n$ converges in probability in the local weak sense to the same limit.
   See \cite[Proposition 2.6]{dembo-montanari}, \cite[Theorem 3.12]{Bordenave2016}, or \cite[Theorems 3.11]{van2020randomII}. for proofs of these facts.
\end{example}

\begin{example} \label{ex:unimodularGW}
Given  a graphic sequence $d(n) = (d_1(n),\ldots,d_n(n))$, with each $d_i(n)$ a positive integer less than $n$, 
 let $G_n \sim \CM(n,d(n))$ be a uniformly random graph on $n$ vertices with degree sequence $d(n)$. Alternatively, this may be constructed from 
the configuration model 
conditioned to have no multi-edges or self-edges (see \cite[Chapter 7]{van2016random}).
Suppose the sequence of degree distributions $\{\frac{1}{n}\sum_{i=1}^n\delta_{d_i(n)}\}$ converges to some distribution $\rho \in \P(\N_0)$ with a finite nonzero first moment, and assume also that the first moments converge, $\frac{1}{n}\sum_{i=1}^n d_i(n) \to \sum_{k \in \N_0}k\rho(k)$.  
Then 
$G_n$ converges in probability in the local weak sense
to the \emph{augmented} or \emph{unimodular Galton-Watson tree with degree distribution $\rho$}, denoted $\mathrm{UGW}(\rho)$ and defined as follows: The root has offspring distribution $\rho$, and each subsequent generation has an independent number of offspring according to the distribution $\widehat{\rho}$, where $\widehat{\rho}$ is defined by
\begin{equation}
  \label{def-widehatrho}
\widehat\rho(k) = \frac{(k+1)\rho(k+1)}{\sum_{n \in \N}n\rho(n)}, \ \ k \in \N_0.
\end{equation}
Note that $\widehat{\rho}=\rho$ when $\rho$ is Poisson. See \cite[Proposition 2.5]{dembo-montanari}, \cite[Theorem 3.15]{Bordenave2016}, or \cite[Theorem 4.1]{van2020randomII} for a derivation of this limit.
\end{example} 

\begin{example} \label{ex:regular} 
Let $G_n$ denote the uniform $\kappa$-regular graph on $n$ vertices, for $\kappa \ge 2$.
Then 
$G_n$ converges in probability in the local weak sense
to the infinite $\kappa$-regular tree; this is a well known consequence of the results of \cite{bollobas1980probabilistic}. Note that the infinite $\kappa$-regular tree is nothing but $\mathrm{UGW}(\delta_\kappa)$.
\end{example}


\subsubsection{Examples where graph convergence implies marked graph convergence}
\label{subsub-icconv}

In Section \ref{se:example graphs} 
we provided illustrative examples of many interesting  examples of graphs   $\{G_n\}$ that converge in the local weak sense (both in law and in probability).   
For many of our results,  we will  require that the sequence of  randomly marked
random graphs $\{(G_n,\RV^n)\}$ converge locally (either in law or in probability), where the random marks $\RV^n = (\RV_v^n)_{v \in G_n}$ represent random initial conditions taking values in some Polish space $\Polish$.  
It is thus natural to ask if there are important classes of random initial conditions for which the  local weak convergence of $\{G_n\}$ implies the local weak convergence of the corresponding randomly $\Polish$-marked graphs. 
It is shown in Corollary \ref{co:localweak iid} that this is true when the random initial conditions $\RV = (\RV_v)_{v \in G}$ are i.i.d.
A more general class of initial conditions for which this holds is
 the class of Gibbs measures, defined below. 
Throughout, fix the Polish space $\Polish$, a reference measure $\lambda \in \P(\Polish)$ and a bounded continuous function $\psi : \Polish^2 \to [0,\infty)$ that serves   as a pairwise interaction potential.

\begin{definition} \label{def-Gibbsfin}
   For each finite graph $G=(V,E)$, the $(\psi,\lambda)$-{\it Gibbs measure} on $G$
   is the probability measure $P_G \in \P(\Polish^V)$ defined  by
\begin{align*}
P_G(d(\polish_v)_{v \in V}) = \frac{1}{Z^G}\,\prod_{(u,v) \in E} \psi(\polish_v,\polish_u) \, \prod_{v \in V}\lambda(d\polish_v),
\end{align*}
where $Z^G > 0$ is the normalizing constant.
\end{definition}

This definition does not make sense for infinite graphs $G$ since $Z^G$ is infinite in that case.  Instead,
  as is standard practice, we   use an alternative characterization of  $P_G$  in terms of a certain conditional independence  or Markov random field property, which then admits a natural  extension to 
 locally finite infinite graphs $G=(V,E)$.
Given $(\psi,\lambda)$ as above and a finite set $A \subset V$, as usual let $\partial A := \{u \in V \setminus A : (u,v) \in E \text{ for some } v \in A\}$ denote the boundary of $A$, and define a map $\Polish^{\partial A} \ni \polish_{\partial A} \mapsto \gamma^G_A( \cdot \,|\, \polish_{\partial A}) \in \P(\Polish^A)$ by 
\begin{align}
\gamma^G_A( d\polish_A \,|\, \polish_{\partial A}) = \frac{1}{Z^G_A(\polish_{\partial A})} \, \prod_{(u,v) \in E: u \in A, \, v \in A \cup \partial A} \psi(\polish_v,\polish_u) \, \prod_{w \in A}\lambda(d\polish_w), \label{def:gammaGAgibbs}
\end{align}
where $Z^G_A(\polish_{\partial A}) > 0$ is the normalizing constant.
Note that for finite $G$,  any   random element $\RV^G = (\RV_v^G)_{v \in G}$ taking values in $\Polish^G$ whose law is the
$(\psi,\lambda)$-Gibbs measure $P_G \in \P(\Polish^V)$ satisfies for every finite $A \subset V$, 
\begin{align}
\gamma^G_A(\cdot \,|\, \RV^G_{\partial A}) = \Lmc(\RV^G_A \,|\, \RV^G_{\partial A}) = \Lmc(\RV^G_A \,|\, \RV^G_{V \setminus A}) \ \ a.s. \label{eq:ap:gamma1}
\end{align}
It is clear that $\gamma^G_A = \gamma^H_A$ whenever $A \cup \partial A$ is a common subset of the vertex sets of two graphs $G$ and $H$ that induce the same subgraph on $A \cup \partial A$. The observation \eqref{eq:ap:gamma1} motivates the following definition. 

\begin{definition}
  \label{def-Gibbsinf} 
  For a general (countable, locally finite) graph $G=(V,E)$, the set $\mathrm{Gibbs}(G) =\mathrm{Gibbs}(G,\psi,\lambda) \subset \P(\Polish^V)$ of $(\psi,\lambda)$-Gibbs measures on $G$ 
 is   the set of laws $\Lmc((\RV^G_v)_{v \in V})$, where  $(\RV^G_v)_{v \in V}$ is a random element of $\Polish^V$ such that
\[
\Lmc(\RV^G_A \,|\, \RV^G_{V \setminus A}) = \gamma^G_A(\cdot \,|\, \RV^G_{\partial A}) \ \ a.s.,
\]
for each finite set $A \subset V$, where $\gamma_A^G$ is as defined in \eqref{eq:ap:gamma1}.
\end{definition}

Unlike in the finite case, when the graph is infinite, the Gibbs measure may not be unique.
However, since the reference measure $\otimes_{v\in V} \lambda$ is invariant under permutations of the vertex set of the
graph and the interaction potential $\psi$ is homogeneous in the sense that it is the same on all edges of the graph, it is easy
to see from Definition \ref{def-Gibbsinf} that  $|\mathrm{Gibbs}(G_1, \psi, \lambda)| = |\mathrm{Gibbs}(G_2, \psi, \lambda)|$
whenever $G_1$ is isomorphic to $G_2$ (see also \cite[Chapter 5]{georgii-gibbs} for related assertions).   Therefore, we can
define $\mathcal{U} = \mathcal{U}_{\psi,\lambda}$ by
\begin{align}
\mathcal{U} := \{G \in \G_* : |\mathrm{Gibbs}(G)|=1\}. \label{def:GibbsU}
\end{align}
In other words,  $\mathcal{U}$ consists of (isomorphism classes of)  locally finite graphs $G$ for which $\mathrm{Gibbs}(G)$ is a singleton.
For $G \in \mathcal{U}$, let $P_G$ denote the unique element of $\mathcal{U}$.  Note that every finite connected graph belongs to $\mathcal{U}$, so this is consistent with the notation introduced in Definition \ref{def-Gibbsfin}.
Note that if $\psi\equiv 1$ then we recover the i.i.d.\ setting, where $P_G = \lambda^G$ for each $G$ and in particular $\mathcal{U} = \G_*$. 
For any $G \in \mathcal{U}$,  let $\RV^G$ denote a random element of $\Polish^G$ with law $P_G$, and  write $(G,\RV^G)$ for the corresponding random element of $\G_*[\Polish]$.

We now state  key convergence results for Gibbs measures,  whose proofs are given in 
Appendix \ref{ap:Gibbsmeasures} for completeness. 

\begin{proposition} \label{pr:lw gibbs}
Suppose $G_n,G \in \G_*$ with $G_n \to G$ in $\G_*$.   If $G \in \mathcal{U}$, then with $\RV^{G_n}, \RV^G$  being random Gibbs configurations as defined above, $\Lmc(G_n,\RV^{G_n}) \to \Lmc(G,\RV^G)$ in $\P(\G_*[\Polish])$.
\end{proposition}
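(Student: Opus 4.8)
The plan is to establish convergence of the marked laws $\Lmc(G_n, \RV^{G_n})$ by working at the level of balls of radius $k$ and exploiting the uniqueness hypothesis $G \in \mathcal{U}$ to control the dependence of the marks near the root on the faraway structure of $G_n$. Since $G_n \to G$ in $\G_*$, for each fixed $k$ there is $n_k$ such that $B_k(G_n) \cong B_k(G)$ for $n \ge n_k$; fix such isomorphisms. Because $\G_*[\Polish]$ is Polish, it suffices to show that for every bounded Lipschitz $f$ on $\G_*[\Polish]$ depending only on $B_k$ (i.e.\ $f(G,\polish) = f(B_k(G,\polish))$), we have $\E[f(B_k(G_n,\RV^{G_n}))] \to \E[f(B_k(G,\RV^G))]$. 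The left-hand side is $\E$ of a bounded continuous functional of the $\Polish^{B_k(G_n)}$-valued random vector $\RV^{G_n}_{B_k(G_n)}$, transported via the isomorphism to $\Polish^{B_k(G)}$, so the real task is to show that the law of $\RV^{G_n}_{B_k(G_n)}$ (pushed to $\Polish^{B_k(G)}$) converges weakly to the law of $\RV^G_{B_k(G)}$ under $P_G$.

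The key step, and the one I expect to be the main obstacle, is the following \emph{finite-range approximation}: the marginal of $P_{G_n}$ on $B_k(G_n)$ is well-approximated, uniformly in $n$, by a quantity that depends only on $B_{k+r}(G_n)$ for $r$ large. Concretely, I would fix a large radius $r$, set $A = B_k$, condition on the configuration on the sphere at distance $r$ (i.e.\ on $\RV^{G_n}_{\partial B_{k+r-1}(G_n)}$ or more simply condition on $\RV^{G_n}_{B_{k+r}(G_n)^c}$), and use the Markov random field property together with the specification kernels $\gamma^G_A$ from \eqref{def:gammaGAgibbs}. Since $\gamma^G_{B_k}$ depends only on the induced subgraph on $B_k \cup \partial B_k$, which is common to $G_n$ and $G$ for $n$ large, the only source of discrepancy is the \emph{boundary law}: the law that $P_{G_n}$ (resp.\ $P_G$) induces on $\partial B_{k+r}$, which feeds into $\gamma^G_{B_{k+r}}$ and then propagates inward. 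The hypothesis $G \in \mathcal{U}$ is exactly what forces this inward propagation to forget the boundary law: uniqueness of the infinite-volume Gibbs measure on $G$ means that the family of finite-volume specifications with arbitrary boundary conditions on $\partial B_{k+r}(G)$ converges, as $r \to \infty$, to a single limit on $B_k(G)$, uniformly over boundary conditions. I would phrase this as: for every $\eps > 0$ there is $r$ such that $\sup_{\eta, \eta'} \| \gamma^G_{B_{k+r}(G)}(\RV_{B_k} \in \cdot \mid \eta) - \gamma^G_{B_{k+r}(G)}(\RV_{B_k} \in \cdot \mid \eta') \|_{\mathrm{TV}} < \eps$ where the composed kernel means first integrating out $\RV_{B_{k+r}\setminus B_k}$; this is a standard consequence of uniqueness (via, e.g., a compactness/extreme-point argument on the simplex of Gibbs measures, or a sequential argument showing any subsequential limit of boundary-perturbed specifications is an infinite-volume Gibbs measure, hence equals $P_G$).

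Granting that uniform boundary-insensitivity, the proof concludes as follows. Given $\eps$, pick $r$ as above; then for all $n$ with $B_{k+r}(G_n) \cong B_{k+r}(G)$, the law of $\RV^{G_n}_{B_k(G_n)}$ differs in total variation from $\gamma^{G}_{B_{k+r}(G)}(\RV_{B_k} \in \cdot \mid \eta_0)$ integrated out over $B_{k+r}\setminus B_k$ by at most $\eps$ for \emph{any} fixed reference boundary condition $\eta_0$ (because the actual $P_{G_n}$-boundary law is some mixture of such kernels, all within $\eps$ of each other), and the same bound holds with $G_n$ replaced by $G$; since the middle term is identical for $G_n$ and $G$ once the radius-$(k+r)$ balls agree, we get $\|\Lmc(\RV^{G_n}_{B_k(G_n)}) - \Lmc(\RV^G_{B_k(G)})\|_{\mathrm{TV}} \le 2\eps$ for $n$ large, after transporting through the isomorphism. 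Letting $\eps \downarrow 0$ gives convergence of the $B_k$-marginals, hence $\E[f(B_k(G_n,\RV^{G_n}))] \to \E[f(B_k(G,\RV^G))]$ for all $k$ and all bounded Lipschitz $f$ on $\G_*[\Polish]$, which by a standard convergence-determining argument yields $\Lmc(G_n,\RV^{G_n}) \to \Lmc(G,\RV^G)$ in $\P(\G_*[\Polish])$, as claimed. I would relegate the precise statement and proof of the boundary-insensitivity lemma to Appendix \ref{ap:Gibbsmeasures}, as it is the only genuinely nontrivial ingredient; everything else is bookkeeping with isomorphisms of finite balls and the continuity of $f$.
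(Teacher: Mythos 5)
Your proposal follows essentially the same route as the paper: the boundary-insensitivity statement you isolate is precisely Lemma \ref{le:gibbsunif} in Appendix \ref{ap:Gibbsmeasures}, proved there by exactly the sequential argument you sketch (any subsequential weak limit of boundary-conditioned specifications is an infinite-volume Gibbs measure, hence equals $P_G$ by uniqueness), and your concluding step via the Markov random field property and isomorphism of the radius-$(k+r)$ balls is the paper's Lemma \ref{le:lwgibbs double} specialized to a single root. The one caveat is that you phrase the insensitivity as a total-variation bound, which the weak-compactness argument does not deliver; the paper's version --- uniformity over boundary conditions for each fixed $f \in C_b$ --- is what that argument actually yields, and it suffices for your final step since you only ever test against bounded Lipschitz $f$.
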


Now, if $G$ is a random element of $\mathcal{U}$ with law $M$, we may define a random element $(G,\Polish^G)$ of $\G_*[\Polish]$ in the natural way, by first sampling $G$ and then generating $\RV^G$ according to the measure $P_G$. More precisely, the law of $(G,\RV^G)$ is determined by the identity
\begin{align*}
\E[f(G,\RV^G)] &= \int_{\mathcal{U}}\E[f(H,\RV^H)]\, M(dH), \quad f \in C_b(\G_*[\Polish]).
\end{align*}
Proposition \ref{pr:lw gibbs} ensures that the integrand is continuous in $H$ on $\mathcal{U}$,
so that this is well defined.

\begin{proposition} \label{pr:lwprob gibbs}
  Suppose $G$ is a random element of $\G_*$, with $G \in \mathcal{U}$ a.s. Suppose $G_n$ are finite (possibly disconnected) random graphs such that $G_n$ converges in probability (resp.\ in law) in the local weak sense to $G$. Then, with $\RV^{G_n}, \RV^G$ being random Gibbs configurations as defined above, 
  $(G_n,\RV^{G_n})$ converges in probability (resp.\ in law) in the local weak sense to $(G,\RV^G)$.
\end{proposition}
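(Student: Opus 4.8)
\textbf{Proof plan for Proposition \ref{pr:lwprob gibbs}.}

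The plan is to deduce the statement from Proposition \ref{pr:lw gibbs} together with the characterizations of local weak convergence (in law and in probability) in terms of empirical distributions over connected components. First I would treat the ``in law'' case, which is the easier of the two. By Definition \ref{def-cil}, convergence of $\{(G_n,\RV^{G_n})\}$ in law in the local weak sense is equivalent to convergence in law of $\{\compu(G_n,\RV^{G_n})\}$ to $(G,\RV^G)$ in $\G_*[\Polish]$. I would first condition on the graph sequence: given $G_n$, the mark $\RV^{G_n}$ is generated (component by component) from the Gibbs measure, and the root $U$ is uniform. Writing $\E[f(\compu(G_n,\RV^{G_n}))] = \E\big[\E[f(\compu(G_n,\RV^{G_n})) \mid G_n]\big]$, the inner conditional expectation is a bounded continuous functional of $\compu(G_n)$ — here one uses that $\comp_v(G_n) \in \mathcal{U}$ a.s.\ (every finite connected graph lies in $\mathcal{U}$), that the Gibbs configuration on a component depends only on that component, and that $H \mapsto \E[f(H,\RV^H)]$ is continuous on $\mathcal{U}$ by Proposition \ref{pr:lw gibbs}. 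Then convergence of $\{\compu(G_n)\}$ in law to $G$ in $\G_*$ (which is exactly the hypothesis, in the ``in law'' case) gives the desired limit, provided one also controls that $G \in \mathcal{U}$ a.s.\ and hence $H\mapsto \E[f(H,\RV^H)]$ is bounded and continuous on the relevant set; boundedness is automatic since $f$ is bounded.

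For the ``in probability'' case I would invoke Lemma \ref{le:localweakprob characterization}: it suffices to verify the asymptotic factorization \eqref{def:asympindep} for $\{(G_n,\RV^{G_n})\}$, namely
\begin{align*}
\E[g_1(\comp_{U^n_1}(G_n,\RV^{G_n}))\,g_2(\comp_{U^n_2}(G_n,\RV^{G_n}))] \to \E[g_1(G,\RV^G)]\,\E[g_2(G,\RV^G)]
\end{align*}
for all $g_1,g_2 \in C_b(\G_*[\Polish])$, where $U^n_1,U^n_2$ are i.i.d.\ uniform vertices of $G_n$ given $G_n$. (One also needs $|G_n|\to\infty$ in probability, which follows from $G_n$ converging in probability in the local weak sense to an a.s.\ infinite-or-finite limit — more precisely this is part of the standing hypotheses via Remark \ref{re:nonrandvertexset}, and if the limit is finite with positive probability the statement still makes sense; I would note that $|G_n|\to\infty$ in probability is needed only in the nontrivial regime and address it briefly.) The key point is to condition on $G_n$ and observe that, given $G_n$, the two quantities $g_i(\comp_{U^n_i}(G_n,\RV^{G_n}))$ depend on $\RV^{G_n}$ only through its restrictions to $\comp_{U^n_1}(G_n)$ and $\comp_{U^n_2}(G_n)$ respectively. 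When $U^n_1$ and $U^n_2$ lie in distinct connected components these restrictions are (conditionally) independent by the product structure of the Gibbs measure across components, and each is distributed as the Gibbs measure on the respective rooted component; when they lie in the same component there is dependence, but the probability of this event is $\E[|\comp_{U^n_1}(G_n)| / |G_n|]$, which I would argue tends to $0$ — this uses that $\{G_n\}$ converges in probability in the local weak sense (so the empirical distribution of component sizes is tight in a suitable sense) and hence the fraction of vertices in any ``large'' component, weighted appropriately, vanishes; more carefully, $\E[|\comp_{U^n_1}(G_n)|/|G_n|] = \E[\frac{1}{|G_n|}\sum_v |\comp_v(G_n)|/|G_n|]$, and one shows this $\to 0$ using $|G_n|\to\infty$ together with local weak convergence controlling the typical component size. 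Then, defining $\Phi_i(H) := \E[g_i(H,\RV^H)]$ for $H \in \mathcal{U}$, which is bounded and continuous by Proposition \ref{pr:lw gibbs}, the left-hand side is asymptotically $\E[\Phi_1(\comp_{U^n_1}(G_n))\,\Phi_2(\comp_{U^n_2}(G_n))]$, and applying the characterization \eqref{def:asympindep} for the \emph{unmarked} sequence $\{G_n\}$ (which converges in probability in the local weak sense to $G$ by hypothesis) gives convergence to $\E[\Phi_1(G)]\,\E[\Phi_2(G)] = \E[g_1(G,\RV^G)]\,\E[g_2(G,\RV^G)]$.

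The main obstacle I anticipate is the bookkeeping around the ``same component'' event in the ``in probability'' case: one must show that the weighted probability that two independent uniform vertices land in a common component is negligible, and do so using only local weak convergence in probability (not, say, an explicit size bound on the largest component). I expect this to follow from a truncation argument — fix a radius $k$, use that $\comp_v(G_n)$ restricted to $B_k$ is well-behaved under local convergence, and handle the ``$\comp_v(G_n)$ has more than $R$ vertices'' contribution by choosing $R$ large; the term $\frac{1}{|G_n|}\sum_v \min(|\comp_v(G_n)|, R)/|G_n| \le R/|G_n| \to 0$, while the tail $\frac{1}{|G_n|}\sum_v |\comp_v(G_n)|\,\mathbf{1}\{|\comp_v(G_n)| > R\}/|G_n|$ is controlled by local weak convergence applied to a suitable (bounded, continuous) truncation of the size functional. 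A secondary technical point is ensuring measurability and the well-definedness of $(G,\RV^G)$ as a random element of $\G_*[\Polish]$, but this is handled by the discussion preceding the proposition (using that $H\mapsto \E[f(H,\RV^H)]$ is continuous, hence the mixture over $M$ makes sense). Everything else is routine conditioning on $G_n$ and invoking Proposition \ref{pr:lw gibbs} to transfer continuity from the unmarked to the marked setting.
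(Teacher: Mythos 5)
Your treatment of the ``in law'' case is fine and matches the paper: condition on $G_n$, note that the conditional expectation is $\E[f(H,\RV^H)]$ evaluated at $H=\compu(G_n)$, and use the continuity of $H\mapsto\E[f(H,\RV^H)]$ on $\mathcal{U}$ from Proposition \ref{pr:lw gibbs} together with $\compu(G_n)\to G$ in law.

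The ``in probability'' case has a genuine gap. Your argument hinges on the claim that the probability that $U^n_1$ and $U^n_2$ land in the same connected component, namely $\E\bigl[|\comp_{U^n_1}(G_n)|/|G_n|\bigr]$, tends to zero, so that the Gibbs configurations seen from the two roots are exactly conditionally independent on an event of probability $1-o(1)$. This is false in essentially all of the interesting examples: if $G_n$ is connected (the boxes $\Z^d_n$, the truncated regular trees $\Tmb^d_n$, random $\kappa$-regular graphs), the two uniform roots are \emph{always} in the same component, and for supercritical \Erdos\ or configuration models the probability that both roots lie in the giant component converges to $s_G^2>0$. Your proposed truncation cannot rescue this: the tail term $\frac{1}{|G_n|}\sum_v \frac{|\comp_v(G_n)|}{|G_n|}\mathbf{1}\{|\comp_v(G_n)|>R\}$ is $1-o(1)$ for connected $G_n$, not $o(1)$. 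The quantity that does tend to infinity is the graph \emph{distance} $d_{G_n}(U^n_1,U^n_2)$ (this is \cite[Corollary 2.13]{van2020randomII}, and is what the paper uses), and the real content of the proof is to show that the Gibbs measure on $G_n$ asymptotically factorizes across two balls whose centers are far apart \emph{within the same component}. The paper does this in Lemma \ref{le:lwgibbs double}: condition on $\RV^{G_n}_{\partial B_\ell(G^i_n)}$ using the Markov random field property, and then invoke Lemma \ref{le:gibbsunif}, which says that for $G\in\mathcal{U}$ the finite-volume conditional measures $\gamma^G_{A_n}(\cdot\,|\,\polish_{\partial A_n})$ converge to the unique infinite-volume Gibbs measure \emph{uniformly over boundary conditions}. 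This is precisely where the uniqueness hypothesis $G\in\mathcal{U}$ does its work; in your outline it is only used to get continuity of $H\mapsto\E[g_i(H,\RV^H)]$, which is a sign that the essential difficulty has been bypassed rather than resolved. To repair the proof you would need to replace the ``distinct components'' dichotomy with a decorrelation estimate of the Lemma \ref{le:lwgibbs double} type.
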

 
An immediate consequence of Propositions \ref{pr:lw gibbs} and \ref{pr:lwprob gibbs} is  that
  analogous convergence results hold when the initial marks are i.i.d.\ with law $\lambda \in \P(\Polish)$, conditionally on the graphs $\{G_n\}$, as stated below.

\begin{corollary} \label{co:localweak iid}
  Suppose $G$ is a random element of $\G_*$, and $G_n$ is a sequence of finite (possibly disconnected) random graphs
such that $G_n$ converges in probability (resp.\ in law) in the local weak sense to $G$. 
Let $\RV^n=(\RV^n_v)_{v \in G_n}$ and $\RV=(\RV_v)_{v \in G}$ be i.i.d.\ with law $\lambda$, given the graphs.
Then  $\{(G_n,\RV^{G_n})\}$ converges in probability (resp.\ in law) in the local weak sense to $(G,\RV^G)$.
\end{corollary}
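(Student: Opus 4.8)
The plan is to recognize that the i.i.d.\ case is precisely the degenerate instance of the Gibbs framework of Section~\ref{subsub-icconv} obtained by taking the pairwise interaction potential $\psi \equiv 1$, and then to invoke Proposition~\ref{pr:lwprob gibbs} directly. With $\psi \equiv 1$, Definition~\ref{def-Gibbsfin} gives, for every finite graph $G=(V,E)$, the normalizing constant $Z^G = 1$ and $P_G = \bigotimes_{v \in V}\lambda = \lambda^V$, which is exactly the law of an i.i.d.\ family of $\lambda$-distributed marks indexed by $V$. Thus for finite $G_n$ the random Gibbs configuration $\RV^{G_n}$ coincides, conditionally on $G_n$, in law with the i.i.d.\ marks $\RV^n$ in the statement, and hence $(G_n,\RV^{G_n}) \stackrel{d}{=} (G_n,\RV^n)$ as random elements of $\G_*[\Polish]$; the same identification applies to $(G,\RV^G)$ and $(G,\RV)$.

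Next I would verify that $\mathcal{U} = \G_*$ in this case, so that the almost-sure hypothesis ``$G \in \mathcal{U}$'' of Proposition~\ref{pr:lwprob gibbs} holds trivially. With $\psi \equiv 1$ the specification \eqref{def:gammaGAgibbs} reduces to $\gamma^G_A(d\polish_A \,|\, \polish_{\partial A}) = \bigotimes_{w \in A}\lambda(d\polish_w)$, which is independent of the boundary condition $\polish_{\partial A}$ and of the ambient graph. By Definition~\ref{def-Gibbsinf}, a law $\Lmc(\RV^G) \in \P(\Polish^V)$ lies in $\mathrm{Gibbs}(G)$ if and only if, for every finite $A \subset V$, $\RV^G_A$ is independent of $\RV^G_{V\setminus A}$ with law $\lambda^A$; specializing $A$ to finite sets and using that the finite-dimensional distributions determine a Borel probability measure on the product space (Kolmogorov extension), one checks that the only such law is the product measure $\lambda^V$. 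Hence $\mathrm{Gibbs}(G)$ is a singleton for every (finite or countably infinite, locally finite) $G$, i.e.\ $\mathcal{U} = \G_*$ and $P_G = \lambda^G$ for all $G$.

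With these two observations in hand, the corollary is immediate: since $G \in \mathcal{U} = \G_*$ almost surely and $G_n$ converges in probability (resp.\ in law) in the local weak sense to $G$, Proposition~\ref{pr:lwprob gibbs} yields that $(G_n,\RV^{G_n})$ converges in probability (resp.\ in law) in the local weak sense to $(G,\RV^G)$, and by the identification above this is exactly the assertion of the corollary.

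There is essentially no obstacle here beyond bookkeeping: all the analytic content --- the continuity of $H \mapsto \Lmc(H,\RV^H)$ on $\mathcal{U}$ (Proposition~\ref{pr:lw gibbs}) and its propagation to convergence in probability/law in the local weak sense (Proposition~\ref{pr:lwprob gibbs}) --- has already been established. The only point requiring (elementary) care is the verification that $\mathcal{U} = \G_*$ when $\psi \equiv 1$, i.e.\ uniqueness of the infinite-volume product Gibbs measure, which is a routine consequence of Kolmogorov's extension theorem once one observes that the conditional specifications are all product measures.
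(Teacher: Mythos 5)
Your proposal is correct and is essentially identical to the paper's own argument: the paper derives the corollary as an immediate consequence of Proposition \ref{pr:lwprob gibbs} by taking $\psi \equiv 1$, noting (as you verify in slightly more detail) that then $P_G = \lambda^G$ and $\mathcal{U} = \G_*$. Your extra care in checking uniqueness of the product Gibbs measure via the Kolmogorov extension argument is a fine, if unneeded, elaboration of a point the paper simply asserts.
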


\subsection{Space of unordered terminating sequences} \label{se:symmetricsequencespace}
We will study processes that  take
values in a sequence of  configuration spaces with corresponding underlying interaction 
graphs that have  different numbers of vertices, and vertices with different degrees.  
We want to be able to specify a single function that takes as input finite sequences of elements of $\Polish$ of arbitrary length and is insensitive to the order of these elements. To this end, for a set $\Polish$, we define in this paragraph a space $\SQ(\Polish)$ of finite unordered $\Polish$-valued sequences of arbitrary length (possibly zero). First, for $k \in \N$ we define the symmetric power (or unordered Cartesian product) $S^k(\Polish)$ as the quotient of $\Polish^k$ by the natural action of the symmetric group on $k$ letters. For convenience, let $S^0(\Polish)$ denote the one-point space. Define $\SQ(\Polish)$ as the disjoint union,
\[
\SQ(\Polish) = \bigsqcup_{k=0}^\infty S^k(\Polish).
\] 
A typical element of  $\SQ(\Polish)$ will be denoted $(x_v)_{v \in V}$, for a finite (possibly empty) set $V$; if the set is empty, then by convention $(\polish_v)_{v \in V} \in S^0(\Polish)$. It must be stressed that, of course, the element $(\polish_v)_{v \in V}$ has no order. 

Suppose now that $(\Polish,d)$ is a metric space, and endow $\SQ(\Polish)$, with the usual disjoint union topology, that is, the finest topology on $\SQ(\Polish)$ for which the injection $S^k(\Polish) \hookrightarrow \SQ(\Polish)$ is continuous for each $k \in \N$. A function $F : \SQ(\Polish) \rightarrow \Polish'$ to a metric space $\Polish'$ is continuous if and only if there is a sequence $(f_k)_{k \in \N_0}$, where
 $f_0 \in \Y$ and, for each $k \in \N$, $f_k : \Polish^k \rightarrow \Polish'$ is a continuous function that is  symmetric in its $k$ variables, such that
\[
F((\polish_i)_{i \in \{1,\ldots,k\}}) = \begin{cases}
f_k(\polish_1,\ldots,\polish_k) &\text{for } k \in \N, \ (\polish_1,\ldots,\polish_k) \in \Polish^k \\
f_0 &\text{for } k=0. 
\end{cases}
\]
If $\Polish$ is separable and completely metrizeable, then so is $\SQ(\Polish)$.
Note that a sequence $(\polish^n_v)_{v \in V_n}$ in $\SQ(\Polish)$ converges to $(\polish_v)_{v \in V}$ if and only if for all $\epsilon > 0$ there exists $N \in \N$ such that  for all $n \ge N$ there exists a bijection $\varphi : V_n \rightarrow V$ such that $\max_{v \in V_n}d(\polish^n_v,\polish_{\varphi(v)}) < \epsilon$. (Note that this implicitly requires that $|V_n|=|V|$ for sufficiently large $n$.)

We now point out the advantages of using
the space $\SQ(\Polish)$, which is perhaps a bit non-standard. 
Indeed, an alternative and common  way to express symmetric functions of
vectors of arbitrary length 
is as functions of the empirical measure of the coordinates of the vector. 
However, continuous functions on $\SQ(\Polish)$ are strictly more general than weakly continuous functions on the set of empirical measures $\P_{\mathrm{emp}}(\Polish):=\{\frac{1}{n}\sum_{k=1}^n\delta_{\polish_k} : n\in\N, \, \polish_1,\ldots,\polish_n \in \Polish\} \subset \P(\Polish)$ equipped with the topology of weak convergence.
Indeed, the map $(\polish_v)_{v \in V} \mapsto \frac{1}{|V|}\sum_{v \in V}\delta_{\polish_v}$ is a continuous surjection from $\SQ(\Polish) \setminus S^0(\Polish)$ to $\P_{\mathrm{emp}}(\Polish)$, but it is not one-to-one; for instance, if $\polish \in \Polish$ then $\polish \in S^1(\Polish)$ and $(\polish,\polish)\in S^2(\Polish)$ are distinct points in $\SQ(\Polish)$ but induce the same empirical measure. In short, $\SQ(\Polish)$ encodes both the distribution of elements of the sequence as well as the numbers of elements taking any particular value,
whereas the empirical measure just encodes the distribution.
For example, the function 
\[
\SQ(\Polish) \ni (\polish_v)_{v \in V} \mapsto \sum_{v \in V}b(\polish_v) \in \R^d
\]
is continuous if $b : \Polish \to \R^d$ is continuous, but it cannot be expressed as a function on $\P_{\mathrm{emp}}(\Polish)$. 
The stronger topology on $\SQ(\Polish)$, compared to the weak convergence topology on $\P_{\mathrm{emp}}(\Polish)$, is useful in several situations; for instance, the maximum function
\[
(\polish_v)_{v \in V} \mapsto \max_{v \in V} \polish_v \in \R,
\]
is continuous on $\SQ(\R)$, and although it can be expressed also as a function on $\P_{\mathrm{emp}}(\R)$, it is not weakly continuous. 
In addition to greater generality, the use of $\SQ(\Polish)$ instead of
$\P_{\mathrm{emp}}(\Polish)$ also yields simpler notation.

\subsection{Path Spaces}
\label{subs-fspaces}

For a discrete-time process with values in a Polish state space $\X$, we write $\X^\infty = \{(x(k))_{k \in \N_0}: x(k) \in \X, k \in \N_0\}$
for the corresponding sequence space, equipped always with the product topology. For $x=(x(k))_{k \in \N_0} \in \X^\infty$, we write $x[k] := (x(0),x(1),\ldots,x(k))$ for the truncated path, an element of $\X^{k+1}$.

For a fixed positive integer $d$,  
throughout we write 
\[
\C := C(\R_+;\R^d)
\]
for the path space of continuous functions,
endowed with the topology of uniform convergence on compacts.
For $t > 0$, we write $\C_t := C([0,t];\R^d)$ and 
for $x \in \C$  we write $\|x\|_{*,t} := \sup_{s \in [0,t]}|x(s)|$ 
and $x[t] := \{x(s), s \in [0,t]\}$ for the truncated path, viewed
as an element of $\C_t$.

\section{Main results}
\label{se:mainresults}

\subsection{Discrete-time models} \label{se:disc-setup}

In this section we introduce the family of discrete-time stochastic process $X^{G,x}$ that we study.
Consider two Polish spaces $\X$ and $\Xi$, in which the state and noise processes take values, respectively. 
Recall the notation for truncated paths from Section \ref{subs-fspaces}.
For a given graph $G$ and initial condition  $x=(x_v)_{v \in G} \in \X^G$, we define processes $(X^{G,x}_v)_{v \in G}$ by the following dynamics:
\begin{equation} \label{eq:random_graph-disc}
X_v^{G,x}(k+1) = F^k\left(X_v^{G,x}[k],X_{N_v(G)}^{G,x}[k],\xi_v(k+1)\right), \quad X^{G,x}_v(0) = x_v, \ \ v \in G, \ k \in \Nmb_0.
\end{equation}
We write $X^{G,x}_v$ for the trajectory $(X^{G,x}_v(k))_{k \in \N_0}$ of particle $v$ and similarly $X^{G,x}$ for the collection of trajectories $(X^{G,x}_v)_{v \in G}$.
For $k \in \N$, we may also write $X^{G,x}[k]=(X^{G,x}_v[k])_{v \in G}$ for the collection of truncated trajectories up to time $k$.
Note that $X^{G,x}$ is well defined for any locally finite marked graph $(G,x)$, even if it is disconnected. But if $(G,x) \in \G_*[\X]$, then we may view $(G,X^{G,x})$ as a random element of $\G_*[\X^\infty]$, which depends on $(G,x)$ only through its isomorphism class.
Indeed, if $(G_1,x^1)$ and $(G_2,x^2)$ are two isomorphic marked graphs, and $\varphi : G_1\to G_2$ denotes an isomorphism,
then clearly $(X^{G_2,x^2}_{\varphi(v)})_{v \in G_1} \stackrel{d}{=} (X^{G_1,x^1}_v)_{v \in G_1}$ due to the symmetry of the dynamics \eqref{eq:random_graph-disc}
 and  assumption \hyperref[assumption:A]{(A.1)} below, and we deduce that $(G_1,X^{G_1,x^1})$ and $(G_2,X^{G_2,x^2})$ induce the same law on $\G_*[\X^\infty]$. 
For $(G,x) \in \G_*[\X]$, the law of $(G,X^{G,x})$ is denoted $P^{G,x} \in \Pmc(\G_*[\X^\infty])$.
We always assume that the transition functions $(F^k)_{k \in \N_0}$ and noises $(\xi_v(k))_{v \in V, k \in \N}$ obey the following assumption:

\begin{assumption}{\textbf{A}} \label{assumption:A} { \ }
\begin{enumerate}
\item[(\textbf{A}.1)] The $\Xi$-valued random variables 
$(\xi_v(k))_{v \in G, k \in \N}$ are i.i.d., with the same law for each (possibly disconnected) graph $G$. 
\item[(\textbf{A}.2)] $F^k \colon \X^{k+1} \times \SQ(\X^{k+1}) \times \nspace \to \X$ is continuous for each $k \in \Nmb_0$.
\end{enumerate}
\end{assumption}

The main examples we have in mind for Assumption (\ref{assumption:A}.2) take the following form: for $\tily \in \X^{k+1}, \tilz_A \in \SQ(\X^{k+1})$, and $\tile \in \nspace$,   
\begin{align}
F^k(\tily,(\tilz_v)_{v \in A},\tile) = \begin{cases}
\widetilde{F}^k_0(\tily(k),\tile) &\text{if } A = \emptyset, \\
\widetilde{F}^k\left(\tily(k),\frac{1}{|A|}\sum_{v \in A}\delta_{\tilz_v(k)}, \tile \right) &\text{if } A \neq \emptyset,
\end{cases} \label{ex:interaction}
\end{align}
for given functions $\widetilde{F}^k_0 : \X \times \nspace \to \X$ and $\widetilde{F}^k : \X \times \P(\X) \times \nspace \to \X$.
Assumption (\ref{assumption:A}.2) holds if $\widetilde{F}^k_0$ and $\widetilde{F}^k$ are continuous, with, as usual,  $\P(\X)$ equipped with the topology of weak convergence. In fact, $\widetilde{F}^k$ needs only to be defined on the subspace of $\P(\X)$ consisting of empirical measures of finitely many points, as discussed in Section \ref{se:symmetricsequencespace}.

We next define solutions of \eqref{eq:random_graph-disc} for \emph{random} graph and initial position pairs $(G,x)$ in the natural way, implicitly taking the random $(G,x)$ to be independent of $(\xi_v(k))_{v \in G, k \in \Nmb}$. 
Given $M \in \Pmc(\Gmc_*[\X])$, we define a distribution $P[M] \in \Pmc(\Gmc_*[\X^\infty])$ by $P[M] = \int P^{G,x}\,M(d(G,x))$, or more precisely by setting
\begin{align}
	\int_{\G_*[\X^\infty]} \varphi\,dP[M] := \int_{\G_*[\X]} \lan P^{G,x}, \varphi \ran\,M(d(G,x)), \label{def:P[M]-disc}
\end{align}
for bounded and continuous functions $\varphi$ on $\G_*[\X^\infty]$. 
For a deterministic $(G,x) \in \G_*[\X]$, notice that $P[\delta_{(G,x)}]=P^{G,x}$.
If $(G,x)$ is a $\G_*[\X]$-valued random variable with law $M$, we may denote by $(G,X^{G,x})$ a $\G_*[\X^\infty]$-valued random variable with law $P[M]$.

\subsection{Diffusive models} \label{se:cont-setup}

Fix a dimension $d \in \N$ and recall from Section \ref{subs-fspaces} the function space  $\C = C(\R_+;\R^d)$ and, for $x \in \C$ and $t > 0$, the notation  $\|x\|_{*,t} := \sup_{s \in [0,t]}|x(s)|$.
We are given a  drift coefficient $b$ and a diffusion coefficient $\sigma$ satisfying the following:

\begin{assumption}{\textbf{A'}} \label{assumption:C}
  The coefficients $(b,\sigma) : \R_+ \times \C \times \SQ(\C) \rightarrow \R^d \times \R^{d \times d}$ are Lipschitz, in the sense that for each $T \in (0,\infty)$, there exists $K_T <  \infty$ such that, for any $t \in [0,T]$, $\tily,y \in \C$,  and $\tilz_A=(\tilz_v)_{v \in A},\z_A=(\z_v)_{v \in A} \in \SQ(\C)$, we have
\begin{align*}
&|b(t,\tily,\tilz_A) - b(t,y,\z_A)| + |\sigma(t,\tily, \tilz_A) - \sigma(t,y,\z_A)| \\
	&\qquad\qquad\qquad\le K_T\left(\|\tily-y\|_{*,T} + \frac{1}{|A|}\sum_{v \in A}\|\tilz_v-\z_v\|_{*,T}\right), \\
\text{and } \ \ \ &\int_0^T \left(|b(t,0,(0)_{v \in A})|^2 + |\sigma(t,0,(0)_{v \in A})|^2\right)dt < \infty
\end{align*}
where the average is understood to be zero if $|A|=0$.  Moreover, $b$ and $\sigma$ are progressively measurable; that is, $b$ is jointly measurable and non-anticipative in the sense that for each $t \ge 0$, $b(t,\tily,(\tilz_v)_{v \in A}) = b(t,y,(\z_v)_{v \in A})$ whenever $\tily(s)=y(s)$ and $\tilz_v(s)=\z_v(s)$ for all $s \le t$ and $v \in A$, and similarly for $\sigma$.
\end{assumption}

We allow path-dependence in the coefficients $(b,\sigma)$ both because such interactions
arise in applications and because this does not complicate the arguments. Assumption \ref{assumption:C} ensures that the SDE system of interest is well defined, and we relegate the standard proof to Appendix \ref{ap:uniqueness-infSDE}:

\begin{theorem}  \label{th:uniqueness-hominfSDE}
Suppose Assumption \ref{assumption:C} holds. Then for
each marked graph $(G,x) \in \G_*[\R^d]$, there exists a pathwise unique strong solution of the following SDE system: 
\begin{align}
dX^{G,x}_v(t) &= b(t,X^{G,x}_v,X^{G,x}_{N_v(G)})dt + \sigma(t,X^{G,x}_v,X^{G,x}_{N_v(G)})dW_v(t), \ \ X^{G,x}_v(0)=x_v, \ \ v \in G. \label{statements:SDE}
\end{align}
\end{theorem}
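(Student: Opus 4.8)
\textbf{Proof plan for Theorem \ref{th:uniqueness-hominfSDE}.}
The plan is to reduce the infinite coupled system \eqref{statements:SDE} to a standard Picard iteration by exploiting the local finiteness of $G$ together with the mean-field-type Lipschitz structure of Assumption \ref{assumption:C}. First I would fix $T \in (0,\infty)$ and work on the interval $[0,T]$; since $G$ is countable and locally finite, I can enumerate the vertices as $V = \{v_1, v_2, \ldots\}$ and set up the fixed-point map $\Phi$ on the space of continuous adapted processes $(Y_v)_{v \in V}$ with $Y_v(0) = x_v$, where $(\Phi Y)_v(t) = x_v + \int_0^t b(s,Y_v,Y_{N_v(G)})\,ds + \int_0^t \sigma(s,Y_v,Y_{N_v(G)})\,dW_v(s)$. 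The key point is that each coordinate of $\Phi Y$ depends only on the finitely many coordinates indexed by $\{v\} \cup N_v(G)$, so there is no issue of summing infinitely many error terms: the recursion closes coordinate by coordinate, albeit through an ever-growing but always finite neighborhood as one iterates.

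The core estimate is a Gr\"onwall/Picard bound carried out coordinate-wise. For two candidate solutions $Y, \Ytil$ I would define $h_v(t) := \E\big[\sup_{s \le t}|Y_v(s) - \Ytil_v(s)|^2\big]$, apply the Burkholder--Davis--Gundy inequality to the stochastic integral, use the Lipschitz bound in Assumption \ref{assumption:C} (noting $|N_v(G)|^{-1}\sum_{u \in N_v(G)}\|\cdot\|_{*,t}^2 \ge (|N_v(G)|^{-1}\sum \|\cdot\|_{*,t})^2$ by Jensen, or simply keep the $L^1$ average and bound it by the $L^2$ average), to obtain an inequality of the form
\begin{align*}
h_v(t) \le C_T \int_0^t \Big(h_v(s) + \frac{1}{|N_v(G)|}\sum_{u \in N_v(G)} h_u(s)\Big)\,ds.
\end{align*}
Since the neighborhood sum is a convex combination, one can bootstrap: iterating $m$ times expresses $h_v(t)$ in terms of $h_u(s)$ for $u$ in the ball $B_m(v)$, with the usual $t^m/m!$ factor from repeated time-integration, and the convexity keeps all coefficients summing to a bounded constant. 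Letting $m \to \infty$ forces $h_v \equiv 0$, giving pathwise uniqueness. For existence I would run the Picard iteration $Y^{(0)}_v \equiv x_v$, $Y^{(n+1)} = \Phi Y^{(n)}$, show via the same coordinate-wise Gr\"onwall argument that $(Y^{(n)}_v)_n$ is Cauchy in $L^2(\sup_{[0,T]})$ for each $v$ simultaneously (the $t^n/n!$ decay again beating the neighborhood growth), pass to the limit, and verify the limit solves \eqref{statements:SDE}; a standard localization handles the at-most-linear-growth bound $\int_0^T(|b(t,0,\mathbf 0)|^2 + |\sigma(t,0,\mathbf 0)|^2)\,dt < \infty$ to get the a priori $L^2$ moment bounds needed to start.

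The main obstacle is purely bookkeeping rather than conceptual: one must organize the Picard/Gr\"onwall estimate so that it holds \emph{uniformly over the infinitely many coordinates at once}, since a naive application would produce vertex-dependent constants (through $|N_v(G)|$ or through the size of successive balls) that could a priori blow up along the vertex enumeration. The resolution is exactly the observation that the interaction enters only through \emph{averages} over neighborhoods, so the relevant constant $C_T$ from Assumption \ref{assumption:C} is genuinely uniform in $v$, and the convex-combination structure prevents any amplification as the iteration spreads across the graph. Since the theorem is stated as standard and its proof is deferred to Appendix \ref{ap:uniqueness-infSDE}, I would present only this outline in the main text and carry out the routine BDG and Gr\"onwall computations there.
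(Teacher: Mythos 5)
Your proposal is correct and matches the paper's Appendix~\ref{ap:uniqueness-infSDE} proof in all essentials: a Picard/Gr\"onwall argument whose constants are uniform in $v$ precisely because the interaction enters only through neighborhood averages. The only cosmetic difference is that the paper takes $\sup_{v\in V}$ of the coordinate-wise estimate and applies scalar Gr\"onwall directly, rather than iterating over growing balls with the $t^m/m!$ factor, but these are interchangeable.
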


For each countable locally finite but possibly disconnected graph $G$ and each $x=(x_v)_{v \in G}$, the SDE system \eqref{statements:SDE} again admits a unique in law weak solution, constructed by simply combining the solutions on the different connected components of $G$, each of which is unique in law by Theorem \ref{th:uniqueness-hominfSDE}. We view $X^{G,x}_v$ as a $\C$-valued random variable for each $v \in G$, and 
$X^{G,x}=(X^{G,x}_v)_{v \in G}$ as a $\C^G$-valued random variable. For each $(G,x) \in \G_*[\R^d]$, we may view $(G,X^{G,x})$ as a random element of $\G_*[\C]$, and we write $P^{G,x}$ for its law.

We define solutions of \eqref{statements:SDE} for \emph{random} marked graphs $(G,x)$ in the natural way, as in the discrete case:
Given $M \in \P(\G_*[\R^d])$, we define $P[M] \in \P(\G_*[\C])$ by 
\begin{align}
\int_{\G_*[\C]} \varphi\,dP[M] := \int_{\G_*[\R^d]} \lan P^{G,x}, \varphi \ran\,M(d(G,x)), \label{def:P[M]-cont}
\end{align}
for $\varphi \in C_b(\G_*[\C])$. 
If $(G,x)$ is a $\G_*[\R^d]$-valued random variable with law $M$, we denote by $(G,X^{G,x})$ a $\G_*[\C]$-valued random variable with law $P[M]$.

\subsection{Local convergence of particle systems} \label{se:statements:localconvergence}

We can now state the first main results, proven in Section \ref{se:locallimits}, which guarantee that $P[M]$ is well-defined and varies continuously with $M$, for  both discrete time dynamics and diffusive systems. 

\begin{theorem}[Discrete time] \label{thm:local-law-disc}
Under Assumption \ref{assumption:A}, the following hold:
\begin{enumerate}[label=(\roman*)]
\item If $(G_n,x^n) \to (G,x)$ in $\G_*[\X]$ as $n \to \infty$, then $P^{G_n,x^n} \to P^{G,x}$ in $\P(\G_*[\X^\infty])$. 
\item For every $M \in \P(\G_*[\X])$, the measure $P[M]$ is well defined. Furthermore, if $M_n \to M$ in $\P(\G_*[\X])$ as $n \to \infty$, then $P[M_n] \to P[M]$ in $\P(\G_*[\X^\infty])$.
\end{enumerate}
\end{theorem}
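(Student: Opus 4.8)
\textbf{Proof plan for Theorem \ref{thm:local-law-disc}.}

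The plan is to prove part (i) first and then deduce part () by a routine integration/measurability argument. For part (i), assume $(G_n,x^n) \to (G,x)$ in $\G_*[\X]$; by passing to representatives of the isomorphism classes we may assume there is a fixed root $\o$ common to all the graphs, and that for each radius $k$ there is an isomorphism $\varphi_n : B_k(G_n) \to B_k(G)$ matching the roots, with the marks of $G_n$ converging to those of $G$ uniformly on $B_k(G_n)$ as $n \to \infty$ (first letting $n \to \infty$ for fixed $k$, then $k \to \infty$). The key structural observation, which I would isolate as the heart of the argument, is a \emph{finite speed of propagation} property of the recursion \eqref{eq:random_graph-disc}: the truncated trajectory $X^{G,x}_v[k]$ depends only on the initial marks $x_u$ and the noises $\xi_u(1),\dots,\xi_u(k)$ for vertices $u$ within graph distance $k$ of $v$. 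This follows by an easy induction on $k$ from the form of \eqref{eq:random_graph-disc}: to compute $X^{G,x}_v(j+1)$ one needs $X^{G,x}_v[j]$ and $X^{G,x}_{N_v(G)}[j]$, which by the inductive hypothesis depend only on data within distance $j$ of $v$ or its neighbors, i.e. within distance $j+1$ of $v$.

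Given this locality, I would construct a coupling on a common probability space: sample one family of i.i.d.\ noises $(\xi_u(j))$ indexed by vertices of the limit graph $G$ (using (\textbf{A}.1), so the noise law does not depend on the graph), and transport them to $G_n$ via the isomorphism $\varphi_n$ on the ball $B_k(G_n)$ for $n$ large. Then for each fixed radius $r$ and time horizon $k$, the restriction of $X^{G_n,x^n}$ to $B_r(G_n)$ is, by finite speed of propagation, a fixed continuous function (built by composing the continuous maps $F^0,\dots,F^{k-1}$ from (\textbf{A}.2)) of the finitely many initial marks $x^n_u$ and noises $\xi_u(j)$ attached to $B_{r+k}(G_n)$. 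As $n \to \infty$, the marks converge (uniformly on $B_{r+k}$, via $\varphi_n$) and the local graph structure stabilizes, so continuity of $F^0,\dots,F^{k-1}$ forces $B_r(B_k(G_n), X^{G_n,x^n}[k])$ — the radius-$r$ neighborhood of the root, marked by truncated trajectories — to converge a.s.\ (along the coupling) in $\G_*[\X^{k+1}]$ to the corresponding object for $(G,x)$. Since bounded continuous test functions on $\G_*[\X^\infty]$ can be approximated by functions depending only on a finite-radius, finite-time-horizon neighborhood of the root (the projections $B_r(\cdot)$ and $X^\infty \ni x \mapsto x[k]$ are continuous, and such functions are convergence-determining on the Polish space $\G_*[\X^\infty]$), bounded convergence then gives $\E[\phi(G_n, X^{G_n,x^n})] \to \E[\phi(G,X^{G,x})]$, i.e.\ $P^{G_n,x^n} \to P^{G,x}$ weakly.

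For part (ii), well-definedness of $P[M]$ requires only that $(G,x) \mapsto \lan P^{G,x}, \phi\ran$ be bounded and measurable for $\phi \in C_b$; boundedness is clear since $\|\phi\|_\infty$ bounds it, and measurability follows from part (i), which gives continuity of this map on $\G_*[\X]$ (a stronger statement). The continuity $M_n \to M \Rightarrow P[M_n] \to P[M]$ is then immediate from the definition \eqref{def:P[M]-disc}: for $\phi \in C_b(\G_*[\X^\infty])$, the function $(G,x) \mapsto \lan P^{G,x},\phi\ran$ lies in $C_b(\G_*[\X])$ by part (i), so $\int \lan P^{G,x},\phi\ran\,M_n(d(G,x)) \to \int \lan P^{G,x},\phi\ran\,M(d(G,x))$ by definition of weak convergence of $M_n$. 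The main obstacle is really just the careful bookkeeping in part (i): setting up the coupling so that the noise transport is consistent across radii, and verifying that the composed-transition-map representation is genuinely continuous jointly in the finitely many marks and noises and stable under the eventual graph isomorphisms — all of which reduce to (\textbf{A}.1), (\textbf{A}.2), finite speed of propagation, and the definition of the topology on $\G_*[\Polish]$, but demand attention to detail. No genuinely hard estimate is needed; there is no contraction or Gronwall argument as in the diffusive case, since the discrete recursion is purely local in time.
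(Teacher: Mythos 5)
Your argument is correct, and part (ii) coincides exactly with the paper's (continuity of $(G,x)\mapsto\lan P^{G,x},\varphi\ran$ from part (i), then the definition \eqref{def:P[M]-disc}). For part (i), however, you take a genuinely different route. The paper inducts on the time index $k$: assuming $(G_n,X^{G_n,x^n}[k])\to(G,X^{G,x}[k])$ in law in $\G_*[\X^{k+1}]$, it adjoins the independent noises $\xi(k+1)$ as additional i.i.d.\ marks (so the convergence upgrades to $\G_*[\X^{k+1}\times\Xi]$), and then applies the continuous mapping theorem to the one-step update map $\Psi_k$, whose continuity on the marked-graph space follows from continuity of $F^k$. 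Your proof instead isolates the finite-speed-of-propagation property of \eqref{eq:random_graph-disc} and builds an explicit almost-sure coupling by transporting a single family of noises through the eventual ball isomorphisms $\varphi_n$, then localizes both in graph radius and in time before invoking continuity of the composed transition maps. Both are sound: each ultimately rests on (\textbf{A}.1), (\textbf{A}.2) and the definition of the $\G_*[\Polish]$ topology, and each implicitly uses that convergence of $(G_n,X^{G_n,x^n}[k])$ for every finite $k$ yields convergence in $\G_*[\X^\infty]$. The paper's version is shorter because the continuous mapping theorem absorbs all the coupling bookkeeping you describe (consistency of the noise transport across radii, stability under isomorphisms), at the cost of having to verify continuity of $\Psi_k$ as a map between marked-graph spaces and to justify adjoining independent marks to a weakly convergent sequence. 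Your version makes the spatial locality explicit and produces an a.s.\ coupling, which is slightly stronger and closer in spirit to the techniques the paper uses elsewhere (the Brownian coupling in the proof of Theorem \ref{thm:local-law-cont} and the correlation-decay Lemma \ref{lem:corrdecay-disc}); the price is the extra care you correctly flag in making the transported noises and the ball isomorphisms consistent, and in the approximation of $C_b(\G_*[\X^\infty])$ by functions of $B_r(\,\cdot\,[k])$, which is standard given the metric \eqref{def:G* marked metric} but should be stated.
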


In continuous time, for technical reasons we restrict to uniformly bounded initial conditions. Let $B_r(\R^d)$ denote the centered closed ball of radius $r > 0$ in $\R^d$.

\begin{theorem}[Diffusions] \label{thm:local-law-cont}
Suppose Assumption \ref{assumption:C} holds, and let $r \in (0,\infty)$. Then the following hold:
\begin{enumerate}[label=(\roman*)]
\item If $\sup_{n \in \Nmb} \sup_{v \in G_n} |x^n_v| \le r$ and $(G_n,x^n) \to (G,x)$ in $\G_*[B_r(\R^d)]$ as $n \to \infty$, then $P^{G_n,x^n} \to P^{G,x}$ in $\P(\G_*[\C])$.
\item For every $M \in \P(\G_*[B_r(\R^d)])$, the measure $P[M]$ is well defined. Furthermore, if $M_n \to M$  in $\P(\G_*[B_r(\R^d)])$ as $n \to \infty$, then $P[M_n] \to P[M]$  in $\P(\G_*[\C])$.
\end{enumerate}
\end{theorem}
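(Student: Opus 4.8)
\textbf{Proof proposal for Theorem \ref{thm:local-law-cont}.}

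The plan is to mirror the structure of the discrete-time argument (Theorem \ref{thm:local-law-disc}), reducing everything to part (i), and then deducing (ii) by a standard disintegration/Skorokhod argument. For part (i), the key device is a \emph{localization} of the SDE system to finite balls, exploiting the fact that local convergence $(G_n,x^n)\to(G,x)$ in $\G_*[B_r(\R^d)]$ means that for each $k$, eventually $B_k(G_n)$ is isomorphic to $B_k(G)$ with marks converging uniformly. Concretely, fix $k$ and let $\varphi_n : B_k(G_n)\to B_k(G)$ be the eventual isomorphisms with $\max_{v\in B_k(G_n)}|x^n_v - x_{\varphi_n(v)}|\to 0$. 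I would compare the true trajectory $(X^{G_n,x^n}_v)_{v\in B_{k}(G_n)}$ with the trajectory of an auxiliary system run only on the finite rooted graph $B_k(G_n)$ (with some boundary convention, e.g.\ reflecting the dynamics at depth-$k$ vertices by treating them as having no further neighbors, or freezing them). A Gronwall estimate using the Lipschitz bound in Assumption \ref{assumption:C}, together with a propagation-of-information bound showing the influence of the boundary at depth $k$ on the root reaches only after ``graph distance'' many steps — here one needs a moment bound on $\|X_v\|_{*,T}$ uniform over $v$ — controls this error on $[0,T]$, uniformly in $n$, by something that $\to 0$ as $k\to\infty$.

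The main steps, in order, are: (1) Establish uniform-in-$v$ (and in the graph) second-moment bounds $\sup_v \E\|X^{G,x}_v\|_{*,T}^2 \le C(r,T)$, using the bounded initial conditions $|x_v|\le r$, the linear-growth part of Assumption \ref{assumption:C}, and a Gronwall argument that tracks the neighbor averages — this is where boundedness of initial data is genuinely used, since the degrees are unbounded in general and one must be careful that the average over $N_v(G)$ does not amplify the bound. (2) Prove a \emph{finite-range dependence up to small error} estimate: the law of $(X^{G,x}_v)_{v\in B_k(G)}$ restricted to $B_{k/2}(G)$, say, depends on the data $(G,x)$ outside $B_k(G)$ only through an $L^2$-error that decays in $k$, uniformly over graphs with the relevant moment bounds; this follows from coupling the system with one where the exterior is modified arbitrarily and running Gronwall, noting the discrepancy at the root is driven by discrepancies that must travel inward. (3) Combine (1)--(2) with the isomorphisms $\varphi_n$ and the uniform convergence of the marks to couple $X^{G_n,x^n}$ and $X^{G,x}$ on $B_{k/2}$ so that $\E[\sup_{v}\|X^{G_n,x^n}_v - X^{G,x}_{\varphi_n(v)}\|_{*,T}^2]$ is small for $n$ large and $k$ large; since $P^{G_n,x^n}\to P^{G,x}$ in $\P(\G_*[\C])$ is tested against functions depending on bounded-radius neighborhoods (the metric \eqref{def:G* marked metric} is a countable sum of such), this yields weak convergence. (4) For part (ii): write $M_n \to M$ in $\P(\G_*[B_r(\R^d)])$; by Skorokhod representation realize $(G_n,x^n)\to(G,x)$ a.s., apply (i) pathwise to get $P^{G_n,x^n}\to P^{G,x}$, and conclude $P[M_n]\to P[M]$ by dominated convergence using that $(G,x)\mapsto \langle P^{G,x},\varphi\rangle$ is bounded continuous (continuity being exactly (i), measurability following from a monotone-class argument); well-definedness of $P[M]$ is immediate once this map is shown Borel measurable.

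The hard part will be Step (2), the quantitative finite-range-dependence estimate with constants uniform over the (possibly high-degree, random) graphs. The subtlety is that a naive Gronwall bound on the coupling discrepancy picks up a factor from each vertex's neighborhood average, and iterating this inward over $k$ generations could produce a constant growing with the degrees rather than decaying in $k$. The resolution I anticipate is to track the discrepancy through a \emph{weighted} sum over vertices — weighting vertex $v$ by a factor like $\theta^{\mathrm{dist}(v,\o)}$ for suitable $\theta<1$ depending on $K_T$ and $T$ — so that the neighbor-averaging is absorbed into the weight recursion; on a short enough time interval $[0,\delta]$ with $\delta$ depending only on $K_T$, the discrepancy functional is a contraction, and one iterates over $\lceil T/\delta\rceil$ subintervals. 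An alternative, slightly softer route avoiding explicit decay rates: run the comparison on $[0,\delta]$ with $\delta$ small, note that on such an interval information propagates (to leading order in $\delta$) a bounded graph-distance, and bootstrap; this trades a clean exponential bound for a longer but more elementary induction. Either way, the moment bound from Step (1) is the essential input that keeps all constants graph-independent.
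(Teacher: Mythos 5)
Your architecture matches the paper's: a graph-uniform second moment bound (the paper's Lemma \ref{le:momentbound-tightness}, proved exactly as you describe, with boundedness of the initial data entering in the same place), a coupling of the two systems on a ball $B_{k}$ via the eventual isomorphisms, a Gronwall-type estimate showing the boundary influence on inner vertices is negligible for large $k$, and then weak convergence via the truncated metric (the paper uses a $W_1$ distance built from $d_{*,1}$). Part (ii) is handled in the paper without Skorokhod representation: once (i) gives continuity of $(G,x)\mapsto\langle P^{G,x},\varphi\rangle$, the convergence $P[M_n]\to P[M]$ is literally the definition of $M_n\to M$ tested against this bounded continuous function, so your Step (4) is correct but heavier than needed. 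One structural difference: the paper does not introduce a truncated auxiliary system on $B_k(G_n)$ with a boundary convention; it couples $X^{G_n,x^n}$ and $X^{G,x}$ directly by driving the particles in $B_{k+1}(G_n)\cong B_{k+1}(G)$ with the \emph{same} Brownian motions and independent ones outside, which avoids having to analyze a modified dynamics twice.

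The one place where I would push back is your resolution of the ``hard part.'' The difficulty you anticipate --- that iterating the neighbor-averaging inward could produce constants growing with the degrees --- does not actually arise, and the two fixes you propose are each problematic. The weighted-sum idea (weighting vertex $v$ by $\theta^{\mathrm{dist}(v,\o)}$) runs into trouble for graphs of unbounded degree: when you sum the recursion $\Delta_v \lesssim \int \frac{1}{|N_v|}\sum_{u\in N_v}\Delta_u$ against the weights, a single vertex $u$ can appear in the neighbor-averages of arbitrarily many low-degree vertices $v$, so the reweighted recursion picks up a factor $\sum_{v\in N_u}|N_v|^{-1}$, which is not uniformly bounded. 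The ``information propagates a bounded graph-distance on $[0,\delta]$'' heuristic is also false in continuous time --- the influence is instantaneous; only its magnitude is small. The paper's resolution is simpler and degree-free: since Assumption \ref{assumption:C} is Lipschitz in the neighborhood \emph{average}, one bounds $\frac{1}{|N_v|}\sum_{u\in N_v}\Delta_u(s)\le \max_{u:\,d_G(u,v)\le 1}\Delta_u(s)$ pointwise, obtaining $\Delta_v(t)\le \epsilon + C_1\int_0^t\max_{u:\,d_G(u,v)\le 1}\Delta_u(s)\,ds$; iterating this inequality $m=k-d_G(v,\o)$ times expands the max to the $m$-neighborhood (still inside $B_k$) while accumulating an iterated time-integral, yielding $\Delta_v(T)\le \epsilon e^{C_1T}+4M\,(C_1T)^m/m!$, where $4M$ comes from the moment bound. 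The factorial denominator is what kills the boundary term as $k\to\infty$, with no dependence on degrees. I recommend replacing your Step (2) with this max-over-neighborhood iteration; the rest of your plan then goes through.
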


\begin{remark} \label{re:gibbs}
    See Section \ref{se:example graphs} for examples of common locally convergent graph sequences, including \Erdos, configuration models, and random regular graphs.
There are many more examples, and in particular in Section \ref{se:lattices + trees} we briefly discuss the cases where $G$ is a lattice or regular tree, and
$\{G_n\}$ is a growing sequence of subgraphs.
As discussed in Section \ref{subsub-icconv} (see also  Appendix \ref{ap:Gibbsmeasures}),
these immediately also imply convergence of the sequences of initial conditions $\{(G_n,x^n)\}$
beyond the simple case of i.i.d.\ initial positions,
when $x^n$ is a Gibbs measure and the (infinite-volume) Gibbs measure on the limiting graph
is unique.
\end{remark}

\begin{remark} \label{re:local-law-root}
The ``root particle map" $(G,\oSlash, x) \mapsto x_{\oSlash}$ is always continuous, where we recall that $\oSlash$ denotes the root of the graph.
Thus, in the setup of Theorem \ref{thm:local-law-disc}: If a sequence of random variables $\{(G_n,x^n)\}$ converges in law to $(G,x)$ in $\G_*[\X]$, then the root particle $\{X^{G_n,x^n}_{\oSlash}\}$ converges in law in $\X^\infty$ to the root particle $X^{G,x}_{\oSlash}$ of the limiting graph.  Similarly, in the setup of Theorem \ref{thm:local-law-cont}: If $\{(G_n,x^n)\}$ converges in law to $(G,x)$ in $\G_*[B_r(\R^d)]$ for some $r > 0$, then $\{X^{G_n,x^n}_{\oSlash}\}$ converges in law in $\C$ to $X^{G,x}_{\oSlash}$.
\end{remark}

\subsection{Convergence of the global empirical measure to a deterministic limit} \label{se:statements:globalemp}

The analysis of the limit of the empirical measure is more subtle. 
For a finite (possibly disconnected) graph $G$ and initial state $x=(x_v)_{v \in G}$ (in $\X^G$ in the discrete case or $(\R^d)^G$ in the diffusive case), we define the \emph{global empirical measure} 
\begin{equation}
  \label{def-empmeasG}
	\mu^{G,x} := \frac{1}{|G|} \sum_{v \in G} \delta_{X_v^{G,x}}.
\end{equation}
Note that this is a random measure on $\X^\infty$ in the discrete case and $\C$ in the continuous case.
In the mean field setting, that is, when $G_n$ is the complete graph on $n$ vertices and $x^n=(x^n_v)_{v \in G_n}$ are i.i.d.\ (or, more generally, chaotic), it is well known that, under suitable assumptions on the coefficients, $\mu^{G_n,x^n}$ converges to a deterministic measure as $n\rightarrow\infty$, identified as the limit law of any single particle. 
This obviously fails for general graphs. In particular, if $(G_n,x^n) \rightarrow (G,x)$ locally with $G$ and $G_n$ finite for each $n$, then $\mu^{G_n,x^n}$ converges in law to the random measure $\mu^{G,x}$ (see Proposition \ref{pr:empirical-finite}).  
However, we show that the global empirical measure does converge to a deterministic limit under the stronger assumption of \emph{convergence in probability in the local weak sense}, discussed in Section \ref{se:local weak in prob}.
 Recall from Lemma \ref{le:empmeas general} that this mode of convergence implies the convergence of the empirical measure of the particles.  
The following two theorems
   are proved in Section \ref{se:pf:emp-global}.

\begin{theorem}[Discrete time] \label{thm:localprob-disc}
Suppose Assumption \ref{assumption:A} holds, and suppose a sequence of random finite marked graphs $\{(G_n,x^n)\}$ converges in probability in the local weak sense to some random element $(G,x)$ of $\G_*[\X]$.
Then $\{(G_n,X^{G_n,x^n})\}$ converges in probability in the local weak sense to $(G,X^{G,x})$.
In particular, $\{\mu^{G_n,x^n}\}$ converges in probability in $\P(\X^\infty)$ 
to $\L(X^{G,x}_{\oSlash})$, where $\oSlash$ is the root of $G$.
\end{theorem}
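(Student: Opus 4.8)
The plan is to condition on the data $\Fmc_n := \sigma(G_n, x^n)$ and verify directly the defining property (Definition~\ref{conv-marked}) of convergence in probability in the local weak sense: with $\nu_n := \frac{1}{|G_n|}\sum_{v \in G_n}\delta_{\comp_v(G_n, X^{G_n,x^n})}$, one must show $\langle \nu_n, f\rangle \to \E[f(G, X^{G,x})]$ in probability for every $f \in C_b(\G_*[\X^\infty])$. The ``in particular'' assertion then follows at once from Lemma~\ref{le:empmeas general}. (Throughout we use, as is standard for this mode of convergence, that $|G_n| \to \infty$ in probability.) The structural input is that, conditionally on $\Fmc_n$, the dynamics \eqref{eq:random_graph-disc} decouple across the connected components of $G_n$, since distinct components involve disjoint families of the i.i.d.\ noises; hence the conditional law of $\comp_v(G_n, X^{G_n,x^n})$ given $\Fmc_n$ is exactly $P^{\comp_v(G_n, x^n)}$. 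I will control the conditional mean and the conditional variance of $\langle\nu_n, f\rangle$ separately and then apply Chebyshev's inequality conditionally on $\Fmc_n$.

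For the conditional mean, set $\Phi_f(H,y) := \langle P^{(H,y)}, f\rangle = \E[f(H, X^{H,y})]$. Theorem~\ref{thm:local-law-disc}(i) shows that $(H,y)\mapsto P^{(H,y)}$ is continuous, so $\Phi_f \in C_b(\G_*[\X])$, and the decoupling observation gives $\E[\langle\nu_n, f\rangle \mid \Fmc_n] = \frac{1}{|G_n|}\sum_{v}\Phi_f(\comp_v(G_n, x^n))$. Since $\{(G_n, x^n)\}$ converges in probability in the local weak sense to $(G,x)$ and $\Phi_f$ is bounded and continuous, this converges in probability to $\E[\Phi_f(G,x)]$, which equals $\E[f(G, X^{G,x})]$ by the definition \eqref{def:P[M]-disc} of $P[\Lmc(G,x)]$.

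For the conditional variance --- the crux of the argument --- I first replace $f$ by a test function depending only on a bounded neighborhood, exploiting the finite speed of propagation of \eqref{eq:random_graph-disc}: for fixed $w$ and $T$, the truncated trajectory $X^{G,x}_w[T]$ is a measurable function of $(x_u)_{u \in B_T(w)}$ and $(\xi_u(j))_{u \in B_T(w),\, j \le T}$ only, where $B_T(w)$ denotes the radius-$T$ ball about $w$. Since $\{\compu(G_n, X^{G_n,x^n})\}$ converges in law --- this is Theorem~\ref{thm:local-law-disc}(ii) applied to $M_n := \Lmc(\compu(G_n, x^n)) \to \Lmc(G,x)$, noting $\Lmc(\compu(G_n, X^{G_n,x^n})) = P[M_n]$ again by the decoupling --- it is tight, so given $\eps, \eta > 0$ one can choose $k, T \in \N$ and $\tilde f \in C_b(\G_*[\X^\infty])$ such that $\tilde f(H,z)$ depends on $(H,z)$ only through the radius-$k$ ball about the root and the time-$T$ truncations of the marks therein, and such that $\E\bigl|\langle\nu_n, f - \tilde f\rangle\bigr| \le \eps + 2\|f\|_\infty\eta$ for all $n$ (split according to whether $\comp_v(G_n, X^{G_n,x^n})$ lies in a suitable compact set). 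The point is that, given $\Fmc_n$, the variable $\tilde f(\comp_v(G_n, X^{G_n,x^n}))$ is a function of $(\xi_u(j))_{u \in B_r(v),\, j \le T}$ alone, where $r := k+T$; hence $\tilde f(\comp_u(\cdot))$ and $\tilde f(\comp_v(\cdot))$ are conditionally independent given $\Fmc_n$ as soon as the graph distance between $u$ and $v$ exceeds $2r$, and therefore
\[
\Var\bigl(\langle\nu_n, \tilde f\rangle \,\big|\, \Fmc_n\bigr) \;\le\; \frac{4\|f\|_\infty^2}{|G_n|^2}\sum_{v \in G_n}\bigl|B_{2r}(\comp_v(G_n))\bigr| .
\]
To see the right-hand side tends to $0$ in probability, bound $|B_{2r}(\comp_v(G_n))| \le |G_n|$ for the (atypically few) vertices whose $2r$-ball exceeds a threshold $L$, and observe that, since $1_{\{|B_{2r}(\cdot)| > L\}} \in C_b(\G_*)$, the proportion $\frac{1}{|G_n|}\#\{v : |B_{2r}(\comp_v(G_n))| > L\}$ converges in probability to $\PP(|B_{2r}(G)| > L)$, which tends to $0$ as $L \to \infty$ because $G$ is locally finite; together with $|G_n| \to \infty$ this yields $\frac{1}{|G_n|^2}\sum_v|B_{2r}(\comp_v(G_n))| \to 0$ in probability. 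Conditional Chebyshev then gives $\langle\nu_n, \tilde f\rangle - \E[\langle\nu_n, \tilde f\rangle \mid \Fmc_n] \to 0$ in probability; combining with the conditional-mean step applied to $\tilde f$ (whose limit $\E[\tilde f(G, X^{G,x})]$ is within $\eps + 2\|f\|_\infty\eta$ of $\E[f(G, X^{G,x})]$), with the uniform $L^1$ bound on $\langle\nu_n, f - \tilde f\rangle$, and letting $\eps, \eta \downarrow 0$ completes the proof.

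I expect the conditional-variance estimate to be the main obstacle, as it combines two ingredients: the reduction to test functions supported on bounded neighborhoods --- available here precisely because the discrete-time dynamics \eqref{eq:random_graph-disc} have finite range --- and the quenched concentration bound on $\frac{1}{|G_n|^2}\sum_v|B_{2r}(\comp_v(G_n))|$ with no a priori degree bound on $G_n$, where the crude estimate $|B_{2r}| \le |G_n|$ together with the local-weak-convergence control on the fraction of vertices with large neighborhoods is what makes the argument work. The diffusive analogue is harder still, since there the dynamics are not of finite range and the exact conditional independence used above must be replaced by quantitative correlation-decay estimates.
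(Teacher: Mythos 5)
Your proof is correct, and its core ingredients are the same as the paper's: the continuity of $(H,y)\mapsto P^{H,y}$ from Theorem \ref{thm:local-law-disc}(i) to identify the (conditional) mean, localization of the test function to a finite ball around the root, and the finite range of the discrete-time dynamics to decorrelate vertices at graph distance greater than $2(k+T)$. What differs is the packaging. The paper routes the argument through Lemma \ref{le:localweakprob characterization}, verifying the product condition $\E[g_1(\comp_{U_1^n})g_2(\comp_{U_2^n})]\to\E[g_1]\,\E[g_2]$ for two independent uniform roots by combining the correlation-decay bound \eqref{def:corrdecay-unified} with two imported facts: $d_{G_n}(U_1^n,U_2^n)\to\infty$ in probability (cited from \cite{van2020randomII}) and stochastic boundedness of $|B_\ell(\comp_{U_i^n}(G_n))|$. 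You instead verify Definition \ref{conv-marked} directly by a conditional Chebyshev argument, bounding $\Var(\langle\nu_n,\tilde f\rangle\mid G_n,x^n)$ by $\tfrac{4\|f\|_\infty^2}{|G_n|^2}\sum_v|B_{2r}(\comp_v(G_n))|$ and killing this via the two-regime truncation $|B_{2r}|\le L$ versus $|B_{2r}|\le|G_n|$. These are two faces of the same second-moment computation---your sum over close pairs is essentially $\PP(d_{G_n}(U_1^n,U_2^n)\le 2r\mid G_n)$---but your version is self-contained where the paper cites the literature, at the cost of a slightly more delicate approximation step: replacing $f\in C_b(\G_*[\X^\infty])$ by a jointly space- and time-localized $\tilde f$ genuinely requires the tightness/compactness argument you sketch, whereas the paper sidesteps it by fixing the time horizon $k$ at the outset and reducing to Lipschitz test functions, for which $d_*(B_\ell(H,y),(H,y))\le 2^{-\ell}$ gives the spatial truncation for free. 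One shared caveat: both arguments need $|G_n|\to\infty$ in probability, which you flag explicitly and which the paper inherits from the hypotheses of Lemma \ref{le:localweakprob characterization}.
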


\begin{theorem}[Diffusions] \label{thm:localprob-cont}
	Suppose Assumption \ref{assumption:C} holds, and suppose a sequence of random finite marked graphs $\{(G_n,x^n)\}$ converges in probability in the local weak sense to some random element $(G,x)$ of $\G_*[\R^d]$. 
	Assume also that there exits $r \in (0,\infty)$ such that  $x^n_v \in B_r(\R^d)$ a.s.\ for each $v \in G_n$ and $n \in \N$.
	Then $\{(G_n,X^{G_n,x^n})\}$ converges in probability in the local weak sense to $(G,X^{G,x})$.
	In particular, $\{\mu^{G_n,x^n}\}$ converges in probability in $\P(\C)$ 
	to $\L(X^{G,x}_{\oSlash})$, where $\oSlash$ is the root of $G$.
\end{theorem}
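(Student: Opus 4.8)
The plan is to deduce the statement from the characterization of convergence in probability in the local weak sense in Lemma~\ref{le:localweakprob characterization}, reducing everything to an \emph{asymptotic decorrelation} estimate for two independent uniformly random roots, which we will establish using a quenched correlation-decay (finite-speed-of-propagation) estimate for the SDE system \eqref{statements:SDE}. Since $x^n_v \in B_r(\R^d)$ and the ball is closed, all the marked graphs in play, including the limit $(G,x)$, lie in $\G_*[B_r(\R^d)]$, so Theorem~\ref{thm:local-law-cont}(i) is available; and we may assume $|G_n|\to\infty$ in probability, as in the hypothesis of Lemma~\ref{le:localweakprob characterization} (when $|G_n|$ remains bounded there is nothing substantial to prove). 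For $g\in C_b(\G_*[\C])$ set $h_g(H,y):=\langle P^{H,y},g\rangle = \E[g(H,X^{H,y})]$; then $h_g\in C_b(\G_*[B_r(\R^d)])$, with boundedness clear and continuity given by Theorem~\ref{thm:local-law-cont}(i). Applying Lemma~\ref{le:localweakprob characterization} to the \emph{given} convergence of $\{(G_n,x^n)\}$ with test functions $h_{g_1},h_{g_2}$ yields the decorrelation of the quenched means: $\E[h_{g_1}(\comp_{U^n_1}(G_n,x^n))\,h_{g_2}(\comp_{U^n_2}(G_n,x^n))] \to \E[h_{g_1}(G,x)]\E[h_{g_2}(G,x)] = \E[g_1(G,X^{G,x})]\E[g_2(G,X^{G,x})]$, where $U^n_1,U^n_2$ are independent uniform vertices of $G_n$.

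The technical core is a Gronwall-type correlation-decay lemma, uniform over all graphs: if $\hat G$ denotes the induced subgraph of $G$ on the ball $B_\ell(u)$ around a vertex $u$, then $\E[\|X^{G,x}_w - X^{\hat G, x}_w\|_{*,T}^2]\le \rho_T(\ell-\mathrm{dist}(w,u))$ for every $w\in B_\ell(u)$, where $\rho_T(m)\downarrow 0$ as $m\to\infty$ and $\rho_T$ depends only on $T$ and $K_T$ from Assumption~\ref{assumption:C}, not on $G$, $u$, or the (uniformly bounded) initial data. The point is that the \emph{averaged} form $\tfrac1{|A|}\sum$ of the interaction makes the combinatorial factors produced by iterating Gronwall along paths telescope, so that the summed contribution of the vertices at graph distance $m$ is controlled by a degree-independent quantity of the type $(K_T T)^m/m!$. (In the discrete-time setting of Theorem~\ref{thm:localprob-disc} this is replaced by the \emph{exact} statement that $X^{G,x}_v[k]$ is a function only of $x$ and the noises inside $B_k(v)$, which simplifies matters considerably.) Along the way we also use the standard uniform moment bound $\sup_{n,v}\E[\|X^{G_n,x^n}_v\|_{*,T}^p]<\infty$, which, together with tightness of $\{\comp_{U^n_i}(G_n)\}$ in $\G_*$ (from convergence in law in the local weak sense), gives tightness of $\{\comp_{U^n_i}(G_n,X^{G_n,x^n})\}$ in $\G_*[\C]$; and the observation that $|B_\ell^{G_n}(U^n_i)|$ is tight in distribution, since $H\mapsto |B_\ell(H)|$ is an integer-valued continuous observable on $\G_*$ and hence tight under local weak convergence.

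Now fix $g_1,g_2\in C_b(\G_*[\C])$ and $\eps>0$. Choose a compact $\Kmc\subset\G_*[\C]$ carrying all but $\eps$ of the mass of each $\comp_{U^n_i}(G_n,X^{G_n,x^n})$ uniformly in $n$, then $L'$ large enough that $g_1,g_2$ are, on $\Kmc$, determined up to $\eps$ by the marked $L'$-ball about the root (using uniform continuity of $g_i$ on $\Kmc$ and the form of the metric \eqref{def:G* marked metric}), and finally $L\gg L'$. On the event $D_n:=\{d_{G_n}(U^n_1,U^n_2)>L\}$ (which contains the event that $U^n_1,U^n_2$ lie in different components), replace, for $i=1,2$, the trajectories on $B_{L'}(U^n_i)$ by the solution $\tilde X^{(i)}$ of \eqref{statements:SDE} on the induced subgraph $B_{L/2}(U^n_i)$; by the correlation-decay lemma summed over $B_{L'}(U^n_i)$ and the tightness of $|B_{L'}(U^n_i)|$, this changes $g_i(\comp_{U^n_i}(G_n,X^{G_n,x^n}))$ by $o_L(1)$ in $L^1$, uniformly in $n$, and likewise changes $\E[g_i(\cdot)\mid G_n,x^n,U^n_1,U^n_2]$ from $h_{g_i}(\comp_{U^n_i}(G_n,x^n))$ by $o_L(1)$. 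After this replacement, $g_1$ and $g_2$ are, on $D_n$, measurable with respect to the disjoint families of Brownian motions indexed by $B_{L/2}(U^n_1)$ and $B_{L/2}(U^n_2)$ (disjoint because $L/2+L/2=L<d_{G_n}(U^n_1,U^n_2)$), hence conditionally independent given $(G_n,x^n,U^n_1,U^n_2)$. Finally $\PP(D_n^c)=\E\big[|G_n|^{-2}\sum_u|B_L^{G_n}(u)|\big]=\E\big[|G_n|^{-1}\,\E[|B_L^{G_n}(U^n_1)|\mid G_n]\big]\to 0$, since $|B_L^{G_n}(U^n_1)|$ is tight in distribution while $|G_n|\to\infty$ — and it is here that convergence \emph{in probability} (rather than merely in law) in the local weak sense is essential, ensuring these conditional probabilities concentrate. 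Combining the above with the first paragraph gives $\limsup_n\big|\E[g_1(\comp_{U^n_1}(G_n,X^{G_n,x^n}))\,g_2(\comp_{U^n_2}(G_n,X^{G_n,x^n}))] - \E[g_1(G,X^{G,x})]\E[g_2(G,X^{G,x})]\big|=O(\eps)$; letting $\eps\downarrow 0$ and invoking Lemma~\ref{le:localweakprob characterization} shows $\{(G_n,X^{G_n,x^n})\}$ converges in probability in the local weak sense to $(G,X^{G,x})$. The assertion about $\mu^{G_n,x^n}$ is then immediate from Lemma~\ref{le:empmeas general}.

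The main obstacle I anticipate is the uniform-in-graph correlation-decay estimate of the second paragraph, in a form robust enough to be summed over a ball $B_{L'}(U^n_i)$ whose cardinality is not uniformly bounded. Its resolution rests on two features that must be used together: the averaged form of the interaction in Assumption~\ref{assumption:C}, which removes any dependence of $\rho_T$ on the vertex degrees, and the structural consequences of convergence in probability in the local weak sense — tightness of the ball sizes $|B_\ell^{G_n}(U^n)|$ and divergence of $d_{G_n}(U^n_1,U^n_2)$ — which make the otherwise dangerous "large ball / nearby pair" events genuinely negligible.
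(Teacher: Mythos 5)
Your proposal is correct and follows essentially the same route as the paper's proof: reduction via Lemma \ref{le:localweakprob characterization} to a decorrelation statement for two independent uniform roots, a graph-uniform Gronwall correlation-decay estimate exploiting the averaged (hence degree-independent) form of the interaction together with the divergence of $d_{G_n}(U^n_1,U^n_2)$ and the stochastic boundedness of ball sizes, and finally the continuity of $(H,y)\mapsto \lan P^{H,y},g\ran$ supplied by Theorem \ref{thm:local-law-cont}(i) to pass the quenched means to the limit. The one substantive difference is in how the decorrelation is implemented: you localize each root's trajectory to the SDE on the induced subgraph of a ball $B_{L/2}(U^n_i)$ and then get \emph{exact} conditional independence from the disjointness of the driving Brownian families, whereas the paper's Lemma \ref{lem:corrdecay-cont} proves a direct covariance bound by coupling $X$ with two genuinely independent copies $Y,Z$ obtained by swapping the Brownian motions on the half-spaces $\{d(v,A_1)\ge d(v,A_2)\}$ and its complement; the Gronwall iteration and the resulting $(ct)^m/m!$ decay are identical in both versions, so this is a cosmetic rather than structural difference (likewise your compactness-based localization of the test functions $g_i$ could be replaced by the paper's simpler reduction to Lipschitz $f_i$ and the bound $d_*(B_\ell(H,y),(H,y))\le 2^{-\ell}$).
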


Note again that Theorems \ref{thm:localprob-disc} and \ref{thm:localprob-cont} all cover the case of i.i.d.\ initial states $(x^n_v)_{v \in G_n, n \in \N}$ where $G_n$ is any of the examples of Section \ref{se:example graphs}, such as \Erdos, configuration models, or random regular graphs.
Beyond the i.i.d.\ case,
the discussion of Remark \ref{re:gibbs} can be repeated verbatim in the context of convergence in probability in the local weak sense.
In particular, the initial states may be generated by certain families of Gibbs measures, as long as the Gibbs measure on the limiting graph is \emph{unique}; see
Section \ref{subsub-icconv} and Appendix \ref{ap:Gibbsmeasures} for details.

\begin{remark} \label{re:OliReiSto-comparison}
  In the diffusion setting, our Theorem \ref{thm:localprob-cont} is very similar to Theorem 5 and Corollary 1 of \cite{OliReiSto19}, respectively, though we work with different assumptions. The results of \cite{OliReiSto19} allow on the one hand for more general directed and weighted networks than we consider, as well as random media. On the other hand, they work only with uniformly rooted graphs satisfying an exponential growth assumption \cite[Definition 9]{OliReiSto19}, and there is no counterpart to our
  Theorems \ref{thm:local-law-cont} or \ref{thm:CompEmpMeas}. In addition, our setup allows for more general forms of interactions, as well as unbounded drift, non-constant diffusion coefficient, and non-Markovian dynamics. 
Furthermore, although there are some parallels, our proofs are quite different from those of \cite{OliReiSto19}. 
Our proof of Theorem \ref{thm:localprob-cont} is based fundamentally on the fact that convergence in probability in the local weak sense ensures the asymptotic independence of the component graphs $(\comp_{U^n_1}(G_n),\comp_{U^n_2}(G_n))$, when $U^n_1$ and $U^n_2$ are independent random uniformly distributed vertices.
On the other hand, the proof of \cite[Theorem 6]{OliReiSto19}
establishes explicit quantitative estimates, using their assumption that the networks have exponential growth.  
\end{remark}

\subsection{Convergence of the connected component empirical measure}   \label{se:statements:componemp}
  Our next main result illustrates that the behavior of the global empirical measure can be markedly different
  for sequences of graphs that converge  only in law,  
   and not  in probability, in the local weak sense.
Recall from Section \ref{se:example graphs} that, for a finite graph $G_n$, the random graph $\compu(G_n)$ is the connected component of $G_n$ containing a (uniformly) randomly chosen root.
Thus,
\begin{equation}
  \label{def-empmeas}
\mu^{\compu(G_n,x^n)} = \frac{1}{|\compu(G_n)|}\sum_{v \in \compu(G_n)}\delta_{X_v^{G_n,x^n}}
\end{equation}
might be called the \emph{connected component empirical measure}.
The analysis of this random measure is more delicate, so we focus on  two important cases,  the \Erdos \ graph $G_n \sim \Gmc(n,p_n)$ with $np_n \to \parm \in (0,\infty)$ and the configuration model $G_n \sim \CM(n,d(n))$ with $\frac{d_k(n)}{n} \to \rho_k$ for some distribution $\rho \in \P(\N_0)$ with finite nonzero first and second moments, as discussed in Examples \ref{ex:GW} and \ref{ex:unimodularGW} in Section \ref{se:example graphs}, although our proof in Section \ref{se:componemp} in fact applies to a larger
class of graph sequences (that satisfy Condition \ref{cond-graphs} therein). 
In these cases the limit of $\mu^{\compu(G_n,x^n)}$ is  (usually)  a \emph{random} measure (see Remark \ref{rem-randomemp}).   
This is intuitively clearest in the subcritical regime, where the limit tree is almost surely finite. 
Theorem \ref{thm:CompEmpMeas} below explains this precisely.
Note that the fact $\mu^{\compu(G_n,x^n)}$ converges to a \emph{random} limit does not contradict Theorem \ref{thm:localprob-disc} or Theorem \ref{thm:localprob-cont}. On the one hand, the sequence of connected component graphs $\{\compu(G_n)\}$ has the same limit as $\{G_n\}$ in the sense of local convergence in distribution, as in Definition \ref{def:localweakdist}. In addition, the sequence $\{G_n\}$ converges \emph{in probability} in the local weak sense, in the sense of Definition \ref{def:localweakprob}. But, on the other hand, the sequence $\{\compu(G_n)\}$ does not converge in probability in the local weak sense in general, and thus Theorems \ref{thm:localprob-disc} and \ref{thm:localprob-cont} do not apply to the graph sequence $\{\compu(G_n)\}$.

We will make the following assumption on the  initial conditions or, equivalently, distribution of marks on the random graphs $(G,x)$ and $\{(G_n,x^n)\}$:

\begin{assumption}{\textbf{B}} \label{assumption:B}
For any Borel set $A \subset \G_*$  with $\PP(G \in A) > 0$, and any random induced subgraph sequence $H_n \subset G_n$ that converges in probability in the local weak sense to a random element of $\G_*$ that has law $\L(G \,|\, G \in A)$,  the marked random graph $(H_n, x^n_{H_n})$ converges in probability in the local weak sense to 
a   marked random graph that has law   $\L((G,x)\,|\,\, G \in A)$.
\end{assumption}

Assumption \ref{assumption:B} holds, for example, when the initial states are i.i.d. More generally, it holds if $x^n$ and $x$ are given by $(\psi,\lambda)$-Gibbs measures on $G_n$ and $G$, respectively, as discussed in Section \ref{subsub-icconv}, as long as the (infinite-volume) Gibbs measure is unique in the
sense that $\PP(G \in \mathcal{U}_{\psi,\lambda})=1$, with $\mathcal{U}_{\psi,\lambda}$ as in \eqref{def:GibbsU}. See Corollary \ref{co:assmpB-Gibbs} for a proof. 

Recall in the following the \Erdos \ model and configuration models and their local limits, described in Examples \ref{ex:GW} and \ref{ex:unimodularGW}, respectively.

\begin{theorem} \label{thm:CompEmpMeas}
Suppose $G$ and $G_n$ satisfy one of the following: 
\begin{enumerate}[label=(\roman*)]
\item $G = \tree \sim \mathrm{UGW}(\mathrm{Poisson}(\parm))$ and $G_n \sim \Gmc(n,p_n)$ with $np_n \rightarrow \parm$, for some $\parm \in (0,\infty)$.
\item $G = \tree \sim \mathrm{UGW}(\rho)$ and $G_n \sim \CM(n,d(n))$,  for a graphic sequence $d(n)=(d_1(n),\dotsc,d_n(n))$ that is well-behaved in the following sense:
there exists $\rho \in \P(\N_0)$ with 
nonzero first moment and $\rho_2<1$ such that 
\[ \frac{d_k(n)}{n} \to \rho_k \mbox{ for each  } k \in \N_0 \quad \mbox{  and  } \quad \frac{1}{n}\sum_{k=1}^n k^2d_k(n) \to \sum_{k \in \N_0}k^2\rho_k < \infty,
\]
    and furthermore,  there exists $\delta > 0$ such that for all $n$, $d_k(n)=0$ whenever $k \ge n^{1/4-\delta}$.
\end{enumerate}
In the discrete-time case, suppose Assumption \ref{assumption:A} holds, write $\base=\X$ and
$\pathspace = \X^\infty$, and  let $X^{G_n,x^n}$ and $X^{G,x}$ be 
defined via \eqref{eq:random_graph-disc}.
In the  diffusive case, suppose Assumption \ref{assumption:C} holds, write $\base=\R^d$ and
$\pathspace = \C$, and let $X^{G_n,x^n}$ and $X^{G,x}$ be defined by \eqref{statements:SDE}.  
In both cases, 
assume the random initial states $x$ and $\{x^n\}$  are such that Assumption \ref{assumption:B} holds.
Then the $\P(\base_\infty)$-valued connected component empirical measure $\mu^{\compu(G_n,x^n)}$ defined in \eqref{def-empmeas} converges in law to the (random) empirical measure $\widetilde\mu^{\Tmc,x}$ defined by
\begin{align*}
\widetilde\mu^{\Tmc,x} = \begin{cases}
\mu^{\Tmc,x} &\text{on } \{|\Tmc| < \infty\} \\
\Lmc(X^{\Tmc,x}_{\oSlash} \, | \, |\Tmc| = \infty) &\text{on } \{|\Tmc|=\infty\}.
\end{cases}
\end{align*}
\end{theorem}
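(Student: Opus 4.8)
The plan is to reduce the connected-component empirical measure to a quantity governed by local weak convergence, then exploit the two-vertex asymptotic independence characterization of convergence in probability in the local weak sense (Lemma~\ref{le:localweakprob characterization}) together with Theorems~\ref{thm:localprob-disc}--\ref{thm:localprob-cont} applied to suitable conditioned subgraph sequences. First I would recall the key structural fact about $\Erdos$ graphs and these configuration models (to be isolated as Condition~\ref{cond-graphs}): with probability tending to one, $G_n$ has a unique ``giant'' component whose size is $\sim \alpha n$ for a deterministic $\alpha = \alpha(\parm)$ or $\alpha(\rho) \in [0,1)$ (with $\alpha = 0$ in the subcritical/critical regime), equal to the survival probability $\PP(|\Tmc| = \infty)$, while all other components are of size $o(n)$. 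Consequently, conditioning on a uniformly random root $U^n$ landing in the giant component versus a finite component partitions the behavior: on the event $\{U^n \in \text{giant}\}$, which has asymptotic probability $\alpha$, the rooted component $\compu(G_n)$ converges in the local weak sense to $\Lmc(G \mid |\Tmc| = \infty)$, and on $\{U^n \notin \text{giant}\}$ it converges to $\Lmc(G \mid |\Tmc| < \infty)$.

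The core of the argument is to show that the random measure $\mu^{\compu(G_n,x^n)}$ concentrates, conditionally on which ``type'' of component the root lands in. The natural tool is the second-moment/variance method: for a bounded continuous test function $\varphi$ on $\base_\infty$, write $\langle \mu^{\compu(G_n,x^n)}, \varphi\rangle = \frac{1}{|\compu(G_n)|}\sum_{v \in \compu(G_n)} \varphi(X_v^{G_n,x^n})$, and observe that $\varphi(X_v^{G_n,x^n})$ depends on $(G_n, x^n)$ only through the rooted marked component $\comp_v(G_n, x^n)$ via a continuous functional (by Theorems~\ref{thm:local-law-disc}/\ref{thm:local-law-cont}, truncating time to finite horizon as needed and approximating by bounded-radius neighborhood functionals). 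Thus I would analyze, conditionally on $G_n$, the behavior when one resamples the root: if $U^n_1, U^n_2$ are i.i.d.\ uniform on the giant component, then $(\comp_{U^n_1}(G_n, x^n), \comp_{U^n_2}(G_n, x^n))$ are asymptotically independent with the right marginal law $\Lmc((G,x) \mid |\Tmc|=\infty)$ --- this is exactly where Assumption~\ref{assumption:B} enters, giving that $H_n = \text{giant}(G_n)$ with marks $x^n_{H_n}$ converges in probability in the local weak sense to $\Lmc((G,x)\mid |\Tmc|=\infty)$, hence by Lemma~\ref{le:localweakprob characterization} the two-vertex asymptotic independence holds, and then Theorem~\ref{thm:localprob-disc}/\ref{thm:localprob-cont} upgrades this to the trajectory level. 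This forces $\langle \mu^{\text{giant}(G_n,x^n)}, \varphi\rangle \to \langle \Lmc(X^{\Tmc,x}_{\o} \mid |\Tmc|=\infty), \varphi\rangle$ in probability. Combining the event $\{U^n \in \text{giant}\}$ (asymptotic probability $\alpha$) with the complementary event, where on finite components we instead directly use that $\compu(G_n, x^n)$ conditioned on $\{|\compu(G_n)| < \infty\text{-ish}\}$ converges in law to $\Lmc((G,x)\mid |\Tmc| < \infty)$ and the component is finite so $\mu^{\compu(G_n,x^n)} = \mu^{\compu(G_n),x^n}$ is a genuinely random (component-dependent) object converging in law to $\mu^{\Tmc,x}$ on $\{|\Tmc|<\infty\}$, I would assemble the mixture $\widetilde\mu^{\Tmc,x}$ stated in the theorem, using that $|\compu(G_n)| \in \{\text{finite component}, \text{giant}\}$ with the right split.

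Making the ``finite vs.\ giant'' dichotomy rigorous and quantitative is the main obstacle. Several technical points must be handled: (a) in the subcritical/critical case $\alpha = 0$, the giant component event is vacuous and one must only control finite components, which is cleaner; (b) in the supercritical case one needs not just the size $\sim \alpha n$ of the giant but also that the \emph{rooted} component law on the giant genuinely converges in probability in the local weak sense to the conditioned limit --- for $\Erdos$ this is classical via exploration/duality (the subcritical dual of the supercritical graph), and for the configuration model one needs the analogous size-biasing and the $n^{1/4-\delta}$ degree truncation plus the finite-second-moment hypothesis to control second moments of component sizes and to rule out a second macroscopic component; (c) the passage from a bounded-radius neighborhood functional to the full trajectory functional $\varphi(X^{G_n,x^n}_v)$ requires the stability estimate underlying Theorems~\ref{thm:local-law-disc}/\ref{thm:local-law-cont}, uniformly enough to commute with the averaging over $v$ --- in the diffusive case this is where the bounded-initial-condition hypothesis $x^n_v \in B_r(\R^d)$ is used. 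I would organize the graph-theoretic inputs (uniqueness and size of the giant, negligibility of the second-largest component, convergence of the conditioned rooted laws) into a single Condition~\ref{cond-graphs}, verify it for the two models in an appendix (Appendix~\ref{ap:condgraphs}), and then give the probabilistic core of the proof --- the variance computation plus the two applications of Lemma~\ref{le:localweakprob characterization} and Theorems~\ref{thm:localprob-disc}/\ref{thm:localprob-cont} --- at the level of a general graph sequence satisfying that condition.
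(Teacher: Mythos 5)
Your proposal is correct and follows essentially the same route as the paper: decompose according to whether the uniform root lands in the giant component, isolate the graph-theoretic inputs (unique giant of size $\sim s_G n$, local-weak-in-probability convergence of the complement to the conditioned law) as a general condition verified separately for the two models via duality, use Assumption \ref{assumption:B} to lift this to marked graphs, and then apply Theorems \ref{thm:localprob-disc}/\ref{thm:localprob-cont} on the giant and the finite-graph empirical-measure continuity (Proposition \ref{pr:empirical-finite}) on the complement before assembling the mixture. The only cosmetic difference is that you sketch the two-vertex variance argument explicitly on the giant component, whereas the paper invokes Theorems \ref{thm:localprob-disc}/\ref{thm:localprob-cont} as a black box and obtains the convergence of the rooted law on the giant by algebraically subtracting the non-giant contribution from the whole-graph limit.
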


In fact, as mentioned earlier, in Theorem \ref{thm:GenCompEmpMeas} of Section \ref{se:componemp} we establish an analogous result that holds for a general class of graph sequences
that satisfy Condition \ref{cond-graphs} stated therein. 
Theorem \ref{thm:CompEmpMeas} follows as an immediate consequence of this more general result and Proposition \ref{prop-graphs}, which verifies this condition for the two graph sequences stated in the theorem.
The assumptions on the degree sequence in case (ii) of Theorem \ref{thm:CompEmpMeas} stem from the seminal work of \cite{MolloyReed1998size} on the behavior of the largest connected component in $\CM(n,d(n))$, and in particular, the case of $\rho_2 = 1$ is subtle; see Remark \ref{rem-MolloyReed} for further discussion.

\begin{remark}
  \label{rem-randomemp}
  Note in Theorem \ref{thm:CompEmpMeas} that $\mu^{\tree,x}$ is non-random if and only if $|\tree|=\infty$ almost surely, which never occurs in case (i) but can occur in
  case (ii). 
In case (ii), define 
\begin{equation} \label{def-parm}
\parm := \frac{\sum_{k=0}^\infty k(k-1)\rho_k}{\sum_{k=0}^\infty k\rho_k}.  
\end{equation} 
Interestingly, in both cases, 
in the subcritical regime $\parm \le 1$, we have $|\tree|<\infty$ almost surely (see, e.g., \cite[Theorems 2.1.2 and 2.1.3]{durrett2007random}), and the limit $\L(\mu^{\Tmc,x})$ of $\Lmc(\mu^{\compu(G_n,x^n)})$ may be nonatomic, although its support is a set of discrete measures. 
Note that $\L(\mu^{\Tmc,x})$ is an element of $\Pmc(\Pmc(\X^\infty))$ in the discrete case or $\P(\P(\C))$ in the continuous case. 
On the other hand, in the supercritical regime $\parm > 1$, the measure $\L(\widetilde\mu^{\Tmc,x})$ always has an atom with mass
$\Pmb(|\Tmc|=\infty) >0$ (see, e.g., \cite[Theorem 2.1.4]{durrett2007random}) at the point $\Lmc(X^{\Tmc,x}_{\oSlash} \, | \, |\Tmc| = \infty)$. 
This mass could be $1$ when $\rho_1=0$ in case (ii), and tends to $1$ as $\parm \rightarrow\infty$ in case (i), that is, as the graphs become increasingly dense.
\end{remark}

\subsection{Lattices and trees} \label{se:regtree}

In this section we highlight what can go wrong for less homogeneous graph sequences. Suppose $\Tmb^d_n$ is the $d$-regular tree of height $n$, for $d \ge 2$. That is, all vertices except the leaves have degree $d$, and all leaves are a distance $n$ from the root $\oSlash$. This tree has $d(d-1)^
{h-1}$ vertices at distance $h$ from the root, and there are
\[
|\Tmb^d_n| = 1 + d\sum_{h=0}^{n-1}(d-1)^h = 1 + d\frac{(d-1)^n - 1}{d - 2}
\]
vertices in total. As $n\rightarrow\infty$, the fraction of vertices which are leaves approaches $(d-2)/(d-1)$. That is, a macroscopic proportion of vertices are leaves, and this greatly influences the behavior of the empirical measure $\mu^{\Tmb^d_n,x^n}$. Particles at different heights behave differently, and the root particle is the only particle at height zero. Hence, one cannot expect the empirical measure to converge to the same limit law as the root particle, and we show in Proposition \ref{pr:regtree} that indeed $\lim_{n\to\infty}\mu^{\Tmb^d_n,x^n} \neq \lim_{n\to\infty}\Lmc(X^{\Tmb^d_n,x^n}_{\oSlash})$. The point is that we should expect from Theorem \ref{thm:localprob-disc} that $\mu^{\Tmb^d_n,x^n}$ behaves not like $\Lmc(X^{\Tmb^d_n,x^n}_{\oSlash})$ but rather like $\Lmc(X^{\Tmb^d_n,x^n}_{U_n})$, where $U_n$ is a \emph{uniformly random} vertex in $\Tmb^d_n$. In this situation, the local limits of $(\Tmb^d_n,U_n)$ and $(\Tmb^d_n,\oSlash)$ are quite different; the latter is the infinite $d$-regular tree, whereas the former is the so-called $d$-\emph{canopy tree} defined in Section \ref{se:lattices + trees}. 

On the other hand, in Section \ref{se:lattices + trees} we show that for lattices the story is simpler. If $\Z^d$ denotes the $d$-dimensional integer lattice and $\Z^d_n = \Z^d \cap [-n,n]^d$, then again particles at vertices of different distances from the origin behave quite differently. But the graph $\Z^d_n$ grows much more slowly than the tree, and the ``boundary" vertices occupy a vanishing fraction of the graph as $n\to\infty$. More precisely, we have here that $(\Z^d_n,0)$ and $(\Z^d_n,U_n)$ both converge locally to $(\Z^d,0)$, where here $U_n$ is a uniformly random vertex in $\Z^d_n$. Hence, in this case,  $\lim_{n\to\infty}\mu^{\Z^d_n,x^n} = \lim_{n\to\infty}\Lmc(X^{\Z^d_n,x^n}_0) = \Lmc(X^{\Z^d,x^n}_0)$, as we show in Proposition \ref{pr:lattice} for i.i.d.\ initial states $x^n$.

Along the way to proving the above claims for lattices and trees, Section \ref{se:nonrandom} will also develop the more general principles of propagation of empirical field convergence (Section \ref{se:empfield}) and propagation of ergodicity (Section \ref{se:propergo}) on a fixed graph.

\section{Local convergence of particle systems} \label{se:locallimits}

In this section we formalize and prove Theorems \ref{thm:local-law-disc} and \ref{thm:local-law-cont}, along the way proving that the measures $P[M]$ defined in Sections \ref{se:disc-setup} and \ref{se:cont-setup} are well defined. Recall from Section \ref{se:background-localmarkedgraphs} the definitions of the spaces of rooted connected graphs and marked graphs, and refer to \cite[Lemma 3.4]{Bordenave2016} and Appendix \ref{ap:localconvergence} for the essential facts about these Polish spaces.
We begin with the discrete-time case:

\begin{proof}[Proof of Theorem \ref{thm:local-law-disc}]
We begin with (i), which we prove inductively after noting first that $(G_n,X^{G_n,x^n}(0))=(G_n,x^n) \to (G,x)$ by assumption. Now suppose $(G_n,X^{G_n,x^n}[k])$ converges in law to $(G,X^{G,x}[k])$ for some $k \in \N_0$, where we recall that $X^{G,x}_v[k]=(X^{G,x}_v(0),\ldots,X^{G,x}_v(k))$ denotes the trajectory up to time $k$.
Recalling the structure of the dynamics \eqref{eq:random_graph-disc}, we may write
\begin{align*}
(G_n,X^{G_n,x^n}[k+1]) &= \Psi_k\big(G_n,X^{G_n,x^n}[k],\xi(k+1)\big),
\end{align*}
where $\Psi_k : \G_*[\X^{k+1} \times \nspace] \to \G_*[\X^{k+2}]$ is defined by
\begin{align*}
\Psi_k(H,y[k],\xi(k+1)) &= \big(H, \, \big(\overline{F}^k(y_v[k],y_{N_v(H)}[k],\xi_v(k+1))\big)_{v \in H}\big),
\end{align*}
where $\overline{F}^k : \X^{k+1} \times \SQ(\X^{k+1}) \times \nspace \to \X^{k+2}$ is given,
for $\tily \in {\mathcal X}^{k+1}$, $\tilz_A = (\tilde{z}_v)_{v\in A}\in \SQ(\X^{k+1})$, and $\tile \in \nspace$, by
\begin{align*}
 \overline{F}^k(\tilde{y}, \tilz_A, \tile  )(i) &= \begin{cases}
 F^k(\tily, \tilz_A,\tile) &\text{if } i=k+1, \\
 \tily (i)  &\text{if } i =0,1,\ldots,k, 
 \end{cases}
\end{align*}

Clearly $\overline{F}^k$ is continuous because $F^k$ is, by Assumption \ref{assumption:A}. It is straightforward to check that $\Psi_k$ is thus also continuous. Now, because 
$(\xi_v(k+1))_{v \in V}$ are independent of $X^{G_n,x^n}[k]$, it is easy to deduce from the convergence in law of $(G_n,X^{G_n,x^n}[k])$ to $(G,X^{G,x}[k])$ in $\G_*[\X^{k+1}]$ that we also have the convergence in law of $(G_n,X^{G_n,x^n}[k],\xi(k+1))$ to $(G,X^{G,x}[k],\xi(k+1))$ in $\G_*[\X^{k+1} \times \nspace]$. It then follows from the continuous mapping theorem that $(G_n,X^{G_n,x^n}[k+1])$ converges in law to $(G,X^{G,x}[k+1])$ in $\G_*[\X^{k+2}]$, which completes the proof of (i).

To prove (ii), note that it follows from (i) that the map $\G_*[\X] \ni (G,x) \mapsto \lan P^{G,x}, \varphi \ran \in \R$ is continuous, for each $\varphi \in C_b(\G_*[\X^\infty])$.
From this and the definition \eqref{def:P[M]-disc} it then follows immediately that the measure $P[M]$ is well defined for every $M \in \P(\G_*[\X])$, and that $P[M_n] \to P[M]$ in $\P(\G_*[\X^\infty])$ whenever $M_n \to M$ in $\P(\G_*[\X])$.
This completes the proof.
\end{proof}

The proof of Theorem \ref{thm:local-law-cont},  the continuous-time case, is more involved,
and we break it down into two steps. First, we prove a version of a standard second moment estimate, which for our purposes must be uniform in the choice of graph. Then, the main line of the proof is an adaptation of standard Lipschitz-based stability arguments to the setting of $\G_*[\C]$. Recall in the following that $B_\ell(\R^d)$ denotes the closed ball of radius $\ell \in (0,\infty)$:

\begin{lemma} \label{le:momentbound-tightness}
  Suppose Assumption \ref{assumption:C} holds. 
  Then, for each $r,T \in  (0,\infty)$,
\begin{align}
\sup_{(G,x) \in \G_*[B_r(\R^d)]}\sup_{v \in G}\E\left[\sup_{t \in [0,T]}|X^{G,x}_v(t)|^2\right] < \infty. \label{le:def:momentbound}
\end{align}
\end{lemma}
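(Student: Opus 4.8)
The plan is to bound the second moment uniformly in the graph by a standard Gr\"onwall-type argument, taking care that the interaction term is controlled by the \emph{average} over neighbors rather than a sum, so that the number of neighbors plays no role. Fix $r,T \in (0,\infty)$ and a marked graph $(G,x) \in \G_*[B_r(\R^d)]$. For brevity write $X_v := X^{G,x}_v$, and set
\[
m_v(t) := \E\left[\sup_{s \in [0,t]}|X_v(s)|^2\right], \qquad M(t) := \sup_{v \in G} m_v(t).
\]
The first task is to show $M(t) < \infty$ for all $t \le T$; a priori this is not obvious since $G$ may be infinite. One way to handle this cleanly is to first work on each finite ball $B_k(G)$ with a frozen (bounded) boundary condition, obtain the estimate there with constants independent of $k$ and of the boundary values, and then pass to the limit using the pathwise uniqueness and the stability/approximation facts underlying Theorem \ref{th:uniqueness-hominfSDE}; alternatively one argues directly, justifying the finiteness a posteriori via a localization (stopping time) argument. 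Either way, the substantive content is the differential inequality below.

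Starting from the integral form of \eqref{statements:SDE}, for $t \le T$ write
\[
X_v(t) = x_v + \int_0^t b(s,X_v,X_{N_v(G)})\,ds + \int_0^t \sigma(s,X_v,X_{N_v(G)})\,dW_v(s).
\]
Using $|x_v| \le r$, the elementary inequality $|a+b+c|^2 \le 3(|a|^2+|b|^2+|c|^2)$, the Cauchy--Schwarz inequality for the drift term, and the Burkholder--Davis--Gundy inequality for the stochastic integral, we get a constant $C = C(T)$ with
\[
m_v(t) \le C\left(r^2 + \E\int_0^t |b(s,X_v,X_{N_v(G)})|^2\,ds + \E\int_0^t |\sigma(s,X_v,X_{N_v(G)})|^2\,ds\right).
\]
Now apply the Lipschitz bound of Assumption \ref{assumption:C} around the base point $(s,0,(0)_{v \in N_v(G)})$: for each $s \le t$,
\[
|b(s,X_v,X_{N_v(G)})|^2 + |\sigma(s,X_v,X_{N_v(G)})|^2 \le C\left(g(s) + \|X_v\|_{*,s}^2 + \Big(\frac{1}{|N_v(G)|}\sum_{u \in N_v(G)}\|X_u\|_{*,s}\Big)^2\right),
\]
where $g(s) := |b(s,0,(0))|^2 + |\sigma(s,0,(0))|^2$ is integrable on $[0,T]$ by Assumption \ref{assumption:C}, and with the convention that the average is $0$ when $v$ is isolated. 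By Jensen's inequality the squared average of $\|X_u\|_{*,s}$ is at most the average of $\|X_u\|_{*,s}^2$, whose expectation is at most $M(s)$. Taking expectations, using $\E\|X_v\|_{*,s}^2 = m_v(s) \le M(s)$, and integrating,
\[
m_v(t) \le C\left(r^2 + \int_0^T g(s)\,ds\right) + C\int_0^t M(s)\,ds.
\]
The right-hand side is independent of $v$, so taking the supremum over $v \in G$ gives
\[
M(t) \le C_1 + C\int_0^t M(s)\,ds, \qquad t \in [0,T],
\]
with $C_1 = C(r^2 + \int_0^T g(s)\,ds)$ depending only on $r$ and $T$ (through the coefficients). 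Provided $M$ is locally bounded — which is where the finite-ball approximation or localization is needed — Gr\"onwall's inequality yields $M(T) \le C_1 e^{CT}$, a bound depending only on $r$ and $T$ and not on $(G,x)$. Taking the supremum over $(G,x) \in \G_*[B_r(\R^d)]$ gives \eqref{le:def:momentbound}.

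\textbf{Main obstacle.} The genuinely delicate point is \emph{not} the Gr\"onwall estimate itself but justifying that $M(t)$ is finite (hence that Gr\"onwall applies) uniformly over possibly infinite graphs: one cannot naively take expectations and sup over $v$ until one knows the quantities are finite. I expect the cleanest route is to run the argument first on finite truncations $B_k(G)$ with arbitrary bounded boundary data, where $M(t) < \infty$ is automatic since there are finitely many vertices, obtain the bound with $k$-independent constants, and then invoke the strong-solution/stability theory behind Theorem \ref{th:uniqueness-hominfSDE} to pass $k \to \infty$; alternatively, a stopping-time localization $\tau_R := \inf\{t : \sup_v |X_v(t)| \ge R\}$ combined with Fatou works, but requires a little more care because the supremum is over infinitely many vertices. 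Everything else is routine BDG/Jensen/Gr\"onwall bookkeeping, and the key structural feature that makes it work — the interaction entering through a \emph{normalized} average — is exactly what renders the constants independent of the degrees and hence of the graph.
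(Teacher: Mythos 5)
Your argument is correct and is essentially the paper's proof: the same normalized-average observation, a Doob/BDG maximal inequality, Jensen on the neighbor average, and two applications of Gr\"onwall, yielding constants depending only on $r$, $T$, and $K_T$. The a priori finiteness of $\sup_{v\in G}\E[\|X_v\|_{*,t}^2]$ that you flag is indeed glossed over in the paper's two-line proof, and your proposed finite-truncation or localization fix is a reasonable way to close that gap.
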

\begin{proof}
Recall the notation $\|x\|_{*,t} = \sup_{0 \le s \le t}|x(s)|$ for $x \in \C$.
Fix $T \in (0,\infty)$ and $(G,x) \in \G_*[B_r(\R^d)]$. A standard argument using Assumption \ref{assumption:C}, Doob's inequality, and Gronwall's inequality yields, for each $v \in G$,
\begin{align*}
\E\left[\|X_v^{G,x}\|_{*,t}^2\right] &\le C\left(1 + |x_v|^2 + \frac{1}{N_v(G)}\sum_{u \in N_v(G)}\E\left[\int_0^t\|X^{G,x}_u\|_{*,s}^2ds\right]\right),
\end{align*}
where the constant $C$  depends only on $T$ and the growth constant $K_T$ of Assumption \ref{assumption:C}. In particular,
$C$ does not depend on $(G,x)$ or $v$. Hence, recalling that $x \in B_r(\R^d)$,  it follows that 
\begin{align*}
\sup_{v \in G}\E\left[\|X_v^G\|_{*,t}^2\right] &\le C\left(1 + r^2 + \int_0^t \sup_{v \in G}\E\left[\|X_v^G\|_{*,s}^2\right] ds\right),
\end{align*}
and we complete the proof of \eqref{le:def:momentbound} by another application of Gronwall's inequality.
\end{proof}

\begin{proof}[Proof of Theorem \ref{thm:local-law-cont}]
We prove only (i), because (ii) follows from (i) exactly as in the last paragraph of the proof of Theorem \ref{thm:local-law-disc}, except with $\X$ and $\X^\infty$ replaced by $\R^d$ and $\C$, respectively. 
We prove convergence of $(G_n,X^{G_n,x^n})$ to $(G,X^{G,x})$ by coupling the two systems and using the Lipschitz assumption.
It may be helpful here to recall the notation introduced in Section \ref{se:background-localmarkedgraphs} for marked graph convergence. In particular, $B_k(G)$ denotes the ball of radius $k \in \N$ around the root in a rooted graph $G$.
While $(G,x)$ denotes not a marked graph but rather an isomorphism class thereof, we use the same notation to denote an arbitrary representative, and similarly for $(G_n,x^n)$ for each $n$. We may further assume without loss of generality that the vertex sets of $G_n$ and $G$ are contained in $\N$. 

To prove the convergence of $(G_n,X^{G_n,x^n})$ to $(G,X^{G,x})$ in $\G_*[\C]$ it suffices to prove the convergence of $(G_n,X^{G_n,x^n}[T])$ to $(G,X^{G,x}[T])$ in $\G_*[\C_T]$ for each $T \in (0,\infty)$. Thus, we fix $T \in (0,\infty)$ for the rest of the proof.
Note that by Lemma \ref{le:momentbound-tightness} there exists $M < \infty$ such that
\begin{align}
\sup_{n \in \N}\sup_{v \in G_n}\E\left[\sup_{t \in [0,T]}|X^{G_n,x^n}_v(t)|^2\right] \le M, \quad \text{ and }\quad \sup_{v \in G}\E\left[\sup_{t \in [0,T]}|X^{G,x}_v(t)|^2\right] \le M. \label{pf:localcont-1}
\end{align}

Let $\epsilon > 0$ and $\ell \in \N$. Choose $k \in \N$ large, to be determined later but certainly strictly larger than $\ell$. Choose $N \in \N$ large enough such that for each $n \ge N$ there exists an isomorphism $\varphi_n : B_{k+1}(G) \to B_{k+1}(G_n)$ such that
\begin{align}
\max_{v \in B_{k+1}(G)}| x_v-x^n_{\varphi_n(v)} |^2 \le \frac{\epsilon}{3}. \label{pf:localcont-2}
\end{align}
This is possible of course because we assumed $(G_n,x^n) \to (G,x)$ in $\G_*[B_r(\R^d)]$. Now suppose that $(\Omega,\F,\FF,\PP)$ is a filtered probability space supporting independent $\FF$-Brownian motions $(W_v)_{v \in G}$ and $(B^n_v)_{v \in G_n, \, n \in \N}$. By Theorem \ref{th:uniqueness-hominfSDE}, we may construct a unique strong solution of the SDE system
\begin{align*}
dX_v(t) &= b(t,X_v,X_{N_v(G)})dt + \sigma(t,X_v,X_{N_v(G)})dW_v(t), \ \ X_v(0)=x_v, \ \ v \in G.
\end{align*}
Define $W^n_v = W_{\varphi_n^{-1}(v)}$ for $v \in B_{k+1}(G_n)$ and $W^n_v=B^n_v$ for $v \in G_n \backslash B_{k+1}(G_n)$, and then consider the SDEs
\begin{align*}
 dX^n_v(t) &= b(t,X^n_v,X^n_{N_v(G_n)})dt + \sigma(t, X^n_v,X^n_{N_v(G_n)})dW_v^n(t), \ \ X^n_v(0)=x^n_v, \ \ v \in G_n.
\end{align*}
This way $X \stackrel{d}{=} X^{G,x}$ and $X^n \stackrel{d}{=} X^{G_n,x^n}$ for each $n$.
Define $Y^n_v := X^n_{\varphi_n(v)}$ for $v \in B_k(G)$. Noting that $\{X^n_u : u \in N_{\varphi_n(v)}(G_n)\} = \{Y^n_u : u \in N_{v}(G)\}$ and $W_{\varphi_n(v)}^n = W_v$ for $v \in B_k(G)$,  we find
\begin{align*}
dY^n_v(t) &= b(t,Y^n_v,Y^n_{N_v(G)})dt + \sigma(t,Y^n_v,Y^n_{N_v(G)})dW_v(t), \ \ Y^n_v(0)=x^n_{\varphi_n(v)}, \ \ v \in B_k(G).
\end{align*}
We are now in a position to compare $X_v$ and $Y^n_v$ for $v \in B_k(G)$. To this end, let $\Delta_v^n(t) = \E[ \|X_v - Y^n_v\|_{*,t}^2]$
for $0 \le t \le T$, recalling that $\|\cdot\|_{*,t}$ denotes the supremum norm over $[0,t]$. Then, for $0 \le t \le T$, a standard estimate using the Lipschitz continuity of $b$ and  $\sigma$ from Assumption \ref{assumption:C}, the Cauchy-Schwarz inequality, the It\^{o} isometry and  Doob's inequality yield 
\begin{align*} 
\Delta_v^n(t) &\le 3|x^n_{\varphi_n(v)}-x_v|^2 + \frac12C_1 \int_0^t\Bigg(\Delta_v^n(s) + \frac{1}{|N_v(G)|}\sum_{u \in N_v(G)}\Delta_u^n(s)\Bigg)ds \\
	&\le \epsilon + C_1 \int_0^t \max_{u : \, d_G(u,v) \le 1}\Delta_u^n(s) ds, 
\end{align*} 
where the last inequality uses \eqref{pf:localcont-2},   
$C_1 := 12(T+4)K_T^2$ with $K_T$ equal to the Lipschitz constant of Assumption \ref{assumption:C}, and $d_G$ denotes graph distance in $G$.
 For $d_G(v,\oSlash) < k$, we iterate this inequality $m := k-d_G(v,\oSlash)$ times to reach the boundary of $B_k(G)$:
\begin{align*}
\Delta_v^n(t) &\le  \epsilon + C_1 \int_0^t \max_{u : \, d_G(u,v) \le 1}\Delta_u^n(t_1) \, dt_1 \\
	&\le \epsilon + C_1t\epsilon + C_1^2 \int_0^t\int_0^{t_1} \max_{u : \, d_G(u,v) \le 2}\Delta_u^n(t_2) \, dt_2 \,dt_1 \\
	&\le \cdots \\
	&\le \epsilon\sum_{j=0}^{m-1}\frac{(C_1t)^j}{j!} + C_1^m \int_0^t\int_0^{t_1}\cdots \int_0^{t_{m-1}} \max_{u : \, d_G(u,v) \le m}\Delta_u^n(t_m) \, dt_m \cdots dt_2\,dt_1 \\
	&\le \epsilon e^{C_1T} + 4M\frac{(C_1t)^m}{m!},
\end{align*}
where the final step used $\sup_{n \in \N} \sup_{v \in G}\Delta_v^n(u) \le 4M$, which follows from \eqref{pf:localcont-1}. Note next that $x^m/m!$ is monotonically decreasing in $m$ for $m > x > 0$. Recall that we fixed $\ell \in \N$ at the beginning of the proof. If we choose $k$ so that $k-\ell > C_1 T$, then we get
\begin{align}
\max_{v \in B_{\ell}(G)}\E[\|X^n_{\varphi_n(v)}-X_v\|_{*,T}^2] = \max_{v \in B_{\ell}(G)}\Delta_v^n(T) &\le \epsilon e^{C_1T} + 4M\frac{(C_1 T)^{k-\ell}}{(k-\ell)!}, \label{pf:localcont-3}
\end{align}
using the aforementioned monotonicity of $m \mapsto x^m/m!$ along with the fact that $m = k-d_G(v,\oSlash) \ge k-\ell$ for $v \in B_\ell(G)$.

Now consider the metric $d_{*,1}$ on $\G_*[\C_T]$ defined in Appendix \ref{ap:localconvergence}:
\begin{equation*}
d_{*,1}((H,{y}),(H',{y}')) = \sum_{j=1}^\infty 2^{-j}\left(1 \wedge \inf_{\varphi \in I(B_j(H),B_j(H'))}\frac{1}{|B_j(H)|}\sum_{v \in B_j(H)}\|y_v - y'_{\varphi(v)}\|_{*,T}\right),
\end{equation*}
where $I(A,B)$ denotes the set of isomorphisms between two graphs $A$ and $B$.
We define the $1$-Wasserstein distance on $\P(\G_*[\C_T])$ using this (bounded) metric, namely
\begin{equation*}
W_1(P,Q) := \inf_{\pi}\int d_{*,1}\,d\pi,
\end{equation*}
where the infimum is over those $\pi \in \P(\G_*[\C_T] \times \G_*[\C_T])$ with marginals $P$ and $Q$.
Notice that the above construction of $X^n$ and $X$ produces a coupling of $\Lmc(G_n,X^{G_n,x^n}[T])$ and $\Lmc(G,X^{G,x}[T])$. This yields, for $n \ge N$,
\begin{align*}
& W_1(\Lmc(G_n,X^{G_n,x^n}[T]),\Lmc(G,X^{G,x}[T])) \\
& \quad \le \E\left[d_{*,1}((G_n,X^{G_n,x^n}),(G,X^{G,x}))\right]  \\
& \quad \le 2^{-\ell} + \E\left[\sum_{j=1}^\ell 2^{-j} \frac{1}{|B_j(G)|}\sum_{v \in B_j(G)}\|X^n_{\varphi_n(v)} - X_v\|_{*,T} \right],
\end{align*}
where the second inequality is obtained by bounding each of the terms $j > \ell$ in the summation representation for $d_{*,1}$ by $2^{-j}$, and in each of the terms $j \le \ell$  the infimum over $I(B_j(G),B_j(G_n))$ is estimated by using the particular isomorphism $\varphi_n|_{B_j(G)} \in I(B_j(G),B_j(G_n))$ constructed above, for $n \ge N$. We may bound this further using \eqref{pf:localcont-3} to get
\begin{align*}
& W_1(\Lmc(G_n,X^{G_n,x^n}[T]),\Lmc(G,X^{G,x}[T])) \\
& \quad \le 2^{-\ell} + (1-2^{-\ell}) \left( \max_{v \in B_{\ell}(G)}\E[\|X^n_{\varphi_n(v)}-X_v\|_{*,T}^2] \right)^{1/2} \\
& \quad \le 2^{-\ell} + (1-2^{-\ell})\left(\epsilon e^{C_1T} + 4M\frac{(C_1 T)^{k-\ell}}{(k-\ell)!}\right)^{1/2}.
\end{align*}
To summarize the logic, for each $\epsilon > 0$ and $\ell \in \N$ we have shown that for sufficiently large $k \in \N$ there exists $N \in \N$ such that the above inequality is valid for all $n \ge N$. Sending first $n\to\infty$, next $k\to\infty$,  then $\ell \to \infty$ and finally $\epsilon \to 0$ shows that $\lim_{n\to\infty}W_1(\Lmc(G_n,X^{G_n,x^n}[T]),\Lmc(G,X^{G,x}[T]))$ $= 0$, which completes the proof.
\end{proof}

\section{Correlation decay} \label{se:corrdecay}

Before studying the convergence of empirical measures announced in Sections \ref{se:statements:globalemp} and \ref{se:statements:componemp}, we first develop some basic ideas of correlation decay that will be essential in the proofs.
Our statements of correlation decay are given first for a non-random graph and initial position. This will adapt immediately to the case of a random graph and initial state, but the correlation will be conditional on the realization of the graph and initial state.

\subsection{Correlation decay in discrete time}

The discrete-time case is easy, simply because  two particles at vertices $u$ and $v$  influence each other within the first $k$ units of time only if their graph distance is smaller than $2k$.

\begin{lemma} 	\label{lem:corrdecay-disc}
Suppose Assumption \ref{assumption:A} holds. Let $G$ be a graph and $x=(x_v)_{v \in G} \in \X^G$.
Let $A_1$ and $A_2$ be subsets of $G$. Then, for $k \in \N_0$ and bounded measurable functions $f_i \colon (\X^{k+1})^{A_i} \to \Rmb$, $i=1,2$, we have
\[
\left|\Cov\big(f_1(X^{G,x}_{A_1}[k]),f_2(X^{G,x}_{A_2}[k])\big)\right| \le 2\ind_{\{2k \ge d_G(A_1,A_2)\}} \|f_1\|_\infty \|f_2\|_\infty.
\]
\end{lemma}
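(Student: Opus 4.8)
The plan is to prove a finite-speed-of-propagation property for the dynamics \eqref{eq:random_graph-disc} and then read off the covariance bound by a two-case split. Write $B_k^G(v) := \{u \in G : d_G(u,v) \le k\}$ for the ball of radius $k$ about a vertex $v$. First I would show, by induction on $k$, the claim: for every $v \in G$ and $k \in \N_0$, the truncated trajectory $X_v^{G,x}[k]$ is a measurable function of the initial data $(x_u)_{u \in B_k^G(v)}$ and the noises $(\xi_u(j))_{u \in B_k^G(v),\,1 \le j \le k}$. The base case $k=0$ is immediate since $X_v^{G,x}[0] = x_v$. For the inductive step, \eqref{eq:random_graph-disc} writes $X_v^{G,x}[k+1]$ as a measurable function of $X_v^{G,x}[k]$, of $(X_u^{G,x}[k])_{u \in N_v(G)}$, and of $\xi_v(k+1)$; applying the inductive hypothesis to $v$ and to each neighbour $u \in N_v(G)$ (and using $B_k^G(u) \subseteq B_{k+1}^G(v)$ for such $u$, as well as $v \in B_{k+1}^G(v)$) shows that $X_v^{G,x}[k+1]$ depends only on the initial data on $B_{k+1}^G(v)$ and the noises $(\xi_u(j))_{u \in B_{k+1}^G(v),\, 1 \le j \le k+1}$. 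Note that only measurability of $F^k$ is used here, not continuity.

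Granting the claim, set $B_k^G(A_i) := \bigcup_{v \in A_i} B_k^G(v)$ and $\mathcal{F}_i := \sigma\big(\xi_u(j) : u \in B_k^G(A_i),\ 1 \le j \le k\big)$ for $i=1,2$. Since the initial configuration $x$ is deterministic, the claim gives that $f_i(X_{A_i}^{G,x}[k])$ is $\mathcal{F}_i$-measurable. Now split on the indicator. If $2k \ge d_G(A_1,A_2)$, the right-hand side equals $2\|f_1\|_\infty\|f_2\|_\infty$, and the bound is just the trivial estimate $|\Cov(\cdot,\cdot)| \le |\E[f_1 f_2]| + |\E f_1|\,|\E f_2| \le 2\|f_1\|_\infty\|f_2\|_\infty$. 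If instead $2k < d_G(A_1,A_2)$, I claim $B_k^G(A_1) \cap B_k^G(A_2) = \emptyset$: a common vertex $w$ would satisfy $d_G(w,v_1) \le k$ and $d_G(w,v_2) \le k$ for some $v_i \in A_i$, whence $d_G(v_1,v_2) \le 2k < d_G(A_1,A_2)$, a contradiction. Since the noises $(\xi_u(j))$ are i.i.d.\ by Assumption (\ref{assumption:A}.1), the families indexing $\mathcal{F}_1$ and $\mathcal{F}_2$ are disjoint, so $\mathcal{F}_1$ and $\mathcal{F}_2$ are independent; hence $\Cov\big(f_1(X_{A_1}^{G,x}[k]), f_2(X_{A_2}^{G,x}[k])\big) = 0$, and the asserted bound again holds.

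There is no substantial obstacle here. The only points needing a little care are the index bookkeeping in the induction (in particular, that a neighbour's radius-$k$ ball lies inside the radius-$(k+1)$ ball about $v$), and the elementary measure-theoretic fact that sub-$\sigma$-fields generated by disjointly-indexed sub-families of an independent family are independent; the two-case split and the constant $2$ are then immediate.
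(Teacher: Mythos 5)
Your proof is correct and follows exactly the paper's approach: the paper's own proof simply notes the bound is trivial when $2k \ge d_G(A_1,A_2)$ and that independence of $X_{A_1}[k]$ and $X_{A_2}[k]$ is ``clear from the dynamics'' otherwise, and your finite-speed-of-propagation induction is precisely the formalization of that claim. Nothing to add.
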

\begin{proof}
This bound is trivial for $2k \ge d_G(A_1,A_2)$. 
For $2k < d_G(A_1,A_2)$, 
it is clear from the dynamics \eqref{eq:random_graph-disc} that $X_{A_1}[k]$ and $X_{A_2}[k]$ are then independent.
\end{proof}

Suppose now that $(G,x)$ is a random element of $\G_*[\X]$, and recall from the definitions in Section \ref{se:disc-setup} that the conditional law of $(G,X^{G,x})$ given $(G,x)$ is $P^{G,x}$.
Under Assumption \ref{assumption:A}, if $A_1$ and $A_2$ are $(G,x)$-measurable random rooted subgraphs of $G$, then, for $k \in \N_0$ and bounded measurable functions $g_1,g_2 \colon \G_*[\X^{k+1}] \to \R$, we have
\begin{align}
\left|\Cov\big(g_1(A_1,X^{G,x}_{A_1}[k]),g_2(A_2,X^{G,x}_{A_2}[k]) \,|\, (G,x)\big)\right| \le 2\ind_{\{2k \ge d_G(A_1,A_2)\}} \|g_1\|_\infty \|g_2\|_\infty, \ \ a.s. \label{def:corrdecay-conditional-disc}
\end{align}

\subsection{Correlation decay for the diffusion system}
\label{subs-cordeccont}

Correlation decay is more complicated in the diffusive case, because the influence of a single particle propagates instantaneously throughout the graph. The Lipschitz Assumption \ref{assumption:C} enables a natural coupling argument, given in the following lemma. We denote by $\|f\|_{BL}$ the bounded Lipschitz metric of a real-valued function $f$ defined on a metric space $(\Polish,d)$, which is given by
\begin{align*}
\|f\|_{BL} = \sup_{x \in \Polish}|f(x)| + \sup_{\stackrel{x,y \in \Polish}{x \neq y}}\frac{|f(x)-f(y)|}{d(x,y)}.
\end{align*}
For any metric space $(\Polish,d)$ and any $k \in \N$, we implicitly endow the product space $\Polish^k$ with the $\ell_1$-metric $((x_1,\ldots,x_k),(y_1,\ldots,y_k)) \mapsto \sum_{i=1}^kd(x_i,y_i)$.

\begin{lemma} \label{lem:corrdecay-cont}
Suppose Assumption \ref{assumption:C} holds, and let $r>0$. For each $t \ge 0$, there exists a constant $\cc_t \in \R_+$ such that the following holds: Let $G$ be a graph and $x=(x_v)_{v \in G} \in (\R^d)^G$ with $\sup_{v \in G}|x_v| \le r$.
Let $A_1$ and $A_2$ be finite subsets of $G$. Then, for bounded Lipschitz functions $f_i \colon \C_t^{A_i} \to \R$, $i=1,2$, we have
\[
\left|\Cov\big(f_1(X^{G,x}_{A_1}[t]),f_2(X^{G,x}_{A_2}[t])\big)\right| \le \cc_t(|A_1| + |A_2|) \|f_1\|_{BL} \|f_2\|_{BL} \left(\frac{\cc_t^{\lceil d_G(A_1,A_2)/2 \rceil }}{\lceil d_G(A_1,A_2)/2\rceil !}\right)^{1/2},
\]
where we adopt the conventions that $\cc_t^\infty/\infty ! := 0$ and, as usual, $0!=1$.
\end{lemma}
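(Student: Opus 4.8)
The plan is to decouple the two blocks $X^{G,x}_{A_1}[t]$ and $X^{G,x}_{A_2}[t]$ by comparing the full system with two auxiliary systems that live on disjoint balls around $A_1$ and $A_2$ and are therefore driven by disjoint, independent families of Brownian motions. Write $d := d_G(A_1,A_2)$ and $m := \lceil d/2\rceil$. The edge cases are disposed of first: if some $A_i = \emptyset$ then $f_i$ is constant and $\Cov = 0$; if $d=0$ then $|\Cov| \le 2\|f_1\|_\infty\|f_2\|_\infty$ handles it provided we take $\cc_t \ge 2$; and if $d=\infty$ then $A_1$ and $A_2$ lie in distinct unions of connected components, whose solutions are driven by disjoint noise families and started from (deterministic) data, hence independent, so $\Cov = 0$, consistent with the convention $\cc_t^\infty/\infty! := 0$. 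So assume $d \in \N$ and set $R := m-1 = \lceil d/2\rceil - 1 \ge 0$. Since $A_1, A_2$ are finite and $G$ is locally finite, the sets $S_i := \{v \in G : d_G(v,A_i) \le R\}$ are \emph{finite}, and since $2R \le d-1 < d$ they are disjoint. On a common probability space carrying independent $d$-dimensional Brownian motions $(W_v)_{v\in G}$, let $X = X^{G,x}$ be the strong solution of \eqref{statements:SDE}, and for $i=1,2$ let $X^{(i)}$ be the strong solution of the SDE system on the finite induced subgraph $G[S_i]$ with initial data $x_{S_i}$ and driving noises $(W_v)_{v\in S_i}$, which exists and is pathwise unique by Theorem~\ref{th:uniqueness-hominfSDE}.

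The decoupling step is the observation that $X^{(i)}$ is a measurable functional of $(W_v)_{v\in S_i}$ only; as $S_1 \cap S_2 = \emptyset$, the families $(W_v)_{v\in S_1}$ and $(W_v)_{v\in S_2}$ are independent, hence $f_1(X^{(1)}_{A_1}[t])$ and $f_2(X^{(2)}_{A_2}[t])$ are independent, so $\Cov\big(f_1(X^{(1)}_{A_1}[t]),f_2(X^{(2)}_{A_2}[t])\big) = 0$. Therefore $\Cov\big(f_1(X_{A_1}[t]),f_2(X_{A_2}[t])\big)$ equals the difference of these two covariances; expanding $\Cov(\cdot,\cdot) = \E[\cdot\,\cdot] - \E[\cdot]\,\E[\cdot]$ and using boundedness of the $f_i$ bounds its absolute value by a universal constant times $\|f_2\|_\infty\,\E\big|f_1(X_{A_1}[t]) - f_1(X^{(1)}_{A_1}[t])\big| + \|f_1\|_\infty\,\E\big|f_2(X_{A_2}[t]) - f_2(X^{(2)}_{A_2}[t])\big|$. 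Using the Lipschitz property of $f_i$ with respect to the $\ell_1$-product metric on $\C_t^{A_i}$ and the Cauchy--Schwarz inequality, $\E\big|f_i(X_{A_i}[t]) - f_i(X^{(i)}_{A_i}[t])\big| \le \|f_i\|_{BL}\,|A_i|\,\max_{v\in A_i}\big(\E\|X_v[t] - X^{(i)}_v[t]\|_{*,t}^2\big)^{1/2}$.

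It remains to estimate $\Delta_v(s) := \E\|X_v[s] - X^{(i)}_v[s]\|_{*,s}^2$ for $v\in A_i$, which is where the bulk of the routine work lies. For any $v\in S_i$ with $d_G(v,A_i)\le R-1$, every neighbour of $v$ in $G$ lies in $S_i$, so $X_v$ and $X^{(i)}_v$ satisfy the \emph{same} SDE driven by the same $W_v$ and started from the same $x_v$; the standard estimate used in the proof of Theorem~\ref{thm:local-law-cont} (Lipschitz continuity of $b,\sigma$, the It\^o isometry, Doob's and Gronwall's inequalities) then yields, for such $v$ and all $0\le t$, $\Delta_v(t) \le C_1\int_0^t \max_{u:\,d_G(u,v)\le 1}\Delta_u(s)\,ds$, with $C_1 := 12(t+4)K_t^2$ and \emph{no} additive term since the initial conditions agree. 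Iterating this inequality $R$ times starting from $v\in A_i$, and noting that after $R$ steps the maximum is over $u$ with $d_G(u,A_i)\le R$, i.e.\ over $u\in S_i$, gives $\Delta_v(t) \le \big(\sup_{u\in S_i,\,0\le s\le t}\Delta_u(s)\big)\,\frac{(C_1 t)^R}{R!} \le 4M\,\frac{(C_1 t)^R}{R!}$, where $M=M(r,t)<\infty$ is the uniform second-moment bound of Lemma~\ref{le:momentbound-tightness} applied both to $X$ and to $X^{(i)}$ (whose initial data also lie in $B_r(\R^d)$); when $R=0$ this reads $\Delta_v(t)\le 4M$, which holds trivially. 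Combining the displays with $R=m-1$ and the elementary bound $\frac{(C_1 t)^{m-1}}{(m-1)!} \le \frac{1}{2C_1 t}\,\frac{(2C_1 t)^m}{m!}$ (valid because $m\le 2^{m-1}$ for all $m\ge 1$), and finally choosing $\cc_t$ large enough to dominate $2$, $2C_1 t$ and a constant multiple of $(2M/C_1 t)^{1/2}$ simultaneously, delivers the claimed inequality with exponent $m = \lceil d_G(A_1,A_2)/2\rceil$. The only conceptual point is the independence of the two auxiliary systems via the disjoint noise families; the geometric input is just the choice $R = \lceil d/2\rceil - 1$, the largest radius keeping the two balls disjoint, and the remaining work is the bookkeeping of the iterated stability estimate, which is essentially identical to one already carried out for Theorem~\ref{thm:local-law-cont}.
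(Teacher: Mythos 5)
Your proof is correct, but it constructs the decoupling differently from the paper. The paper keeps the full graph $G$ and builds two auxiliary systems $Y,Z$ on all of $G$ by swapping in independent copies of the Brownian motions according to whether a vertex is closer to $A_1$ or to $A_2$; since each of $Y,Z$ then has \emph{exactly} the law of $X$, the product of marginal expectations is unchanged and only the joint term $\E[f_1f_2]$ needs to be compared, with the coupling error sourced at the ``midline'' between $A_1$ and $A_2$. You instead truncate to the disjoint balls $S_i$ of radius $\lceil d/2\rceil-1$ and solve the SDE on the induced subgraphs, which makes the independence of the two comparison systems completely transparent (disjoint noise index sets, deterministic initial data), but at the cost that $X^{(i)}$ no longer has the law of $X$ — so you must also compare the marginal terms (you do, via the $\Cov-\Cov$ expansion) — and that the error is sourced at the boundary of the ball, where the subgraph dynamics genuinely differ. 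This costs you one level of the Gr\"onwall iteration ($R=m-1$ applications rather than the paper's $m$), which you correctly repair with the elementary bound $\tfrac{(C_1t)^{m-1}}{(m-1)!}\le\tfrac{1}{2C_1t}\tfrac{(2C_1t)^m}{m!}$ and an enlarged constant; the paper instead gets the exponent $m$ directly because the noise-swap does not alter the neighbourhood structure, so the one-step stability inequality remains valid all the way out to distance $\lceil d/2\rceil-1$ with the a priori moment bound invoked only at distance $\lceil d/2\rceil$. Your treatment of the edge cases ($A_i=\emptyset$, $d=0$, $d=\infty$), of the $\ell_1$ product metric in the Lipschitz step, and of the uniform moment bound via Lemma \ref{le:momentbound-tightness} applied to both the full and the truncated systems is all sound, so the argument goes through; the analytical engine — iterated Lipschitz/Doob estimates producing factorial decay in the graph distance — is the same in both proofs.
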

\begin{proof}
  For ease of notation, we omit the $G$ and $x$ from the notation, writing $X=X^{G,x}$, $d=d_G$, and $N_v=N_v(G)$ for $v \in G$.
Also, fix   finite subsets $A_1$ and $A_2$ of $G$. 
The idea behind the proof is to couple $X$ with  two other processes $Y$ and $Z$, which are driven partially by  different collections of Brownian motions.

Recall that $X=X^{G,x}$ defined in \eqref{statements:SDE} is driven by Brownian motions $W = (W_v)_{v \in G}$. 
Let $\ti W := (\ti W_v)_{v \in G}$ be independent copies of $W$. 
Let $Y$  be another particle  system defined as in \eqref{statements:SDE},
but with $X$ replaced with $Y$ and  $W_v$ replaced with
$\ti W_v$ for $v$
such that $d(v,A_1) \ge d(v,A_2)$.  In a similar fashion, let
$Z$ be defined as in \eqref{statements:SDE}, but where $X$ is replaced with $Z$
and  $W_v$ is replaced with $\ti W_v$ for $v$
such that $d(v,A_1) < d(v,A_2)$. Precisely, $(X,Y,Z)$ are defined as the unique solutions of the following sets of equations:
\begin{align*}
X_v(t) & = x_v +  \int_0^t b(s,X_v,X_{N_v}) \, ds  +  \int_0^t \sigma(s,X_v,X_{N_v}) \, dW_v(s), \quad v \in G,
\end{align*}
as well as
\begin{align*}
	Y_v(t) & = x_v +  \int_0^t b(s,Y_v,Y_{N_v}) \, ds +  \int_0^t \sigma(s,Y_v,Y_{N_v}) \, d\ti W_v(s), \quad \mbox{ if } d(v,A_1) \ge d(v,A_2), \\
	Y_v(t) & = x_v +  \int_0^t b(s,Y_v,Y_{N_v}) \, ds  +  \int_0^t \sigma(s,Y_v,Y_{N_v}) \, dW_v(s), \quad \mbox{ if } d(v,A_1) < d(v,A_2),	\\ 
	Z_v(t) & = x_v +  \int_0^t b(s,Z_v,Z_{N_v}) \, ds  +  \int_0^t \sigma(s,Z_v,Z_{N_v}) \, dW_v(s), \quad \mbox{ if } d(v,A_1) \ge d(v,A_2), \\
	Z_v(t) & = x_v +  \int_0^t b(s,Z_v,Z_{N_v}) \, ds +  \int_0^t \sigma(s,Z_v,Z_{N_v}) \, d\ti W_v(s), \quad \mbox{ if } d(v,A_1) < d(v,A_2).
\end{align*}
Because the SDE \eqref{statements:SDE} is unique in law by Theorem \ref{th:uniqueness-hominfSDE}, each of $X$, $Y$ and $Z$ have the same law. Moreover, $Y$ is independent of $Z$ by construction. Therefore, for $f_1$ and $f_2$ as in the statement of the lemma,
\begin{align*}
		\Emb [f_1(X_{A_1}[t])] \Emb [f_2(X_{A_2}[t])] & = \Emb [f_1(Y_{A_1}[t])] \Emb [f_2(Z_{A_2}[t])] = \Emb [f_1(Y_{A_1}[t])f_2(Z_{A_2}[t])] 
\end{align*}
and hence, recalling that $\|x\|_{*,t} := \sup_{s \in [0,t]}|x(s)|$,
\begin{align}
		& |\Cov(f_1(X_{A_1}[t]),f_2(X_{A_2}[t]))| \notag \\
		&\quad  = |\Emb[f_1(X_{A_1}[t])f_2(X_{A_2}[t]) - f_1(Y_{A_1}[t])f_2(Z_{A_2}[t])]| \notag \\
		&\quad  \le \|f_1\|_{BL} \|f_2\|_{BL} \Emb \left[\|X_{A_1}-Y_{A_1}\|_{*,t} \right]+ \|f_1\|_{BL} \|f_2\|_{BL} \Emb \left[\|X_{A_2}-Z_{A_2}\|_{*,t}\right], \label{eq:Diff_Cov}
\end{align}
	
In what follows,  $\cc_i,$ $i=1,2,3,$ represent suitably chosen
          finite constants (depending only on $t$ and $K_t$ from Assumption \ref{assumption:C} but not on the underlying graph), which
        we do not identify explicitly. 
        For each $0 \le k \le \lceil\frac{d(A_1,A_2)}{2}\rceil-1$ and $v \in G$ such that
        $d(v,A_1) \le k$, we have $d(v,A_1) < d(v,A_2)$. 
	It then follows from the evolution of $X$ and $Y$, Jensen's inequality, Doob's inequality, and the Lipschitz condition in Assumption \ref{assumption:C} that
	\begin{align*}
	  \max_{v: \, d(v,A_1) \le k} \Emb \left[\|X_v-Y_v\|_{*,t}^2 \right]& \le \max_{v: \, d(v,A_1) \le k}
          2t \int_0^t \Emb \left[|b(s,X_v,X_{N_v})-b(s,Y_v,Y_{N_v})|^2\right] \, ds \\
		& \quad + \max_{v: \, d(v,A_1) \le k} 8 \int_0^t \Emb \left[|\sigma(s,X_v,X_{N_v})-\sigma(s,Y_v,Y_{N_v})|^2\right] \, ds \\
		& \le \cc_1 \int_0^t \max_{v: \, d(v,A_1) \le k+1} \Emb \left[\|X_v-Y_v\|_{*,s}^2 \right]\, ds.
	\end{align*}
Since $\sup_{v \in G} |x_v| \leq r$, 
recall from Lemma \ref{le:momentbound-tightness} that $\sup_{v \in G} \E \left[\|X_v\|_{*,t}^2 \right] \le \cc_2$.
Since $X$ and $Y$ have the same law, this implies $\sup_{v \in G} \E \left[ \|Y_v\|_{*,t}^2\right] \le \cc_2$. 
These two bounds imply
	\begin{align*}
		\max_{v: \, d(v,A_1) \le \lceil\frac{d(A_1,A_2)}{2}\rceil} \Emb \left[\|X_v-Y_v\|_{*,t}^2 \right] & \le \cc_3 :=4\cc_2. 
	\end{align*}
	From the last two displays, we can recursively get for each $k = \lceil\frac{d(A_1,A_2)}{2}\rceil-1, \lceil\frac{d(A_1,A_2)}{2}\rceil-2,\dotsc,0$,
	\begin{equation*}
		\max_{v: \, d(v,A_1) \le k} \Emb \left[\|X_v-Y_v\|_{*,t}^2\right] \le \cc_3 \frac{(\cc_1 t)^{\lceil\frac{d(A_1,A_2)}{2}\rceil-k}}{(\lceil\frac{d(A_1,A_2)}{2}\rceil-k)!}.
	\end{equation*}
In particular,  when $k=0$ we have
\begin{equation*}
\max_{v \in A_1} \Emb \left[\|X_v-Y_v\|_{*,t}^2 \right] \le \cc_3 \frac{(\cc_1 t)^{\lceil\frac{d(A_1,A_2)}{2}\rceil}}{\lceil\frac{d(A_1,A_2)}{2}\rceil!}, 
\end{equation*}
and hence, recalling that we work with the $\ell_1$ distance on the product space $\C_t^{A_1}$, 
\begin{equation*}
\Emb \left[\|X_{A_1}-Y_{A_1}\|_{*,t} \right]\le |A_1| \sqrt{\max_{v \in A_1} \E [\|X_v-Y_v\|_{*,t}^2]} \le |A_1| \left(\cc_3 \frac{(\cc_1 t)^{\lceil\frac{d(A_1,A_2)}{2}\rceil}}{\lceil\frac{d(A_1,A_2)}{2}\rceil!}\right)^{1/2}.
\end{equation*}
Similarly, we can obtain the bound 
\begin{equation*}
\Emb \left[\|X_{A_2}-Z_{A_2}\|_{*,t} \right] \le |A_2| \left(\cc_3 \frac{(\cc_1 t)^{\lceil\frac{d(A_1,A_2)}{2}\rceil}}{\lceil\frac{d(A_1,A_2)}{2}\rceil!}\right)^{1/2}.
\end{equation*}
Combining these two displays with \eqref{eq:Diff_Cov} yields the desired result with $c_t := \max( \sqrt{c_3}, c_1t)$. 
\end{proof}

Let us lastly state the analogue of \eqref{def:corrdecay-conditional-disc} in the continuous-time setting. Let $(\Polish,d)$ be a metric space, and let $f \colon \G_*[\Polish] \to \R$ be Lipschitz with respect to the metric $d_*$ introduced in \eqref{def:G* marked metric}. Then, for any finite graph $G \in \G_*$, we have
\begin{align*}
d_*((G,x),(G,y)) &\le \max_{v \in G}d(x_v,y_v) \le \sum_{v \in G}d(x_v,y_v),
\end{align*}
for any $x,y \in \Polish^G$, and we may thus view $x \mapsto f(G,x)$ as a Lipschitz function on $\Polish^G$ with Lipschitz constant no greater than that of $f$ itself (recalling from above that we equip $\Polish^G$ with the $\ell_1$ metric).
This allows us to deduce the continuous-time analogue of \eqref{def:corrdecay-conditional-disc} from Lemma \ref{lem:corrdecay-cont}, which we state as follows:
Suppose now that $(G,x)$ is a random element of $\G_*[B_r(\R^d)]$ for some $r > 0$, with $G$ almost surely finite.
Under Assumption \ref{assumption:C}, if $A_1$ and $A_2$ are $(G,x)$-measurable random rooted subgraphs of $G$, then, for $t \ge 0$ and bounded Lipschitz functions $g_1,g_2 \colon \G_*[\C_t] \to \R$, we have
\begin{align}
&\left|\Cov\big(g_1(A_1,X^{G,x}_{A_1}[t]),g_2(A_2,X^{G,x}_{A_2}[t]) \,|\, (G,x)\big)\right| \nonumber \\
	&\qquad \le \cc_t(|A_1| + |A_2|)\|g_1\|_{BL} \|g_2\|_{BL} \left(\frac{\cc_t^{\lceil  d_G(A_1,A_2)/2 \rceil}}{\lceil d_G(A_1,A_2)/2  \rceil!}\right)^{1/2}, \ \ a.s., \label{def:corrdecay-conditional-cont}
\end{align}
with the same constants $(\cc_t)_{t \ge 0}$ as in Lemma \ref{lem:corrdecay-cont}.

\section{Convergence of empirical measures} \label{se:emp-main}

In this section we will introduce unified notation to simultaneously analyze both discrete and diffusive dynamics, since the remainder of the arguments are essentially
the same in each case. 
  With $r \in (0,\infty)$  as in the statement of Theorem \ref{thm:local-law-cont}, we define
\begin{alignat*}{5}
\base & = \X, \quad  && \pathspacek= \X^{k+1}, \quad && \pathspace= \X^\infty, \quad && k \in \indexset=\N_0, \qquad \text{in the discrete case, or} \\
\base & = B_r(\R^d), \quad  && \pathspacek= \C_k, \quad && \pathspace = \C, \quad && k \in \indexset=\R_+, \qquad \text{in the continuous case},
\end{alignat*}
where we recall that $\C_k :=C([0,k];\R^d)$ and $\C :=C(\R_+;\R^d)$.

    We  assume throughout that   Assumption \ref{assumption:A} (in the discrete case)
    or  Assumption \ref{assumption:A}' (in the diffusive setting) 
    holds.
    For any $(G,x) \in \G_*[\base]$,
    let  $(G,X^{G,x})$ be the $\G_*[\pathspace]$-valued
    random element, where $X^{G,x}$ satisfies the  dynamics  described by 
    \eqref{eq:random_graph-disc} or \eqref{statements:SDE}, respectively.
    Let the empirical measure $\mu^{G,x}$ be the corresponding   $\P(\pathspace)$-valued random element defined in \eqref{def-empmeasG}.

Fix $T \in (0,\infty)$.  
In both cases, we have seen from \eqref{def:corrdecay-conditional-disc} and \eqref{def:corrdecay-conditional-cont} that there exists
for each $k \in \indexset$, a function $\decayfn_k : \N \to \R_+$ that depends on $T$, such that $\decayfn_k(\infty) := \lim_{n\to\infty}\decayfn_k(n)=0$ and
for all $k \in [0,T] \cap \indexset$, 
\begin{align}
|\Cov(f_1(A_1,X^{G,x}_{A_1}[k]), f_2(A_2,X^{G,x}_{A_2}[k]) \, | \, (G,x))| \le (|A_1|+|A_2|)\decayfn_k(d_G(A_1,A_2)), \ \ a.s., \label{def:corrdecay-unified}
\end{align}
for any random element $(G,x)$ of $\G_*[\base]$ with $G$ a.s.\ finite, any finite subsets $A_1,A_2\subset G$, and any bounded Lipschitz functions $f_1,f_2 : \G_*[\pathspacek] \to \R$ with $\|f_1\|_{BL},\|f_2\|_{BL}\le 1$. (In the discrete case, $f_1$ and $f_2$ need not be Lipschitz, but we will not need this generality.)

Our first result considers  the easy case of a finite limiting graph, for which local convergence
implies empirical measure convergence. The following  is an immediate consequence of
Theorem \ref{thm:local-law-disc}, Theorem \ref{thm:local-law-cont},
and Proposition \ref{pr:localconvergence-empiricalmeasure}.

\begin{proposition}[Finite graph case] \label{pr:empirical-finite}
  Suppose $(G,x)$ and $(G_n,x^n)$ are 
  random marked graphs  with $(G_n,x^n) \rightarrow (G,x)$ in law in $\G_*[\base]$. 
  If $G$ and $G_n$ are a.s.\ finite, then $\mu^{G_n,x^n}$ converges in law to $\mu^{G,x}$ in $\P(\pathspace)$.  
\end{proposition}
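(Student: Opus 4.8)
The plan is to read the statement as an immediate consequence of two ingredients already available: the local convergence of the particle systems (Theorems \ref{thm:local-law-disc} and \ref{thm:local-law-cont}), and the continuity of the empirical-measure map on finite marked graphs (Proposition \ref{pr:localconvergence-empiricalmeasure}), combined through the continuous mapping theorem for weak convergence.

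First I would set $M_n := \L(G_n,x^n)$ and $M := \L(G,x)$, which are elements of $\P(\G_*[\base])$, and observe that the hypothesis ``$(G_n,x^n) \to (G,x)$ in law in $\G_*[\base]$'' is precisely the statement $M_n \to M$ in $\P(\G_*[\base])$. In the discrete-time case, part (ii) of Theorem \ref{thm:local-law-disc} then gives $P[M_n] \to P[M]$ in $\P(\G_*[\pathspace])$; since, by the definition \eqref{def:P[M]-disc} and the independence of the driving noise from $(G_n,x^n)$, one has $P[M_n] = \L(G_n,X^{G_n,x^n})$ and $P[M] = \L(G,X^{G,x})$, this says exactly that $(G_n,X^{G_n,x^n}) \to (G,X^{G,x})$ in law in $\G_*[\pathspace]$. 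In the diffusive case the same conclusion follows from part (ii) of Theorem \ref{thm:local-law-cont}, using that in the unified notation $\base = B_r(\R^d)$ is bounded, so the hypotheses of that theorem are met.

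Next I would invoke Proposition \ref{pr:localconvergence-empiricalmeasure}, which asserts that the map $(H,y) \mapsto \frac{1}{|H|}\sum_{v \in H}\delta_{y_v}$ is continuous from the set of \emph{finite} marked rooted graphs in $\G_*[\pathspace]$ into $\P(\pathspace)$; intuitively, if $(H_n,y^n) \to (H,y)$ locally with $H$ finite, then for all large $n$ one has $H_n \cong H$ as graphs with matching marks converging, whence weak convergence of the empirical measures. Extending this map arbitrarily but measurably to all of $\G_*[\pathspace]$ (for instance sending infinite graphs to a fixed point of $\P(\pathspace)$), its set of discontinuity points is contained in the set of infinite marked graphs, which by hypothesis carries no mass under $\L(G,X^{G,x})$ since $G$ is a.s.\ finite. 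The continuous mapping theorem then yields $\mu^{G_n,x^n} \to \mu^{G,x}$ in law in $\P(\pathspace)$, as claimed; note that the assumed finiteness of each $G_n$ is exactly what makes $\mu^{G_n,x^n}$ well defined in the first place.

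There is no genuine obstacle here: the proof is essentially a one-line reduction once Theorems \ref{thm:local-law-disc}--\ref{thm:local-law-cont} and Proposition \ref{pr:localconvergence-empiricalmeasure} are in hand. The only point deserving a line of care is that the empirical-measure map is continuous only at finite graphs, so the continuous mapping theorem must be applied with the explicit observation that the limiting graph $G$ is a.s.\ finite.
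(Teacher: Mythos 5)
Your proof is correct and follows exactly the route the paper takes: it deduces the proposition from Theorems \ref{thm:local-law-disc} and \ref{thm:local-law-cont} together with Proposition \ref{pr:localconvergence-empiricalmeasure} via the continuous mapping theorem, with the a.s.\ finiteness of $G$ ensuring the discontinuity set of the empirical-measure map is null. The paper states this as an immediate consequence without elaboration; your write-up simply supplies the details, including the correct observation about where the map fails to be continuous.
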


When the limiting graph $G$ is infinite, $\mu^{G,x}$ is not well defined, and  
  the problem becomes more challenging.
In this case, the correlation decay property \eqref{def:corrdecay-unified} plays a decisive role, allowing us to derive a quenched  asymptotic independence property which ultimately implies that the empirical measure concentrates around its mean.

\subsection{Empirical measure convergence for $G_n$} \label{se:pf:emp-global}

\begin{proof}[Proof of Theorems \ref{thm:localprob-disc} and \ref{thm:localprob-cont}]
It suffices to  prove   convergence in probability in the local weak sense of $(G_n,X^{G_n,x^n})$ to $(G,X^{G,x})$, since then
  convergence of the empirical measure sequence $\mu^{G_n,x^n}$ to $\Lmc(X^{G,x}_{\oSlash})$  follows immediately from Lemma \ref{le:empmeas general}.

Fix $k \in \indexset$.
Given $G_n$, let $U^n_1$ and $U^n_2$ be independent random elements, uniformly distributed on  the vertex set of $G_n$. Abbreviate $\comp^n_i := \comp_{U^n_i}(G_n,X^{G_n,x^n}[k])$ for $i=1,2$.
By Lemma \ref{le:localweakprob characterization}, it suffices to show that
\begin{align}
\lim_{n\to\infty}\E[f_1(\comp^n_1)f_2(\comp^n_2)] &= \E[f_1(G,X^{G,x}[k])]\E[f_2(G,X^{G,x}[k])], \quad \forall f_1,f_2 \in C_b(\G_*[\pathspacek]). \label{pf:global1}
\end{align}
By a standard approximation argument, we may 
 assume $f_i$ is bounded and Lipschitz with $\|f_i\|_{BL} \le 1$, for $i=1,2$.  
 Moreover, if $(H,y) \in \G_*[\pathspacek]$,  then we have the simple estimate $d_*(B_\ell(H,y),(H,y)) \le 2^{-\ell}$, where recall that $d_*$ is the
   metric introduced in \eqref{def:G* marked metric}. It thus suffices to prove
that 
 \begin{align}
\lim_{n\to\infty}\E[f_1(B_\ell(\comp^n_1))f_2(B_\ell(\comp^n_2))] &= \E[f_1(B_\ell(G,X^{G,x}[k]))]\E[f_2(B_\ell(G,X^{G,x}[k]))]. \label{pf:global2}
\end{align}

 Convergence in probability in the local weak sense of $G_n$ to $G$ 
 is known to imply that $d_{G_n}(U^n_1,U^n_2) \to \infty$ in probability \cite[Corollary 2.13]{van2020randomII}.  
 Further, it also implies that $\{|B_\ell(\comp_{U_i^n}(G_n))| : n \in \Nmb, i=1,2\}$ is stochastically bounded, because $|B_\ell(\comp_{U_i^n}(G_n))| \to |B_\ell(G)|$ in distribution as $n \to \infty$ and $|B_\ell(G)| < \infty$ almost surely.
 Hence, the correlation decay estimate \eqref{def:corrdecay-unified}, with $A_i := B_\ell(\comp^n_i)$, $i = 1, 2$,
 implies that 
\[
\Cov\big(f_1(B_\ell(\comp^n_1)), \, f_2(B_\ell(\comp^n_2))\,|\,(G_n,x^n)\big) \to 0
\]
in probability. 
 Since the sequence
 $|\Cov\big(f_1(B_\ell(\comp^n_1)), \, f_2(B_\ell(\comp^n_2))\,|\,(G_n,x^n)\big)|, n \in \N,$ is also uniformly
 bounded,   this implies 
\begin{align*}
& \lim_{n\to\infty} \E[f_1(B_\ell(\comp^n_1))f_2(B_\ell(\comp^n_2))] \\
& \quad = \lim_{n\to\infty} \E\Big[\E\big[f_1(B_\ell(\comp^n_1)) \,|\, (G_n,x^n)\big] \E\big[f_2(B_\ell(\comp^n_2)) \,|\, (G_n,x^n)\big]\Big].
\end{align*}
Now recall the notation $P^{H,y} := \L(H,X^{H,y}) \in \P(\G_*[\pathspace])$ for $(H,y) \in \G_*[\base]$. Define $g_i \in C_b(\G_*[\pathspace])$ by $g_i(H,\bar{y}) := f_i(B_\ell(H,\bar{y}[k]))$ for $i=1,2$
 and $(H,\bar{y}) \in \G_*[\pathspace]$. 
We then have  
\begin{align*}
\E\big[f_i(B_\ell(\comp^n_i)) \,|\, (G_n,x^n)\big] = \lan P^{\comp_{U^n_i}(G_n,x^n)}, g_i\ran, \quad i=1,2,
\end{align*}
and the preceding equation can be rewritten  as
\begin{align*}
\lim_{n\to\infty} \E[f_1(B_\ell(\comp^n_1))f_2(B_\ell(\comp^n_2))] &= \lim_{n\to\infty} \E\Big[\lan P^{\comp_{U^n_1}(G_n,x^n)}, g_1\ran \lan P^{\comp_{U^n_2}(G_n,x^n)}, g_2\ran \Big].
\end{align*}
From Theorem \ref{thm:local-law-disc} (in the discrete case)  and Theorem \ref{thm:local-law-cont} (in the diffusive case, under the additional assumption
that $x_v^n \in B_r(\R^d)$ a.s. for each $v \in G_n$ and $n \in \N$) we know that the map $h_i: \G_*[\base] \mapsto \R$ that takes 
$(H,y) \mapsto  \lan P^{H,y},g_i\ran$ is continuous. 
Thus, we may use the convergence in probability in the local weak sense of $(G_n,x^n)$ to $(G,x)$, in its equivalent form given in Lemma \ref{le:localweakprob characterization}, to deduce that
\begin{align*}
  \lim_{n\to\infty} \E[f_1(B_\ell(\comp^n_1))f_2(B_\ell(\comp^n_2))]
  &= \lim_{n \to \infty} \E[ h_1(\comp_{U_1^n}(G_n,x^n))] \E[h_2(\comp_{U_2^n}(G_n,x^n))] \\
  & = \E[ h_1(G,x)]\E[ h_2(G,x)] \\
	&= \E\left[f_1(B_\ell(G,X^{G,x}[k])) \right] \E\left[ f_2(B_\ell(G,X^{G,x}[k])) \right].
\end{align*}
This establishes \eqref{pf:global2}, thus completing the proof.
\end{proof}

\subsection{Empirical measure convergence for $\compu (G_n)$} \label{se:componemp}

We now turn to the class of examples where the empirical measure
sequence can converge to a stochastic limit.
We first state and prove a result on  
global empirical measure convergence for a general class of graph sequences  that satisfy
the  properties stated in Condition \ref{cond-graphs} below,  and
then deduce Theorem \ref{thm:CompEmpMeas}  by verifying these conditions for the 
graph sequences considered therein.  We continue to use the unified notation introduced at the
beginning of   Section \ref{se:emp-main}.

In what follows, given any graph $H$ with vertex set contained in $\N$, let $\compmax(H)$ denote the largest connected component of $H$; if there is a tie, we choose the component with the smallest minimal vertex $v \in \N$.
This tie-break is purely for convenience, as the following condition requires in part that there is no tie, with high probability, for the graphs that we consider.

\begin{Condition} \label{cond-graphs}
The random graphs $G$ and $\{G_n\}$ satisfy the following properties.
Write $s_G := \PP(|G| = \infty)$. 
\begin{enumerate} 
\item If $s_G > 0$, then for every $\epsilon > 0$, 
$\lim_{n \rightarrow \infty}  \PP \left( \left|\frac{\left| \compmax(G_n) \right|}{s_{G} n}  - 1\right| > \epsilon  \right) = 0$;
\item If $0 < s_G < 1$, then $G_n \setminus \compmax(G_n)$ converges in probability in the local weak sense to the random graph with law $\mathcal{L}(G \,|\, |G|<\infty)$.
\end{enumerate} 
\end{Condition}

The following result shows that this condition is satisfied by the graph sequences
considered in Theorem \ref{thm:CompEmpMeas}.   Its proof builds on many well known properties of random graphs,      and is thus relegated to Appendix \ref{ap:condgraphs}. 

      \begin{proposition}
        \label{prop-graphs}
        Suppose the random graphs $G_n, n \in \N,$ and $G$ are as described in either (i) or (ii) of Theorem \ref{thm:CompEmpMeas}. Then they satisfy Condition \ref{cond-graphs}.
      \end{proposition}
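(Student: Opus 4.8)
The plan is to reduce Condition~\ref{cond-graphs} to a few well-documented facts about these two ensembles and then assemble them by a soft local-weak-convergence argument, handling the \Erdos\ and configuration-model cases in parallel. First I would identify $s_G := \PP(|G|=\infty)$. For $G_n\sim\Gmc(n,p_n)$ the limit is $\Tmc\sim\mathrm{GW}(\mathrm{Poisson}(\parm))$ (recall $\widehat\rho=\rho$ for Poisson), so $s_G=\zeta_\parm$, the survival probability of the Poisson($\parm$) branching process: the unique positive root of $\zeta=1-e^{-\parm\zeta}$ when $\parm>1$, and $0$ otherwise. For $G_n\sim\CM(n,d(n))$ the limit is $\Tmc\sim\mathrm{UGW}(\rho)$, and a standard branching-process computation gives $s_G=1-g_\rho(\eta)$, where $g_\rho,g_{\widehat\rho}$ are the probability generating functions of $\rho,\widehat\rho$ and $\eta\in[0,1]$ is the smallest fixed point of $g_{\widehat\rho}$; one has $\eta<1$, equivalently $s_G>0$, exactly when $g_{\widehat\rho}'(1)=\parm>1$ with $\parm$ as in \eqref{def-parm}, the hypothesis $\rho_2<1$ ruling out only the degenerate $2$-regular case $\parm=1$. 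In both cases $s_G<1$ unless every degree is at least $2$, and then part~(2) of Condition~\ref{cond-graphs} is vacuous.

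Part~(1) (the case $s_G>0$) is then the classical giant-component law of large numbers. For $\Gmc(n,p_n)$ with $np_n\to\parm>1$ it is standard (e.g.\ \cite{durrett2007random,van2016random}) that $|\compmax(G_n)|/n\to\zeta_\parm=s_G$ in probability and that the second-largest component has size $O(\log n)$; in particular the largest component is unique with high probability, so the tie-break in the definition of $\compmax$ is immaterial. For $\CM(n,d(n))$ this is exactly the Molloy--Reed theorem on the size of the giant component \cite{MolloyReed1998size}: the stated hypotheses --- convergence of the degree distribution, convergence of the finite second moment, and the truncation $d_k(n)=0$ for $k\ge n^{1/4-\delta}$, which is precisely Molloy--Reed's admissibility condition --- guarantee that for the configuration \emph{multigraph} $|\compmax|/n$ converges in probability to an explicit survival constant with all other components $o(n)$, and that constant equals $1-g_\rho(\eta)=s_G$ by the branching-process identification above. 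Since a finite second moment makes the multigraph simple with probability bounded away from zero, conditioning on simplicity transfers both statements, and the local limit, to $\CM(n,d(n))$.

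For part~(2), assume $0<s_G<1$ and write $m_n:=|\compmax(G_n)|$, $H_n:=G_n\setminus\compmax(G_n)$, $N_n:=n-m_n=|H_n|$. The elementary point is that $\comp_v(H_n)=\comp_v(G_n)$ whenever $v\notin\compmax(G_n)$, whence for $f\in C_b(\G_*)$
\[
\frac1{N_n}\sum_{v\in H_n}f(\comp_v(H_n))=\frac{n}{N_n}\Bigl(\frac1n\sum_{v\in G_n}f(\comp_v(G_n))-\frac1n\sum_{v\in\compmax(G_n)}f(\comp_v(G_n))\Bigr).
\]
By Remark~\ref{rem-cip}, $G_n$ converges in probability in the local weak sense to $\Tmc$, so the middle average converges in probability to $\E[f(\Tmc)]$; and $n/N_n\to 1/(1-s_G)$ by part~(1). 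It therefore suffices to show $\frac1n\sum_{v\in\compmax(G_n)}f(\comp_v(G_n))\to s_G\,\E[f(\Tmc)\mid|\Tmc|=\infty]$ in probability, after which the split $\E[f(\Tmc)]=s_G\E[f(\Tmc)\mid|\Tmc|=\infty]+(1-s_G)\E[f(\Tmc)\mid|\Tmc|<\infty]$ yields exactly the target limit $\E[f(\Tmc)\mid|\Tmc|<\infty]$. To prove this I would truncate: for fixed $K\in\N$, the set $\{H\in\G_*:|H|\ge K\}$ is clopen, so $f\mathbf 1_{\{|\cdot|\ge K\}}\in C_b(\G_*)$; since $m_n\to\infty$, with high probability every $v\in\compmax(G_n)$ has $|\comp_v(G_n)|=m_n\ge K$, so with high probability $\frac1n\sum_{v\in\compmax}f(\comp_v(G_n))$ equals $\frac1n\sum_{v\in G_n}f(\comp_v(G_n))\mathbf 1_{\{|\comp_v|\ge K\}}$ minus the contribution of vertices outside $\compmax(G_n)$; the first term converges in probability to $\E[f(\Tmc)\mathbf 1_{\{|\Tmc|\ge K\}}]$, while the second is bounded in absolute value by $\|f\|_\infty$ times $\frac1n\#\{v\notin\compmax:|\comp_v|\ge K\}=\frac1n\#\{v\in G_n:|\comp_v|\ge K\}-m_n/n$, which converges in probability to $\PP(|\Tmc|\ge K)-s_G=\PP(K\le|\Tmc|<\infty)$. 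A routine limiting argument in $K$, using $\E[f(\Tmc)\mathbf 1_{\{|\Tmc|\ge K\}}]\to s_G\E[f(\Tmc)\mid|\Tmc|=\infty]$ and $\PP(K\le|\Tmc|<\infty)\to 0$, then completes part~(2).

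The main obstacle is part~(1) for the configuration model: one must invoke Molloy--Reed \cite{MolloyReed1998size} under exactly its admissibility hypotheses (which is the source of both the $n^{1/4-\delta}$ truncation and the exclusion $\rho_2<1$), match the Molloy--Reed survival constant with $s_G=\PP(|\mathrm{UGW}(\rho)|=\infty)$ via branching-process duality, and carry out the passage from the configuration multigraph to the conditioned simple graph $\CM(n,d(n))$. For the \Erdos\ ensemble part~(1) is entirely classical; the identification of $s_G$ in the first step is a routine generating-function fixed-point computation, and part~(2) is a soft manipulation of local weak limits relying only on clopenness of $\{|H|\ge K\}$ and the convergences already established.
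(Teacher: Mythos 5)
Your proposal is correct, and while your treatment of Condition \ref{cond-graphs}(1) matches the paper's (classical giant-component laws of large numbers for $\Gmc(n,p_n)$ and the Molloy--Reed theorem for $\CM(n,d(n))$, including the multigraph-to-simple-graph transfer), your proof of Condition \ref{cond-graphs}(2) takes a genuinely different route. The paper proves part (2) via \emph{duality}: it invokes the discrete duality principle (\cite[Theorem 4.15]{van2016random} for \Erdos\ graphs, \cite[Theorem 2]{MolloyReed1998size} for the configuration model) to identify $G_n\setminus\compmax(G_n)$ as approximately a subcritical random graph of the same family with explicit dual parameters, takes its local limit, and then matches that limit with $\L(\Tmc\mid|\Tmc|<\infty)$ using a separate duality lemma for $\mathrm{UGW}$ trees (Lemma \ref{lem-duality}). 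You instead derive part (2) \emph{softly} from part (1) plus the known local-weak convergence in probability of $G_n$ to $\Tmc$: using that $\comp_v(G_n\setminus\compmax(G_n))=\comp_v(G_n)$ off the giant, you subtract the giant's contribution from the global empirical component distribution, and you control that contribution by a truncation argument exploiting that $\{|H|\ge K\}$ is clopen in $\G_*$ (so $f\mathbf 1_{\{|\cdot|\ge K\}}\in C_b(\G_*)$), that whp every vertex of $\compmax(G_n)$ lies in a component of size $\ge K$, and that the mass of finite-but-large components $\PP(K\le|\Tmc|<\infty)$ vanishes as $K\to\infty$. I checked the decomposition and the limiting argument in $K$; they are sound. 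What your approach buys is a substantial economy of means: it dispenses entirely with the two discrete duality theorems and with Lemma \ref{lem-duality}, and in fact shows that Condition \ref{cond-graphs}(2) is a \emph{consequence} of Condition \ref{cond-graphs}(1) together with local weak convergence in probability, for an arbitrary graph sequence — a more general statement than the paper records. What the paper's route buys is explicit structural information (the dual degree sequence/parameter of the complement of the giant), which your argument does not produce. Two small points to tighten: the standard \Erdos\ results are stated for $p_n=\parm/n$, so a word on the extension to $np_n\to\parm$ (the paper's Remark \ref{rem-npn} handles this by coupling/stochastic ordering) is needed; and the parenthetical claim that $\rho_2<1$ forces $s_G>0\Rightarrow\parm>1$ glosses over the case $\rho$ supported on $\{0,2\}$ (where $\widehat\rho=\delta_1$), but this is exactly the level of precision of the paper's own argument, so it is not a gap specific to your proof.
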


We now state our generalization of Theorem \ref{thm:CompEmpMeas}.
Using  the common notation introduced, we proceed with a  unified treatment of discrete and diffusive dynamics.

\begin{theorem}  \label{thm:GenCompEmpMeas}
  Suppose $\{G_n\}_{n \in \N}$ and $G$ satisfy Condition \ref{cond-graphs}, $G_n$ converges in probability in the local weak sense to $G$, and
  the initial conditions $(x^n)$ and $x$  are such that Assumption \ref{assumption:B} is satisfied. 
 Further, 
  suppose Assumption \ref{assumption:A} (resp.\ Assumption \ref{assumption:A}')
  holds and  let $\mu^{\compu(G_n,x^n)}$ be the  connected component empirical measure
    defined in \eqref{def-empmeas}. 
    Then   $\mu^{\compu(G_n,x^n)}$ converges in law in  $\P(\pathspace)$ to the    random measure $\tilde{\mu}^{G,x}$, where
  \[  \tilde{\mu}^{G,x} :=
  \left\{
  \begin{array}{ll}
    \mu^{G,x}   & \mbox{ on } \{|G| < \infty\}, \\
       \Lmc \left( X_{\oSlash}^{G,x} \,\big|\, |G| = \infty \right)  & \mbox{ on } \{|G|= \infty\}.
  \end{array}
  \right. 
  \]
\end{theorem}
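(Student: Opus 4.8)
We root $G_n$ at a uniformly random vertex $U_n$, chosen independently of everything else, so that $\compu(G_n)=\comp_{U_n}(G_n)$, and split according to whether $U_n$ lands in the giant component $\compmax(G_n)$. Since the dynamics decouple across connected components, on $\{U_n\in\compmax(G_n)\}$ we have $\mu^{\compu(G_n,x^n)}=\mu^{\compmax(G_n),\,x^n_{\compmax(G_n)}}$ (the empirical measure of the system run on the giant alone), while on $\{U_n\notin\compmax(G_n)\}$, writing $H_n:=G_n\setminus\compmax(G_n)$ and using that $\comp_v(H_n)=\comp_v(G_n)$ for $v\in H_n$, we have $\mu^{\compu(G_n,x^n)}=\mu^{\comp_{U_n}(H_n),\,x^n}$. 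Conditioning on $(G_n,x^n)$, the event $\{U_n\in\compmax(G_n)\}$ has probability $|\compmax(G_n)|/|G_n|\to s_G:=\PP(|G|=\infty)$ in probability by Condition~\ref{cond-graphs}(1). We treat $0<s_G<1$ below; the cases $s_G\in\{0,1\}$ are analogous with only one of the two regimes surviving (and when $s_G=0$, local weak convergence of $G_n$ to the a.s.\ finite $G$ forces $|\compmax(G_n)|/|G_n|\to0$, so $H_n$ still converges in probability in the local weak sense to $G$).

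\emph{Small-component regime.} By Condition~\ref{cond-graphs}(2), $H_n$ converges in probability in the local weak sense to the random graph with law $\Lmc(G\mid|G|<\infty)$; applying Assumption~\ref{assumption:B} with $A=\{|G|<\infty\}$ upgrades this to convergence of the marked graphs $(H_n,x^n_{H_n})$ to a marked graph with law $\Lmc((G,x)\mid|G|<\infty)$. Consequently $\compu(H_n,x^n_{H_n})$ — which, conditionally on $\{U_n\notin\compmax(G_n)\}$, has the same law as $(\comp_{U_n}(G_n),x^n_{\comp_{U_n}(G_n)})$ — converges in law in $\G_*[\base]$ to the \emph{almost surely finite} marked graph with law $\Lmc((G,x)\mid|G|<\infty)$. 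Since the dynamics on this (finite) component coincide with those induced by $X^{G_n,x^n}$, Proposition~\ref{pr:empirical-finite} gives that, conditionally on $\{U_n\notin\compmax(G_n)\}$, $\mu^{\compu(G_n,x^n)}$ converges in law in $\P(\pathspace)$ to $\mu^{G,x}$ conditioned on $\{|G|<\infty\}$.

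\emph{Giant regime.} The first step is to identify the local limit of $\compmax(G_n)$ rooted at a uniformly random vertex, i.e.\ of $\nu^{\mathrm{giant}}_n:=\frac{1}{|\compmax(G_n)|}\sum_{v\in\compmax(G_n)}\delta_{\comp_v(G_n)}$. Writing the empirical neighbourhood measure of $G_n$ as the convex combination $\frac{1}{|G_n|}\sum_{v}\delta_{\comp_v(G_n)}=\frac{|\compmax(G_n)|}{|G_n|}\nu^{\mathrm{giant}}_n+\frac{|H_n|}{|G_n|}\nu^{\mathrm{small}}_n$ and passing to the limit using $\frac{1}{|G_n|}\sum_{v}\delta_{\comp_v(G_n)}\to\Lmc(G)$ (by hypothesis), $\nu^{\mathrm{small}}_n\to\Lmc(G\mid|G|<\infty)$ (Condition~\ref{cond-graphs}(2)), $|\compmax(G_n)|/|G_n|\to s_G$ (Condition~\ref{cond-graphs}(1)), together with the identity $\Lmc(G)=s_G\Lmc(G\mid|G|=\infty)+(1-s_G)\Lmc(G\mid|G|<\infty)$, shows $\nu^{\mathrm{giant}}_n\to\Lmc(G\mid|G|=\infty)$ in probability in $\P(\G_*)$. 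Thus $\compmax(G_n)$ converges in probability in the local weak sense to $\Lmc(G\mid|G|=\infty)$, so Assumption~\ref{assumption:B} with $A=\{|G|=\infty\}$ gives that $(\compmax(G_n),x^n_{\compmax(G_n)})$ converges in probability in the local weak sense to the marked graph with law $\Lmc((G,x)\mid|G|=\infty)$; since $|\compmax(G_n)|\to\infty$, Theorem~\ref{thm:localprob-disc} (resp.\ Theorem~\ref{thm:localprob-cont}) applies to this sequence and yields $\mu^{\compmax(G_n),\,x^n_{\compmax(G_n)}}\to\Lmc(X^{G,x}_{\o}\mid|G|=\infty)$ in probability in $\P(\pathspace)$.

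\emph{Combining.} For $f\in C_b(\P(\pathspace))$, conditioning on $\{U_n\in\compmax(G_n)\}$ and its complement, and using that $\mu^{\compmax(G_n),\,x^n_{\compmax(G_n)}}$ is $(G_n,x^n)$-measurable while $U_n$ is independent of $(G_n,x^n)$, gives
\begin{align*}
\E\big[f(\mu^{\compu(G_n,x^n)})\big] &= \E\Big[f\big(\mu^{\compmax(G_n),\,x^n_{\compmax(G_n)}}\big)\cdot\tfrac{|\compmax(G_n)|}{|G_n|}\Big] \\
&\qquad + \PP(U_n\notin\compmax(G_n))\cdot\E\big[f(\mu^{\compu(G_n,x^n)})\mid U_n\notin\compmax(G_n)\big].
\end{align*}
By the giant-regime step the first term tends to $s_G\,f(\Lmc(X^{G,x}_{\o}\mid|G|=\infty))$ (as a product of a bounded term converging to a constant in probability and $|\compmax(G_n)|/|G_n|\to s_G$), and by the small-component step, together with $\PP(U_n\notin\compmax(G_n))\to1-s_G$, the second term tends to $\E[f(\mu^{G,x})\,1_{\{|G|<\infty\}}]$; the sum equals $\E[f(\widetilde\mu^{G,x})]$, which is the claim. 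The main obstacle is the giant-regime step: correctly identifying the local limit of $\compmax(G_n)$ (rooted uniformly, with its marks) as the conditioned limit $(G,x)\mid\{|G|=\infty\}$ from Conditions~\ref{cond-graphs}(1)--(2) and Assumption~\ref{assumption:B}, and ensuring that $\mu^{\compmax(G_n),\,x^n_{\compmax(G_n)}}$, the empirical measure of the dynamics restricted to the giant, is genuinely covered by Theorems~\ref{thm:localprob-disc}--\ref{thm:localprob-cont}; the small-component regime is comparatively routine given Proposition~\ref{pr:empirical-finite}.
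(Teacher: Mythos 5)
Your proposal is correct and follows essentially the same route as the paper's proof: the same split according to whether the uniform root lands in $\compmax(G_n)$, the same use of Condition \ref{cond-graphs}(2) plus Assumption \ref{assumption:B} plus Proposition \ref{pr:empirical-finite} for the small-component regime, and the same convex-combination argument to identify the local-in-probability limit of $\compmax(G_n)$ as $\Lmc(G\mid|G|=\infty)$ before invoking Theorems \ref{thm:localprob-disc}--\ref{thm:localprob-cont} for the giant. The only cosmetic difference is that you phrase the giant regime via the $(G_n,x^n)$-measurable quantity $\mu^{\compmax(G_n),\,x^n_{\compmax(G_n)}}$ rather than via the conditional law $\Lmc(\compu(G_n,x^n)\mid S_n)$, which amounts to the same thing.
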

\begin{proof}[Proof of Theorem \ref{thm:GenCompEmpMeas}.] 
Recall $s_G := \PP(|G| = \infty)$. 
First, if $s_G=0$, then the claim follows from Proposition \ref{pr:empirical-finite}.
We thus focus on the case when $s_G \in  (0,1]$. 

Recall that $\compu(G_n) = \comp_{U_n}(G_n)$, where $U_n$ is a uniformly random vertex of $G_n$. 
Let $S_n  := \{ U_n \in \compmax(G_n) \}$ 
   denote the event that  $\compu(G_n)$ is the (rooted) maximal component of $G_n$. 
By Condition \ref{cond-graphs}(1),
   with high probability the giant component $\compmax(G_n)$   of $G_n$ has size $s_G n$, and we deduce from dominated convergence that
   \begin{equation}
     \label{snlimit}
     \lim_{n \rightarrow \infty} \PP(S_n) = \lim_{n \to \infty} \frac{1}{n} \mathbb{E}[|\compmax(G_n)|] = s_G.
   \end{equation}
Further, for any bounded continuous function $F \colon \Pmc(\pathspace) \to \R$ we have
\begin{align*}
\E[F(\mu^{\compu(G_n,x^n)})] &= \E[F(\mu^{\compu(G_n,x^n)}) \, | \, S_n]\Pmb(S_n) + \E[F(\mu^{\compu(G_n,x^n)}) \, | \, S_n^c]\Pmb(S_n^c).
\end{align*}
It then suffices to show that 
\begin{align}
\lim_{n\to\infty}\E[F(\mu^{\compu(G_n,x^n)}) \, | \, S_n^c] &= \E[F(\mu^{G,x}) \, | \, |G| < \infty], \quad \text{if } s_G<1, \label{eq:empirical-measure-U(G_n)-claim-disc} \\
\lim_{n\to\infty} \E[F(\mu^{\compu(G_n,x^n)}) \, | \, S_n] &= F\left(\Lmc(X^{G,x}_{\oSlash} \, | \, |G|=\infty )\right). \label{pf:ER-U-empiricallimit1-disc}
\end{align}
Indeed, these claims together with \eqref{snlimit} imply that
\begin{align*}
\lim_{n\to\infty}\E[F(\mu^{\compu(G_n,x^n)})]  &= s_{G}F\big(\Lmc(X^{G,x}_{\oSlash} \, | \, |G|=\infty )\big) + (1-s_{G})\E[F(\mu^{G,x}) \, | \, |G| < \infty],
\end{align*}
which completes the proof. 
We now present the  proofs of these two claims.

{\ }

\noindent{\bf Step 1.} We first prove \eqref{eq:empirical-measure-U(G_n)-claim-disc}.
 We may assume $s_G < 1$, and thus $\PP(S_n^c) > 0$ for sufficiently large $n$ by \eqref{snlimit}. 
Define marked random graphs
$(\widetilde{G}_n,\tilde{x}^n) = (G_n,x^n) \setminus \compmax(G_n,x^n)$ for $n \in \N$, and define $(\widetilde{G},\tilde{x})$ such that $\L(\widetilde{G},\tilde{x}) = \L((G,x)\,|\, |G|< \infty)$.
Then Condition \ref{cond-graphs}(2) implies that
$\widetilde{G}_n$ converges in probability in the local weak sense to $\widetilde{G}$. Hence, by Assumption \ref{assumption:B}, $(\widetilde{G}_n,\tilde{x}^n)$ converges in probability in the local weak sense to to a graph that has the same law as $(\widetilde{G},\tilde{x})$.  That is, for $g \in C_b(\G_*[\base])$,
\begin{align*}
\frac{1}{|G_n \setminus \compmax(G_n)|}\sum_{v \in G_n \setminus \compmax(G_n)} g(\comp_v(G_n,x^n)) \to \E[g(G,x) \,|\, |G| < \infty], \quad \text{in probability.} 
\end{align*}
Note that $\L(U_n\,|\,(G_n,x^n),S_n^c)$ is uniform on $G_n \setminus \compmax(G_n)$.
Therefore, for $g \in C_b(\G_*[\base])$, 
\begin{align}
  \E[g(\compu(G_n,x^n)) \, | \, S_n^c] &= \E\left[ \frac{1}{|G_n \setminus \compmax(G_n)|}\sum_{v \in G_n \setminus \compmax(G_n)} g(\comp_v(G_n,x^n)) \, | \, S_n^c \right] \nonumber \\
	&\to \E[g(G,x)\,|\,|G| < \infty]. \label{pf:G_n-complim}
\end{align}
Here we used the following elementary fact: If $\{A_n\}$ are events with $\liminf_{n\to\infty}\PP(A_n) > 0$, and if $\{Z_n\}$ is a uniformly bounded sequence of real random variables converging in probability to a constant $c$, then $\E[ Z_n | A_n] - c = \E[( Z_n -c)\ind_{A_n}]/\PP(A_n) \to 0$.
The limit \eqref{pf:G_n-complim}, valid for all $g \in C_b(\G_*[\base])$, means that
$\L(\compu(G_n,x^n) \, | \, S_n^c) \to \L((G,x) \, | \, |G| < \infty)$ in $\P(\G_*[\base])$. Because the limiting law is supported on finite graphs, 
it follows from Proposition \ref{pr:empirical-finite} that $\L(\mu^{\compu(G_n,x^n)} \, | \, S_n^c)$ converges to $\L(\mu^{G,x} \, | \, |G| < \infty)$ in $\P(\P(\pathspace))$, and 
\eqref{eq:empirical-measure-U(G_n)-claim-disc} follows.

{\ }

\noindent{\bf Step 2.} We now prove \eqref{pf:ER-U-empiricallimit1-disc}.
 We first claim that the sequence of marked random graphs $\compmax(G_n,x^n)$ converges in probability in the local weak sense to the marked random graph with law $\mathcal{L}((G,x) \,|\, |G|=\infty)$.
 By Assumption \ref{assumption:B}, 
 it suffices to show that $\compmax(G_n)$ converges in probability in the local weak sense to the random graph with law $\mathcal{L}(G \,|\, |G|=\infty)$.
For this, note that if $s_G=1$, then such a convergence follows immediately from that of $G_n$ and Condition \ref{cond-graphs}(1). 
Next we verify the claim for $0<s_G<1$.
Note that we can write
\begin{align*}
\frac{1}{|G_n|} \sum_{v \in G_n} \delta_{\comp_v(G_n)} & = \frac{|G_n \setminus \compmax(G_n)|}{|G_n|} \frac{1}{|G_n \setminus \compmax(G_n)|} \sum_{v \in G_n \setminus \compmax(G_n)} \delta_{\comp_v(G_n)} \\
    	& \quad + \frac{|\compmax(G_n)|}{|G_n|} \frac{1}{|\compmax(G_n)|} \sum_{v \in \compmax(G_n)} \delta_{\comp_v(G_n)}.
\end{align*}
By the assumption that $G_n$ converges to $G$ in probability in the local weak sense, and the equivalent form given in Remark \ref{re:lwp-P(G)form},  we have the following two limits in probability, in $\P(\G_*)$:
\begin{equation*}
\frac{1}{|G_n|} \sum_{v \in G_n} \delta_{\comp_v(G_n)} \to \L(G), \:\: \text{and} \:\: \frac{1}{|G_n \setminus \compmax(G_n)|} \sum_{v \in G_n \setminus \compmax(G_n)} \delta_{\comp_v(G_n)} \to \L(G \,|\, |G|<\infty), 
\end{equation*}
where the second limit also uses Condition \ref{cond-graphs}(2).
Moreover, by Condition \ref{cond-graphs}(1) we also have the convergence in probability
\begin{align*}
\frac{|\compmax(G_n)|}{|G_n|} \to s_G, \quad \text{and} \quad \frac{|G_n \setminus \compmax(G_n)|}{|G_n|} \to 1-s_G.
\end{align*}
Since also
\begin{align*}
\L(G) = (1-s_G) \L(G \,|\, |G|<\infty) + s_G \L(G \,|\, |G|=\infty),
\end{align*}
we deduce that
 \begin{equation*}
	\frac{1}{|\compmax(G_n)|} \sum_{v \in \compmax(G_n)} \delta_{\comp_v(G_n)} \to \L(G \,|\, |G|=\infty)
\end{equation*}
in probability.
This proves that $\compmax(G_n)$ converges in probability in the local weak sense to the random graph with law $\mathcal{L}(G \,|\, |G|=\infty)$
when $0<s_G<1$, and hence, the claim on the convergence of the corresponding marked random graph sequence also holds for all $s_G \in (0,1]$. 

  Noting that $\L(U_n\,|\,(G_n, x^n),S_n)$ is the uniform distribution on $\compmax(G_n)$, we deduce that the sequence of marked random graphs with laws
  $\L(\compu(G_n,x^n)\,|\,S_n)$ also converges in probability in the local weak sense to the random graph with law $\mathcal{L}((G,x) \,|\, |G|=\infty)$.
Then, \eqref{pf:ER-U-empiricallimit1-disc} follows from Theorems \ref{thm:localprob-disc} (in the discrete case) and \ref{thm:localprob-cont} (in the diffusive setting). 
\end{proof}

\section{Empirical field convergence on some non-random graphs}
\label{se:nonrandom}

In this section we discuss various examples of dynamics on convergent sequences of
non-random graphs, as summarized in Section \ref{se:regtree}.
We continue to unify  the treatment of discrete and diffusive
dynamics, using the notation introduced at the beginning of Section \ref{se:emp-main}.
We assume throughout this section that Assumption \ref{assumption:A} holds in the discrete case and
Assumption \ref{assumption:C} holds in the diffusive setting.

We begin with some further notation.
For a graph $G=(V,E)$, let $\mathrm{Aut}(G)$ denote the set of automorphisms, i.e., bijections from $V$ to $V$ which preserve edges in the sense that $(u,v) \in E \Leftrightarrow (\varphi u,\varphi v) \in E$.
The group of automorphisms (with composition as the group operation) acts on the configuration space $S^G$, for any set $S$, in the natural way:
For $s=(s_v)_{v \in G} \in S^G$ and $\varphi \in \mathrm{Aut}(G)$, we define
\[
\varphi s := (s_{\varphi v})_{v \in G}.
\]
A crucial point is the $\mathrm{Aut}(G)$-invariance of the dynamics: 
\begin{align}
X^{G,\varphi x} \stackrel{d}{=} \varphi X^{G,x}, \qquad \forall x \in \X^G, \ \varphi \in \mathrm{Aut}(G). \label{eq:autoidentity}
\end{align}
Indeed, this follows from the well-posedness of the dynamics of the process $X^{G,x}$ for each $(G,x)$.
  Our results below pertain to  empirical fields, defined as follows.   
  Given  a set $S$,  a finite graph $G$,  
  a configuration $s \in S^G$, and a family of automorphisms
  $\varphi_v \in \mathrm{Aut}(G)$,  $v  \in G$, 
  the associated    empirical field  is given by 
  \[   \frac{1}{|G|}\sum_{v \in G}\delta_{\varphi_v s}. 
  \]
  A  natural special case is when the graph $G$  is the $d$-dimensional torus and $\varphi_v(\cdot) = \cdot - v$.

\subsection{A sufficient condition for empirical field convergence}

We first present a general principle, which states  that if the
average distances in a sequence of graphs $G_n$ converges to zero,
 then the random \emph{empirical fields} of  marked random graphs exhibiting correlation decay will concentrate around their means.
Recall in the following that $d_G$ denotes the graph distance on a graph $G$.

\begin{proposition} \label{pr:corrdecay-general}
  Suppose $G \in \G_*$ is infinite 
  and let $\{A_n\} \subset G$ be a sequence of finite  subsets of the vertex set of $G$ with
  $|A_n| \to \infty$  as $n \to \infty$. 
  Suppose for every $\ell \in \N$, there exists
    $\decayfn_\ell: \N \to \R_+$ that satisfies $\lim_{n\to\infty}\decayfn_\ell(n) = 0$ as well as 
\begin{equation}
  \lim_{n\rightarrow\infty} \frac{1}{|A_n|^2}\sum_{u,v \in A_n}\decayfn_\ell(d_G(u,v)) = 0.
  \label{eq:avggrowthcondition-disc}
\end{equation}
Also, let $\newPolish$ be a complete separable metric space,
and suppose $\newRV$ is a random element of $\newPolish^G$   
that satisfies the following decay of correlation: for every $\ell \in \N$, $A,B \subset G$ with $|A|, |B| \le \ell$, and
all $f \in C_b(\newPolish^A)$ and
$g \in C_b(\newPolish^B)$ with $\|f\|_{BL},\|g\|_{BL} \le 1$,
\begin{equation} 
	|\Cov(f(\newRV_A),g(\newRV_B))| \le \decayfn_\ell(d_{G}(A,B)).  
	\label{asmp:cordecay}
\end{equation}
In addition, suppose for each $v \in G$ we are given $\varphi_v \in \mathrm{Aut}(G)$ satisfying $\varphi_v\oSlash = v$, and let  $\mu_n := \frac{1}{|A_n|}\sum_{v \in A_n}\delta_{\varphi_v \newRV}$.
Then for each $f \in C_b(\newPolish^G)$, we have
\begin{align}
  \label{result-erg1}
\lim_{n\to\infty} \big| \lan \mu_n, f\ran - \E[\lan \mu_n, f\ran ]\big| = 0, \ \ \  \text{ in probability}. 
\end{align}
\end{proposition}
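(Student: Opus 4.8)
The plan is to prove convergence in probability by a second-moment argument: it suffices to show that $\Var(\langle \mu_n, f\rangle) \to 0$, since then Chebyshev's inequality gives \eqref{result-erg1}. Because $f \in C_b(\newPolish^G)$ depends on the entire (countable) configuration, the first technical step is to reduce to the case where $f$ depends only on finitely many coordinates. Using that $\newPolish^G$ carries the product topology, for any $\eps > 0$ we can find $\ell \in \N$ and a function $f_\ell$ depending only on the coordinates in $B_\ell(G)$ such that $\|f - f_\ell\|_\infty < \eps$; replacing $f$ by $f_\ell$ changes $\langle \mu_n, f\rangle$ by at most $\eps$ uniformly, so it is enough to handle finitely-supported $f$. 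For such $f$, note that $\langle \mu_n, f\rangle = \frac{1}{|A_n|}\sum_{v \in A_n} f(\varphi_v \newRV)$, and $f(\varphi_v \newRV) = f\big((\newRV_{\varphi_v w})_{w \in B_\ell(G)}\big)$ depends only on the coordinates of $\newRV$ indexed by the set $\varphi_v B_\ell(G)$, which has cardinality $|B_\ell(G)| =: L < \infty$ (independent of $v$, since $\varphi_v$ is an automorphism).

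\textbf{Variance bound via correlation decay.} Expanding the variance,
\begin{align*}
\Var\big(\langle \mu_n, f\rangle\big) = \frac{1}{|A_n|^2}\sum_{u, v \in A_n} \Cov\big(f(\varphi_u \newRV), f(\varphi_v \newRV)\big).
\end{align*}
For the $(u,v)$ term, $f(\varphi_u\newRV)$ is a bounded (WLOG $|f| \le 1$ after rescaling) continuous function of $\newRV_{A}$ with $A := \varphi_u B_\ell(G)$, and $f(\varphi_v \newRV)$ of $\newRV_B$ with $B := \varphi_v B_\ell(G)$, with $|A|, |B| \le L$. Applying the decay-of-correlation hypothesis \eqref{asmp:cordecay} with the value $L$ playing the role of $\ell$, we get $|\Cov(f(\varphi_u\newRV), f(\varphi_v\newRV))| \le \decayfn_L\big(d_G(\varphi_u B_\ell(G), \varphi_v B_\ell(G))\big)$. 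Since $\varphi_u\o = u$ and $\varphi_v\o = v$, we have $\o \in B_\ell(G)$ so $u \in \varphi_u B_\ell(G)$ and $v \in \varphi_v B_\ell(G)$, hence $d_G(\varphi_u B_\ell(G), \varphi_v B_\ell(G)) \ge d_G(u,v) - 2\ell$. Using monotonicity is not assumed of $\decayfn_L$, so instead I would define $\newdecayfn_\ell(m) := \sup_{k \ge (m - 2\ell)\vee 0}\decayfn_L(k)$, which still tends to $0$ as $m \to \infty$ because $\decayfn_L(k)\to 0$; then $|\Cov(\cdots)| \le \newdecayfn_\ell(d_G(u,v))$. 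Plugging in,
\begin{align*}
\Var\big(\langle \mu_n, f\rangle\big) \le \frac{1}{|A_n|^2}\sum_{u,v \in A_n}\newdecayfn_\ell(d_G(u,v)),
\end{align*}
and the right-hand side tends to $0$ by hypothesis \eqref{eq:avggrowthcondition-disc} applied to $\newdecayfn_\ell$ (which is legitimate since $\newdecayfn_\ell : \N \to \R_+$ vanishes at infinity, exactly the class of functions to which \eqref{eq:avggrowthcondition-disc} is assumed to apply — here I would note that the hypothesis should be read as holding for \emph{every} such $\decayfn_\ell$, or alternatively one invokes it for the specific $\decayfn_\ell$ in \eqref{asmp:cordecay} and checks the averaged sum for $\newdecayfn_\ell$ dominates appropriately; a clean way is to observe $\newdecayfn_\ell(m) \to 0$ and that \eqref{eq:avggrowthcondition-disc} for a vanishing sequence follows from the same statement for $\decayfn_\ell$ by a standard truncation).

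\textbf{Conclusion and main obstacle.} Combining the two steps: given $\eps > 0$, pick $\ell$ so that the finite-coordinate approximation $f_\ell$ satisfies $\|f - f_\ell\|_\infty < \eps$; then $|\langle \mu_n, f\rangle - \E\langle\mu_n,f\rangle| \le |\langle\mu_n,f_\ell\rangle - \E\langle\mu_n,f_\ell\rangle| + 2\eps$, and the first term converges to $0$ in probability by Chebyshev plus the variance bound. Since $\eps$ is arbitrary, \eqref{result-erg1} follows. The main obstacle I anticipate is purely bookkeeping: carefully matching the index $\ell$ appearing in the correlation-decay hypothesis \eqref{asmp:cordecay} (a bound on set \emph{cardinalities}) with the radius $\ell$ used for the finite-coordinate approximation (a bound on ball \emph{radii}), and tracking how $\decayfn_\ell$ must be replaced by a shifted/supremized version $\newdecayfn_\ell$ to absorb the $2\ell$-shift in graph distance between the root vertices and their $\ell$-balls — while ensuring the averaged-decay hypothesis \eqref{eq:avggrowthcondition-disc} still applies to this modified function. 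This is routine but needs to be stated precisely.
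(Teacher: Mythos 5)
Your proposal is correct in substance and follows essentially the same route as the paper: reduce to functions of finitely many coordinates, expand the variance of $\lan \mu_n, f\ran$, apply the correlation-decay hypothesis to the sets $\varphi_uB_\ell(G)$ and $\varphi_vB_\ell(G)$ with the resulting $2\ell$-shift in graph distance (the paper writes this as $\decayfn_\ell((d_G(u,v)-2r)^+)$ and invokes \eqref{eq:avggrowthcondition-disc} directly, exactly the bookkeeping you describe), and finish with Chebyshev. The only point where you diverge is the reduction step: your claim that every $f \in C_b(\newPolish^G)$ admits a cylinder approximation with $\|f-f_\ell\|_\infty < \eps$ is false when $\newPolish$ is non-compact (uniform approximation by cylinder functions holds for uniformly continuous $f$ with respect to the product metric, not for all of $C_b$), whereas the paper simply asserts that it suffices to verify the claim for cylinder functions; so you should either restrict to a convergence-determining class of cylinder/uniformly continuous test functions or state the reduction as the paper does.
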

\begin{proof}
To show \eqref{result-erg1}, it suffices to show that for each $r \in \N$ and bounded Lipschitz $f \in C_b(\newPolish^{B_r(G)})$, as $n \rightarrow \infty$
\begin{align} \label{toshow1} 
\frac{1}{|A_n|}\sum_{v \in A_n} \big(f(\varphi_v \newRV_{B_r(G)}) - \E[f(\varphi_v \newRV_{B_r(G)})] \big) \rightarrow  0, \ \ \  \text{ in probability}. 
\end{align}
Now, use \eqref{asmp:cordecay} to see that with for $\ell := |B_r(G)|$ 
\begin{align*}
& \E\left[ \left(\frac{1}{|A_n|}\sum_{v \in A_n} \big(f(\varphi_v \newRV_{B_r(G)}) - \E[f(\varphi_v \newRV_{B_r(G)})] \big)\right)^2 \right] \\
	& \quad = \frac{1}{|A_n|^2} \sum_{v,u \in A_n} \Cov(f(\newRV_{\varphi_vB_r(G)}),f(\newRV_{\varphi_uB_r(G)})) \\
& \quad \le \|f\|_{BL}^2 \frac{1}{|A_n|^2} \sum_{v,u \in A_n}
\decayfn_\ell((d_G(u,v)-2r)^+),
\end{align*} 
where the inequality uses the assumption that $\varphi_v\oSlash=v$ to deduce that $\varphi_vB_r(G)$ is precisely the ball of radius $r$ centered around $v$, for each $v \in G$. The claim
\eqref{toshow1} now follows from Markov's inequality and \eqref{eq:avggrowthcondition-disc}. 
\end{proof}

\begin{remark}  \label{rem-ergass} 
  It is easy to see that if $G$ has bounded degree,  then  \eqref{eq:avggrowthcondition-disc} holds automatically for any $c_\ell$ with
  $\lim_{n \rightarrow \infty} c_\ell(n) = 0$,  as long as $|A_n| \to \infty$.  
Indeed, in this case for any $m \in \N$ we have
$a_m := \sup_{n \in \N} \sup_{v \in G_n} |\{u \in G_n : d_G(u,v) \le m\}| < \infty.$ 
   Hence, it follows that for any $\ell, m \in \N$,
\begin{align*}
\frac{1}{|G_n|^2} \sum_{v,u \in G_n} \decayfn_\ell(d_{G}(u,v)) &\le \sup_{k > m}\decayfn_\ell(k) + \frac{1}{|G_n|^2}\sum_{u,v \in G_n : d_G(u,v) \le m} \|\decayfn_\ell\|_\infty \\
	&\le \sup_{k > m}\decayfn_\ell(k) + \frac{a_m\|\decayfn_\ell\|_\infty}{|G_n|}.
\end{align*}
The  claim now follows on sending first $n\to\infty$ and then $m\to\infty$.
  Without a bounded degree assumption, the condition \eqref{eq:avggrowthcondition-disc} requires that correlations decay quickly enough to overcome the growth rate of the graph. 
\end{remark}

\subsection{Propagation of empirical field convergence} \label{se:empfield}

We next state an abstract result pertaining to the time-propagation of the property of convergence of random empirical fields, which is interesting in its own right and will serve us well in the more concrete examples of Section \ref{se:lattices + trees}.

\begin{proposition} \label{pr:empfield}
Let $G=(V,E,\oSlash)$ be a rooted, countable, locally finite graph. For each $v \in G$, suppose there exists $\varphi_v \in \mathrm{Aut}(G)$ such that $\varphi_v\oSlash=v$. Let $A_n \subset V,$
    $n \in \N$, be
  a sequence of finite subsets that satisfies $|A_n| \to \infty$. Let $x=(x_v)_{v \in G}$ be a $\newspace^G$-valued random variable that satisfies 
\begin{align}
\lim_{n\to\infty} \frac{1}{|A_n|}\sum_{v \in A_n}f(\varphi_vx)  = \E[f(x)], \ \ \text{in probability}, \label{asmp:fieldconv}
\end{align}
for each $f \in C_b(\newspace^G)$. Then, for each $f \in C_b(\newpathspace^G)$, 
\begin{align}
\lim_{n\to\infty} \frac{1}{|A_n|}\sum_{v \in A_n}f(\varphi_vX^{G,x})  = \E[f(X^{G,x})], \ \ \text{in probability}, \label{claim:fieldconv}
\end{align} 
 In particular, we have $\frac{1}{|A_n|}\sum_{v \in A_n}\delta_{X^{G,x}_v} \to \Lmc(X^{G,x}_{\oSlash})$   in probability in $\P(\newpathspace)$.
\end{proposition}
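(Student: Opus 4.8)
The plan is to transfer the convergence \eqref{asmp:fieldconv} of the empirical field of the initial data to the empirical field of the trajectories by conditioning on $x$. I would write $Z_n := \frac{1}{|A_n|}\sum_{v\in A_n}f(\varphi_v X^{G,x})$ and split
\[
Z_n - \E[f(X^{G,x})] = \big(Z_n - \E[Z_n\mid x]\big) + \big(\E[Z_n\mid x] - \E[f(X^{G,x})]\big);
\]
the first (``conditional fluctuation'') term will be controlled by correlation decay, the second (``conditional mean'') term by the hypothesis \eqref{asmp:fieldconv}. First I would reduce, by a standard approximation, to $f$ of the form $f(y) = \widetilde f\big(y_{B_r(G)}[T]\big)$ with $r \in \N$, $T \in \indexset$, and $\widetilde f$ bounded and Lipschitz: such functions are uniformly dense among the bounded Lipschitz functions for a bounded metric compatible with the product topology on $\newpathspace^G$, and the latter are convergence-determining, so proving \eqref{claim:fieldconv} for this class is equivalent to proving it for every $f \in C_b(\newpathspace^G)$. (Note $G$ is necessarily infinite, since $|A_n| \to \infty$.)

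For the conditional mean term, set $h(y) := \E[f(X^{G,y})]$ for $y \in \newspace^G$. Since the driving noise is independent of $x$, the $\mathrm{Aut}(G)$-invariance \eqref{eq:autoidentity} gives $\E[f(\varphi_v X^{G,x})\mid x] = \E[f(X^{G,\varphi_v x})\mid x] = h(\varphi_v x)$ a.s., so that $\E[Z_n\mid x] = \frac{1}{|A_n|}\sum_{v\in A_n}h(\varphi_v x)$ and $\E[f(X^{G,x})] = \E[h(x)]$. The key point is that $h \in C_b(\newspace^G)$: it is bounded by $\|f\|_\infty$, and its continuity in the product topology on $\newspace^G$ is exactly continuous dependence of the dynamics on the initial condition. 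In discrete time this is immediate, since $X^{G,y}_{B_r(G)}[T]$ is a continuous function of, and depends on only finitely many coordinates of, $y$. In the diffusive case — where $\newspace = B_r(\R^d)$, so the initial data is automatically bounded — it follows from Theorem \ref{thm:local-law-cont} applied to the constant graph sequence $G_n \equiv G$, together with the uniform second-moment and Lipschitz estimates underlying Lemma \ref{le:momentbound-tightness}. With $h \in C_b(\newspace^G)$ in hand, applying \eqref{asmp:fieldconv} to $h$ yields $\E[Z_n\mid x] \to \E[h(x)] = \E[f(X^{G,x})]$ in probability.

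For the conditional fluctuation term, put $D := |B_r(G)| < \infty$; since $\varphi_v$ is an automorphism sending $\o$ to $v$, the set $\varphi_v B_r(G)$ is the ball of radius $r$ about $v$, has $D$ vertices, and $f(\varphi_v X^{G,x})$ depends on $X^{G,x}$ only through its restriction to $\varphi_v B_r(G)$. Conditioning on $x$ — a deterministic graph and initial condition, independent of the noise — Lemma \ref{lem:corrdecay-disc} (discrete time) or Lemma \ref{lem:corrdecay-cont} (diffusions), together with $d_G(\varphi_u B_r(G), \varphi_v B_r(G)) \ge (d_G(u,v) - 2r)^+$, yields a deterministic estimate
\[
\big|\Cov\big(f(\varphi_u X^{G,x}),\, f(\varphi_v X^{G,x}) \mid x\big)\big| \le C_{r,T}\,\decayfn_T\big((d_G(u,v) - 2r)^+\big) \qquad \text{a.s.},
\]
where $\decayfn_T(m) \to 0$ as $m \to \infty$ and $C_{r,T}$ depends only on $D$, $\|\widetilde f\|_{BL}$, $r$, and $T$. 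Hence
\[
\E\big[(Z_n - \E[Z_n\mid x])^2\big] = \E[\Var(Z_n\mid x)] \le \frac{C_{r,T}}{|A_n|^2}\sum_{u,v\in A_n}\decayfn_T\big((d_G(u,v) - 2r)^+\big),
\]
and since $G$ has bounded degree and $|A_n| \to \infty$, the right-hand side tends to $0$ by the elementary computation in Remark \ref{rem-ergass} (this step is also precisely Proposition \ref{pr:corrdecay-general} specialized to the conditional law given $x$). Thus $Z_n - \E[Z_n\mid x] \to 0$ in $L^2$, and combining with the previous paragraph proves \eqref{claim:fieldconv} for cylinder $f$, hence in general. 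The ``in particular'' statement then follows by taking $f(y) = g(y_\o)$ with $g$ ranging over a countable convergence-determining family of cylinder-in-time Lipschitz functions on $\newpathspace$, since $f(\varphi_v X^{G,x}) = g(X^{G,x}_v)$.

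The step I expect to be the main obstacle is the verification, in the diffusive case, that the conditional-mean functional $h$ is genuinely continuous in the product topology on $\newspace^G$: because the diffusion dynamics propagate influence through the whole graph instantaneously, one cannot argue locally as in discrete time and must instead extract this continuity from the correlation-decay and Lipschitz estimates (Theorem \ref{thm:local-law-cont} and Lemma \ref{le:momentbound-tightness}). The conditional fluctuation estimate, though it also relies on correlation decay, is comparatively routine once the deterministic covariance bound above is in place.
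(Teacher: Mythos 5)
Your proposal is correct and follows essentially the same route as the paper's proof: the same decomposition into a conditional-fluctuation term (killed by the conditional correlation-decay estimates \eqref{def:corrdecay-conditional-disc}/\eqref{def:corrdecay-conditional-cont} plus the bounded-degree computation of Remark \ref{rem-ergass}, i.e.\ Proposition \ref{pr:corrdecay-general} applied conditionally on $x$) and a conditional-mean term handled via the $\mathrm{Aut}(G)$-invariance \eqref{eq:autoidentity}, the continuity of $y\mapsto\E[f(X^{G,y}[k])]$ from Theorems \ref{thm:local-law-disc}/\ref{thm:local-law-cont}, and the hypothesis \eqref{asmp:fieldconv}. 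The only difference is presentational: you spell out the continuity of the conditional-mean functional $h$, which the paper asserts implicitly when declaring $g\in C_b(\base^G)$.
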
 
\begin{proof}
  The ``in particular" claim at the end follows by taking $f$ in \eqref{claim:fieldconv} of the form $f(y) = \tilde{f}(y_{\oSlash})$ for $\tilde{f} \in C_b(\newpathspace)$, and by noting that $(\varphi_vX^{G,x})_{\oSlash} = X^{G,x}_{\varphi_v \oSlash}=X^{G,x}_v$. To prove \eqref{claim:fieldconv}, it suffices to show that  for each $k \in \N$ and each bounded Lipschitz $f \in C_b(\newpathspacek^G)$,   
\begin{equation} 
\lim_{n\to\infty} \frac{1}{|A_n|}\sum_{v \in A_n}f(\varphi_vX^{G,x}[k])  = \E[f(X^{G,x}[k])], \ \ \text{in probability}. \label{pf:fieldconv}
\end{equation} 
Now, the (conditional) correlation decay estimate \eqref{def:corrdecay-unified} implies that there exists a family of
  functions $\decayfn_\ell: \N \mapsto [0,\infty)$ with $\decayfn_\ell(n) \rightarrow 0$ as $n \rightarrow \infty$, $\ell \in \N$, such that: 
  for every $\ell \in \N$, $A,B \subset G$ with $|A|, |B| \le \ell$, and
$f \in C_b(\newpathspacek^A)$ and
$g \in C_b(\newpathspacek^B)$ with $\|f\|_{BL},\|g\|_{BL} \le 1$,
\begin{align*} 
	|\Cov(f(X^{G,x}_A[k]),g(X^{G,x}_B[k]) \,|\, x)| \le \decayfn_\ell(d_{G}(A,B)), \quad a.s.
\end{align*}
Note that the assumed existence of automorphisms $\varphi_v$ implies that each vertex has the same degree as the root $\oSlash$, and so the graph $G$ necessarily has bounded degree. Hence,
  by
  Remark \ref{rem-ergass}, the limit \eqref{eq:avggrowthcondition-disc} also holds for every $\ell \in \N$.  
Thus,     we may apply Proposition \ref{pr:corrdecay-general} with $\newRV_v=X^{G,x}_v[k]$ to find
\begin{align*} 
\lim_{n\to\infty}\frac{1}{|A_n|}\sum_{v \in A_n}\big(f(\varphi_vX^{G,x}[k]) - \E[f(\varphi_vX^{G,x}[k]) \, | \, x] \big) = 0, \quad \text{in probability.}
\end{align*}
Next, define $g \in C_b(\base^G)$ by $g(y) := \E[f(X^{G,y}[k])] =\E[f(X^{G,x}[k])\,|\,x=y]$, and note that $g(\varphi_vy) = \E[f(\varphi_vX^{G,y}[k])]$ according to \eqref{eq:autoidentity}. Using \eqref{asmp:fieldconv}, it follows that 
\begin{align*}
\frac{1}{|A_n|}\sum_{v \in A_n}\E[f(\varphi_vX^{G,x}[k]) \, | \, x] &= \frac{1}{|A_n|}\sum_{v \in A_n} g(\varphi_vx) \to \E[g(x)] = \E[f(X^{G,x}[k])],
\end{align*}
in probability. This shows \eqref{pf:fieldconv} and thus completes the proof.
\end{proof}

The condition \eqref{asmp:fieldconv} is a fairly general form of empirical field convergence.  As we show in 
 Corollary \ref{co:transitive} below, it holds, for example, if the
graph is suitably symmetric and the initial states exhibit correlation decay. 
Recall that a graph $G$ is said to be \emph{vertex transitive} if for all $u,v \in V$ there exists $\varphi \in \mathrm{Aut}(G)$ such that $\varphi u=v$. 

\begin{corollary} \label{co:transitive}
Suppose the graph $G$ is vertex transitive, and suppose $x=(x_v)_{v \in G}$ satisfies the following properties:
\begin{itemize} 
\item $\varphi x\stackrel{d}{=} x$ for all $\varphi \in \mathrm{Aut}(G)$.
\item For every $\ell \in \N$, there  exists a function $\newdecayfn_\ell : \N \to \R_+$ such that $\lim_{n\to\infty}\newdecayfn_\ell(n) = 0$ and
\begin{align*}
\Cov(f( x_A),g( x_B)) \le \newdecayfn_\ell(d_G(A,B)), \quad A, B \subset G, |A|, |B| \le \ell, 
\end{align*}
for  all $f \in \C_b(\X^A)$ and $g \in \C_b(\X^B)$,  with  $\|f\|_{BL} \le 1$ and $\|g\|_{BL} \le 1$. 
\end{itemize}
Then, for any sequence $\{A_n\}$ of finite subsets of $G$ with $|A_n| \to \infty$, we have $\frac{1}{|A_n|}\sum_{v \in A_n}\delta_{X^{G,x}_v}$ $\to \Lmc(X^{G,x}_v)$ in probability in $\P(\newpathspace)$.
\end{corollary}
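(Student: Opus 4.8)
The plan is to obtain the statement by feeding the two abstract results already established, Proposition \ref{pr:corrdecay-general} and Proposition \ref{pr:empfield}, into one another: the former handles the initial field, and the latter propagates the resulting empirical-field convergence to the trajectories.

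\emph{Setup.} Since $\{A_n\}$ is a sequence of finite subsets of $G$ with $|A_n| \to \infty$, the vertex set of $G$ must be infinite; and since $G$ is locally finite and vertex transitive, all its vertices share a common finite degree, so $G$ has bounded degree. Fix a root $\o$ of $G$, and for each $v \in G$ use vertex transitivity to choose $\varphi_v \in \mathrm{Aut}(G)$ with $\varphi_v\o = v$. Because automorphisms are distance-preserving, $\varphi_v B_r(G)$ is exactly the ball of radius $r$ centered at $v$, independently of the choice of $\varphi_v$.

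\emph{Empirical field convergence of $x$.} First I would verify the hypothesis \eqref{asmp:fieldconv} of Proposition \ref{pr:empfield} for the initial field, namely $\frac{1}{|A_n|}\sum_{v \in A_n} f(\varphi_v x) \to \E[f(x)]$ in probability for each bounded continuous $f$ on $\X^G$. For this I would apply Proposition \ref{pr:corrdecay-general} with $\newRV = x$, $\newPolish = \X$, and $\decayfn_\ell := \newdecayfn_\ell$ the decay functions furnished by the second hypothesis of the corollary: these satisfy $\decayfn_\ell(n) \to 0$, and since $G$ has bounded degree and $|A_n| \to \infty$ the averaging condition \eqref{eq:avggrowthcondition-disc} holds automatically by the argument in Remark \ref{rem-ergass}; moreover the correlation-decay assumption on $x$ supplies precisely the bounded-Lipschitz version of \eqref{asmp:cordecay} used in the proof of Proposition \ref{pr:corrdecay-general} (and applying the one-sided bound to $-f$ upgrades it to a two-sided bound). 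Proposition \ref{pr:corrdecay-general} then yields $\langle \mu_n, f\rangle - \E[\langle \mu_n, f\rangle] \to 0$ in probability, where $\mu_n := \frac{1}{|A_n|}\sum_{v \in A_n}\delta_{\varphi_v x}$. Since $\varphi_v x \stackrel{d}{=} x$ for every $v$ by the first hypothesis, $\E[\langle \mu_n, f\rangle] = \frac{1}{|A_n|}\sum_{v \in A_n}\E[f(\varphi_v x)] = \E[f(x)]$, which is exactly \eqref{asmp:fieldconv}.

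\emph{Conclusion.} With \eqref{asmp:fieldconv} in hand, and with $G$ of bounded degree, $|A_n| \to \infty$, and the $\varphi_v$ as above, Proposition \ref{pr:empfield} applies and gives $\frac{1}{|A_n|}\sum_{v \in A_n} f(\varphi_v X^{G,x}) \to \E[f(X^{G,x})]$ in probability for all bounded continuous $f$ on $\newpathspace^G$; in particular $\frac{1}{|A_n|}\sum_{v \in A_n}\delta_{X^{G,x}_v} \to \Lmc(X^{G,x}_{\o})$ in probability in $\P(\newpathspace)$. Finally, combining the automorphism identity \eqref{eq:autoidentity} with $\varphi_v x \stackrel{d}{=} x$ gives $X^{G,x} \stackrel{d}{=} \varphi_v X^{G,x}$, hence $X^{G,x}_{\o} \stackrel{d}{=} X^{G,x}_v$ for every $v$, so the limit may equivalently be written $\Lmc(X^{G,x}_v)$ as stated. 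The only point that requires any care lies inside Proposition \ref{pr:corrdecay-general}: the reduction from an arbitrary bounded continuous $f$ on $\X^G$ to bounded Lipschitz functions of finitely many coordinates, so that the correlation-decay bound can be invoked. This is routine, and there is otherwise no substantive obstacle — the corollary amounts to a careful bookkeeping combination of the two abstract propositions.
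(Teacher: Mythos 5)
Your proposal is correct and follows essentially the same route as the paper's proof: verify hypothesis \eqref{asmp:fieldconv} for the initial field by applying Proposition \ref{pr:corrdecay-general} (with Remark \ref{rem-ergass} supplying \eqref{eq:avggrowthcondition-disc} from bounded degree, and $\varphi_v x \stackrel{d}{=} x$ identifying the mean), then conclude via Proposition \ref{pr:empfield}. The extra remarks on the Lipschitz reduction and on rewriting the limit as $\Lmc(X^{G,x}_v)$ are consistent with, and implicit in, the paper's argument.
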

\begin{proof} 
Since $G$ is vertex transitive, for each $v \in G$ there exists $\varphi_v \in \mathrm{Aut}(G)$ such that $\varphi_v\oSlash=v$. Also, a vertex transitive graph is regular and in particular of bounded degree. Hence, by Remark \ref{rem-ergass}, \eqref{eq:avggrowthcondition-disc} holds.  Using the assumption of correlation decay, we may apply Proposition \ref{pr:corrdecay-general} to get
\begin{align*}
\lim_{n\to\infty}\frac{1}{|A_n|}\sum_{v \in A_n} \big(f(\varphi_vx ) - \E[f(\varphi_v x)]\big) = 0,
\end{align*}
in probability, for $f \in C_b(\newspace^G)$. But $\varphi_v x \stackrel{d}{=} x$, and we deduce that $x$ satisfies \eqref{asmp:fieldconv}.  We complete the proof by applying Proposition \ref{pr:empfield}. 
\end{proof}

\begin{remark}
The two assumptions on $x$ are clearly satisfied if $(x_v)_{v \in G}$ are i.i.d.
\end{remark}

\subsection{Lattices and regular trees} \label{se:lattices + trees} 
We lastly highlight what can go wrong regarding empirical measure convergence, as discussed in Section \ref{se:regtree}. This situation is illustrated most clearly by the following two results. 
To begin with, let $\Z^d$ denote the integer lattice, and let $\Z^d_n = \Z^d \cap [-n,n]^d$.

\begin{proposition} \label{pr:lattice}
Let $d \in \N$.
Suppose $x=(x_v)_{v \in \Z^d}$ are i.i.d. $\newspace$-valued random elements, and let $x^n=(x_v)_{v \in \Z^d_n}$ for $n \in \N$. Then
\begin{align*}
\lim_{n\to\infty}\frac{1}{|\Z^d_n|}\sum_{v \in \Z^d_n}\delta_{X^{\Z^d_n,x^n}_v} = \lim_{n\to\infty}\frac{1}{|\Z^d_n|}\sum_{v \in \Z^d_n}\delta_{X^{\Z^d,x}_v} = \Lmc(X^{\Z^d,x}_0)
\end{align*}
in probability.
\end{proposition}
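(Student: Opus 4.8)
The strategy is to establish both equalities separately. For the second equality, $\lim_n \frac{1}{|\Z^d_n|}\sum_{v \in \Z^d_n}\delta_{X^{\Z^d,x}_v} = \Lmc(X^{\Z^d,x}_0)$, observe that $\Z^d$ is the Cayley graph of the abelian (hence amenable) group $\Z^d$, that $\{\Z^d_n\}$ is a F\o lner sequence, and that i.i.d.\ initial states $x=(x_v)_{v\in\Z^d}$ form a $\Z^d$-invariant ergodic random field. Thus Corollary \ref{co:ergopropagation} applies directly and yields the convergence $\frac{1}{|\Z^d_n|}\sum_{v \in \Z^d_n}\delta_{X^{\Z^d,x}_v} \to \Lmc(X^{\Z^d,x}_0)$ in probability in $\P(\newpathspace)$. (Alternatively, one can invoke Corollary \ref{co:transitive}, since $\Z^d$ is vertex transitive and i.i.d.\ marks trivially satisfy the invariance and correlation-decay hypotheses; this also gives the ``in particular'' conclusion of Proposition \ref{pr:empfield}.)

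For the first equality, the goal is to compare the dynamics on the finite box $\Z^d_n$ with those on the infinite lattice $\Z^d$, showing the difference in empirical measures vanishes. The key observation is that the two graphs $\Z^d_n$ and $\Z^d$ agree (as graphs, and with identical initial conditions since $x^n=x|_{\Z^d_n}$) on the ball $B_r(v)$ around any vertex $v$ whose distance to the boundary $\partial \Z^d_n$ exceeds $r$. By the same iterated-Lipschitz / finite-speed-of-propagation estimates used in the proof of Theorem \ref{thm:local-law-cont} (and the even simpler finite-dependence argument in the discrete case, cf.\ Lemma \ref{lem:corrdecay-disc}), for any fixed time horizon $k$ the truncated trajectory $X^{\Z^d_n,x^n}_v[k]$ is close (in $L^2$, uniformly in such $v$) to $X^{\Z^d,x}_v[k]$, with an error that decays as $k$ is held fixed and the distance from $v$ to $\partial\Z^d_n$ grows. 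Since $\#\{v \in \Z^d_n : d(v,\partial\Z^d_n) \le R\}/|\Z^d_n| \to 0$ for each fixed $R$, the ``bad'' boundary layer is negligible, and for the bulk vertices the trajectories nearly coincide. Concretely, fixing a bounded Lipschitz test function $f$ on $\newpathspacek$, one bounds $\big|\frac{1}{|\Z^d_n|}\sum_{v}\big(f(X^{\Z^d_n,x^n}_v[k]) - f(X^{\Z^d,x}_v[k])\big)\big|$ by splitting the sum into boundary vertices (controlled by the vanishing fraction times $2\|f\|_\infty$) and interior vertices at depth $\ge R$ (controlled by $\|f\|_{BL}$ times the uniform $L^1$ trajectory bound, which is $O((C_kk)^{R}/R!)^{1/2}$ by the argument of Theorem \ref{thm:local-law-cont}); then send $n\to\infty$, then $R\to\infty$.

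Combining the two parts: the interior trajectories of $X^{\Z^d_n,x^n}$ match those of $X^{\Z^d,x}$ up to a vanishing-fraction correction, so $\frac{1}{|\Z^d_n|}\sum_{v}\delta_{X^{\Z^d_n,x^n}_v}$ and $\frac{1}{|\Z^d_n|}\sum_{v}\delta_{X^{\Z^d,x}_v}$ have the same limit, and the latter is $\Lmc(X^{\Z^d,x}_0)$ by the first paragraph. I would phrase the comparison at the level of a fixed time horizon $T$ (equivalently $k \in \indexset$) using the unified notation, reducing to convergence of $\langle \cdot, f\rangle$ for bounded Lipschitz $f$ on $\newpathspacek^{}$, which suffices since such $f$ are convergence-determining on $\P(\pathspacek)$ and $k$ is arbitrary.

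\textbf{Main obstacle.} The main technical point is the quantitative finite-speed-of-propagation estimate controlling $\E[\|X^{\Z^d_n,x^n}_v - X^{\Z^d,x}_v\|_{*,T}^2]$ for $v$ far from the boundary. In the diffusive case this is genuinely the crux — one must run the same coupling-plus-iterated-Gronwall argument as in Theorem \ref{thm:local-law-cont}, being careful that the two SDE systems are driven by a common family of Brownian motions on the agreeing sub-ball, and that the uniform second-moment bound of Lemma \ref{le:momentbound-tightness} applies to both systems (which it does, since $x^n$ is uniformly bounded because $x$ is, or one truncates; in the i.i.d.\ case with possibly unbounded marks one first reduces to bounded marks by a standard approximation, or notes that Proposition \ref{pr:empfield}'s hypotheses only require the empirical-field convergence \eqref{asmp:fieldconv}). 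In discrete time this obstacle essentially disappears: for $d(v,\partial\Z^d_n) \ge 2k$ the trajectories $X^{\Z^d_n,x^n}_v[k]$ and $X^{\Z^d,x}_v[k]$ are \emph{exactly} equal, by the finite-dependence structure of \eqref{eq:random_graph-disc} already exploited in Lemma \ref{lem:corrdecay-disc}, so only the trivial boundary-fraction estimate is needed.
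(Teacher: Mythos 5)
Your proposal is correct, and the second equality is handled exactly as in the paper (Corollary \ref{co:transitive}; your primary citation of Corollary \ref{co:ergopropagation} is an equally valid route, both being consequences of Proposition \ref{pr:empfield}). For the first equality, however, you take a genuinely different path. The paper does not compare the two particle systems directly: it first proves the purely graph-theoretic statement \eqref{pf:Zdlocal}, namely that $\frac{1}{|\Z^d_n|}\sum_{v}\delta_{(\Z^d_n,v)}\to\delta_{(\Z^d,0)}$ in $\P(\G_*)$ (an explicit computation summing $2^{-(n-|v|)}$ over the shells $\Z^d_r\setminus\Z^d_{r-1}$), so that $\Z^d_n$ converges \emph{in probability in the local weak sense} to the deterministic rooted graph $(\Z^d,0)$; Corollary \ref{co:localweak iid} then upgrades this to the i.i.d.-marked graphs, and Theorems \ref{thm:localprob-disc}/\ref{thm:localprob-cont} deliver $\mu^{\Z^d_n,x^n}\to\Lmc(X^{\Z^d,x}_0)$ in probability directly, without ever coupling the finite-box dynamics to the infinite-lattice dynamics. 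Your argument instead runs the iterated-Gronwall/finite-speed-of-propagation coupling from the proof of Theorem \ref{thm:local-law-cont} vertex by vertex, shows the two trajectory fields agree up to an $O((C_1T)^m/m!)^{1/2}$ error at depth $m$ from the boundary, and uses the vanishing boundary fraction to conclude that the two empirical measures share a limit. Both arguments ultimately rest on the same fact (the boundary layer of $\Z^d_n$ is asymptotically negligible), but the paper's route is softer and reuses the general machinery, while yours is more self-contained and yields an explicit quantitative coupling between $X^{\Z^d_n,x^n}_v$ and $X^{\Z^d,x}_v$ for bulk vertices that the paper's proof does not provide. Your side remarks are also consistent with the paper's conventions: in the diffusive case the marks take values in $\newspace=B_r(\R^d)$ by the unified notation of Section \ref{se:emp-main}, so Lemma \ref{le:momentbound-tightness} applies to both systems and no truncation is actually needed.
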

\begin{proof}
The second limit follows from Corollary \ref{co:transitive}, since $\Z^d$ is vertex transitive.
To prove the first, note that it is straightforward to check that
\begin{align}
\lim_{n\to\infty}\frac{1}{|\Z^d_n|}\sum_{v \in \Z^d_n}\delta_{(\Z^d_n,v)} = \delta_{(\Z^d,0)} \label{pf:Zdlocal}
\end{align}
in $\P(\G_*)$. To see this, define $|v|$ for $v \in \Z^d$ as the $\ell_\infty$ distance from $v$ to the origin, i.e., the unique value $r \in \N_0$ for which $v \in \Z^d_r \setminus \Z^d_{r-1}$.
Then $B_{n-|v|}(\Z^d_n,v) \cong B_{n-|v|}(\Z^d,0)$, and we have
\begin{align*}
\frac{1}{|\Z^d_n|}\sum_{v \in \Z^d_n} d_*\big((\Z^d_n,v), (\Z^d,0)\big) &\le \frac{1}{|\Z^d_n|}\sum_{v \in \Z^d_n}  2^{-(n-|v|)} \\
	&= \frac{2^{-n}}{|\Z^d_n|} + \sum_{r=1}^n 2^{-(n-r)} \frac{|\Z^d_r \setminus \Z^d_{r-1}|}{|\Z^d_n|} \\
	&= \frac{2^{-n}}{(2n+1)^d} + \sum_{r=1}^n 2^{-(n-r)} \frac{(2r+1)^d-(2r-1)^d}{(2n+1)^d}.
\end{align*}
Noting that $(2r+1)^d - (2r-1)^d \le 2d(2n+1)^{d-1}$ for $1 \le r \le n$, we see that the above
converges to zero as $n\to\infty$. 
This proves \eqref{pf:Zdlocal}, and we then deduce from the i.i.d.\ assumption on $x$ and from Corollary \ref{co:localweak iid} that
\begin{align*}
\lim_{n\to\infty}\frac{1}{|\Z^d_n|}\sum_{v \in \Z^d_n}\delta_{(\Z^d_n,v,x)} = \Lmc((\Z^d,0,x))
\end{align*}
in probability in $\P(\G_*[\newspace])$. This shows that $(\Z^d_n,x)$ converges in probability in the local weak sense to $(\Z^d,x)$, and we may thus apply Theorem \ref{thm:localprob-disc} or Theorem \ref{thm:localprob-cont}  
(and Remark \ref{re:local-law-root}) to complete the proof.
\end{proof}

Next, we consider the infinite $d$-regular tree  $\Tmb^d$, and let $\Tmb^d_n$ denote the $d$-regular tree of height $n$; that is, if $\oSlash \in \Tmb^d$ denotes an arbitrary choice of root, then $\Tmb^d_n$ is the induced subgraph with vertex set $B_n(\Tmb^d)$. 
In order to describe the convergence of $\mu^{\Tmb^d_n,x}$, we consider the following infinite tree $\widetilde{\Tmb}^d = (\Vtil,\Etil)$, which one can interpret as the infinite limit of $\Tmb^d_n$ from the point of view of a leaf. This is sometimes known as the $d$-canopy tree \cite[Lemma 2.8]{dembo-montanari}, pictured in Figure \ref{fig:canopytree} below:
\begin{equation*}
\Vtil = \Nmb_0 \times \Nmb_0, \quad \Etil = \{ ((i,j),(i+1,\lfl j/(d-1) \rfl)) : i,j \in \Nmb_0 \}.
\end{equation*}
Intuitively, $\L(X^{\widetilde{\Tmb}^d,x}_{(i,0)})$ is the limiting law of a particle at distance $i$ from the nearest leaf.

\begin{proposition} \label{pr:regtree}
Suppose $x=(x_v)_{v \in \Tmb^d}$ are i.i.d.\ $\newspace$-valued random elements, and let $x^n=(x_v)_{v \in \Tmb^d_n}$ for $n \in \N$. Then
\begin{equation*}
\lim_{n\to\infty}\frac{1}{|\Tmb^d_n|}\sum_{v \in \Tmb^d_n}\delta_{X^{\Tmb^d,x}_v} = \Lmc(X^{\Tmb^d,x}_{\oSlash}),
\end{equation*}
in probability, whereas
\begin{equation*}
\lim_{n\to\infty}\frac{1}{|\Tmb^d_n|}\sum_{v \in \Tmb^d_n}\delta_{X^{\Tmb^d_n,x}_v} = \sum_{i=0}^\infty \frac{d-2}{(d-1)^{i+1}} \Lmc(X^{\widetilde{\Tmb}^d,x}_{(i,0)}). 
\end{equation*}
\end{proposition}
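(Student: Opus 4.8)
The plan is to handle the two limits separately. The first limit concerns the process $X^{\Tmb^d,x}$ on the \emph{fixed} infinite $d$-regular tree, and since $\Tmb^d$ is vertex transitive (it is the Cayley graph of a free product of copies of $\Z/2\Z$, and in any case directly admits for each $v$ an automorphism carrying $\o$ to $v$), and the i.i.d.\ initial condition $x$ satisfies both hypotheses of Corollary \ref{co:transitive} (it is automorphism invariant and has trivially decaying correlations, $\Cov(f(x_A),g(x_B))=0$ whenever $A\cap B=\emptyset$), the first displayed limit follows immediately from Corollary \ref{co:transitive} applied with $A_n = B_n(\Tmb^d) = \Tmb^d_n$. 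So the real content is the second limit.

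For the second limit, the strategy is to pass through the local weak convergence machinery of Theorem \ref{thm:localprob-disc}/\ref{thm:localprob-cont}. First I would establish the purely graph-theoretic fact that
\begin{align*}
\lim_{n\to\infty}\frac{1}{|\Tmb^d_n|}\sum_{v \in \Tmb^d_n}\delta_{(\Tmb^d_n,v)} = \sum_{i=0}^\infty \frac{d-2}{(d-1)^{i+1}}\,\delta_{(\widetilde{\Tmb}^d,(i,0))}
\end{align*}
in $\P(\G_*)$, i.e.\ that $\{\Tmb^d_n\}$ rooted uniformly converges in probability (a non-random sequence, so just convergence) in the local weak sense to the canopy tree rooted at a point whose distance to the nearest leaf is $i$ with probability $\frac{d-2}{(d-1)^{i+1}}$; this is essentially \cite[Lemma 2.8]{dembo-montanari}. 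The key computation: a vertex of $\Tmb^d_n$ at distance $h$ from the leaves (equivalently at height $n-h$ from the root, for $0 \le h \le n$) has a fixed number of such vertices, and for any fixed $r$ the radius-$r$ neighborhood of such a vertex is isomorphic to $B_r(\widetilde{\Tmb}^d,(h,0))$ once $h \ge r$ and $n-h \ge r$; counting shows the fraction of vertices at distance exactly $i$ from the leaves tends to $\frac{d-2}{(d-1)^{i+1}}$, and the fraction within distance $r$ of the root is negligible. This gives a bound on $\frac{1}{|\Tmb^d_n|}\sum_v d_*((\Tmb^d_n,v),(\widetilde{\Tmb}^d_{\mathrm{can}},v'))$ analogous to the one in the proof of Proposition \ref{pr:lattice}, which one checks tends to zero.

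Next, since the initial condition is i.i.d., Corollary \ref{co:localweak iid} upgrades this to convergence in probability in the local weak sense of the \emph{marked} graphs: $(\Tmb^d_n,x^n)$ converges in probability in the local weak sense to the random marked graph $(\widetilde{\Tmb}^d,(I,0),x)$, where $I$ is independent of $x$ with $\PP(I=i)=\frac{d-2}{(d-1)^{i+1}}$ and $x$ is i.i.d.\ on $\widetilde{\Tmb}^d$. Then Theorem \ref{thm:localprob-disc} (discrete case) or Theorem \ref{thm:localprob-cont} (diffusive case, using that i.i.d.\ marks on a fixed Polish space can be taken bounded, or rather noting the hypotheses there require bounded initial states — one should either assume $\newspace$ bounded or invoke a truncation; in the statement $\newspace=\X$ discrete or $\newspace=\R^d$, so in the diffusive case one restricts to $x$ supported in a ball as in the other results) yields that $\mu^{\Tmb^d_n,x^n} \to \L(X^{\widetilde{\Tmb}^d,x}_{(I,0)})$ in probability in $\P(\newpathspace)$, and
\begin{align*}
\L(X^{\widetilde{\Tmb}^d,x}_{(I,0)}) = \sum_{i=0}^\infty \frac{d-2}{(d-1)^{i+1}}\,\L(X^{\widetilde{\Tmb}^d,x}_{(i,0)})
\end{align*}
by conditioning on $I$, which is the claimed formula.

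\textbf{Main obstacle.} The routine-but-delicate part is the neighborhood-counting argument establishing the local limit of $\{\Tmb^d_n\}$ rooted uniformly: one must correctly identify that from the vantage point of a vertex at fixed distance $i$ from the leaf set the limiting rooted graph is the canopy tree rooted at $(i,0)$ (not the regular tree), carefully handle the vertices near the true root $\o$ (which form a vanishing fraction), and verify the geometric weights $\frac{d-2}{(d-1)^{i+1}}$ sum to one and arise as the limiting empirical distribution of ``distance to nearest leaf''. Once this graph-level statement is in hand, the dynamical conclusion is a direct application of the already-established Theorems \ref{thm:localprob-disc}–\ref{thm:localprob-cont} together with Corollary \ref{co:localweak iid}, exactly as in Proposition \ref{pr:lattice}. (One should also note, for the diffusive case, the standing boundedness assumption on initial conditions, so the statement is implicitly under $x$ taking values in a fixed ball, or one reduces to that case.)
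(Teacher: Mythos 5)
Your proposal is correct and follows essentially the same route as the paper: Corollary \ref{co:transitive} for the first limit, and for the second the local weak convergence of uniformly rooted $\Tmb^d_n$ to the canopy tree with geometric root distribution (the paper simply cites \cite[Lemma 2.8]{dembo-montanari} rather than redoing the counting), followed by Corollary \ref{co:localweak iid}, Theorem \ref{thm:localprob-disc}, and the root map. Your side remark about bounded initial conditions in the diffusive case is a fair observation but does not change the argument.
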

\begin{proof}
The first claim follows from Corollary \ref{co:transitive}, since $\Tmb^d$ is vertex transitive. To prove the second, let us assign a random root $\oSlash$ in  $\widetilde{\Tmb}^d$ by setting $\widetilde\oSlash = (i,0)$ with probability $(d-2)/(d-1)^{i+1}$, for each $i \in \N_0$. Then $\compu(\Tmb^d_n)$ converges to $(\widetilde{\Tmb}^d,\widetilde\oSlash)$ locally in probability by \cite[Lemma 2.8]{dembo-montanari}, i.e.,
\begin{equation*}
\frac{1}{|\Tmb^d_n|}\sum_{v\in \Tmb^d_n} \delta_{(\Tmb^d_n,v)} \to \Lmc((\widetilde{\Tmb}^d,\widetilde\oSlash))
\end{equation*}
in probability in $\P(\G_*)$.
Since $x$ is assumed i.i.d., Corollary \ref{co:localweak iid} implies that
\begin{equation*}
\frac{1}{|\Tmb^d_n|}\sum_{v\in \Tmb^d_n} \delta_{(\Tmb^d_n,v,x)} \to \Lmc((\widetilde{\Tmb}^d,\widetilde\oSlash,x))
\end{equation*}
in probability in $\P(\G_*[\newspace])$. This shows that $(\Tmb^d_n,x)$ converges in probability in the local weak sense to $(\widetilde\Tmb^d,x)$, and we may thus apply Theorem \ref{thm:localprob-disc} (and Remark \ref{re:local-law-root}) to deduce that
\begin{equation*}
\frac{1}{|\Tmb^d_n|}\sum_{v\in \Tmb^d_n} \delta_{(\Tmb^d_n,v,X^{\Tmb^d_n,x})} \to \Lmc((\widetilde{\Tmb}^d,\widetilde\oSlash,X^{\widetilde{\Tmb}^d,x})) = \sum_{i=0}^\infty \frac{d-2}{(d-1)^{i+1}} \Lmc(\widetilde{\Tmb}^d,(i,0),X^{\widetilde{\Tmb}^d,x})
\end{equation*}
in probability in $\P(\G_*[\newpathspace])$. Apply the continuous mapping theorem using the root map $(G,\oSlash,y) \mapsto y_{\oSlash}$ to complete the proof.
\end{proof}

\begin{figure}
\tikzstyle{every node}=[fill,scale=0.5]
\begin{center}
  \begin{tikzpicture}[scale=0.8]
  \def \n {7}
  \def \m {7}
  \def \k {3}
     labels
    \foreach \i in {0,...,\m}
      \foreach \j in {0,...,\n}{
        \node[draw, circle] at (\i,\j) {};
        }
    \foreach \i in {1,...,\m}
      \foreach \j in {0,...,\n}{
        \draw (\i - 1,\j) edge (\i , {floor(\j / (\k-1))});
      }
  \end{tikzpicture}
	\caption{Part of the $\kappa$-canopy tree $\Gtil$, for $\kappa=3$.}
	\label{fig:canopytree}
\end{center}
\end{figure}
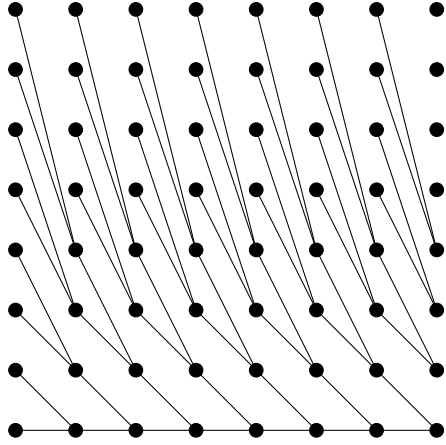

\subsection{Propagation of ergodicity} \label{se:propergo}

The convergence of empirical fields studied in the previous section is closely related to \emph{ergodicity}, discussed in this section. This section explains that the dynamics under investigation in this paper propagate the property of ergodicity as well, in the most natural context of Cayley graphs.
We first collect some basic terminology from group theory and ergodic theory, and assume throughout that all groups are countable and endowed with the discrete topology.
If $G$ is a group and $S$  a finite set of generators satisfying $S=S^{-1}$ (where $S^{-1} := \{g^{-1} : g \in S\}$), then the \emph{Cayley graph} of $(G,S)$ is the graph with vertex set $G$ and edge set $E=\{(g,gs) : s \in S, g \in G\}$.
Classical examples include $\Z^d$ and the $d$-regular tree for $d \ge 2$ even, the latter being the Cayley graph of the free group of order $d/2$.

For a Polish space $\newPolish$ and a group $G$, 
recall that $G$ acts naturally on $\newPolish^G$ via $g\newpolish := (\newpolish_{gv})_{v \in G}$ for $\newpolish=(\newpolish_v)_{v \in G} \in \newPolish^G$. We write $gA := \{g\newpolish : \newpolish \in A\}$ for any set $A \subset \newPolish^G$.
We say that $\mu \in \P(\newPolish^G)$ is \emph{$G$-invariant} if $\mu(g A)=\mu(A)$ for any Borel set $A \subset \newPolish^G$ and $g \in G$, and \emph{ergodic} if
\begin{align*}
\mu(A \, \Delta \, g A) = 0  \ \ \ \forall g \in G \quad \Longrightarrow \quad \mu(A)=0 \text{ or } \mu(A)=1.
\end{align*}
Alternatively, for a $\newPolish^G$-valued random variable $\newRV=(\newRV_g)_{g \in G}$, we say that $\newRV$ is $G$-invariant (resp.\ ergodic) if its law is.
The following proposition explains the notable and remarkably simple phenomenon of  \emph{propagation of ergodicity}:

\begin{proposition} \label{pr:ergopropagation}
Suppose $G$ is the Cayley graph of a finitely generated group. Suppose $x=(x_v)_{v\in G}$ is a $G$-invariant, ergodic $\newspace^G$-valued random element. Then $X^{G,x}$ (resp.\ $X^{G,x}[k]$, for any $k \in \N$) is a $G$-invariant, ergodic $\newpathspace^G$-valued (resp.\ $\newpathspacek^G$-valued) random element.
\end{proposition}
\begin{proof}
  The essential fact is that a \emph{factor} of an ergodic law is ergodic, and  we apply this by expressing
    the  solution of the SDE system as a factor of the initial configuration and noises.
  
   More precisely, for a measurable space $\newPolish$, a \emph{factor} of a random element $z=(z_v)_{v \in G}$ of $\newPolish^G$ is defined as any random element $\bar{z}=(\bar{z}_v)_{v \in G}$ of $\overline\newPolish^G$, for some measurable space $\overline\newPolish$, such that $\bar{z}=\varphi(z)$ a.s.\ for some product-measurable map $\varphi : \newPolish^G \to \overline\newPolish^G$ which commutes with the group action, i.e., $\varphi(gz)=g\varphi(z)$ for all $g \in G$, $z \in \newPolish^G$.
   It is a straightforward consequence of the definitions that if $z$ is ergodic then so is any factor of it.
  
To  apply this  in  our  context,  
for $v \in G$, let $y_v=W_v$ denote the Brownian motions in the diffusive case, or let $y_v=\xi_v$ denote the noise process in the discrete case.
Note that $y=(y_v)_{v \in G}$ are i.i.d.\ and thus ergodic.
Because $x$ is ergodic by assumption, and $x$ and $y$ are independent, we deduce that $(x,y)$ is ergodic as well.
In either the diffusive or discrete case, because the SDE admits a strong solution in the former case by Theorem \ref{th:uniqueness-hominfSDE}, we can express the solution process $X^{G,x}=\Phi(x,y)$ as a measurable function $\Phi$ of the initial states and noises. This map satisfies $\Phi(gx,gy)=g\Phi(x,y)$ a.s., thanks to the automorphism-invariance of the dynamics. 
\end{proof}

\begin{remark}	\label{rmk:Dereudre-Roelly}
In the diffusion setting, the phenomenon of propagation of ergodicity is shown in Theorem 2.2 of \cite{DereudreRoelly2017} in the special case when $\sigma=1$ and $G=\Zmb^d$.
The authors allow for a more general class of possibly discontinuous drifts for which there may be non-uniqueness of the infinite SDE solution, and this non-uniqueness makes the problem far more delicate. In particular, weak solutions that are not strong solutions cannot be constructed as factors of the initial conditions and Brownian motions, as in the proof of Proposition \ref{pr:ergopropagation} given above. Instead,
 they   use an approach using a Girsanov change of measure, exploiting the 
    fact that the  diffusion coefficient $\sigma$  is constant.
\end{remark}

\appendix

\section{Local convergence of marked graphs} \label{ap:localconvergence}

\subsection{Essential properties of the metric space of local convergence}
\label{subs:essential}

This appendix develops the essential properties of the space $\G_*[\Polish]$ of isomorphism classes of
rooted connected marked graphs, defined in Section \ref{se:background-localmarkedgraphs}.
Throughout this section, $(\Polish,d)$ is a fixed metric space. Some of these results (albeit with a different choice of metric that induces the same topology) can be found in \cite[Section 3.2]{Bordenave2016}.

Let $I(G,G')$ denote the set of isomorphisms between two graphs $G,G'\in \G_*$.
Recall from Section \ref{subsub-unmarked} that a sequence $\{(G_n,{\polish}^n)\}$ $\subset \G_*[\Polish]$ \emph{converges locally} to $(G,{\polish}) \in \G_*[\Polish]$ if, for every $k \in \N$ and $\epsilon > 0$, there exist $N \in \N$ such that for all $n \ge N$ there exists $\varphi \in I(B_k(G_n),B_k(G))$ with $d({\polish}^n_v,{\polish}_{\varphi(v)}) < \epsilon$ for all $v \in B_k(G_n)$, where recall
that  $B_k(G_n)$ represents the induced subgraph of $G_n$ on vertices of $G_n$ that 
are no greater than distance $k$ from the root.  
We may endow $\G_*[\Polish]$ with either of the following two metrics: 
\begin{align*}
d_*((G,{\polish}),(G',{\polish}')) &= \sum_{k=1}^\infty 2^{-k}\left(1 \wedge \inf_{\varphi \in I(B_k(G),B_k(G'))}\max_{v \in B_k(G)}d(\polish_v,\polish'_{\varphi(v)})\right), \\
d_{*,1}((G,{\polish}),(G',{\polish}')) &= \sum_{k=1}^\infty 2^{-k}\left(1 \wedge \inf_{\varphi \in I(B_k(G),B_k(G'))}\frac{1}{|B_k(G)|}\sum_{v \in B_k(G)}d(\polish_v,\polish'_{\varphi(v)})\right), 
\end{align*} 
where the infimum of the empty set is understood to be infinite. We will show in Lemma \ref{le:G*metric} that these are genuine metrics on $\G_*[\Polish]$. 
The following proposition confirms first that they are indeed compatible with the aforementioned notion of local convergence.

\begin{proposition}
Let $(G,{\polish}),(G_n,{\polish}^n) \in \G_*[\Polish]$, for $n \in \N$. The following are equivalent:
\begin{enumerate}
\item $(G_n,{\polish}^n)$ converges locally to $(G,{\polish})$.
\item $d_*((G_n,{\polish}^n),(G,{\polish})) \rightarrow 0$.
\item $d_{*,1}((G_n,{\polish}^n),(G,{\polish})) \rightarrow 0$.
\end{enumerate} 
\end{proposition}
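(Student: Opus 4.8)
The plan is to prove the three-way equivalence by establishing $(1)\Leftrightarrow(2)$ and $(1)\Leftrightarrow(3)$ separately, since both metrics have the same structure—a weighted sum over $k$ of $1\wedge(\text{optimal matching cost on }B_k)$—and the only difference is whether the per-level cost aggregates via $\max$ or via average. First I would unpack the definition of local convergence: $(G_n,\polish^n)\to(G,\polish)$ means that for every $k\in\N$ and $\eps>0$ there is $N$ such that for all $n\ge N$ there exists $\varphi\in I(B_k(G_n),B_k(G))$ with $\max_{v\in B_k(G_n)}d(\polish^n_v,\polish_{\varphi(v)})<\eps$. A preliminary observation worth recording is that once $I(B_k(G_n),B_k(G))\ne\emptyset$ for some $k$, in particular $B_k(G_n)\cong B_k(G)$ as rooted graphs, so $|B_k(G_n)|=|B_k(G)|$; moreover local convergence at level $k$ automatically implies it at all smaller levels (restrict the isomorphism), which lets me treat the levels more or less independently.

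For $(1)\Rightarrow(2)$: fix $\eps>0$, choose $K$ with $2^{-K}<\eps/2$, and for each $k\le K$ use local convergence to get $N_k$ beyond which a matching of $\max$-cost below $\eps/2$ exists on $B_k$; then for $n\ge\max_{k\le K}N_k$ every term with $k\le K$ in the sum defining $d_*$ is at most $2^{-k}(\eps/2)$ and the tail $\sum_{k>K}2^{-k}\cdot 1$ is below $\eps/2$, giving $d_*((G_n,\polish^n),(G,\polish))<\eps$. For $(2)\Rightarrow(1)$: fix $k$ and $\eps>0$; since $d_*\to0$, for large $n$ we have $d_*<2^{-k}\min(\eps,1)$, so in particular the $k$-th summand is strictly below $2^{-k}$, forcing $1\wedge\inf_\varphi\max_v d(\polish^n_v,\polish_{\varphi(v)})<\min(\eps,1)$, hence $I(B_k(G_n),B_k(G))\ne\emptyset$ and the infimum (attained, as it is over a finite nonempty set once nonempty) is below $\eps$, which is exactly local convergence at level $k$. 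The argument for $d_{*,1}$ is identical in shape; the only new input needed is the elementary inequality relating the two per-level costs, namely
\[
\frac{1}{|B_k(G)|}\sum_{v\in B_k(G)}d(\polish_v,\polish'_{\varphi(v)})\ \le\ \max_{v\in B_k(G)}d(\polish_v,\polish'_{\varphi(v)}),
\]
valid for any fixed $\varphi$, which immediately yields $d_{*,1}\le d_*$ and so gives $(1)\Rightarrow(2)\Rightarrow(3)$ for free once $(1)\Rightarrow(2)$ is done, reducing the work to $(3)\Rightarrow(1)$. For $(3)\Rightarrow(1)$: the subtlety is that a small average cost does not a priori bound the max, so I cannot directly recover the $\eps$-matching in the max-sense from a single level. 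The fix is to use one extra level of resolution: convergence $d_{*,1}\to0$ forces, for each fixed $k+1$, the existence for large $n$ of $\varphi_n\in I(B_{k+1}(G_n),B_{k+1}(G))$ with small average cost over $B_{k+1}$; restricting $\varphi_n$ to $B_k(G_n)$ and noting $|B_k(G)|/|B_{k+1}(G)|$ is a fixed positive constant, a small average over $B_{k+1}$ gives a small (though larger by that constant factor) average over $B_k$—still only an average bound. So instead I would observe that for each fixed $k$ there are only finitely many isomorphism types of $B_k$-balls realizable, and more efficiently: since $d_{*,1}\to 0$ forces the $k$-th term to $0$, and that term is $1\wedge(\text{something})$ where "something" is an infimum over the finite set $I(B_k(G_n),B_k(G))$, we conclude $I(B_k(G_n),B_k(G))\ne\emptyset$ for large $n$, i.e. $B_k(G_n)\cong B_k(G)$ as plain rooted graphs, for every $k$. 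Given that, the mark-matching is the real content: for the recovered isomorphism class, the infimum of the average over the finite set $I(B_{k}(G_n),B_{k}(G))$ tends to $0$, and since all balls $B_{k+1}$ eventually have a fixed common isomorphism type, the vertex set has a fixed finite size $m_{k+1}$; a sequence of averages of $m_{k+1}$ nonnegative numbers tending to $0$ forces each to tend to $0$ along the chosen matchings only after passing to the right matching—here one uses that there are finitely many matchings, so one of them is chosen infinitely often and for it the average $\to0$ implies the max $\to0$ over that subsequence, and a standard subsequence argument promotes this to the full sequence. This last point—upgrading "average $\to 0$ over a varying finite family of matchings" to "max $\to 0$ for a genuine matching", i.e. $(3)\Rightarrow(1)$—is the main obstacle, and the clean way through it is exactly the finiteness of $I(B_k(G_n),B_k(G))$ together with a pigeonhole/subsequence argument; everything else is bookkeeping with the geometric weights $2^{-k}$.
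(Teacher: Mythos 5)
Your proof is correct, and its overall skeleton matches the paper's: $d_{*,1}\le d_*$ gives $(2)\Rightarrow(3)$ for free, $(1)\Leftrightarrow(2)$ is handled by truncating the geometric sum and restricting isomorphisms to smaller balls, and the only real content is $(3)\Rightarrow(1)$. Where you diverge is in how you close that last step. You treat the passage from ``average cost $\to 0$'' to ``max cost $\to 0$'' as requiring a pigeonhole over the finitely many elements of $I(B_k(G_n),B_k(G))$ plus a subsequence-of-subsequences argument. That machinery is sound but superfluous: since $G$ is locally finite, $|B_k(G)|$ is a fixed finite number $m$, and for \emph{any} isomorphism $\varphi$ one has $\max_{v\in B_k(G)}d(\polish_v,\polish^n_{\varphi(v)})\le\sum_{v\in B_k(G)}d(\polish_v,\polish^n_{\varphi(v)})=m\cdot(\text{average})$, uniformly in $\varphi$. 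So it suffices to force the $k$-th summand of $d_{*,1}$ below $2^{-k}\varepsilon/m$, which is exactly what the paper does by choosing $M\ge k$ with $2^{-M}<\varepsilon/|B_k(G)|$ and waiting until $d_{*,1}<2^{-2M}$; no fixing of a matching, no subsequences. Your parenthetical worry that ``a small average does not a priori bound the max'' is only an issue when the number of terms grows, which it does not here, and the remark that the implication works ``only after passing to the right matching'' is a red herring—the bound holds for every matching. The brief detour through $B_{k+1}$ in your writeup is likewise never used. None of this affects correctness; it only makes the easiest step look like the hardest one.
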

\begin{proof}
Clearly $d_{*,1} \le d_*$, so (2) $\Rightarrow$ (3). To prove (1) $\Rightarrow$ (2), suppose $(G_n,{\polish}^n)$ converges locally to $(G,{\polish})$. Fix $\epsilon > 0$ and $k \in \N$ such that $2^{1-k} \le \epsilon$. Find $n_k$ such that for all $n \ge n_k$ there exists $\varphi_n \in I(B_k(G_n),B_k(G))$ with $d({\polish}^n_v,{\polish}_{\varphi_n(v)}) < 2^{-k}$ for all $v \in B_k(G_n)$. Note that  for $j < k$ the restriction $\varphi_n|_{B_j(G_n)}$ belongs to $I(B_j(G_n),B_j(G))$. We deduce that, for $n \ge n_k$,
\begin{align*}
d_*((G_n,{\polish}^n),(G,{x})) &< \sum_{j=1}^k2^{-j}2^{-k} + \sum_{j=k+1}^{\infty}2^{-j}\left(1 \wedge \inf_{\varphi \in I(B_j(G),B_j(G'))}\max_{v \in B_j(G)}d(\polish_v,\polish'_{\varphi(v)})\right) \\
	&\le 2^{-k} + 2^{-k} \le \epsilon.
\end{align*}

Finally, to prove (3) $\Rightarrow$ (1), fix $k \in \N$ and $\epsilon > 0$. Choose $M \in \N$ such that $2^{-M} < \epsilon / |B_k(G)|$ and $M \ge k$. Find $N$ such that $d_{*,1}((G_n,{\polish}^n),(G,{\polish})) < 2^{-2M}$ for all $n \ge N$. Then 
\[
\inf_{\varphi \in I(B_j(G),B_j(G_n))}\frac{1}{|B_j(G)|}\sum_{v \in B_j(G)}d(\polish_v,\polish^n_{\varphi(v)}) < 2^{j-2M} \le 2^{-M}, \qquad n \ge N, \ j \le M.
\]
In particular, choosing $j=k$, we may thus find for each $n$ some $\varphi_n \in I(B_k(G),B_k(G_n))$ such that
\[
\frac{1}{|B_k(G)|}\sum_{v \in B_k(G)}d(\polish_v,\polish^n_{\varphi(v)}) < 2^{-M}.
\]
Bounding the maximum by the sum,
\begin{align*}
\max_{v \in B_k(G)}d(\polish_v,\polish^n_{\varphi(v)}) < 2^{-M} |B_k(G)| \le \epsilon.
\end{align*}
In summary, we have shown that for each $k\in \N$ and $\epsilon > 0$ there exists $N \in \N$ such that for all $n \ge N$ there exists $\varphi_n \in I(B_k(G),B_k(G_n))$ such that $\max_{v \in B_k(G)}d(\polish_v,\polish^n_{\varphi(v)}) <  \epsilon$. This shows that $(G_n,{\polish}^n) \to (G,{\polish})$ locally, and the proof is complete.
\end{proof}

\begin{lemma}
  \label{le:G*metric} 
$(\G_*[\Polish],d_*)$ and $(\G_*[\Polish],d_{*,1})$ are metric spaces.
\end{lemma}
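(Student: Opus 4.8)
\textbf{Proof plan for Lemma \ref{le:G*metric}.} The task is to verify that $d_*$ and $d_{*,1}$ are genuine metrics on the set $\G_*[\Polish]$ of isomorphism classes of rooted connected marked graphs. The plan is to check the three metric axioms (non-negativity with the identity-of-indiscernibles property, symmetry, and the triangle inequality) for both functions, noting that the arguments are parallel and differ only in whether one aggregates the vertex discrepancies with a max or with a normalized sum.

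First I would observe that both $d_*$ and $d_{*,1}$ take values in $[0,1]$ (each summand is a $2^{-k}$ times a quantity capped at $1$, and $\sum 2^{-k}=1$), so finiteness and non-negativity are immediate; the convention that $\inf\emptyset=+\infty$ gets truncated by the $1\wedge(\cdot)$, so there is no issue with that. For \emph{identity of indiscernibles}: if $(G,\polish)\cong(G',\polish')$, pick an isomorphism $\varphi$ realizing the marked-graph isomorphism; its restriction to each ball $B_k(G)$ lies in $I(B_k(G),B_k(G'))$ and satisfies $d(\polish_v,\polish'_{\varphi(v)})=0$ for every $v$, so every summand vanishes and the distance is $0$. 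Conversely, suppose $d_*((G,\polish),(G',\polish'))=0$ (the $d_{*,1}$ case is identical). Then for each $k$ the infimum over $I(B_k(G),B_k(G'))$ of $\max_v d(\polish_v,\polish'_{\varphi(v)})$ is $0$; in particular $I(B_k(G),B_k(G'))\neq\emptyset$ for all $k$, so $B_k(G)\cong B_k(G')$ for all $k$. The key point is that since each $B_k$ is a finite rooted graph, $I(B_k(G),B_k(G'))$ is a finite set, and there is some $\varphi_k$ attaining the infimum — hence $\max_{v\in B_k(G)}d(\polish_v,\polish'_{\varphi_k(v)})=0$, i.e.\ $\polish_v=\polish'_{\varphi_k(v)}$ for all $v\in B_k(G)$. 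A compactness/diagonal argument then produces a single isomorphism: because the sets $I(B_k(G),B_k(G'))$ are finite and nested-compatible (restriction maps $I(B_{k+1})\to I(B_k)$), by König's lemma there is a coherent choice $\varphi_k$ with $\varphi_{k+1}|_{B_k(G)}=\varphi_k$; their union defines an isomorphism $\varphi:G\to G'$ of the (connected) rooted graphs with $\polish_v=\polish'_{\varphi(v)}$ for all $v$, so $(G,\polish)\cong(G',\polish')$. \emph{Symmetry} is clear: $\varphi\mapsto\varphi^{-1}$ is a bijection $I(B_k(G),B_k(G'))\to I(B_k(G'),B_k(G))$, reindexing the max (resp.\ the sum, using that $|B_k(G)|=|B_k(G')|$ whenever either set is nonempty) by $w=\varphi(v)$.

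The remaining axiom is the \emph{triangle inequality}, which I expect to be the main (though still routine) obstacle, and which I would handle termwise. Fix $k$ and three marked graphs $(G,\polish),(G',\polish'),(G'',\polish'')$. If either $I(B_k(G),B_k(G'))$ or $I(B_k(G'),B_k(G''))$ is empty, the corresponding $k$-term on the right-hand side equals $2^{-k}$, and since the left-hand $k$-term is $\le 2^{-k}$ there is nothing to prove. Otherwise, given $\varphi\in I(B_k(G),B_k(G'))$ and $\psi\in I(B_k(G'),B_k(G''))$, the composition $\psi\circ\varphi$ lies in $I(B_k(G),B_k(G''))$, and for the $d_*$ case the triangle inequality for $d$ on $\Polish$ gives
\begin{align*}
\max_{v\in B_k(G)} d(\polish_v,\polish''_{\psi\varphi(v)}) &\le \max_{v\in B_k(G)}\big(d(\polish_v,\polish'_{\varphi(v)}) + d(\polish'_{\varphi(v)},\polish''_{\psi\varphi(v)})\big) \\
&\le \max_{v\in B_k(G)}d(\polish_v,\polish'_{\varphi(v)}) + \max_{w\in B_k(G')}d(\polish'_w,\polish''_{\psi(w)}),
\end{align*}
using that $\varphi$ maps $B_k(G)$ bijectively onto $B_k(G')$. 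Capping both sides at $1$, applying $a\wedge 1\le (b\wedge 1)+(c\wedge 1)$ when $a\le b+c$, and taking the infimum over $\varphi$ and $\psi$ yields the desired inequality for the $k$-th term; the $d_{*,1}$ case is the same computation with $\frac{1}{|B_k(G)|}\sum_{v\in B_k(G)}$ in place of $\max_{v\in B_k(G)}$, again using the bijection $\varphi$ to rewrite $\frac{1}{|B_k(G)|}\sum_{v}d(\polish'_{\varphi(v)},\polish''_{\psi\varphi(v)}) = \frac{1}{|B_k(G')|}\sum_w d(\polish'_w,\polish''_{\psi(w)})$. Summing the termwise bounds against $2^{-k}$ completes the proof. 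The only genuinely nontrivial ingredient is the König's-lemma step in identity-of-indiscernibles, which replaces an infinite family of partial isomorphisms by a single global one; everything else is bookkeeping with $1\wedge(\cdot)$ and the triangle inequality on $\Polish$.
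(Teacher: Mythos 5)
Your proof is correct and follows essentially the same route as the paper: the only substantive point is identity of indiscernibles, which both arguments settle by gluing the mark-preserving ball isomorphisms $\varphi_k$ into a single coherent global isomorphism --- you via K\H{o}nig's lemma on the finite, restriction-compatible sets $I(B_k(G),B_k(G'))$, the paper via sequential compactness of $(V')^{V}$ in the product of discrete topologies, which is the same diagonal argument in different clothing. You are in fact slightly more complete than the paper: you justify that the infimum defining each term is attained (finiteness of $I(B_k(G),B_k(G'))$, from local finiteness of the graphs) and you write out the triangle inequality termwise via composition of isomorphisms, both of which the paper treats as immediate and omits.
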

\begin{proof}
  We first check that $d_*$ is a metric. Symmetry is clear, as is the fact that $(G,{\polish}) \cong (G',{\polish}')$ implies $d_*((G,{\polish}), (G',{\polish}'))=0$. Conversely, if $d_*((G,{\polish}), (G',{\polish}'))=0$, we show that $(G,{\polish})$ and  $(G',{\polish}')$ are isomorphic as follows: Find a sequence $\varphi_k \in I(B_k(G),B_k(G'))$ such that $\polish_v = \polish'_{\varphi_k(v)}$ for all $v \in B_k(G)$. Extend each $\varphi_k$ arbitrarily to a function from $G$ to $G'$, and view each $\varphi_k$ as an element of the space $(V')^{V}$. Endowing $V'$ and $V$ with the discrete topology, we may equip $(V')^V$ with the topology of pointwise convergence. The sequence $(\varphi_n)$ is pre-compact in this topology since $\varphi_n|_{B_k(G)} \subset B_k(G')^{V}$ for each $n \ge k$, so we may find a subsequential limit point $\varphi : V \rightarrow V'$. The restriction  $\varphi|_{B_k(G)}$ belongs to $I(B_k(G),B_k(G'))$ for each $k$, and it follows that $\varphi$ must be an isomorphism from $G$ to $G'$. Moreover, we must have $\polish_v = \polish'_{\varphi(v)}$ for all $v \in B_k(G)$, for all $k$, and we conclude that $\varphi$ is an isomorphism from $(G,{\polish})$ to $(G',{\polish}')$.
  
  Next, note that $d_{*,1}((G,{\polish}), (G',{\polish}'))=0$ if and only if $d_*((G,{\polish}), (G',{\polish}'))=0$. Therefore $d_{*,1}$ is also a metric.
\end{proof}

The following lemma is taken from \cite[Lemma 3.4]{Bordenave2016}.
\begin{lemma}
If $\Polish$ is a Polish space, then so is $\G_*[\Polish]$.
\end{lemma}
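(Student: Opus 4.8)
The plan is to exhibit an explicit countable dense subset of $(\G_*[\Polish],d_*)$. Since $(\Polish,d)$ is separable, fix a countable dense set $D \subset \Polish$. I would let $\Dmc$ denote the collection of all isomorphism classes of \emph{finite} rooted connected graphs $G=(V,E,\o)$ with marks $\polish=(\polish_v)_{v\in V}$ taking values in $D$; since there are only countably many finite rooted connected graphs up to isomorphism (each has a representative on a vertex set $\{1,\dots,m\}$ for some $m$, and there are finitely many such for each $m$), and for each such graph only countably many $D$-valued mark configurations, $\Dmc$ is countable. The claim is that $\Dmc$ is dense in $(\G_*[\Polish],d_*)$.

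To prove density, I would take an arbitrary $(G,\polish) \in \G_*[\Polish]$ and $\epsilon > 0$, choose $k \in \N$ with $2^{-k} < \epsilon/2$, and approximate using the finite truncation $B_k(G)$. Concretely, let $B_{k+1}(G)$ be the ball of radius $k+1$ around the root; this is a finite rooted connected graph. For each vertex $v \in B_{k+1}(G)$, pick $\polish'_v \in D$ with $d(\polish_v,\polish'_v) < \epsilon/2$, and let $(H,\polish')$ be the element of $\Dmc$ given by $B_{k+1}(G)$ with these $D$-valued marks. Then $B_j(H) \cong B_j(G)$ for all $j \le k$ (indeed for $j \le k+1$ as unmarked graphs), so in the $d_*$ sum the identity isomorphism witnesses that each term with index $j \le k$ is bounded by $\max_{v \in B_j(H)} d(\polish_v,\polish'_v) < \epsilon/2$, while the terms with index $j > k$ contribute at most $\sum_{j>k} 2^{-j} = 2^{-k} < \epsilon/2$; hence $d_*((G,\polish),(H,\polish')) < \epsilon$.

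I expect no serious obstacle here; the only point requiring a little care is the bookkeeping that $\Dmc$ really is countable — that is, that the set of isomorphism classes of finite rooted connected graphs is countable — which follows by enumerating representatives on vertex sets $\{1,\dots,m\}$, $m \in \N$, and noting finiteness for each $m$. Everything else is a routine estimate using the geometric tail of the series defining $d_*$ and the fact that truncating at level $k$ only affects the summands with index $j > k$. One minor subtlety worth stating explicitly: marking only $B_{k+1}(G)$ rather than all of $G$ is harmless because $d_*$ ignores mark discrepancies beyond radius $j$ in its $j$-th term, and we need agreement of the rooted graph structure on $B_k(G)$, which $B_{k+1}(G)$ certainly provides.
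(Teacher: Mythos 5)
Your proposal is correct and follows essentially the same route as the paper: fix a countable dense set in $\Polish$, and approximate an arbitrary marked graph by a finite truncation $B_k(G)$ carrying marks from that dense set. The paper states this more tersely (leaving the metric estimate implicit), while you spell out the geometric-tail bookkeeping for $d_*$; there is no substantive difference.
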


\subsection{Auxiliary results}
\label{apsub:auxiliary}

With the essential properties of the metric space $(\G_*[\Polish],d_*)$ now established, we
now establish two auxiliary results.  The first addresses the question of convergence of empirical measures.

\begin{proposition} \label{pr:localconvergence-empiricalmeasure}
Suppose $(\Polish,d)$ is a complete, separable metric space.
Let $(G,{\polish})$, $(G_n,{\polish}^n) \in \G_*[\Polish]$, and assume $G$ and $G_n$ are finite graphs. Define the empirical measures
\[
\mu^{G} = \frac{1}{|G|}\sum_{v \in G}\delta_{\polish_v}, \quad\quad \mu_n = \frac{1}{|G_n|}\sum_{v \in G_n}\delta_{\polish^n_v}.
\]
If $(G_n,{\polish}^n) \rightarrow (G,{\polish})$ in $\G_*[\Polish]$, then $\mu_n \rightarrow \mu^G$ in $\P(\Polish)$.  
\end{proposition}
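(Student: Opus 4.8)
The statement is a routine fact: if $(G_n,\polish^n) \to (G,\polish)$ in $\G_*[\Polish]$ with all graphs finite, then the empirical measures of the marks converge weakly. Since local convergence with a \emph{finite} limit graph forces $|G_n| = |G|$ for all large $n$ and moreover yields, for each large $n$, an isomorphism $\varphi_n : G \to G_n$ with $\max_{v \in G} d(\polish_v, \polish^n_{\varphi_n(v)})$ as small as desired, the plan is to exploit this exact vertex-matching directly rather than going through any metric on $\P(\Polish)$ abstractly.

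\textbf{Step 1 (reduce to a matching).} Fix $k \ge \mathrm{diam}(G)$ so that $B_k(G) = G$. By the definition of local convergence in $\G_*[\Polish]$ and the fact that $G$ is finite, for every $\epsilon > 0$ there exists $N$ such that for all $n \ge N$ there is an isomorphism $\varphi_n \in I(B_k(G_n), B_k(G)) = I(G_n, G)$; in particular $|G_n| = |G| =: m$ for $n \ge N$, and we may take $\varphi_n$ with $\max_{v \in G} d(\polish^n_{\varphi_n^{-1}(v)}, \polish_v) < \epsilon$ (relabeling: let $\psi_n := \varphi_n^{-1} : G \to G_n$, so $d(\polish^n_{\psi_n(v)}, \polish_v) < \epsilon$ for all $v \in G$). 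More carefully, since $\epsilon$ is arbitrary, pick a sequence $\epsilon_j \downarrow 0$ and a corresponding increasing sequence $N_j$, and for $n \in [N_j, N_{j+1})$ set $\epsilon(n) := \epsilon_j$; then $\epsilon(n) \to 0$ and for each $n \ge N_1$ there is a bijection $\psi_n : G \to G_n$ with $\max_{v \in G} d(\polish_v, \polish^n_{\psi_n(v)}) < \epsilon(n)$.

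\textbf{Step 2 (estimate the Wasserstein / bounded-Lipschitz distance).} For any $1$-Lipschitz $f : \Polish \to \R$ with $\|f\|_\infty \le 1$ and $n \ge N_1$,
\begin{align*}
\left| \langle \mu_n, f\rangle - \langle \mu^G, f\rangle \right|
&= \left| \frac{1}{m}\sum_{v \in G} f(\polish^n_{\psi_n(v)}) - \frac{1}{m}\sum_{v \in G} f(\polish_v) \right| \\
&\le \frac{1}{m}\sum_{v \in G} \left| f(\polish^n_{\psi_n(v)}) - f(\polish_v) \right| \le \frac{1}{m}\sum_{v \in G} d(\polish^n_{\psi_n(v)}, \polish_v) < \epsilon(n),
\end{align*}
where the first equality uses that $\psi_n$ is a bijection of $G$ onto $G_n$ so it merely reindexes the sum defining $\mu_n$. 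Hence the bounded-Lipschitz distance $d_{BL}(\mu_n, \mu^G) \to 0$ as $n \to \infty$. Since $(\Polish,d)$ is separable, the bounded-Lipschitz distance metrizes weak convergence on $\P(\Polish)$, so $\mu_n \to \mu^G$ in $\P(\Polish)$, completing the proof.

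\textbf{Main obstacle.} There is essentially no hard step here; the only point requiring a little care is the bookkeeping in Step 1 to pass from the ``for every $\epsilon$, eventually'' formulation of local convergence to a single bijection sequence $\psi_n$ with $\max_v d(\polish_v,\polish^n_{\psi_n(v)}) =: \delta_n \to 0$, and to confirm (from finiteness of $G$) that $|G_n|=|G|$ eventually. Everything after that is the one-line Lipschitz estimate above plus the standard fact that $d_{BL}$ metrizes weak convergence on a separable metric space. One should also remark that $\mu^G$ and each $\mu_n$ are well-defined Borel probability measures (finite graphs, so no convergence issues), which is immediate.
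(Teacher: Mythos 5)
Your overall strategy coincides with the paper's: produce, for large $n$, an isomorphism of $G$ onto $G_n$ that matches marks to within $\epsilon$, and then conclude by a one-line Lipschitz estimate (the paper uses the Kantorovich metric $W_1$ where you use $d_{BL}$; these are interchangeable here). The one place your write-up has a genuine gap is the identity $I(B_k(G_n),B_k(G)) = I(G_n,G)$ in Step~1. Taking $k \ge \mathrm{diam}(G)$ gives $B_k(G)=G$, but the isomorphism $B_k(G_n)\cong B_k(G)$ says nothing about vertices of $G_n$ at distance greater than $k$ from its root: e.g.\ if $G$ is a rooted path of length $k$ and $G_n$ is a rooted path of length $k+1$, then $B_k(G_n)\cong B_k(G)$ yet $B_k(G_n)\subsetneq G_n$. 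If this were not ruled out, $\psi_n$ would be a bijection of $G$ onto a \emph{proper} subgraph of $G_n$, $|G_n|\ne|G|$, and the reindexing in Step~2 would fail.

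The fix is one sentence, and it is exactly the point the paper is careful about: invoke local convergence at radius $k+1$ (or any radius strictly larger than the root-to-farthest-vertex distance $R(G)$). Since $B_{k+1}(G_n)\cong B_{k+1}(G)=B_k(G)$, the rooted isomorphism preserves distances from the root, so $G_n$ has no vertices at distance $k+1$ from its root; by connectedness $G_n=B_k(G_n)$, and only then is $\psi_n$ a bijection $G\to G_n$. The paper packages this as: choose $r=2R(G)$, note $B_r(G)=B_{R(G)}(G)$, deduce $B_r(G_n)=B_{R(G)}(G_n)$ by isomorphism, and use the elementary fact that $B_r(G')=B_s(G')$ for some $s>r$ forces $G'=B_r(G')$. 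With that sentence added, your argument is complete and is essentially the paper's proof.
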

\begin{proof}
Fix finite graphs $G$, $G_n$ in $\G_*$. 
Consider the $1$-Wasserstein (Kantorovich) metric,
\[
W_1(m,m') = \sup\left\{ \int_\X f\,d(m-m') : f : \X \rightarrow \R, \ |f(x)-f(y)| \le d(x,y) \, \forall x,y \in \X \right\}.
\]
It is well known that convergence in this metric implies weak convergence.
For any (rooted connected) graph $G'=(V',E',\oSlash') \in \G_*$ let
$R(G') = \inf\{n \ge 0 : G' = B_n(G')\}$, and note that $R(G')$ is simply the distance from the root to the furthest vertex.
A graph $G' \in \G_*$ is finite if and only if $R(G') < \infty$. Moreover, $G'=B_{R(G')}(G')=B_r(G')$ for any $r \ge R(G')$. Because the graph is connected, it is also clear that if $B_r(G') = B_s(G')$ for some $s > r$, then there are no vertices that are at a distance greater than  $r$ from the root, and so $G'=B_r(G')$ and $R(G') \le r$.

Now, let $r=2R(G)$. 
Let $\epsilon > 0$.  The assumed convergence $(G_n, {\polish}^n) \rightarrow (G,{\polish})$ implies 
the existence of $N \in \N$ such that for all $n \ge N$
there exists $\varphi_n \in I(B_r(G),B_r(G_n))$ such that  $\max_{v \in B_r(G)}d(\polish_v,\polish^n_{\varphi_n(v)}) < \epsilon$. Now, since $G = B_r(G) = B_{R(G)}(G)$, by isomorphism we must have $B_r(G_n)=B_{R(G)}(G_n)$. From the argument of the previous paragraph we deduce that $G_n=B_r(G_n)$ and $R(G_n) = R(G)$. Thus $\varphi_n$ is an isomorphism from $G$ to $G_n$, and
\begin{equation*}
W_1(\mu_n,\mu) = \sup_f \frac{1}{|G|}\sum_{v \in G} \left(f(\polish_v) - f(\polish^n_{\varphi_n(v)})\right) \le \frac{1}{|G|}\sum_{v \in G} d(\polish_v,\polish^n_{\varphi_n(v)}) < \epsilon. \qedhere
\end{equation*}
\end{proof}

We now present the proof of Lemma \ref{le:localweakprob characterization} stated in Section \ref{se:local weak in prob},
which provides equivalent characterizations of convergence in probability in the local weak sense. 

\begin{proof}[Proof of Lemma \ref{le:localweakprob characterization}]
The proof is similar to that of the Sznitman-Tanaka theorem \cite[Proposition 2.2(i)]{sznitman1991topics}.
A simple and well known argument shows that the total variation distance between $\L((U^n_1,U^n_2) \,|\, G_n)$ and $\L((\pi_n(1),\pi_n(2)) \,|\, G_n)$ is no more than $2/|G_n|$ on the set $|G_n| \geq 2$,
where $\pi_n$ is a uniformly random permutation of the vertex set of $G_n$ (given $G_n$, and assuming without loss of generality that the vertex set of $G_n$ is $\{1,\dotsc,|G_n|\}$). 
 Since $|G_n|\to\infty$ in probability, we deduce that the total variation distance between $\L(U^n_1,U^n_2)$ and $\L(\pi_n(1),\pi_n(2))$ vanishes. 
Thus, \eqref{def:asympindep} is equivalent to
\begin{align}
\E[g_1(\comp_{\pi_n(1)}(G_n,\polish^n))g_2(\comp_{\pi_n(2)}(G_n,\polish^n))] \to \E[g_1(G,\polish)]\E[g_2(G,\polish)], \quad \forall  g_1,g_2 \in C_b(\G_*[\Polish]). \label{pf:asympindep1}
\end{align}
Let $\mu_n := \frac{1}{|G_n|}\sum_{v \in G_n} \delta_{\comp_{\pi_n(v)}(G_n,\polish^n)}$.
Since the  (conditional) joint  law $\L\big((\comp_{\pi_n(v)}(G_n,\polish^n))_{v \in G_n} | G_n \big)$ is exchangeable, for $g_1,g_2 \in C_b(\G_*[\Polish])$ we have 
\begin{align}
\E\left[ \lan \mu_n,g_1\ran \lan\mu_n, g_2\ran \right] &= \E \left[ \frac{1}{|G_n|^2} \sum_{u,v \in G_n} g_1(\comp_{\pi_n(u)}(G_n,\polish^n))g_2(\comp_{\pi_n(v)}(G_n,\polish^n)) \right]  \nonumber \\
	&= \E\Bigg[ \frac{|G_n|-1}{|G_n|} g_1(\comp_{\pi_n(1)}(G_n,\polish^n))g_2(\comp_{\pi_n(2)}(G_n,\polish^n)) \label{pf:asympindep3} \\
	&\qquad\qquad + \frac{1}{|G_n|} g_1(\comp_{\pi_n(1)}(G_n,\polish^n)) g_2(\comp_{\pi_n(1)}(G_n,\polish^n))\Bigg].  \nonumber
\end{align}

Now suppose that \eqref{def:asympindep}, or equivalently \eqref{pf:asympindep1},  holds.  Let $f \in C_b(\G_*[\Polish])$, and take $g_i(\cdot):= f(\cdot)-\E[f(G,\polish)]$ for $wi=1,2$. Then the right-hand side of \eqref{pf:asympindep3} converges to $\E[g_1(G,\polish)]\E[g_2(G,\polish)]$ $=0$, and we deduce that 
\[
\E\left[\left(\lan \mu_n,f\ran - \E[f(G,\polish)]\right)^2\right] = \E[\lan \mu_n, g_1\ran\lan\mu_n,g_2\ran] \to 0.
\]
As this holds for arbitrary $f$, we deduce that 
\begin{align}
\lim_{n\to\infty}\frac{1}{|G_n|}\sum_{v \in G_n} \delta_{\comp_v(G_n,\polish^n)} = \lim_{n\to\infty}\mu_n = \L(G,\polish), \qquad \text{in } \P(\G_*[\Polish]), \text{ in probability}. \label{pf:asympindep2}
\end{align}
Note that the first identity is just the definition of $\mu_n$, upon removing the permutation.
Thus, \eqref{pf:asympindep2} is precisely the convergence in probability in the local weak sense of $(G_n,\polish)$ to $(G,\polish)$, which completes the proof of the ``if" part of the claim. 

To prove the converse, we assume \eqref{pf:asympindep2} holds and deduce \eqref{pf:asympindep1} as follows. Note that \eqref{pf:asympindep2} implies $\E\left[ \lan \mu_n,g_1\ran \lan\mu_n, g_2\ran \right]$ converges to $\E[g_1(G,\polish)]\E[g_2(G,\polish)]$, whereas the right-hand side of \eqref{pf:asympindep3} clearly has the same $n\to\infty$ limit as the left-hand side of \eqref{pf:asympindep1} since $|G_n|\to\infty$. 
\end{proof}

\section{Proofs of local weak convergence of Gibbs measures} \label{ap:Gibbsmeasures}

The goal of this section is to prove the results in Section \ref{subsub-icconv}.
A number of prior works, such as \cite{dembo2013factor,dembo-montanari}, have studied Gibbs measures on locally converging (sparse) graph sequences and
  Lemma \ref{le:gibbsunif} on the convergence of the whole particle configuration is well known in a more general context (see  \cite{georgii-gibbs}),
  but we include it here for completeness.

Although we have focused our attention on \emph{factor models} with pairwise interactions,  the same arguments extend easily to bounded-range interactions.
Recall the definitions given in  Section \ref{subsub-icconv}, and fix the pair $(\psi, \lambda)$ as defined therein.   
Also, recall that given a Polish space $\Polish$ and a graph $G = (V,E)$,   $P \in \P(\Polish^V)$ is said to be a
   {\it Markov random field}  with respect to $G$ if 
  for every finite $A \subset V$, 
  \begin{equation}
    \label{P-MRF}
   P(y_A\,|\,\polish_{V\setminus A}) = P(y_A\,|\, \polish_{\partial A}) \quad \mbox{for } P\mbox{-a.e. }  \polish_{V\setminus A}  \in \Polish^{V\setminus A}. 
  \end{equation}  

The first step toward the proofs of Propositions \ref{pr:lw gibbs} and \ref{pr:lwprob gibbs}
is the following lemma, which is inspired by
\cite[Proposition 7.11]{georgii-gibbs}.

\begin{lemma} \label{le:gibbsunif}
  Suppose $G=(V,E) \in \mathcal{U}$. Let $P_G$ be the unique $(\psi,\lambda)$-Gibbs measure, and    
  let $A_n$ be any increasing sequence of finite sets with $\cup_nA_n = V$. Then, for any $m \in \N$
  and $f \in C_b(\Polish^{A_m})$, we have
   \begin{align*}
\lim_{n\to\infty}\sup_{\polish_{\partial A_n} \in \Polish^{\partial A_n}}\left| \int_{\Polish^{A_n}} f(\polish_{A_m})\,\gamma^G_{A_n}(d\polish_{A_n} \,|\, \polish_{\partial A_n}) - \int_{\Polish^V} f(\bar{\polish}_{A_m})\,P_G(d\bar{\polish}_V)\right| = 0.
\end{align*}
\end{lemma}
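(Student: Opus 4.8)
The plan is to prove the lemma by exploiting the uniqueness of the Gibbs measure together with a compactness argument. The key observation is that for each fixed boundary condition $\polish_{\partial A_n}$, one can build an infinite-volume measure on $\Polish^V$ that is consistent with the specification $\gamma^G$ everywhere on $A_n$, and uniqueness will force all such measures to agree (in the limit) with $P_G$. Concretely, first I would fix $m \in \N$ and $f \in C_b(\Polish^{A_m})$, and suppose for contradiction that the claimed supremum does not tend to zero. Then there is $\delta > 0$, a subsequence $n_k \to \infty$, and boundary configurations $\polish^{(k)}_{\partial A_{n_k}}$ such that
\[
\left| \int_{\Polish^{A_{n_k}}} f(\polish_{A_m})\,\gamma^G_{A_{n_k}}(d\polish_{A_{n_k}} \,|\, \polish^{(k)}_{\partial A_{n_k}}) - \int_{\Polish^V} f(\bar\polish_{A_m})\,P_G(d\bar\polish_V)\right| \ge \delta
\]
for all $k$.

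Next I would extend each $\gamma^G_{A_{n_k}}(\cdot \,|\, \polish^{(k)}_{\partial A_{n_k}})$ to a probability measure $Q_k$ on the full configuration space $\Polish^V$: for instance, sample $\polish_{A_{n_k}}$ from $\gamma^G_{A_{n_k}}(\cdot \,|\, \polish^{(k)})$, set $\polish_{\partial A_{n_k}} = \polish^{(k)}$, and fill in the remaining vertices $V \setminus (A_{n_k} \cup \partial A_{n_k})$ independently according to $\lambda$ (any fixed prescription works). The sequence $\{Q_k\}$ is tight in $\P(\Polish^V)$ with the product topology — here I would use that $\lambda$ is tight on the Polish space $\Polish$, so finitely many coordinates are tight, and a diagonal argument over the countable vertex set $V$ gives tightness of the joint laws — hence along a further subsequence $Q_k \to Q$ for some $Q \in \P(\Polish^V)$. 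The main work is then to check that $Q \in \mathrm{Gibbs}(G,\psi,\lambda)$: for any finite set $B \subset V$, once $A_{n_k} \supset B \cup \partial B$ (which holds eventually since $A_n \uparrow V$), the DLR-type consistency $\gamma^G_{A_{n_k}} = \gamma^G_{A_{n_k} \setminus B}(\cdot \mid \cdot) \otimes \gamma^G_B(\cdot \mid \cdot)$ built into the definition \eqref{def:gammaGAgibbs} shows that $Q_k$ satisfies the conditional specification $\gamma^G_B$ exactly; passing to the limit using the boundedness and continuity of $\psi$ (so that $\gamma^G_B(\cdot \mid \polish_{\partial B})$ depends continuously on $\polish_{\partial B}$, and the relevant conditional expectations pass to the limit) gives that $Q$ satisfies the Markov random field/Gibbs property with kernel $\gamma^G_B$. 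Since $G \in \mathcal{U}$, we conclude $Q = P_G$. Finally, since $f(\polish_{A_m})$ depends only on finitely many coordinates and is bounded continuous, $\int f \, dQ_k \to \int f \, dQ = \int f \, dP_G$, contradicting the displayed lower bound; this proves the lemma.

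I expect the main obstacle to be the verification that the subsequential limit $Q$ is genuinely a Gibbs measure, i.e.\ that the conditional specifications $\gamma^G_B$ survive the weak limit. The subtlety is that weak convergence $Q_k \to Q$ does not in general commute with conditioning, so one must phrase the Gibbs property in an integrated form that \emph{is} stable: namely, $\int h(\polish_{\partial B}) \, \gamma^G_B(g \mid \polish_{\partial B}) \, Q(d\polish) = \int g(\polish_B) h(\polish_{\partial B}) \, Q(d\polish)$ for all bounded continuous $g, h$ on the relevant finite-dimensional spaces, and then use that the left-hand integrand $\polish \mapsto h(\polish_{\partial B}) \gamma^G_B(g \mid \polish_{\partial B})$ is bounded and continuous (this is where $\psi$ bounded and continuous, and $\psi \geq 0$ with the normalizing constant $Z^G_B(\polish_{\partial B})$ bounded away from zero on the compact range of $\polish_{\partial B}$, enter). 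One mild technical point worth isolating: the normalizer $Z^G_B(\polish_{\partial B})$ must be shown to be continuous and strictly positive in $\polish_{\partial B}$; positivity is immediate from $\psi \ge 0$ and $\lambda$ being a probability measure together with... actually one needs $\psi$ to not vanish identically, which is implicit, or more carefully one restricts attention to boundary values in the (tight, essentially compact) support of the limit, where uniform bounds hold. I would handle this by noting $B$ is finite so $\partial B$ is finite, $\Polish^{\partial B}$ coordinates are tight under $\{Q_k\}$, and standard Gibbs-specification arguments (as in \cite[Chapter 1]{georgii-gibbs}) give the required continuity.
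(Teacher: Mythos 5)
Your proposal follows essentially the same route as the paper's proof: argue by contradiction, extend each finite-volume kernel $\gamma^G_{A_{n_k}}(\cdot\,|\,\polish^{(k)}_{\partial A_{n_k}})$ to a measure on $\Polish^V$ (the paper fills in \emph{all} of $V\setminus A_n$, boundary included, with i.i.d.\ $\lambda$; you freeze the boundary at $\polish^{(k)}$ — this difference is immaterial since every fixed vertex eventually lies in $A_{n_k}$), extract a weak limit by tightness, identify the limit as an element of $\mathrm{Gibbs}(G)$, and invoke $G\in\mathcal{U}$ to force the limit to be $P_G$, contradicting the assumed lower bound.

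One step needs tightening. The integrated identity you propose to verify for the limit $Q$, namely $\int h(\polish_{\partial B})\,\gamma^G_B(g\,|\,\polish_{\partial B})\,Q(d\polish)=\int g(\polish_B)h(\polish_{\partial B})\,Q(d\polish)$ with $h$ a function of the boundary coordinates only, establishes $\Lmc(Y_B\,|\,Y_{\partial B})=\gamma^G_B(\cdot\,|\,Y_{\partial B})$ but not the DLR condition $\Lmc(Y_B\,|\,Y_{V\setminus B})=\gamma^G_B(\cdot\,|\,Y_{\partial B})$ that Definition \ref{def-Gibbsinf} requires for membership in $\mathrm{Gibbs}(G)$ (and hence for the uniqueness step to apply). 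You must test against $h(\polish_C)$ for an \emph{arbitrary} finite $C\subset V\setminus B$, which is exactly what the paper does via $\E[g(Y_B)h(Y_C)]=\E[\lan\gamma^G_B(\cdot\,|\,Y_{\partial B}),g\ran h(Y_C)]$; the passage to the limit works identically because the integrand is still a bounded continuous function of finitely many coordinates. Your concerns about the normalizer are reasonable but over-engineered: pointwise positivity of $Z^G_B(\polish_{\partial B})$ (asserted in the definition) together with boundedness and continuity of $\psi$ already give weak continuity of $\polish_{\partial B}\mapsto\gamma^G_B(\cdot\,|\,\polish_{\partial B})$, which is all the limit argument uses; the paper obtains tightness of the extended measures from the uniformly bounded density $d\gamma^G_A(\cdot\,|\,\polish_{\partial A})/d\lambda^A$ rather than from a diagonal argument, but the two routes serve the same purpose.
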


Recall that the definition of the kernel $\gamma^G_A(d\polish_A\,|\, \polish_{\partial A})$ is given pointwise in \eqref{def:gammaGAgibbs}, in terms of the continuous interaction function $\psi$. Because we work with this particular version of the conditional probability measures, it makes sense that Lemma \ref{le:gibbsunif} is stated in terms of a supremum rather than an essential supremum.

\begin{proof}[Proof of Lemma \ref{le:gibbsunif}]
We first note that $\polish_{\partial A} \mapsto \gamma^G_A( \cdot \,|\, \polish_{\partial A})$ is continuous with respect to weak convergence, for any finite graph $G$ and nonempty finite set of vertices $A$, because $\psi$ is bounded and continuous. Moreover, because $\psi$ is bounded, we have
\begin{align*}
\sup_{\polish \in \Polish^V} \frac{d\gamma^G_A(\cdot \,|\, \polish_{\partial A})}{d\lambda^A}(\polish_A) < \infty,
\end{align*}
In particular, this readily implies that
\begin{align}
\{\gamma^G_A(\cdot \,|\, \polish_{\partial A}) : \polish_{\partial A} \in \Polish^{\partial A}\} \subset \P(\Polish^A) \ \ \text{ is tight for each } G \text{ and } A. \label{eq:gibbstightness}
\end{align}

Now suppose that, in contradiction to the assertion of the lemma,  there exist an increasing sequence of finite sets $A_n$ with $\cup_nA_n=V$, $m \in \N$, $f \in C_b(\Polish^{A_m})$,  $\epsilon > 0$, and $y^n_{\partial A_n} \in \Polish^{\partial A_n}$ such that
\begin{equation}
\left| \int_{\Polish^{A_n}} f(\polish_{A_m})\,\gamma^G_{A_n}(d\polish_{A_n} \,|\, y^n_{\partial A_n}) - \int_{\Polish^V} f(\polish_{A_m})\,P_G(d\polish_V)\right| \ge \epsilon, \quad \forall n \ge m. \label{pf:gibbsunique1}
\end{equation}
Define $P^n \in \P(\Polish^V)$ by setting  
\begin{align*}
P^n(d\polish_V) = \gamma^G_{A_n}(d\polish_{A_n} \,|\, y^n_{\partial A_n}) \prod_{v \in V \setminus A_n} \lambda(d\polish_v).
\end{align*}
Then it is easy to verify  that $P^n$ is a Markov random field with  respect to $G$, in the sense that \eqref{P-MRF} holds when $P$ is replaced with $P^n$. 
  Moreover, 
  as a consequence of \eqref{eq:gibbstightness}, the sequence $(P^n)$ is tight and thus has a weak limit point, say $P \in \P(\Polish^V)$. 
By \eqref{pf:gibbsunique1}, we have 
\begin{align}
 \left| \int_{\Polish^V} f(\polish_{A_m}) \,(P-P_G)(d\polish_V) \right|  \ge \epsilon. \label{pf:gibbsunique2}
\end{align}
Now, let $(P^{n_k})$ denote a subsequence of $(P^n)$ that converges weakly to $P$. 
Also, let $\RV^k=(\RV^{k}_v)_{v \in V}$ and $\RV=(\RV_v)_{v \in V}$ be random $\Polish^V$-valued elements  with laws $P^{n_k}$ and $P$, respectively.  Consider disjoint finite sets $B,C \subset V$ and  $g \in C_b(\Polish^B)$, $h \in C_b(\Polish^C)$. Then, first using the weak convergence of $(P^{n_k})$ to $P$, 
  then the Markov random field property of  $P^n$,   the definition of $P^{n_k}$, and finally the
continuity of $\gamma^G_B$, we have 
\begin{align*}
\E[g(\RV_B)h(\RV_C)] &= \lim_{k\to\infty}\E[g(\RV^k_B)h(\RV^k_C)] \\
	&= \lim_{k\to\infty}\E[\E[g(\RV^k_B)\,|\, \RV^k_{V \setminus B}]h(\RV^k_C)] \\
	&= \lim_{k\to\infty}\E[\lan \gamma^G_B(\cdot\,|\,\RV^k_{\partial B}), \, g \ran h(\RV^k_C)] \\
	&= \E[\lan \gamma^G_B(\cdot\,|\,\RV_{\partial B}), \, g \ran h(\RV_C)]. 
\end{align*}
Because $B$ and $C$ are arbitrary finite subsets of $V$, this is enough to conclude that $P$ belongs to $\mathrm{Gibbs}(G) = \mathrm{Gibbs}(G, \psi,\lambda)$. Since $G \in {\mathcal U}$, 
this implies $P=P_G$, which contradicts \eqref{pf:gibbsunique2}.  
\end{proof}

The following Lemma includes Proposition \ref{pr:lw gibbs} as a special case (by taking $G_n^2$ to be an independent copy of $G_n^1$ and $G_n$ to be the disjoint union of
$G_n^1$ and $G_n^2$), and it will also be useful in proving Proposition \ref{pr:lwprob gibbs}:

\begin{lemma} \label{le:lwgibbs double}
For $n \in \N$, let $G_n$ be a finite (possibly disconnected) random graph, and  for $i=1,2$, let $o^i_n$ be a (random) vertex in $G_n$, and let $G^i_n$ be an induced (random) subgraph of $G_n$ rooted at $o^i_n$.
Assume $\L(G^1_n,G^2_n) \to \L(G^1,G^2)$ in $\P(\G_* \times \G_*)$ for some random elements $G^1,G^2$ of $\mathcal{U}$, and assume also that $d_{G_n}(o^1_n,o^2_n) \to \infty$ as $n\to\infty$ in probability. 
Then, for  random elements $Y^{G_n}$, $Y^{G^1}$, and $Y^{G^2}$  with laws $P_{G_n}$, $P_{G^1}$, and $P_{G^2}$, respectively,  we have
\[
\Lmc\big((G^1_n,\RV^{G_n}_{G^1_n}),(G^2_n,\RV^{G_n}_{G^2_n})\big) \to \Lmc(G^1,\RV^{G^1}) \times \Lmc(G^2,\RV^{G^2}), \quad \text{in } \G_*[\Polish] \times \G_*[\Polish].
\]
\end{lemma}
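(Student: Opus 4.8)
\textbf{Proof proposal for Lemma \ref{le:lwgibbs double}.}

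The plan is to reduce the joint convergence of the marked graphs to a combination of two ingredients: (a) the known local-graph convergence $\Lmc(G^1_n,G^2_n)\to\Lmc(G^1,G^2)$, and (b) a \emph{conditional decoupling} of the two Gibbs configurations $\RV^{G_n}_{G^1_n}$ and $\RV^{G_n}_{G^2_n}$ given $(G^1_n,G^2_n)$, which will follow from the uniqueness hypothesis $G^i\in\mathcal{U}$ together with the fact that $d_{G_n}(o^1_n,o^2_n)\to\infty$. First I would fix $k\in\N$, $\epsilon>0$, and bounded Lipschitz test functions $h_1,h_2$ on $\G_*[\Polish]$, each of the form $h_i(H,\polish)=H_i(B_k(H),\polish_{B_k(H)})$ (which suffices by definition of $d_*$), and reduce to showing that
\begin{align*}
\E\big[h_1(G^1_n,\RV^{G_n}_{G^1_n})\,h_2(G^2_n,\RV^{G_n}_{G^2_n})\big] \longrightarrow \E[h_1(G^1,\RV^{G^1})]\,\E[h_2(G^2,\RV^{G^2})].
\end{align*}
The quantity $h_i(G^i_n,\RV^{G_n}_{G^i_n})$ depends on the configuration only through $\RV^{G_n}$ restricted to $B_k(G^i_n)$, so on the event $\{d_{G_n}(o^1_n,o^2_n)>2k+2\}$ the two balls $B_{k+1}(G^i_n)$ are vertex-disjoint with disjoint boundaries; since this event has probability tending to $1$, I may freely intersect with it at the cost of an $o(1)$ error.

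The heart of the argument is the conditional decoupling. Condition on $(G_n, o^1_n,o^2_n)$ lying in the good event above. Write $A_n^i := B_{k}(G^i_n)$ and note $B_n := A_n^1\cup A_n^2$ is a finite vertex set with $\partial_{G_n} B_n$ splitting as the disjoint union $\partial_{G_n} A_n^1 \sqcup \partial_{G_n} A_n^2$. By the Markov random field property \eqref{eq:ap:gamma1} of $P_{G_n}$, the conditional law of $\RV^{G_n}_{B_n}$ given $\RV^{G_n}_{G_n\setminus B_n}$ is $\gamma^{G_n}_{B_n}(\cdot\,|\,\RV^{G_n}_{\partial B_n})$, and because $\psi$ factorizes over edges and there are no edges between $A_n^1\cup\partial A_n^1$ and $A_n^2\cup\partial A_n^2$ on the good event, this kernel factorizes: $\gamma^{G_n}_{B_n}(d\polish_{B_n}\,|\,\polish_{\partial B_n}) = \gamma^{G_n}_{A_n^1}(d\polish_{A_n^1}\,|\,\polish_{\partial A_n^1})\otimes\gamma^{G_n}_{A_n^2}(d\polish_{A_n^2}\,|\,\polish_{\partial A_n^2})$. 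Hence, conditionally on $(G_n, o_n^1,o_n^2, \RV^{G_n}_{\partial B_n})$, the restrictions $\RV^{G_n}_{A_n^1}$ and $\RV^{G_n}_{A_n^2}$ are independent. Taking conditional expectations and then the outer expectation reduces the left-hand side to
\begin{align*}
\E\Big[\langle \gamma^{G_n}_{A_n^1}(\cdot\,|\,\RV^{G_n}_{\partial A_n^1}),\, h_1(G_n^1,\cdot)\rangle \;\langle \gamma^{G_n}_{A_n^2}(\cdot\,|\,\RV^{G_n}_{\partial A_n^2}),\, h_2(G_n^2,\cdot)\rangle\Big] + o(1).
\end{align*}

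The final step is to replace each factor $\langle \gamma^{G_n}_{A_n^i}(\cdot\,|\,\RV^{G_n}_{\partial A_n^i}),\, h_i(G_n^i,\cdot)\rangle$ by the deterministic (given $G_n^i$) quantity $\langle P_{G_n^i},\, h_i\rangle$, uniformly over the boundary condition, using Lemma \ref{le:gibbsunif}. Indeed, the hypothesis $\L(G_n^1,G_n^2)\to\L(G^1,G^2)$ with $G^i\in\mathcal{U}$ a.s.\ lets me use a standard diagonal/Skorokhod-coupling argument: along the weak convergence, $G_n^i$ is close in $\G_*$ to a realization of $G^i$, so $B_k(G_n^i)$ eventually coincides with $B_k$ of a fixed graph in $\mathcal U$, and applying Lemma \ref{le:gibbsunif} with the increasing exhaustion $A_n^i$ of that fixed limiting graph (noting $\gamma^{G_n}_{A}=\gamma^{H}_A$ whenever $A\cup\partial A$ induces the same subgraph) shows that $\sup_{\polish_{\partial A_n^i}}\big|\langle \gamma^{G_n}_{A_n^i}(\cdot\,|\,\polish_{\partial A_n^i}),h_i(G_n^i,\cdot)\rangle - \langle P_{G^i},h_i\rangle\big|\to 0$ in probability. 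Combined with Proposition \ref{pr:lw gibbs} (giving $\langle P_{G_n^i},h_i\rangle \to \E[h_i(G^i,\RV^{G^i})]$ in law, equivalently $\E\langle P_{G_n^i},h_i\rangle\to\E[h_i(G^i,\RV^{G^i})]$ and asymptotic determinism of the fluctuation) and bounded convergence, the displayed expectation converges to $\E[h_1(G^1,\RV^{G^1})]\,\E[h_2(G^2,\RV^{G^2})]$, as desired.

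I expect the main obstacle to be the bookkeeping in the last step: making precise the statement that ``$\gamma^{G_n}_{A_n^i}$ depends only on the local subgraph around $o_n^i$, which converges,'' and arguing uniformly in the boundary condition. The cleanest route is probably to pass to a Skorokhod coupling on which $(G_n^1,G_n^2)\to(G^1,G^2)$ almost surely in $\G_*\times\G_*$, so that for each fixed $k$ and $n$ large (random) one has $B_{k+1}(G_n^i)\cong B_{k+1}(G^i)$; then $h_i(G_n^i,\cdot)$ can be transported to a fixed function on $\Polish^{B_k(G^i)}$, Lemma \ref{le:gibbsunif} applies verbatim on the (fixed, in $\mathcal U$) graph $G^i$ with its own exhaustion, and the supremum over boundary conditions absorbs the fact that we do not control $\RV^{G_n}$ outside $B_n$. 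One subtlety to check is that the good event $\{d_{G_n}(o_n^1,o_n^2)>2k+2\}$ is measurable with respect to the data we are conditioning on and that the factorization of $\gamma^{G_n}_{B_n}$ genuinely requires only the absence of edges between $B_{k}(G_n^1)\cup\partial$ and $B_{k}(G_n^2)\cup\partial$, which is exactly guaranteed on that event.
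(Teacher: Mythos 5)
Your overall strategy coincides with the paper's: decouple the two neighborhoods via the Markov random field property of $P_{G_n}$ on the event that $o^1_n$ and $o^2_n$ are far apart, and then replace each conditional expectation by the unconditional one, uniformly in the boundary condition, via Lemma \ref{le:gibbsunif}. However, there is a genuine gap in the radius bookkeeping of your final step. You condition on $\RV^{G_n}_{\partial A^i_n}$ with $A^i_n = B_k(G^i_n)$, where $k$ is the \emph{same} radius on which the test function $h_i$ depends, and then claim that
\[
\sup_{\polish_{\partial A^i_n}}\bigl|\langle \gamma^{G_n}_{A^i_n}(\cdot\,|\,\polish_{\partial A^i_n}),\,h_i(G^i_n,\cdot)\rangle - \langle P_{G^i},h_i\rangle\bigr| \longrightarrow 0
\]
by ``applying Lemma \ref{le:gibbsunif} with the increasing exhaustion $A^i_n$.'' But $A^i_n$ is not an exhaustion of the (generally infinite) limit graph $G^i$: for fixed $k$ it stabilizes, up to isomorphism, to the fixed finite set $B_k(G^i)$, whereas Lemma \ref{le:gibbsunif} requires $\bigcup_n A_n = V$. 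When the conditioning set equals the support of the test function, the supremum over boundary conditions of the discrepancy is a fixed, generically nonzero quantity (the boundary condition at distance exactly $k$ directly biases the configuration throughout $B_k$), so this step fails as written.

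The fix is the two-radius device used in the paper's proof: keep the test function supported on $B_r$ (your $B_k$), but condition at a strictly larger radius $\ell$. Apply Lemma \ref{le:gibbsunif} on the limit graph $G^i \in \mathcal{U}$ with the genuine exhaustion $(B_\ell(G^i))_{\ell\in\N}$ to choose $\ell=\ell(\epsilon,r)$ so that the supremum over boundary conditions $\polish_{\partial B_\ell(G^i)}$ of the discrepancy is at most $\epsilon$; transport this to $G^i_n$ for large $n$ via an isomorphism $B_{\ell+1}(G^i)\cong B_{\ell+1}(G^i_n)$ together with the identity $\gamma^{G_n}_{A}=\gamma^{H}_{A}$ for common induced subgraphs; and perform the Markov random field decoupling on the event $d_{G_n}(o^1_n,o^2_n)\ge 2\ell$, which still has probability tending to one since the distance diverges. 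With that modification your argument goes through and is essentially the paper's proof; the rest of your outline (Skorohod coupling, reduction to ball-local test functions, factorization of the Gibbs kernel over the two disjoint neighborhoods) matches what the paper does.
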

\begin{proof}
By the Skorohod representation theorem, we may assume that $G_n$, $(G_n^1,o_n^1)$, and $(G_n^2,o_n^2)$ are non-random. 
Fix $r \in \N$ and  $f_1,f_2 \in C_b(\G_*[\Polish])$ with $|f_1|,|f_2| \le 1$. Recall that $B_r(G)$ denotes the ball of radius $r$ around the root in $G$, and we similarly write $B_r(G,\polish)$ for a marked graph. It suffices to show that
\begin{align}
\lim_{n\to\infty}\E \left[f_1(B_r(G^1_n,\RV^{G_n}_{G^1_n}))f_2(B_r(G^2_n,\RV^{G_n}_{G^2_n}))\right] = \E\left[f_1(B_r(G^1,\RV^{G^1}))\right]\E\left[f_2(B_r(G^2,\RV^{G^2}))\right]. \label{pf:gibbsdouble1}
\end{align}
Let $\epsilon > 0$.  
We may define a function $\widehat{f}_i \in C_b( \Polish^{B_r(G^i)})$ by $\widehat{f}_i(\polish) := f_i(B_r(G^i,\polish))$, for $i = 1, 2$. 
By Lemma \ref{le:gibbsunif} we may find $\ell > r$ such that, for each $i=1,2$, 
\begin{align}
  \sup_{\polish \in \Polish^{\partial B_{\ell}(G^i)}}
  \left| \int_{ \Polish^{B_\ell(G^i)} }  \widehat{f}_i(\polish_{B_r(G^i)}) \, \gamma^{G^i}_{B_{\ell}(G^i)}(d\polish_{B_{\ell}(G^i)} \,|\, \polish_{\partial B_\ell(G^i)})
    - \int_{\Polish^{G^i}} \widehat{f}(\polish_{B_r(G^i)}) \,P_{G^i}(d\polish)\right| \le \epsilon. \label{pf:gibbslocal12}
\end{align}
For each $i=1,2$, since $G^i_n \to G$, we may find $N < \infty$ such that for all $n \ge N$ there exists an isomorphism $\varphi^i_n : B_{\ell+1}(G^i) \to B_{\ell+1}(G^i_n)$.
For any positive integer $m \le \ell+1$ we may also view $\varphi^i_n$ as a dual map $\Polish^{B_m(G^i_n)} \to \Polish^{B_m(G^i)}$ by setting
\begin{align*}
\varphi^i_n\polish = (\polish_{\varphi^i_n(v)})_{v \in B_m(G^i)}, \qquad \text{for } \polish=(\polish_v)_{v \in B_m(G^i_n)}.
\end{align*}
For $\bar{\polish} \in \Polish^{\partial B_\ell(G^i_n)}$, we have
\begin{align*}
\E \left[ \widehat{f}_i(\varphi^i_n \RV^{G_n}_{B_r(G^i_n)}) \, | \, \RV^{G_n}_{\partial B_{\ell}(G^i_n)} = \bar{\polish}\right] &= \int_{\Polish^{B_{\ell}(G^i_n)}} \widehat{f}_i(\varphi^i_n \polish_{B_r(G^i_n)}) \, \gamma^{G^i_n}_{B_{\ell}(G^i_n)}(d\polish_{B_{\ell}(G^i_n)} \,|\, \bar{\polish}) \\
	&= \int_{\Polish^{B_{\ell}(G^i)}} \widehat{f}_i(\polish_{B_r(G^i)}) \, \gamma^{G^i}_{B_{\ell}(G^i)}(d\polish_{B_{\ell}(G^i)} \,|\, \varphi^i_n \bar{\polish}),
\end{align*}
  and thus \eqref{pf:gibbslocal12} implies 
\begin{align}
 \sup_{\bar\polish \in \Polish^{\partial B_{\ell}(G^i_n)}}
\left| \E[ \widehat{f}_i(\varphi^i_n \RV^{G_n}_{B_r(G^i_n)}) \, | \, \RV^{G_n}_{\partial B_{\ell}(G^i_n)} = \bar\polish]
    - \E[\widehat{f}_i(\RV^{G^i}_{B_r(G^i)})]\right| \le \epsilon. \label{pf:gibbslocal22}
\end{align}
By assumption we may choose $n$ large enough so that $d_{G_n}(o^1_n,o^2_n) \geq 2\ell$.
Then, use the fact that $\RV^{G_n}$ is a Markov random field over the graph $G_n$ and the fact that $B_r(G^1_n)$ and $B_r(G_n^2)$ are disjoint, 
to get
\begin{align*}
\E &[\widehat{f}_1(\varphi^1_n \RV^{G_n}_{B_r(G^1_n)})\widehat{f}_2(\varphi^2_n \RV^{G_n}_{B_r(G^2_n)})] \\
	&= \E\Big[\E[ \widehat{f}_1(\varphi^1_n \RV^{G_n}_{B_r(G^1_n)}) \, | \, \RV^{G_n}_{\partial B_{\ell}(G^1_n)}] \, \E[ \widehat{f}_2(\varphi^2_n \RV^{G_n}_{B_r(G^2_n)}) \, | \, \RV^{G_n}_{\partial B_{\ell}(G^2_n)}] \Big].
\end{align*}
Combine this with \eqref{pf:gibbslocal22}, and recall that $|f_i| \le 1$, to obtain
\begin{align*}
\left|\E[\widehat{f}_1(\varphi^1_n \RV^{G_n}_{B_r(G^1_n)})\widehat{f}_2(\varphi^2_n \RV^{G_n}_{B_r(G^2_n)})] - \E[\widehat{f}_1(\RV^{G^1}_{B_r(G^1)})]\E[\widehat{f}_2(\RV^{G^2}_{B_r(G^2)})]\right| \le 2\epsilon,
\end{align*}
for sufficiently large $n$.
Plugging in the definitions of $\widehat{f}_i$ and $\varphi_n^i$, this becomes
\begin{align*}
\left|\E[f_1(B_r(G^1_n,\RV^{G_n}_{G^1_n}))f_2(B_r(G^2_n,\RV^{G_n}_{G^2_n}))] - \E[f_2(B_r(G^1,\RV^{G^1}))]\E[f_2(B_r(G^2,\RV^{G^2}))]\right| \le 2\epsilon,
\end{align*}
for $n$ large. Since $\epsilon$ was arbitrary, this implies \eqref{pf:gibbsdouble1}.
\end{proof}

\begin{proof}[Proof of Proposition \ref{pr:lwprob gibbs}]  
We first prove the ``in law" case.
  Note that the convergence of $G_n \to G$ (resp.\ $(G_n,Y^{G_n}) \to (G,Y^G)$) in distribution in the local weak sense is equivalent to the convergence in law of $\comp_{U^n}(G_n) \to G$ in $\G_*$ (resp.\ $\comp_{U^n}(G_n,Y^{G_n}) \to (G,Y^G)$ in $\G_*[\Polish]$), where $U^n$ is a uniform random vertex in $G_n$.
  The ``in law" case then follows immediately from Proposition \ref{pr:lw gibbs} via continuous mapping or marginalization.

  Next we prove the ``in probability" case.
By Lemma \ref{le:localweakprob characterization}, we know that 
\[
\L(\comp_{U^n_1}(G_n),\comp_{U^n_2}(G_n)) \to \L(G) \times \L(G), \quad \text{in } \P(\G_* \times \G_*),
\]
where $U^n_1,U^n_2$ are independent uniform random vertices in $G_n$. Because $G_n \to G$ in probability in the local weak sense, it is known from \cite[Corollary 2.13]{van2020randomII} that $d_{G_n}(U^n_1,U^n_2) \to \infty$. By passing to a Skorohod representation, we may assume the limits are all almost sure, and then invoke Lemma \ref{le:lwgibbs double} to deduce that 
\[
\Lmc(\comp_{U^n_1}(G_n,\RV^{G_n}),\comp_{U^n_2}(G_n,\RV^{G_n})) \to \Lmc(G,\RV^G) \times \Lmc(G,\RV^G), \quad \text{in } \P(\G_*[\Polish] \times \G_*[\Polish]).
\]
By Lemma \ref{le:localweakprob characterization}, this is equivalent to the claim.
\end{proof}

\begin{corollary} \label{co:assmpB-Gibbs}
  Suppose $G$ is a random element of $\mathcal{U}$. Suppose $\{G_n\}$ is a sequence of finite (possibly disconnected) random graphs.
  Let $H_n \subset G_n$ be random induced subgraphs, and let $A \subset \G_*$ be a Borel set with $\PP(G \in A) > 0$. Suppose $H_n$ converges in probability in the local weak sense to a random element $\widetilde{H}$ of $\G_*$ with $\L(\widetilde{H})=\L(G\,|\,G \in A)$. Then, given random elements $Y^{G_n}$ and $Y^G$ with laws $P_{G_n}$ and $P_{G}$, respectively,
  the sequence of marked random graphs $(H_n,Y^{G_n}_{H_n})$ converges in probability in the local weak sense to the marked random graph with law $\L((G,Y^G)\,|\,G \in A)$.
\end{corollary}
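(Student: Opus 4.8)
\textbf{Proof proposal for Corollary \ref{co:assmpB-Gibbs}.}

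The plan is to reduce the claim to a single application of Lemma \ref{le:lwgibbs double}, after first conditioning on the event $\{G \in A\}$ and re-expressing everything in terms of independent uniformly random roots. First I would invoke the characterization of convergence in probability in the local weak sense from Lemma \ref{le:localweakprob characterization}: it suffices to show that for two independent uniformly random vertices $U^n_1, U^n_2$ of $H_n$ (equivalently, of $G_n$ restricted to $H_n$, after passing to the vertex set of $H_n$),
\begin{align*}
\E\big[g_1(\comp_{U^n_1}(H_n,\RV^{G_n}_{H_n}))g_2(\comp_{U^n_2}(H_n,\RV^{G_n}_{H_n}))\big] \to \E\big[g_1(G,\RV^G)\,\big|\,G\in A\big]\E\big[g_2(G,\RV^G)\,\big|\,G\in A\big]
\end{align*}
for all $g_1,g_2 \in C_b(\G_*[\Polish])$, where here I am using that $\L((G,\RV^G)\,|\,G\in A)$ is the law whose first marginal is $\L(G\,|\,G\in A) = \L(\widetilde H)$ and where the Gibbs configuration on $\widetilde H$ is generated from $P_{\widetilde H}$ (well-defined since $\widetilde H \in \mathcal U$ a.s., as $\PP(G\in A)>0$ forces $A \cap \mathcal U$ to carry full mass under $\L(G\,|\,G\in A)$). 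I would also record that $|H_n|\to\infty$ in probability, which follows because $\widetilde H$ is a.s.\ an element of $\G_*$ with $\L(\widetilde H)=\L(G\,|\,G\in A)$ and $H_n$ converges to it in probability in the local weak sense — strictly one needs $\PP(|\widetilde H|=\infty)>0$ or a separate argument for the finite case, but the finite-$\widetilde H$ case is handled exactly as in the proof of Theorem \ref{thm:GenCompEmpMeas}, Step 1, via Proposition \ref{pr:empirical-finite} applied conditionally.

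The core step is then to apply Lemma \ref{le:lwgibbs double} with $G_n$ there taken to be our $G_n$, with $G^i_n := \comp_{U^n_i}(H_n)$ the connected components of the two independent uniform roots of $H_n$, and with $G^1 = G^2$ distributed as $\widetilde H$ (independently). The two hypotheses of Lemma \ref{le:lwgibbs double} to verify are: (a) $\L(G^1_n,G^2_n) \to \L(\widetilde H)\times\L(\widetilde H)$ in $\P(\G_*\times\G_*)$, which is exactly the content of the assumed convergence of $H_n$ in probability in the local weak sense combined with Lemma \ref{le:localweakprob characterization}; and (b) $d_{G_n}(U^n_1,U^n_2) \to \infty$ in probability. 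For (b) I would use the fact — established in \cite[Corollary 2.13]{van2020randomII} and already invoked in the proof of Proposition \ref{pr:lwprob gibbs} — that convergence in probability in the local weak sense of $H_n$ forces $d_{H_n}(U^n_1,U^n_2)\to\infty$, and since $d_{G_n} \le d_{H_n}$ is the wrong direction, I instead argue that $U^n_1$ and $U^n_2$ land in distinct connected components of $H_n$, hence of $G_n$, with probability tending to one (this is precisely how $d_{H_n}\to\infty$ manifests when the limit graph is a.s.\ finite) or are at large $G_n$-distance otherwise; either way the relevant input to Lemma \ref{le:lwgibbs double} is that $\comp_{U^n_1}(H_n)$ and $\comp_{U^n_2}(H_n)$ are eventually vertex-disjoint subgraphs of $G_n$ at graph distance tending to infinity, which is the only way the lemma's proof uses hypothesis (b). Lemma \ref{le:lwgibbs double} then delivers exactly the displayed convergence, with $\RV^{G_n}$ restricted to the two component subgraphs, and hence the desired convergence in probability in the local weak sense of $(H_n,\RV^{G_n}_{H_n})$.

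The main obstacle I anticipate is the bookkeeping around hypothesis (b): one must be careful that Lemma \ref{le:lwgibbs double} is stated for roots in $G_n$ but our roots are chosen uniformly in $H_n$, so $d_{G_n}(U^n_1,U^n_2)$ and $d_{H_n}(U^n_1,U^n_2)$ must be related, and more importantly the Markov random field property of $\RV^{G_n}$ that drives the proof of Lemma \ref{le:lwgibbs double} is over $G_n$, not $H_n$ — so the decoupling of $\RV^{G_n}_{B_r(\comp_{U^n_1}(H_n))}$ from $\RV^{G_n}_{B_r(\comp_{U^n_2}(H_n))}$ requires that these two balls, taken within $G_n$, be separated in $G_n$. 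This is genuinely true: if $U^n_1$ and $U^n_2$ lie in different components of $H_n$ but the \emph{same} component of $G_n$, they could still be close in $G_n$. However, the hypothesis that $H_n \to \widetilde H$ in probability in the local weak sense, applied to the \emph{ambient} graph $G_n$ via the continuity of the map sending a rooted graph to the rooted component of its root within a prescribed induced subgraph, should be leveraged — or, more simply, one re-reads the proof of Lemma \ref{le:lwgibbs double} and observes it only ever needs $d_{G_n}(o^1_n,o^2_n)\to\infty$ to make the two balls $B_\ell(G^1_n)$ and $B_\ell(G^2_n)$ disjoint and non-adjacent, which here follows from the local-weak convergence structure once we note $\comp_{U^n_i}(H_n) \subset \comp_{U^n_i}(G_n)$ and that, conditionally, the pair of $G_n$-components is asymptotically independent with distance diverging by the same \cite[Corollary 2.13]{van2020randomII} applied to $G_n$ itself. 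Modulo this care, the rest is routine, and I would note explicitly at the end that when the initial marks are i.i.d.\ (the $\psi\equiv 1$ case) the argument collapses to the elementary observation that i.i.d.\ marks restricted to disjoint vertex sets are independent, recovering the simplest instance of Assumption \ref{assumption:B}.
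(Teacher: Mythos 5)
Your overall strategy matches the machinery that actually underlies the paper's proof, but you have unfolded it at the wrong level and, in doing so, run into (and left open) a difficulty that the paper's argument sidesteps. The paper's proof is two lines: first, Proposition~\ref{pr:lwprob gibbs} is invoked (with $H_n$ playing the role of the ambient graph) to get convergence in probability in the local weak sense of $(H_n,Y^{G_n}_{H_n})$ to $(\widetilde H, Y^{\widetilde H})$; second, the identification $\L(\widetilde H, Y^{\widetilde H})=\L((G,Y^G)\,|\,G\in A)$ is read off from the fact that $H\mapsto \L(H,Y^H)$ is a well-defined map on $\mathcal U$, continuous by Proposition~\ref{pr:lw gibbs}, combined with $\L(\widetilde H)=\L(G\,|\,G\in A)$. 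Your proposal instead re-derives Proposition~\ref{pr:lwprob gibbs} from scratch via Lemma~\ref{le:localweakprob characterization} and Lemma~\ref{le:lwgibbs double}; your treatment of the limit identification is essentially the paper's second step stated informally, and that part is fine (as is your side remark about $|H_n|\to\infty$).

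The genuine gap is the one you flag yourself and do not close: Lemma~\ref{le:lwgibbs double} needs $d_{G_n}(U^n_1,U^n_2)\to\infty$ in the \emph{ambient} graph, because the Markov random field property that decouples the two neighborhoods is that of $Y^{G_n}$ over $G_n$. Local weak convergence in probability of $H_n$ only yields $d_{H_n}(U^n_1,U^n_2)\to\infty$, and since $d_{G_n}\le d_{H_n}$ this is the wrong direction. Neither of your proposed fixes works: \cite[Corollary 2.13]{van2020randomII} applied ``to $G_n$ itself'' requires the roots to be uniform on $G_n$ (yours are uniform on $H_n$) and requires $G_n$ to converge in probability in the local weak sense (not assumed in the corollary); and two vertices lying in distinct components of $H_n$ can perfectly well be adjacent in $G_n$, as you yourself observe. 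The way out is that in every application (see Theorem~\ref{thm:GenCompEmpMeas}, where $H_n$ is $\compmax(G_n)$ or its complement) $H_n$ is a union of connected components of $G_n$, in which case $Y^{G_n}_{H_n}\stackrel{d}{=}Y^{H_n}$ and intrinsic and ambient distances coincide on $H_n$, so one may simply cite Proposition~\ref{pr:lwprob gibbs} as the paper does. For a genuinely arbitrary induced subgraph your worry is fatal rather than cosmetic: the restriction of the $G_n$-Gibbs field to $H_n$ need not resemble the Gibbs field on $H_n$ at all (consider $H_n$ the even-indexed vertices of a long path, whose marks remain correlated through the deleted odd vertices), so the route through Lemma~\ref{le:lwgibbs double} with roots chosen in $H_n$ cannot be repaired without restricting the class of subgraphs.
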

\begin{proof}
By Proposition \ref{pr:lwprob gibbs},  $(H_n,Y^{G_n}_{H_n})$ converges in probability in the local weak sense to $(\widetilde{H},Y^{\widetilde{H}})$.
So we must only argue that $\L(\widetilde{H},Y^{\widetilde{H}}) = \L((G,Y^G) \,|\, G \in A)$.
But this is easy to see:
recalling that $\mathcal{U}$ is the collection of graphs on which the Gibbs measure is unique,  
there exists a map $\Phi : \mathcal{U} \to \G_*[\Polish]$, which is in fact continuous by
Proposition \ref{pr:lw gibbs}, such that $\L(H, Y^H)= \L(\Phi(H))$ for each $H \in \mathcal{U}$. 
 Together with the assumption  $\L(\widetilde{H})=\L(G\,|\,G \in A)$, this implies the desired result:
\begin{align*}
\L(\widetilde{H},Y^{\widetilde{H}}) &= \L(\Phi(\widetilde{H})) = \L(\Phi(G)\,|\,G \in A) = \L((G,Y^G) \,|\, G \in A).
\end{align*}
\end{proof}

\section{Existence and uniqueness for the infinite SDE under Lipschitz assumptions}
\label{ap:uniqueness-infSDE}

\begin{proof}[Proof of Theorem \ref{th:uniqueness-hominfSDE}] 
Let $(\Omega,\F,\FF=(\F_t)_{t \ge 0},\PP)$ be a filtered probability space supporting independent $\FF$-Wiener processes $(W_v)_{v \in V}$ and initial conditions $(\xi_v)_{v \in V}$ that are $\F_0$-measurable and i.i.d.\ with law $\lambda_0$.
Let $(X_v)_{v \in V}$ and $(\widetilde{X}_v)_{v \in V}$ denote two continuous $\FF$-adapted processes, satisfying $\max_{v \in V}\E\|X_v\|^2_{*,T} < \infty$ for each $T > 0$, where we recall that $\|x\|_{*,T} = \sup_{0 \le t \le T}|x(t)|$.
Fix $T < \infty$.
Define $(Y_v)_{v \in V}$ and $(\widetilde{Y}_v)_{v \in V}$ by
\begin{align*}
dY_v(t) &= b(t,X_v,X_{N_v})dt + \sigma(t,X_v)dW_v(t), \quad Y_v(0) = \xi_v, \\
d\widetilde{Y}_v(t) &= b(t,\widetilde{X}_v,\widetilde{X}_{N_v})dt + \sigma(t,\widetilde{X}_v)dW_v(t), \quad \widetilde{X}_v(0) = \xi_v.
\end{align*}
For each $v \in V$ and $t \in [0,T]$,  we may use It\^{o}'s formula and the assumed Lipschitz condition on the drift and diffusion coefficients to get
\begin{align*}
\E \left[\|Y_v - \widetilde{Y}_v\|_{*,t}^2\right] & \le 2t
  \E \left[\int_0^t \left| b(s,X_v,X_{N_v}) - b(s,\widetilde{X}_v,\widetilde{X}_{N_v})\right|^2 ds \right]\\
	& \quad + 8\E \left[\int_0^t \left| \sigma(s,X_v) - \sigma(s,\widetilde{X}_v)\right|^2 ds \right]\\
	& \le 4tK_T^2 \int_0^t \E \left[\left(\|X_v - \widetilde{X}_v\|_{*,s}^2 + \frac{1}{|N_v|}\sum_{u \in N_v} \|X_u - \widetilde{X}_u\|_{*,s}^2 \right) ds \right]\\
  & \quad + 8\bK_T^2 \int_0^t \E \left[ \|X_v - \widetilde{X}_v\|_{*,s}^2\right] \, ds. 
  \end{align*}
Hence,
\begin{align*}
\sup_{v \in V} \E \left[\|Y_v - \widetilde{Y}_v\|_{*,T}^2 \right] & \le 8(tK_T^2+\bK_T^2) \int_0^T \sup_{v \in V} \E \left[ \|X_v - \widetilde{X}_v\|_{*,t}^2 \right] dt.
\end{align*}
Existence and uniqueness now follows from a standard Picard iteration argument that invokes  Gronwall's inequality.
\end{proof}

\section{Verification of Condition \ref{cond-graphs}}
\label{ap:condgraphs}

The goal of this section is to prove Proposition \ref{prop-graphs}, namely
verify that Condition \ref{cond-graphs} is satisfied by the
the {\Erdos } and CM graph sequences described in Theorem
  \ref{thm:CompEmpMeas}.  
This relies on a useful duality property of $\mathrm{UGW}$ trees (as defined in Example \ref{ex:unimodularGW}) which we state first:

\begin{lemma} 
  \label{lem-duality}
  Given a probability distribution $\rho$ on $\N_0$ with finite and nonzero first and second
  moments,    let $\parm_\rho$ be as defined in \eqref{def-parm},
  and let $m_\rho := \sum_{i \ge 1} i \rho_i$.  Then  let $\tree_\rho := \mathrm{UGW} (\rho)$ and define 
$s_\rho :=  s_{\tree_\rho} = \PP(|\tree_\rho| = \infty).$  
  If $\parm_\rho > 1$, then  $s_\rho  > 0$ 
  and $\Lmc( \Tmc_\rho \, | \, |\Tmc_\rho| <\infty) = \Lmc(\widetilde{\Tmc}_\rho)$,
  where $\widetilde{\Tmc}_\rho = \mathrm{UGW} (\widetilde{\rho})$, with 
  \[  \widetilde{\rho}_k  := \frac{\rho_k}{1 - s_{\rho}} \left( 1 - \frac{2 \alpha_\rho}{m_\rho} \right)^{k/2}, \quad k \in \N_0, \quad
  \tilde{\parm}_\rho :=  \frac{\sum_{k \in \N_0} k(k-1) \tilde{\rho}_k}{\sum_{k \in \N_0} k \tilde{\rho}_k}
\le 1, 
  \]
with $\alpha_\rho \in [0,m_\rho/2]$   equal to the smallest positive solution to the equation $H_\rho(x) = 0$, where 
 \[ H_\rho(x) := m_\rho  - 2 x -  \sum_{k \in \N_0}  k \rho_k \left( 1 - \frac{2 x}{m_\rho} \right)^{k/2}, \quad x \in  [0, m_\rho/2].  \]
 Furthermore, if $\rho = {\rm Poisson} (\parm)$, then $\widetilde{\rho} = {\rm Poisson} (\tilde{\parm})$, where
 $\tilde{\parm} < 1$ satisfies $\tilde{\parm} e^{-\tilde{\parm}} = \theta e^{-\theta}$. 
\end{lemma}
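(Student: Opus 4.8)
The strategy is to realize a UGW$(\rho)$ tree conditioned on extinction as another UGW tree, by applying the classical theory of Galton–Watson trees conditioned on extinction to the two generating mechanisms separately: the root offspring $\rho$ and the subsequent offspring $\widehat\rho$ (defined in \eqref{def-widehatrho}). Recall that for a Galton–Watson tree with offspring distribution $p$ and extinction probability $q \in (0,1)$, the tree conditioned on extinction is again Galton–Watson with the size-biased (by $q$) offspring distribution $p^q_k := q^{k-1}p_k$; this is a standard fact (see e.g.\ Athreya–Ney). First I would fix notation: let $q$ be the extinction probability of the ``subtree'' Galton–Watson process driven by $\widehat\rho$, i.e.\ the smallest fixed point in $[0,1]$ of the generating function $G_{\widehat\rho}(x) = \sum_k \widehat\rho_k x^k$. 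Since $\parm_\rho = \sum_k k\widehat\rho_k$ (this identity should be recorded: $\sum_k k\widehat\rho_k = \frac{\sum_k k(k-1)\rho_k}{\sum_k k \rho_k} = \parm_\rho$ by \eqref{def-parm}, \eqref{def-widehatrho}), the hypothesis $\parm_\rho > 1$ gives $q \in (0,1)$, hence $s_\rho = \PP(|\Tmc_\rho| = \infty) > 0$.

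\textbf{Key steps.} (1) In a UGW$(\rho)$ tree, $|\Tmc_\rho| = \infty$ iff at least one of the root's children spawns an infinite line of descent. Conditioning on the whole tree being finite is therefore: condition the root to have $k$ children (reweighted), and condition each of the $k$ independent $\widehat\rho$-subtrees hanging off the root to be finite. By the independence structure of the UGW tree and Bayes' rule, $\PP(\text{root has } k \text{ children}\mid |\Tmc_\rho|<\infty) = \rho_k q^k / (1-s_\rho)$, where $1 - s_\rho = \sum_k \rho_k q^k = G_\rho(q)$, and each subtree, conditioned to be finite, is an independent UGW$(\widehat\rho^{\,q})$ tree with $\widehat\rho^{\,q}_k = q^{k-1}\widehat\rho_k$. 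So $\Lmc(\Tmc_\rho \mid |\Tmc_\rho|<\infty)$ is UGW$(\widetilde\rho)$ with root offspring $\widetilde\rho_k = \rho_k q^k/(1-s_\rho)$ and subtree offspring $\widehat\rho^{\,q}$; one must then check that $\widehat{\widetilde\rho} = \widehat\rho^{\,q}$, i.e.\ that the size-biasing of $\widetilde\rho$ via \eqref{def-widehatrho} reproduces $q^{k-1}\widehat\rho_k$ — a short computation using $\widetilde\rho_k \propto \rho_k q^k$. (2) Identify $q$ with the quantity in the statement: I claim $q = (1 - 2\alpha_\rho/m_\rho)^{1/2}$, where $m_\rho = \sum_k k\rho_k$ and $\alpha_\rho$ solves $H_\rho(\alpha_\rho)=0$. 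Plug $x = \frac{m_\rho}{2}(1-q^2)$ into $H_\rho$: the equation $H_\rho(x)=0$ becomes $m_\rho - m_\rho(1-q^2) - \sum_k k\rho_k q^k = 0$, i.e.\ $m_\rho q^2 = \sum_k k\rho_k q^k$, i.e.\ $q = \frac{\sum_k k\rho_k q^k}{m_\rho}\cdot\frac{1}{q} = \sum_k \widehat\rho_k q^{k-1} = G_{\widehat\rho}(q)$ — exactly the fixed-point equation for the extinction probability. The smallest positive solution of $H_\rho$ corresponds to the smallest fixed point $q \in (0,1)$ of $G_{\widehat\rho}$, which is the genuine extinction probability. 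With this substitution, $\widetilde\rho_k = \rho_k q^k/(1-s_\rho) = \frac{\rho_k}{1-s_\rho}(1 - 2\alpha_\rho/m_\rho)^{k/2}$, matching the statement. (3) The subcriticality $\widetilde\parm_\rho \le 1$: compute $\widetilde\parm_\rho = \sum_k k \widehat{\widetilde\rho}_k = \sum_k k \widehat\rho^{\,q}_k = \sum_k k q^{k-1}\widehat\rho_k = G_{\widehat\rho}'(q)$; since $q$ is the smallest fixed point of the convex function $G_{\widehat\rho}$ on $[0,1]$ and $G_{\widehat\rho}(1)=1$, convexity gives $G_{\widehat\rho}'(q) \le 1$ (with equality only in the critical/degenerate boundary case). (4) The Poisson specialization: when $\rho = \mathrm{Poisson}(\parm)$ we have $\widehat\rho = \rho$, so $\widehat\rho^{\,q}_k = q^{k-1}e^{-\parm}\parm^k/k! = \frac{1}{q}e^{-\parm}(q\parm)^k/k!$, and normalization forces $\frac{1}{q}e^{-\parm}e^{q\parm}=1$, i.e.\ $\widehat\rho^{\,q} = \mathrm{Poisson}(q\parm)$ with $q = e^{-(1-q)\parm}$; setting $\widetilde\parm := q\parm$, this reads $\widetilde\parm e^{-\widetilde\parm} = \parm e^{-\parm}$ after multiplying by $\parm e^{-\parm}$, and moreover $\widetilde\rho_k = \rho_k q^k/(1-s_\rho)$ is readily checked to equal $\mathrm{Poisson}(\widetilde\parm)_k$ (using $1-s_\rho = G_\rho(q) = e^{-\parm(1-q)} = q$ in the Poisson case, which also recovers $\widehat{\widetilde\rho} = \widetilde\rho$).

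\textbf{Main obstacle.} The conceptual core — that conditioning a Galton–Watson tree on extinction produces the $q$-size-biased offspring law — is classical, so the real work is bookkeeping: correctly handling the \emph{two-type} structure of UGW trees (root offspring $\rho$ vs.\ descendant offspring $\widehat\rho$) under conditioning, and verifying that the resulting object is still of UGW form, i.e.\ that the conditioned descendant law $\widehat\rho^{\,q}$ is consistent with size-biasing the conditioned root law $\widetilde\rho$ via \eqref{def-widehatrho}. The other point requiring care is the algebraic identification of $q$ with $(1-2\alpha_\rho/m_\rho)^{1/2}$ and the verification that ``smallest positive root of $H_\rho$'' picks out the correct (minimal, hence true extinction) fixed point of $G_{\widehat\rho}$ rather than the trivial root $q=1$ — equivalently, that $\alpha_\rho > 0$, which holds precisely because $\parm_\rho > 1$ makes $H_\rho'(0) = -2 + \parm_\rho \cdot\text{(something)}$... more precisely because $H_\rho(m_\rho/2) = -\sum_k k\rho_k 0^{k/2} \le 0$ with $H_\rho(0) = m_\rho - m_\rho = 0$ and $H_\rho$ strictly positive just to the right of $0$ when $\parm_\rho>1$, by a derivative computation. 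I would carry these four steps out in the order (1)–(4) above, with step (2) being where most of the care is needed.
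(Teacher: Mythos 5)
Your proof is correct, and at its core it is the same argument as the paper's: both rest on the branching-process duality principle, i.e.\ the identity $\widehat{\widetilde\rho}_k = \beta^{k-1}\widehat\rho_k$ with $\beta = (1-2\alpha_\rho/m_\rho)^{1/2}$, combined with the telescoping that comes from $1+x_1+\dots+x_t=t$ for a finite tree. The packaging differs in two ways that are worth noting. First, the paper re-derives the duality by directly computing the probability of each finite offspring vector under the conditioned law and matching it to $\mathrm{UGW}(\widetilde\rho)$, and it handles the Poisson case separately by citation; you instead invoke the classical ``GW conditioned on extinction has offspring law $q^{k-1}p_k$'' as a black box, reduce the work to the two-type bookkeeping (root law $\rho$ versus descendant law $\widehat\rho$, and the consistency check $\widehat{\widetilde\rho}=\widehat\rho^{\,q}$), and obtain the Poisson case as a corollary. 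Second, and this is a genuine gain in clarity over the paper, you explicitly identify $\beta$ as the extinction probability $q$ of the $\widehat\rho$-driven subtree process by showing that $H_\rho(x)=0$ under $x=\tfrac{m_\rho}{2}(1-q^2)$ is exactly the fixed-point equation $G_{\widehat\rho}(q)=q$; the paper uses the root of $H_\rho$ only algebraically and never says what $\beta$ \emph{is}, so your step (2) explains why $H_\rho$ has the form it does. One small correction: the change of variable $q\mapsto x=\tfrac{m_\rho}{2}(1-q^2)$ is \emph{decreasing}, so the smallest positive root of $H_\rho$ corresponds to the \emph{largest} fixed point of $G_{\widehat\rho}$ in $[0,1)$, not the smallest; this is harmless because convexity of $G_{\widehat\rho}$ guarantees at most one fixed point in $[0,1)$ (which is then the extinction probability), but the sentence as written should be fixed. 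You should also record, as the paper implicitly does, that the conditioning is well posed, i.e.\ $1-s_\rho=G_\rho(q)>0$.
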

\begin{proof}
In the case when $\rho = \mathrm{Poisson} (\parm )$ with $\parm > 1$, and $\tree_\parm = \tree_\rho$,
    the fact that $\Lmc (\tree_\parm||\tree_\parm| < \infty) = \Lmc(\tree_{\tilde{\parm}})$, where
    $\tilde{\parm}$ solves the stated equation   is a consequence of
    the Poisson duality principle enunciated in \cite[Theorem 3.15]{van2016random}.
    For general $\rho$,  dropping the explicit dependence on $\rho$ from all quantities, we 
    start by noting that the existence of 
$\alpha$  in the statement of the lemma follows from 
    the fact that the continuously differentiable
    function $H$ satisfies  
    $H(0)=0$, $H'(0)>0$ and  $H(\frac{m}{2})=0$, where  we used $\parm > 1$ to conclude these
    properties.
    For the subsequent analysis, we also note that  the above properties also show that $H^\prime (\alpha) \le 0$.

        We now show that  $\Lmc(\widetilde{\Tmc}) = \Lmc( \Tmc \, | \, |\Tmc| <\infty)$.
        This  uses an argument  similar to that used to prove the duality principle for
        (not necessarily unimodular) branching processes
       \cite[Theorem 3.7]{van2016random},  and so we provide just a sketch of the proof.
 Let  $\RV_i, i \in \N,$ and $\tilde{\RV}_i, i \in \N,$ be iid random variables distributed according to
        $\rho$ and $\rhotil$, respectively.
        Here,  $\RV_i$ (resp.\ $\tilde{\RV}_i$) represents  the number of children of the $i$-th vertex in $\Tmc$ (resp.\
        $\widetilde{\Tmc}$) when
        the tree is  explored starting with the root vertex labeled $1$, and increasing labels
        for  vertices in each succesive generation 
        (with an arbitrary  ordering of labels for vertices within each generation).
Let $H=(\RV_1,\dotsc,\RV_{|\tree|})$ be the vector of the offspring number of each vertex in $\Tmc$ and 
 let $\Htil=(\tilde{\RV}_1,\dotsc,\tilde{\RV}_{|\Tmc|})$ be the corresponding quantity in $\widetilde{\Tmc}$.  
	Write $\beta := \sqrt{1-\frac{2\alpha}{m}}$. 
	From the definition of $\alpha$ we have $\sum_{k \in \Nmb_0} k\rho_k \beta^k = \beta^2 \sum_{k \in \Nmb_0} k\rho_k$.
	Using this and the definitions of $m$, $\rhotil$ as stated in the lemma,
         and the  size-biased versions 
        $\rhohat$ and $\hat{\rhotil}$ of $\rho$ and $\rhotil$, as defined in \eqref{def-widehatrho},
        one can see that $\hat{\rhotil}_k / \rhohat_k = \beta^{k-1}$.   
	From this it follows that for each $t \in \N$ and $\polish \in \N_0^t$, 
	\begin{align*}
      & \Pmb(H=(\polish_1,\dotsc,\polish_t)\,|\, |\Tmc|<\infty) = \frac{\Pmb(H=(\polish_1,\dotsc,\polish_t))}{\Pmb(|\Tmc|<\infty)} = \frac{1}{1-s_\rho} \rho_{\polish_1} \prod_{i=2}^t \rhohat_{\polish_i} \\
	  & \quad = \left( \rhotil_{\polish_1} \prod_{i=2}^t \hat{\rhotil}_{\polish_i} \right) \left( \beta^{-\polish_1} \prod_{i=2}^t \beta^{1-\polish_i}\right) 
	  = \rhotil_{\polish_1} \prod_{i=2}^t \hat{\rhotil}_{\polish_i} 
      = \Pmb(\Htil=(\polish_1,\dotsc,\polish_t)),
	\end{align*}
	where the third equality uses the definition of $\rhotil_{\polish_1}$, and the fourth equality uses the observation that $1+\polish_1+\dotsb+\polish_t=t$ is the total number of vertices.
    This completes the proof.
\end{proof}

\begin{proof}[Proof of Proposition \ref{prop-graphs}]
  {Recall the description of $G_n$, $n \in \Nmb$, and $G$ in Theorem \ref{thm:CompEmpMeas}. 
	Recall $s_G := \PP (|G| =  \infty)$.
	In case (ii), also recall that $\theta$ was defined in \eqref{def-parm}. 

	We first verify Condition \ref{cond-graphs}(1) for $s_G > 0$.
	Since $s_G>0$, from \cite[Theorems 2.1.2 and 2.1.3]{durrett2007random} we must have $\theta > 1$.  
	Then it is known that  there exists $\beta > 0$ such that
	with probability approaching $1$ there is exactly one connected component of $G_n$ with
	more than $\beta\log n$ vertices, and the size of this component divided by $s_{\parm}n$
	approaches $1$ in probability.   Indeed, this follows from \cite[Theorem 2.3.2]{durrett2007random}
	and Remark \ref{rem-npn} below for case (i), and  \cite[Theorem 1]{MolloyReed1998size} for case (ii).
	Therefore, in both cases, Condition \ref{cond-graphs}(1) holds.

	To verify  Condition \ref{cond-graphs}(2) when $0<s_G<1$, we first consider the \Erdos \ graph in case (i).	
	It follows from  \cite[Theorem 4.15]{van2016random} that the law of $G_n \setminus \compmax(G_n)$ is close to that of
        $\widetilde G_n$ as $n\to\infty$, where $\widetilde{G}_n$ is the
	\Erdos \ graph $\Gmc(\lfloor n s_G\rfloor,\parm'/\lfloor n s_G\rfloor)$, where $\parm' < 1$ satisfies $\parm' e^{-\parm'} = \parm e^{-\parm}$.
From Example \ref{ex:GW}, we know that $\widetilde{G}_n$ converges in probability in the local weak sense to $\widetilde{\tree}$, the Galton-Watson tree with Poisson($\theta'$) offspring distribution. 
	Now, it follows from the Poisson duality principle enunciated in \cite[Theorem 3.15]{van2016random}  
	that $\Lmc(\widetilde{\Tmc}) = \Lmc( \Tmc \, | \, |\Tmc| <\infty)$, where $\Tmc$ is the Galton-Watson tree with Poisson($\theta$) offspring distribution. 
	This establishes Condition \ref{cond-graphs}(2) for case (i).

	Now, consider the CM model of  case (ii), let $\rhotil$ and $\tilde{\parm} \le 1$ be  as defined in Lemma \ref{lem-duality},
	and for each $n \in \N$,  let $m_n$, $\alpha_n \in [0,m_n/2]$ be defined
	as in Lemma \ref{lem-duality}, but with $\rho$ replaced with $d_{\cdot}(n)/n$, the empirical degree distribution of 
	$G_n$, and let $\tilde{d}_k(n)=\rho_k \left( 1 -  2\alpha_n/m_n \right)^{k/2} n$.  
	Our assumptions on the graphic sequence $d(n)$, most notably that the second moment of the degree sequence converges to the finite second moment of $\rho$, ensure that the degree sequence is \emph{well-behaved} in the sense of \cite{MolloyReed1998size}.
	It follows from the duality principle of \cite[Theorem 2]{MolloyReed1998size} that the law of
	$G_n \setminus \compmax(G_n)$ is close to that of $\widetilde G_n$ as $n\to\infty$, where
	$\widetilde{G}_n \sim \CM((1- s_G)n,\tilde{d}(n))$. 
From Example \ref{ex:unimodularGW}, we know that $\widetilde{G}_n$ converges in probability in the local weak sense to
	$\widetilde{\tree} \sim$ UGW($\rhotil$). Moreover, $\tilde{\parm} \le 1$ implies  
	$\PP ( |\widetilde{\Tmc}| < \infty) = 1$. 
	The verification of Condition \ref{cond-graphs}(2) for case (ii) is completed by
	observing that 
	from Lemma \ref{lem-duality},   $\Lmc(\widetilde{\Tmc}) = \Lmc( \Tmc \, | \, |\Tmc| <\infty)$, where $\Tmc \sim$ UGW($\rho$).}
\end{proof}

\begin{remark} \label{rem-MolloyReed}
In the proof of Proposition \ref{prop-graphs}, it is clear that the technical assumptions on the graphic sequence $d(n)$ in case (ii) are essentially only needed to apply the results of Molloy-Reed \cite{MolloyReed1998size}. Proposition \ref{prop-graphs} and Theorem \ref{thm:CompEmpMeas} remain valid as long as $d(n)$ satisfies the somewhat more general (but longer to state) assumptions of \cite[Theorems 1 and 2]{MolloyReed1998size}.
The other technical assumption $\rho_2<1$ in case (ii) is only used to conclude from $s_G>0$ and \cite[Theorems 2.1.2 and 2.1.3]{durrett2007random} that $\theta>1$. In general, the sequence of the configuration model (and the empirical measure on it) when $\rho_2=1$ could behave quite differently, even if the limit random graph $G$ is just an infinite $2$-regular tree.
This is because, as illustrated in \cite[Page 130 and Exercises 4.3--4.5]{van2020randomII}, the size of the maximal component could be either $O(n)$ or $o(n)$, depending the way the graphic sequence converges to $\rho$.
\end{remark}

\begin{remark}  \label{rem-npn}
  Strictly speaking, in the above proof of Proposition \ref{prop-graphs}, the results cited from
  \cite[Theorem 2.3.2]{durrett2007random} and \cite[Theorem 4.15]{van2016random} are established only for $p_n = \parm/n$, and not under the general condition $np_n \rightarrow \parm$.  
	However for \cite[Theorem 2.3.2]{durrett2007random}, the latter case can be deduced from the former by using stochastic ordering.
	Specifically, with $L_1(p_n)$ denoting the size of the largest component in ${\mathcal G}(n,p_n)$, a simple coupling argument can be used to show that for any $\varepsilon > 0$, for all sufficiently large $n$, $L_1((\parm-\varepsilon)/n) \leq L_1(p_n)   \leq L_1((\parm +\varepsilon)/n)$.
On the other hand, for \cite[Theorem 4.15]{van2016random}, the proof therein works in fact under the general condition $np_n \rightarrow \parm$. 
\end{remark}

\textbf{Acknowledgments}.
We thank an anonymous referee for suggesting the more direct and general argument presented  in the proof of Proposition \ref{pr:ergopropagation}. 
In the case of an \emph{amenable} group $G$, the mean ergodic theorem states that the ergodicity of $x$ is equivalent to convergence of the empirical fields $\frac{1}{|A_n|}\sum_{g \in A_n} \delta_{gx} \to \L(x)$ for any F\o lner sequence $(A_n)$
(see \cite[Chapter 8]{einsiedler2013ergodic} for definitions).  
In a previous version of the paper, we used this along with Proposition \ref{pr:empfield} to prove Proposition \ref{pr:ergopropagation} in the amenable case.

\bibliographystyle{amsplain}

\begin{bibdiv}
\begin{biblist}

\bib{aldous-steele}{incollection}{
      author={Aldous, D.},
      author={Steele, J.M.},
       title={The objective method: probabilistic combinatorial optimization
  and local weak convergence},
        date={2004},
   booktitle={Probability on discrete structures},
   publisher={Springer},
       pages={1\ndash 72},
}

\bib{BalFasFagTou12}{article}{
      author={Baladron, J.},
      author={Fasoli, D.},
      author={Fagueras, O.},
      author={Touboul, J.},
       title={Mean-field description and propagation of chaos in networks of
  {H}odgkin-{H}uxley and {F}itz{H}ugh-{N}agumo neurons},
        date={2012},
     journal={The Journal of Mathematical Neuroscience},
      volume={2},
      number={1},
       pages={10},
}

\bib{bayraktar2019mean}{article}{
      author={Bayraktar, Erhan},
      author={Wu, Ruoyu},
       title={Mean field interaction on random graphs with dynamically changing
  multi-color edges},
        date={2021},
     journal={Stochastic Processes and their Applications},
      volume={141},
       pages={197\ndash 244},
}

\bib{benjamini2001recurrence}{article}{
      author={Benjamini, I.},
      author={Schramm, O.},
       title={Recurrence of distributional limits of finite planar graphs},
        date={2001},
     journal={Electronic Journal of Probability},
      volume={6},
}

\bib{BhaBudWu18}{article}{
      author={Bhamidi, S.},
      author={Budhiraja, A.},
      author={Wu, R.},
       title={Weakly interacting particle systems on inhomogeneous random
  graphs},
        date={2019},
     journal={Stoch. Proc. Appl.},
      volume={129},
      number={6},
       pages={2174\ndash 2206},
}

\bib{bollobas1980probabilistic}{article}{
      author={Bollob{\'a}s, B.},
       title={A probabilistic proof of an asymptotic formula for the number of
  labelled regular graphs},
        date={1980},
     journal={European Journal of Combinatorics},
      volume={1},
      number={4},
       pages={311\ndash 316},
}

\bib{Bordenave2016}{unpublished}{
      author={Bordenave, C.},
       title={Lecture notes on random graphs and probabilistic combinatorial
  optimization},
        date={2016},
         url={https://www.math.univ-toulouse.fr/~bordenave/coursRG.pdf},
}

\bib{CopDieGia18}{article}{
      author={Coppini, F.},
      author={Dietert, H.},
      author={Giacomin, G.},
       title={A law of large numbers and large deviations for interacting
  diffusions on {E}rd\"{o}s-r\'{e}nyi graphs},
        date={2020},
     journal={Stochastics and Dynamics},
      volume={20},
      number={2},
        note={DOI 10.1142/S0219493720500100},
}

\bib{del2004feynman}{incollection}{
      author={{Del Moral}, P.},
       title={Feynman-{K}ac formulae: {G}enealogical and interacting particle
  systems with applications},
        date={2004},
   booktitle={{Feynman-Kac Formulae}},
   publisher={Springer},
       pages={47\ndash 93},
}

\bib{DelGiaLuc16}{article}{
      author={Delattre, S.},
      author={Giacomin, G.},
      author={Lu\c{c}on, E.},
       title={A note on dynamical models on random graphs and {F}okker-{P}lanck
  equations},
        date={2016},
     journal={J. Stat. Phys},
      volume={165},
       pages={785\ndash 798},
}

\bib{dembo-montanari}{article}{
      author={Dembo, A.},
      author={Montanari, A.},
       title={Gibbs measures and phase transitions on sparse random graphs},
        date={2010},
     journal={Brazilian Journal of Probability and Statistics},
      volume={24},
      number={2},
       pages={137\ndash 211},
}

\bib{dembo2013factor}{article}{
      author={Dembo, A.},
      author={Montanari, A.},
      author={Sun, N.},
       title={Factor models on locally tree-like graphs},
        date={2013},
     journal={The Annals of Probability},
      volume={41},
      number={6},
       pages={4162\ndash 4213},
}

\bib{Der03}{article}{
      author={Dereudre, D.},
       title={Interacting {B}rownian particles and {G}ibbs fields on
  pathspaces},
        date={2003},
     journal={ESAIM:Probability and Statistics},
      volume={7},
       pages={251\ndash 277},
}

\bib{DereudreRoelly2017}{article}{
      author={Dereudre, D.},
      author={Rœlly, S.},
       title={Path-dependent infinite-dimensional {SDE} with non-regular drift:
  An existence result},
        date={201705},
     journal={Ann. Inst. H. Poincaré Probab. Statist.},
      volume={53},
      number={2},
       pages={641\ndash 657},
         url={https://doi.org/10.1214/15-AIHP728},
}

\bib{durrett2007random}{book}{
      author={Durrett, R.},
       title={Random graph dynamics},
   publisher={Cambridge university press Cambridge},
        date={2007},
      volume={200},
      number={7},
}

\bib{DurrettLevin}{article}{
      author={Durrett, R.},
      author={Levin, S.A.},
       title={Stochastic spatial models: a user's guide to ecological
  applications},
        date={1994},
     journal={Philosophical Transactions of the Royal Society of London. Series
  B: Biological Sciences},
      volume={343},
      number={1305},
       pages={329\ndash 350},
}

\bib{einsiedler2013ergodic}{book}{
      author={Einsiedler, M.},
      author={Ward, T.},
       title={Ergodic theory},
   publisher={Springer},
        date={2013},
}

\bib{follmer1994stock}{article}{
      author={F{\"o}llmer, H.},
       title={Stock price fluctuation as a diffusion in a random environment},
        date={1994},
     journal={Philosophical Transactions of the Royal Society of London. Series
  A: Physical and Engineering Sciences},
      volume={347},
      number={1684},
       pages={471\ndash 483},
}

\bib{georgii-gibbs}{book}{
      author={Georgii, H.-O.},
       title={Gibbs measures and phase transitions},
   publisher={Walter de Gruyter},
        date={2011},
      volume={9},
}

\bib{grassberger1983critical}{article}{
      author={Grassberger, P.},
       title={On the critical behavior of the general epidemic process and
  dynamical percolation},
        date={1983},
     journal={Mathematical Biosciences},
      volume={63},
      number={2},
       pages={157\ndash 172},
}

\bib{Kol10}{book}{
      author={Kolokoltsov, N.},
       title={Nonlinear {M}arkov processes and kinetic equations},
      series={Cambridge Tracts in Mathematics},
   publisher={Cambridge University Press},
        date={2010},
      volume={vol. 182},
}

\bib{LacRamWu-original}{article}{
      author={Lacker, D.},
      author={Ramanan, K.},
      author={Wu, R.},
       title={Large sparse networks of interacting diffusions},
        date={2019},
     journal={arXiv preprint arXiv:1904.02585v1},
}

\bib{LacRamWu19b}{article}{
      author={Lacker, D.},
      author={Ramanan, K.},
      author={Wu, R.},
       title={Marginal dynamics of interacting diffusions on unimodular
  {G}alton-{W}atson trees},
        date={2020},
     journal={arXiv preprint arXiv:2009.11667},
}

\bib{LacRamWu19a}{unpublished}{
      author={Lacker, D.},
      author={Ramanan, K.},
      author={Wu, R.},
       title={Marginal dynamics of probabilistic cellular automata on trees},
        date={2021},
        note={preprint},
}

\bib{LacRamWu-MRF}{article}{
      author={Lacker, Daniel},
      author={Ramanan, Kavita},
      author={Wu, Ruoyu},
       title={Locally interacting diffusions as markov random fields on path
  space},
        date={2021},
     journal={Stochastic Processes and their Applications},
      volume={140},
       pages={81\ndash 114},
}

\bib{lebowitz1990statistical}{article}{
      author={Lebowitz, J.L.},
      author={Maes, C.},
      author={Speer, E.R.},
       title={Statistical mechanics of probabilistic cellular automata},
        date={1990},
     journal={Journal of statistical physics},
      volume={59},
      number={1-2},
       pages={117\ndash 170},
}

\bib{LouisNardi}{book}{
      author={Louis, P.-Y.},
      author={Nardi, F.R.},
       title={Probabilistic cellular automata},
   publisher={Springer},
        date={2018},
}

\bib{LucStan14}{article}{
      author={Lu\c{c}on, E.},
      author={Stannat, W.},
       title={Mean field limit for disordered diffusions with singular
  interactions},
        date={2014},
     journal={Annals of Applied Probability},
      volume={24},
      number={5},
       pages={1946\ndash 1993},
}

\bib{Luc18quenched}{article}{
      author={Lu{\c{c}}on, E.},
       title={Quenched asymptotics for interacting diffusions on inhomogeneous
  random graphs},
        date={2020},
     journal={Stochastic Processes and their Applications},
      volume={130},
      number={11},
       pages={6783\ndash 6842},
}

\bib{Mck67}{incollection}{
      author={McKean, H.P.},
       title={Propagation of chaos for a class of non-linear parabolic
  equations},
        date={1967},
   booktitle={Stochastic differential equations},
      series={(Lecture Series in Differential Equations, Session 7, Catholic
  Univ.)},
       pages={41\ndash 57},
}

\bib{Med18}{article}{
      author={Medvedev, G.S.},
       title={The continuum limit of the {K}uramoto model on sparse directed
  graphs},
        date={2019},
     journal={Communications in Mathematical Sciences},
      volume={17},
      number={4},
       pages={883\ndash 898},
}

\bib{MolloyReed1998size}{article}{
      author={Molloy, M.},
      author={Reed, B.},
       title={The size of the giant component of a random graph with a given
  degree sequence},
        date={1998-09},
        ISSN={0963-5483},
     journal={Comb. Probab. Comput.},
      volume={7},
      number={3},
       pages={295\ndash 305},
         url={http://dx.doi.org/10.1017/S0963548398003526},
}

\bib{montanari2012weak}{article}{
      author={Montanari, A.},
      author={Mossel, E.},
      author={Sly, A.},
       title={The weak limit of {I}sing models on locally tree-like graphs},
        date={2012},
     journal={Probability Theory and Related Fields},
      volume={152},
      number={1-2},
       pages={31\ndash 51},
}

\bib{nadtochiy2018mean}{article}{
      author={Nadtochiy, S.},
      author={Shkolnikov, M.},
       title={Mean field systems on networks, with singular interaction through
  hitting times},
        date={2020},
     journal={Annals of Probability},
      volume={48},
      number={3},
       pages={1520\ndash 1556},
}

\bib{OliRei18}{article}{
      author={Oliveira, R.~I.},
      author={Reis, G.~H.},
       title={Interacting diffusions on random graphs with diverging average
  degrees: {H}ydrodynamics and large deviations},
        date={2019},
     journal={Journal of Statistical Physics},
      volume={176},
      number={5},
       pages={1057\ndash 1087},
}

\bib{OliReiSto19}{article}{
      author={Oliveira, R.I.},
      author={Reis, G.~H.},
      author={Stolerman, L.~M.},
       title={Interacting diffusions on sparse graphs: hydrodynamics from local
  weak limits},
        date={2020},
     journal={Electronic Journal of Probability},
      volume={25},
      number={110},
        note={35 pp.},
}

\bib{RedRoeRus10}{article}{
      author={Redig, F.},
      author={Roelly, S.},
      author={Ruszel, W.},
       title={Short-time {G}ibbsianness for infinite-dimensional diffusions
  with space-time interaction},
        date={2010},
     journal={Journal of Statistical Physics},
      volume={138},
       pages={1124\ndash 1144},
}

\bib{spiliopoulos2018network}{article}{
      author={Spiliopoulos, K.},
      author={Yang, J.},
       title={Network effects in default clustering for large systems},
        date={2019},
     journal={Applied Mathematical Finance},
      volume={26},
      number={6},
       pages={523\ndash 582},
}

\bib{sznitman1991topics}{article}{
      author={Sznitman, A.-S.},
       title={Topics in propagation of chaos},
        date={1991},
     journal={Ecole d'Et{\'e} de Probabilit{\'e}s de Saint-Flour XIX—1989},
       pages={165\ndash 251},
}

\bib{van2016random}{book}{
      author={{van der Hofstad}, R.},
       title={Random graphs and complex networks},
   publisher={Cambridge University Press. Available at
  https://www.win.tue.nl/~rhofstad/NotesRGCN.pdf},
        date={2016},
      volume={1},
}

\bib{van2020randomII}{book}{
      author={{van der Hofstad}, R.},
       title={Random graphs and complex networks, volume 2},
   publisher={Available at
  https://www.win.tue.nl/~rhofstad/NotesRGCNII\_11\_07\_2020.pdf},
        date={2020},
}

\end{biblist}
\end{bibdiv}

\end{document}